\title{A categorification of twisted Heisenberg algebras}
\author{David Hill}
\author{Joshua Sussan}
\date{\today}
\newtheorem{defined}{Definition}
\newtheorem{prop}{Proposition}
\newtheorem{theorem}{Theorem}
\newtheorem{corollary}{Corollary}
\newtheorem{lemma}{Lemma}
\newtheorem*{thm}{Theorem}
\newtheorem{ex}{Example}
\begin{document}

\maketitle
\baselineskip 14pt

\def\adj{\text{adj}}
\def\tr{\text{tr}}
\def\R{\mathbb R}
\def\Q{\mathbb Q}
\def\Z{\mathbb Z}
\def\N{\mathbb N}
\def\C{\mathbb C}
\def\l{\lbrace}
\def\r{\rbrace}
\def\o{\otimes}
\def\lra{\longrightarrow}
\def\Hom{\mathrm{Hom}}
\def\HOM{\mathrm{HOM}}
\def\RHom{\mathrm{RHom}}
\def\Id{\mathrm{Id}}
\def\mc{\mathcal}
\def\mf{\mathfrak}
\def\Ext{\mathrm{Ext}}
\def\Ind{\mathrm{Ind}}
\def\Res{\mathrm{Res}}
\def\soc{\mathrm{soc}}
\def\hd{\mathrm{hd}}
\def\Seq{\mathrm{Seq}}
\def\shuffle{\,\raise 1pt\hbox{$\scriptscriptstyle\cup{\mskip
               -4mu}\cup$}\,}
\newcommand{\define}{\stackrel{\mbox{\scriptsize{def}}}{=}}

%%%%%%%%%%%%%%%%%%%%%%%%%%%
%%
%% this is my usual drawing command, for eps files in subdirectory fig
%%
%%\def\drawing#1{\begin{center}\epsfig{file=fig/#1}\end{center}}

%  this is for eps files in the same directory as the latex file.
%
\def\drawing#1{\begin{center}\epsfig{file=#1}\end{center}}

 \def\yesnocases#1#2#3#4{\left\{
\begin{array}{ll} #1 & #2 \\ #3 & #4
\end{array} \right. }

\newcommand{\LOT}{H^-}

\begin{abstract}
We categorify a quantized twisted Heisenberg algebra associated to a finite subgroup of $ SL(2,\mathbb{C}) $.
\end{abstract}

\tableofcontents

%%%%%%%%%%%%%%%%%%%%%%%%%%%%
%%\psfrag{X15}{}
%%%%%%%%%%%%%%%%%%%%%%%%%%%%%%%%%%%%%%

%%%%%%%%%%%%%%%%%%%%%%%%%%%

\section{Introduction}
Categorifications of quantum groups associated to Dynkin diagrams were constructed by Khovanov and Lauda \cite{KL1, KL2} and independently by Rouquier \cite{Ro}.  In particular, there is a categorification of the quantum group associated to an affine Dynkin diagram in terms of its Drinfeld-Jimbo presentation.  A natural question is how to categorify its loop realization.  A first step towards this goal is the program of categorifying Heisenberg algebras which was initiated by Khovanov ~\cite{Kh} who constructed a diagrammatic category $ \mathcal{H} $ whose Grothendieck group contains a certain Heisenberg algebra $ \mathfrak{h}$.
%Licata and Savage constructed a q-deformation of Khovanov's category $ \mathcal{H}_q $ and proved that its %Grothendieck group also contains the Heisenberg algebra $ \mathfrak{h} $ ~\cite{LS}.
Conjecturally the Grothendieck group is isomorphic to the Heisenberg algebra $ \mathfrak{h}$.

Cautis and Licata defined a diagrammatic graded 2-category $ \mathcal{H}_{\Gamma} $ depending on a finite subgroup $ \Gamma $ of $ SL(2,\C) $.  They proved that the Grothendieck group of $ \mathcal{H}_{\Gamma} $
is isomorphic to a quantized Heisenberg algebra $ \mathfrak{h}_{q,\Gamma} $ associated to the group $ \Gamma $.  Relations in the Heisenberg algebra are lifted to isomorphisms of 1-morphisms in their diagrammatic category.  For example, a relation in the Heisenberg algebra $ q_i p_i = p_i q_i + [2] $ becomes an isomorphism $ Q_i P_i \cong P_i Q_i \oplus \Id \langle1 \rangle \oplus \Id \langle -1 \rangle $ where $ \Id \langle r \rangle $ is the identity 1-morphism shifted up by degree $ r $.
A key step in their identification of the Grothendieck group is a 2-representation of their diagrammatic 2-category on a 2-category of modules for a wreath product associated to symmetric groups and $ \Gamma $.

This suggests that we should be able to extend the Cautis-Licata construction to a categorification of twisted Heisenberg algebras $ \mathfrak{h}_{q,\Gamma}^-$ by replacing the symmetric group with its super version- the Hecke-Clifford algebra.
This is the goal of this paper (in the language of monoidal categories instead of 2-categories).  We rely heavily upon ~\cite{CL} along with the work of Frenkel, Jing, and Wang ~\cite{FJW}, ~\cite{JW}, who utilize the Hecke-Clifford algebra to construct the Fock space representation of $ \mathfrak{h}_{\Gamma}^-$.
An alternate approach to that construction is to use the spin symmetric group ~\cite{FJW}.  A topological construction of twisted Heisenberg algebras was introduced by Wang ~\cite{W}.
We define here a diagrammatic $\Z \times \Z_2$-graded monoidal category $ \mathcal{H}_{\Gamma}^- $.
%This diagrammatic category turns out to be a special case of the general construction in ~\cite{CL2} but the extra $ %\Z_2 $-grading is not considered there.
One relation in the twisted Heisenberg algebra is $ q_i p_i = p_i q_i + 2[2] $.  This relation becomes an isomorphism of objects
$ Q_i P_i \cong P_i Q_i \oplus \Id \langle1 \rangle \oplus \Id \langle -1 \rangle \oplus  \Id \langle1 \rangle \lbrace 1 \rbrace \oplus  \Id \langle-1 \rangle \lbrace 1 \rbrace $.  If the finite group $ \Gamma $ is trivial, this isomorphism appears in the context of induction and restriction of modules for the Hecke-Clifford algebra (see \cite{Klesh} or \cite{CS} for example).
Our main theorem is:

\begin{thm}
There is a isomorphism of algebras
$ \phi \colon \mathfrak{h}_{q,\Gamma}^- \rightarrow K_0(\mathcal{H}_{\Gamma}^-) $.
\end{thm}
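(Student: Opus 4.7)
The plan is to mimic the Cautis--Licata strategy from \cite{CL}, with the Hecke--Clifford algebra and the Frenkel--Jing--Wang Fock space \cite{FJW,JW} playing the roles that the symmetric group and the usual Fock space play in the untwisted picture. First I would define $\phi$ on the generators by $\phi(p_i) = [P_i]$ and $\phi(q_i) = [Q_i]$, and then check that every defining relation of $\mathfrak{h}_{q,\Gamma}^-$ is in fact witnessed by an explicit isomorphism of $1$-morphisms in $\mathcal{H}_{\Gamma}^-$, exactly as illustrated by the relation $q_ip_i = p_iq_i+2[2]$ corresponding to the stated decomposition of $Q_iP_i$. These isomorphisms should be built directly from the diagrammatic generators (cups, caps, crossings, Clifford dots, and $\Gamma$-colorings), so that well-definedness of $\phi$ as a $\Z\times\Z_2$-graded algebra homomorphism is essentially bookkeeping once the right list of local relations has been imposed.

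For surjectivity, I would argue that every indecomposable object of $\mathcal{H}_{\Gamma}^-$ is, up to grading shift and parity shift, a summand of some word in the $P_i$'s and $Q_i$'s. The main tool here is a Mackey-type decomposition: repeatedly applying the $QP$ isomorphism brings any word into a normal form $P_{i_1}\cdots P_{i_r}Q_{j_1}\cdots Q_{j_s}$, and the diagrammatic presentation ensures that all endomorphisms of such words are generated by the local pieces already used. Thus $K_0(\mathcal{H}_{\Gamma}^-)$ is spanned, as a module, by images of the monomials in $p_i,q_j$, and hence $\phi$ is surjective.

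The heart of the proof is injectivity, and this is where I would rely most heavily on \cite{FJW,JW}. The idea is to build a $(\Z\times\Z_2)$-graded $2$-representation of $\mathcal{H}_{\Gamma}^-$ on the $2$-category whose objects are categories of graded supermodules over the wreath product of the Hecke--Clifford (or spin symmetric) algebra with $\Gamma$, where $P_i,Q_i$ act by the appropriate induction and restriction functors twisted by the irreducible $\Gamma$-representation indexed by $i$. The isomorphisms of $1$-morphisms required to define $\mathcal{H}_{\Gamma}^-$ should be shown to hold in this $2$-representation by unpacking the induction/restriction adjunction (as in \cite{Klesh,CS} for the trivial $\Gamma$ case), giving a $2$-functor from $\mathcal{H}_{\Gamma}^-$ to endofunctors. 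Passing to Grothendieck groups gives a map from $K_0(\mathcal{H}_{\Gamma}^-)$ to the twisted Fock space of Frenkel--Jing--Wang. Since the action of $\mathfrak{h}_{q,\Gamma}^-$ on this Fock space is faithful, and since the composition with $\phi$ recovers this action, $\phi$ must itself be injective.

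The main obstacle, as I see it, is the construction of the faithful $2$-representation and in particular the verification that the list of local relations defining $\mathcal{H}_{\Gamma}^-$ is consistent and complete: one must ensure both that enough relations are imposed so that all Cautis--Licata-style $Q_iP_j$ decompositions follow, and that no so many are imposed that the Grothendieck group collapses. The new ingredient compared to \cite{CL} is the Clifford generator and the associated $\Z_2$-grading, which forces signs into the braiding relations and doubles the contribution $[2]\mapsto 2[2]$; getting these signs right both in the diagrammatic calculus and in the wreath-product action is the delicate part, after which surjectivity and injectivity fall out as above.
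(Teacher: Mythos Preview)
Your proposal is correct and follows essentially the same strategy as the paper: well-definedness via explicit categorical isomorphisms, injectivity via a $2$-representation on wreath-product module categories whose decategorification is the faithful Fock space action of \cite{FJW,JW}, and surjectivity via a normal-form argument. The paper organizes the proof slightly differently, routing through an auxiliary category $\mathcal{K}_\Gamma^-$ and making your surjectivity sketch precise via a degree argument showing that $\mathrm{End}^0\bigl(\prod_i \widetilde{P}_i^{m_i}\prod_j \widetilde{Q}_j^{n_j}\bigr)\cong \bigotimes_i \mathbb{S}_{m_i}\otimes\bigotimes_j \mathbb{S}_{n_j}$, so that indecomposables are indexed by tuples of strict partitions and hence lie in the image of $\phi$.
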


As in \cite{CL}, we construct a representation of the diagrammatic category on categories of modules for a wreath product associated to $ \Gamma $ and a Clifford algebra with the symmetric group in order to prove the homomorphism above is an injection.

It would be interesting to find a twisted analogue of the construction of Shan and Vasserot who categorify the action of the Heisenberg algebra on Fock space using a category of modules for cyclotomic rational double affine Hecke algebras ~\cite{SV}.
Another natural problem is to understand a twisted version of  ~\cite{SW} where Stroppel and Webster construct an action of quantum $\hat{\mathfrak{sl}}_e$ on tensor products of fermionic Fock spaces.

\subsection*{Outline of the paper}
The twisted Heisenberg algebras, which are the objects to be categorified, are reviewed in Section ~\ref{heisalgebra}.
In Section ~\ref{preliminaries} we give some background material on Clifford algebras, Hecke-Clifford algebras, wreath products, monoidal categories, and Grothendieck groups.
%Of particular importance is an explicit realization due to Nazarov of a minimal idempotent in the Hecke-Clifford algebra corresponding to the trivial partition.
Section ~\ref{catH} contains the definition of the main diagrammatic category and its Karoubi envelope which categorifes the Heisenberg algebra.  A homomorphism from the Heisenberg algebra $ \mathfrak{h}_{q,\Gamma}^- $ to the Grothendieck group of the appropriate category is constructed in this section although its proof relies heavily on the results of Section ~\ref{auxcat}.  The proof relies on the construction of an auxiliary category where relations between objects are easier to prove.  There is a functor from the auxiliary category to the category defined in Section ~\ref{catH}.  This functor transports the isomorphisms between the objects in the auxiliary category to relations between the objects in Section ~\ref{catH} giving rise to the desired homomorphism described above.
In Section ~\ref{2rep} we consider a category associated to the wreath product attached to the data of Hecke-Clifford algebra of all ranks and the finite group $ \Gamma $.  There is an action
of $ \mathcal{H}_{\Gamma}^- $ on this category which allows us to prove that the homomorphism from $ \mathfrak{h}_{q,\Gamma}^- $ to the Grothendieck group of $ \mathcal{H}_{\Gamma}^- $ is an isomorphism in Section
~\ref{mainresult}.

\subsection*{Acknowledgements}
The authors would like to thank Mikhail Khovanov, Naihuan Jing, and Weiqiang Wang for helpful conversations.  They are also very grateful to Sabin Cautis and Anthony Licata for explaining details of their paper.
This project began during the second author's visit to the Max Planck Institute for Mathematics and is very grateful for its support and excellent working conditions.

J.S. was supported by NSF grant DMS-1407394 and PSC-CUNY Award 67144-00 45.

\section{The twisted Heisenberg algebra $ \mathfrak{h}_{q,\Gamma}^- $}
\label{heisalgebra}
Let $ \mathfrak{g} $ be a finite dimensional complex simple Lie algebra associated to a simply laced Dynkin diagram with Cartan matrix $ (a_{ij}) $.  Let the set of simple roots of $ \mathfrak{g} $ be
$ \lbrace \alpha_1, \ldots, \alpha_r \rbrace $.  Fix a Cartan subalgebra $ \mathfrak{h} $ of $ \mathfrak{g} $ with basis $ \lbrace h_1, \ldots, h_r \rbrace $ dual to the set of simple roots.
Let $ e_i $ and $ f_i $ be Chevalley generators corresponding to the roots $ \alpha_i $ and $ -\alpha_i $ respectively.

To the finite dimensional algebra $ \mathfrak{g} $ we associate to it its affine algebra and extended affine algebra respectively
\begin{equation*}
\hat{\mathfrak{g}} = \mathfrak{g} \otimes \mathbb{C}[t,t^{-1}] \oplus \mathbb{C} c \hspace{.4in} \tilde{\mathfrak{g}} = \hat{\mathfrak{g}} \oplus \mathbb{C} d.
\end{equation*}

By the McKay correspondence, all such $ \widetilde{\mathfrak{g}} $ and hence $ \hat{\mathfrak{g}} $ are in bijection with finite subgroups $ \Gamma $ of $ SL_2(\mathbb{C}) $.  Denote
the corresponding affine algebra by $ \hat{\mathfrak{g}}_{\Gamma} $.

Let $ \theta $ be the involution of $ \mathfrak{g} $ such that
\begin{equation*}
e_i \mapsto f_i \hspace{.5in} f_i \mapsto e_i \hspace{.5in} h_i \mapsto -h_i.
\end{equation*}
Define
\begin{equation*}
\mathfrak{g}_0 = \lbrace g \in \mathfrak{g} | \theta(g) = g \rbrace \hspace{.5in} \mathfrak{g}_1 = \lbrace g \in \mathfrak{g} | \theta(g) = -g \rbrace.
\end{equation*}

Now we define the twisted affine algebra
\begin{equation*}
\hat{\mathfrak{g}}_{\Gamma}[-1] = \bigoplus_{i=0}^1 \mathfrak{g}_i \otimes t^i \mathbb{C}[t^2, t^{-2}] \oplus \mathbb{C}c
\end{equation*}
with relations
\begin{equation*}
[x \otimes t^m, y \otimes t^n] = (m+n)[x,y] + \frac{m}{2} \delta_{m,-n}(x,y)c
\end{equation*}
\begin{equation*}
[c, x \otimes t^n]=0
\end{equation*}
where $ (,) $ is the Killing form on $ \mathfrak{g} $.

For this paper we will assume that the central extension $ c $ always acts by $ 1 $ so we define the twisted Heisenberg algebra as the enveloping algebra
\begin{equation*}
\mathfrak{h}_{\Gamma}^-  = \mathcal{U}((\mathfrak{h} \otimes t \mathbb{C}[t^2, t^{-2}] \oplus \mathbb{C}c)/ (c-1)).
\end{equation*}

Setting $ h_i(\frac{m}{2}) = h_i \otimes t^m $, we see that $ \mathfrak{h}_{\Gamma}^- $ is generated by
\begin{equation*}
\lbrace h_i(\mbox{$\frac{m}{2}$}) | i=1, \ldots, r \text{ and } m \in 2 \mathbb{Z} +1 \rbrace
\end{equation*}
with relations
\begin{equation*}
[h_i(\mbox{$\frac{m}{2}$}), h_j(\mbox{$\frac{n}{2}$})]=\frac{m}{2} a_{ij} \delta_{m,-n}.
\end{equation*}

\begin{prop}
The twisted Heisenberg algebra $\mathfrak{h}_{\Gamma}^- $ is generated by
\begin{equation*}
\lbrace p_i^{(m)}, q_i^{(m)} | i = 1, \ldots, r, \text{ and } m \in \Z_{\geq 0} \rbrace
\end{equation*}
with relations:
\begin{enumerate}
\item $ p_i^{(n)} p_j^{(m)} = p_j^{(m)} p_i^{(n)} $ for all $ i,j $
\item $ q_i^{(n)} q_j^{(m)} = q_j^{(m)} q_i^{(n)} $ for all $ i,j $
\item $ q_i^{(n)} p_i^{(m)} = p_i^{(m)} q_i^{(n)} + \sum_{k \geq 1} (-1)^k 4k p_i^{(m-k)} q_i^{(n-k)} $ for all $ i $
\item $ q_i^{(n)} p_j^{(m)} = p_j^{(m)} q_i^{(n)} + \sum_{k \geq 1} 2 p_j^{(m-k)} q_i^{(n-k)} $ for all $ i, j $
with $ a_{ij}=-1 $
\item $ q_i^{(n)} p_j^{(m)} = p_j^{(m)} q_i^{(n)} $ for all $ i, j $ with $ a_{ij} = 0 $.
\end{enumerate}
\end{prop}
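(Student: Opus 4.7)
The plan is to realize the $p_i^{(m)}$ and $q_i^{(m)}$ inside $\mathfrak{h}_{\Gamma}^-$ as coefficients of two generating series, built as vertex-operator-style exponentials in the $h_i(\pm m/2)$, and then to deduce relations (1)--(5) from a single multiplicative identity of formal power series.

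Concretely, I set
\begin{equation*}
P_i(z) = \exp\!\Bigl( \sum_{m \geq 1,\, m \text{ odd}} \tfrac{2}{m} \, h_i(-\tfrac{m}{2}) \, z^m \Bigr), \qquad Q_i(z) = \exp\!\Bigl( -\sum_{m \geq 1,\, m \text{ odd}} \tfrac{2}{m} \, h_i(\tfrac{m}{2}) \, z^m \Bigr),
\end{equation*}
and declare $p_i^{(m)}, q_i^{(m)}$ to be the coefficients of $z^m$ in these series. Relations (1) and (2) are then immediate: the bracket $[h_i(m/2), h_j(n/2)]$ carries the factor $\delta_{m,-n}$, which vanishes for positive odd $m,n$; hence $P_i(z)$ and $P_j(w)$ commute as formal series, and likewise $Q_i(z)$ and $Q_j(w)$.

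For relations (3)--(5), since the commutator $[h_i(m/2), h_j(-n/2)] = \tfrac{m}{2} a_{ij} \delta_{m,n}$ is central, the Baker--Campbell--Hausdorff formula yields
\begin{equation*}
Q_i(w)\, P_j(z) \,=\, P_j(z)\, Q_i(w) \, \exp\!\Bigl(-2 a_{ij} \sum_{m \geq 1,\, m \text{ odd}} \tfrac{(zw)^m}{m}\Bigr) \,=\, P_j(z)\, Q_i(w)\,\Bigl(\tfrac{1-zw}{1+zw}\Bigr)^{\!a_{ij}},
\end{equation*}
where we used $\sum_{m \text{ odd},\, m \geq 1} t^m/m = \tfrac{1}{2}\log\tfrac{1+t}{1-t}$. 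The proof then reduces to expanding $\bigl(\tfrac{1-t}{1+t}\bigr)^{a_{ij}}$ as a power series in $t = zw$ and reading off coefficients of $z^m w^n$. The bookkeeping identities $\tfrac{(1-t)^2}{(1+t)^2} = 1 + \sum_{k \geq 1} (-1)^k 4k\, t^k$ (for $a_{ij}=2$) and $\tfrac{1+t}{1-t} = 1 + \sum_{k \geq 1} 2\, t^k$ (for $a_{ij}=-1$) reproduce exactly (3) and (4), while (5) is trivial since the exponential factor is $1$ when $a_{ij} = 0$.

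To finish, I verify that these elements generate $\mathfrak{h}_\Gamma^-$ and that (1)--(5) is a complete presentation. Surjectivity follows by inverting the exponentials: extracting the coefficient of $z^m$ in $\tfrac{m}{2}\log P_i(z)$ expresses each $h_i(-m/2)$ (odd $m \geq 1$) as a universal polynomial in $p_i^{(1)}, \ldots, p_i^{(m)}$ via classical Newton-type identities, and similarly $h_i(m/2)$ in terms of the $q_i^{(k)}$. The relations (1)--(5) suffice to bring any monomial into the normal form (ordered product of $p$'s) $\cdot$ (ordered product of $q$'s); matching this spanning set with the PBW basis of $\mathfrak{h}_\Gamma^-$ coming from the $h_i(m/2)$ presentation shows the two sides coincide. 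The main technical obstacle is the careful power-series bookkeeping to land on exactly the constants $(-1)^k 4k$ and $2$ appearing in (3) and (4); once the generating-function dictionary is set up, the remainder is essentially formal.
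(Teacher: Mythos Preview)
Your approach is essentially the same as the paper's: both realize $p_i^{(m)}$ and $q_i^{(m)}$ as coefficients of exponential generating series in the $h_i(\pm m/2)$, obtain a commutation relation between the series with scalar factor $\bigl(\tfrac{1-t}{1+t}\bigr)^{a_{ij}}$, and read off (3)--(5) by expanding this factor.  The paper cites \cite[Proposition~3.4.1]{FLM} for the commutation formula and works with half-integer exponents in $z_1,z_2$, whereas you derive it directly from Baker--Campbell--Hausdorff with integer exponents in $z,w$; these are cosmetic differences.  Your sign convention in the exponents is opposite to the paper's (so your $p_i^{(1)} = 2h_i(-\tfrac12)$ rather than $-2h_i(-\tfrac12)$), but this is harmless since the relations (1)--(5) are invariant under simultaneously inverting both generating series.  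One point in your favour: you explicitly address why the elements generate and why (1)--(5) is a \emph{complete} set of relations (via the Newton/PBW argument), which the paper's proof leaves implicit, verifying only that the relations hold.
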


\begin{proof}
Let
\begin{equation*}
\sum_{{m} \in \frac{1}{2}\N} q_i^{(2m)} z_1^{-m} =  \text{exp}(\sum_{m \in \N + \frac{1}{2}} \frac{h_i(m)}{m} z_1^{-m})
\end{equation*}

\begin{equation*}
\sum_{{m} \in \frac{1}{2}\N} p_j^{(2m)} z_2^{m} =  \text{exp}(\sum_{m \in \N + \frac{1}{2}} -\frac{h_j(-m)}{m} z_2^{m}).
\end{equation*}
Now the first, second, and fifth relations are clear.

By ~\cite[Proposition 3.4.1]{FLM},
\begin{equation}
\label{vertexcomm}
\left[\sum_{{m} \in \frac{1}{2}\N} q_i^{\left(2m\right)} z_1^{-m}\right]\left[\sum_{{m} \in \frac{1}{2}\N} p_j^{\left(2m\right)} z_2^{m}\right]=\left[\sum_{{m} \in \frac{1}{2}\N} p_j^{\left(2m\right)} z_2^{m}\right]\left[\sum_{{m} \in \frac{1}{2}\N} q_i^{\left(2m\right)} z_1^{-m}\right]
\left(\frac{1-z_2^{\frac{1}{2}}z_1^{\frac{-1}{2}}}{1+z_2^{\frac{1}{2}}z_1^{\frac{-1}{2}}}\right)^{a_{ij}}
\end{equation}

If $ a_{ij} = -1 $, then
\begin{eqnarray*}
\left(\frac{1-z_2^{\frac{1}{2}}z_1^{\frac{-1}{2}}}{1+z_2^{\frac{1}{2}}z_1^{\frac{-1}{2}}}\right)^{a_{ij}} &=
& \left(\frac{1+z_2^{\frac{1}{2}}z_1^{\frac{-1}{2}}}{1-z_2^{\frac{1}{2}}z_1^{\frac{-1}{2}}}\right)\\
&= &\left(1+z_2^{\frac{1}{2}}z_1^{\frac{-1}{2}}\right)\left(1+ z_2^{\frac{1}{2}}z_1^{\frac{-1}{2}} + z_2^{{1}}z_1^{{-1}}+ z_2^{\frac{3}{2}}z_1^{\frac{-3}{2}} + \cdots \right) \\
& = & \left(1+2z_2^{\frac{1}{2}}z_1^{\frac{-1}{2}} + 2z_2^{{1}}z_1^{{-1}}+ 2z_2^{\frac{3}{2}}z_1^{\frac{-3}{2}} + \cdots \right).
\end{eqnarray*}
Substituting this expansion into \eqref{vertexcomm} gives the fourth relation.

If $ i=j$, then $ a_{ij} = 2 $.  Then \eqref{vertexcomm} gives
\begin{equation}
\label{vertexcomm2}
\left[\sum_{{m} \in \frac{1}{2}\N} q_j^{\left(2m\right)} z_1^{-m}\right]\left[\sum_{{m} \in \frac{1}{2}\N} p_i^{\left(2m\right)} z_2^{m}\right] =  \left[\sum_{{m} \in \frac{1}{2}\N} p_i^{\left(2m\right)} z_2^{m}\right]\left[\sum_{{m} \in \frac{1}{2}\N} q_j^{\left(2m\right)} z_1^{-m}\right]
\left(\frac{1-z_2^{\frac{1}{2}}z_1^{\frac{-1}{2}}}{1+z_2^{\frac{1}{2}}z_1^{\frac{-1}{2}}}\right)^2.
\end{equation}
Now we expand
\begin{eqnarray*}
\left(\frac{1-z_2^{\frac{1}{2}}z_1^{\frac{-1}{2}}}{1+z_2^{\frac{1}{2}}z_1^{\frac{-1}{2}}}\right)^2 &= &
\left(1- 2 z_2^{\frac{1}{2}}z_1^{\frac{-1}{2}} + z_2^{{1}}z_1^{{-1}}\right) \left(1- 2 z_2^{\frac{1}{2}}z_1^{\frac{-1}{2}} + 3 z_2^{{1}}z_1^{{-1}} - 4z_2^{\frac{3}{2}}z_1^{\frac{-3}{2}} + \cdots\right) \\
& = & \left(1- 4 z_2^{\frac{1}{2}}z_1^{\frac{-1}{2}} + 8 z_2^{{1}}z_1^{{-1}} - 12 z_2^{\frac{3}{2}}z_1^{\frac{-3}{2}} + \cdots\right).
\end{eqnarray*}
Substituting this series into \eqref{vertexcomm2} gives the third relation.
\end{proof}

\begin{ex}
We have the following formulas:
$ p_i^{(1)} = -2h_i(\frac{-1}{2}) $,
$ q_i^{(1)} = 2h_i(\frac{1}{2}) $,
$ p_i^{(2)} = 2h_i(\frac{-1}{2}) h_i(\frac{-1}{2}) $, and
$ q_i^{(2)} =  2h_i(\frac{1}{2})h_i(\frac{1}{2}) $.
\end{ex}

For a natural number $ k $, let $ [k] = q^{k-1} + q^{k-3} + \cdots + q^{1-k} $.
Now we come to the main object of study: the quantized twisted Heisenberg algebra.

\begin{defined}
\label{qheis}
Let $ \mathfrak{h}_{q,\Gamma}^- $ be the $ \mathbb{C}[q,q^{-1}] $ algebra generated by
\begin{equation*}
\lbrace p_i^{(m)}, q_i^{(m)} | i = 1, \ldots, r, \text{ and } m \in \Z_{\geq 0} \rbrace
\end{equation*}
with relations
\begin{enumerate}
\item $ p_i^{(n)} p_j^{(m)} = p_j^{(m)} p_i^{(n)} $ for all $ i,j $
\item $ q_i^{(n)} q_j^{(m)} = q_j^{(m)} q_i^{(n)} $ for all $ i,j $
\item $ q_i^{(n)} p_i^{(m)} = p_i^{(m)} q_i^{(n)} + \sum_{k \geq 1} 2([k+1]+[k-1]) p_i^{(m-k)} q_i^{(n-k)} $ for all $ i $
\item $ q_i^{(n)} p_j^{(m)} = p_j^{(m)} q_i^{(n)} + \sum_{k \geq 1} 2 p_j^{(m-k)} q_i^{(n-k)} $ for all $ i, j $
with $ a_{ij}=-1 $
\item $ q_i^{(n)} p_j^{(m)} = p_j^{(m)} q_i^{(n)} $ for all $ i, j $ with $ a_{ij} = 0 $.
\end{enumerate}
\end{defined}

\begin{prop}
The algebra $ \mathfrak{h}_{q,\Gamma}^- $ specializes to  $ \mathfrak{h}_{\Gamma}^- $.  That is, there is an isomorphism of $ \mathbb{C}$-algebras
$ f \colon \mathfrak{h}_{q=-1,\Gamma}^- \rightarrow \mathfrak{h}_{\Gamma}^- $.
\end{prop}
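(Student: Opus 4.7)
The plan is to take $f$ to be the identity on generators, sending $p_i^{(m)} \mapsto p_i^{(m)}$ and $q_i^{(m)} \mapsto q_i^{(m)}$, and then to verify that the defining relations of $\mathfrak{h}_{q,\Gamma}^-$ at $q=-1$ are exactly the relations of $\mathfrak{h}_{\Gamma}^-$ as presented in the previous proposition. Relations (1), (2), (4), and (5) are identical in the two presentations, so the entire content is a single coefficient computation in relation (3).

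The key step is evaluating the quantum integer $[k] = q^{k-1}+q^{k-3}+\cdots + q^{1-k}$ at $q=-1$. Since consecutive summands differ by a factor of $q^2$, at $q=-1$ each of the $k$ summands equals $(-1)^{k-1}$, so $[k]\bigl|_{q=-1} = k(-1)^{k-1}$. Therefore
\[
2\bigl([k+1]+[k-1]\bigr)\Big|_{q=-1}
= 2\bigl((k+1)(-1)^{k} + (k-1)(-1)^{k-2}\bigr)
= 4k(-1)^{k},
\]
which is exactly the coefficient $(-1)^{k}4k$ appearing in relation (3) of $\mathfrak{h}_{\Gamma}^-$. Hence the relations of $\mathfrak{h}_{q=-1,\Gamma}^-$ coincide term-by-term with those of $\mathfrak{h}_{\Gamma}^-$, so $f$ is a well-defined algebra homomorphism.

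For the isomorphism claim, since both algebras are defined by generators and relations on the same generating set, and the sets of relations are literally the same after specialization, the identity-on-generators map $f$ is simultaneously surjective (both algebras are generated by $\{p_i^{(m)}, q_i^{(m)}\}$) and injective (any relation among the $p$'s and $q$'s in $\mathfrak{h}_{\Gamma}^-$ is a consequence of relations (1)--(5), hence lifts to $\mathfrak{h}_{q=-1,\Gamma}^-$). There is no real obstacle here — once the $q=-1$ specialization of $2([k+1]+[k-1])$ is identified with $(-1)^{k}4k$, the proposition is immediate from the presentation given in the preceding proposition.
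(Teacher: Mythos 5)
Your proof is correct and follows the same approach as the paper: the paper's entire proof is the single remark that $2([k+1]+[k-1])$ specializes to $(-1)^k 4k$ at $q=-1$, which is exactly your key computation. You simply spell out the intermediate steps (evaluating $[k]|_{q=-1} = k(-1)^{k-1}$) and the routine consequences that the paper leaves implicit.
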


\begin{proof}
The only thing to note is that when $ q=-1 $, the expression $ 2([k+1]+[k-1]) $ is $ (-1)^k 4k $.
\end{proof}
See \cite{J2} and \cite{J} for more details on the relationship between quantized twisted Heisenberg algebras and quantized twisted affine algebras.

\section{Preliminaries}
\label{preliminaries}
%\subsection{The Clifford algebra $ Cl_n $}
\subsection{The Hecke-Clifford algebra $ \mathbb{S}_n $}
Let $ Cl_n $ be the Clifford algebra generated by $ c_i $ for $ i=1, \ldots, n $ with relations 
$$ c_i^2 = 1, \;\;\;\mbox{ and }\;\;\; c_i c_j = -c_j c_i \;\;\;\mbox{ for }\;\;\; i \neq j .$$
%$ Cl_n $ has an irreducible supermodule $ L_n $ of dimension $ 2^{\lfloor \frac{n+1}{2} \rfloor} $.  Moreover, $ Cl_n $ has $ 2^{\lfloor \frac{n}{2} \rfloor} $ minimal orthogonal idempotents which are all isomorphic.  This gives rise to a decomposition of $ Cl_n$-supermodules
%\begin{equation*}
%Cl_n \cong \bigoplus_{i=1}^{2^{\lfloor \frac{n}{2} \rfloor}} L_n
%\end{equation*}

%\subsection{The Hecke-Clifford algebra $ \mathbb{S}_n $}
Let $ {S}_n $ be the symmetric group of permutations of $ n $ elements generated by transpositions $ s_i $ for $ i=1, \ldots, n-1 $.
Then, $ S_n $ acts on $ Cl_n $ by permutations on the subscripts of the Clifford generators.
When $S_n$ acts on a set $X$, we will denote the action of $ w \in S_n $ on an element $ x \in X $ by $ w.x $.
Then the Hecke-Clifford algebra $ \mathbb{S}_n $ is defined as the semi-direct product:
\begin{equation*}
\mathbb{S}_n = Cl_n \rtimes \C[S_n]
\end{equation*}

The Hecke-Clifford algebra is a $ \Z_2$-graded superalgebra where the symmetric group lies in degree zero and each Clifford generator $ c_i $ has degree one.  We denote the degree of an element $ x \in \mathbb{S}_n $ by $ ||x|| $. Throughout this paper, we will adopt the conventions on super representation theory described in \cite[Part II]{Klesh}.

The super representation theory of $ \mathbb{S}_n $ is equivalent to the projective representation theory of $ S_n $.  Thus the finite dimensional irreducible super representations of $ \mathbb{S}_n $ are indexed by strict partitions of $ n $. Nazarov gave a construction of (quasi-)idempotents in $ \mathbb{S}_n $ indexed by strict partitions of $ n $ ~\cite{N}.  His methods parallel Cherednik's construction of idempotents in
$ \mathbb{C}[S_n] $ using the affine Hecke algebra.
A different construction of (quasi-)idempotents $ \psi_{\lambda} $ was given later by Hecke-Clifford ~\cite{Se} which are the ones we use here.  For the special case that $ \lambda = (n) $, the element $ \psi_{\lambda} $ has a familiar expression:

\begin{equation}
\label{psi}
\psi_{(n)} = \frac{1}{n!} \sum_{w \in S_n} w.
\end{equation}

%\begin{equation}
%\label{psi}
%\psi_{(n)} = \frac{1}{n!} \prod_{k=2, \ldots, r}^{\rightarrow} \prod_{k'=1, \ldots, k-1}^{\rightarrow} (1+ \frac{s_{k,k'}}{\sqrt{k(k-1)}-\sqrt{k'(k'-1)}} + \frac{c_k c_{k'} s_{k,k'}}{\sqrt{k(k-1)}+\sqrt{k'(k'-1)}})
%\end{equation}
%where $ s_{k,k'} $ is the permutation of the set $ \lbrace 1, \ldots, n \rbrace $ switching $ k $ and $ k' $ and fixing all other elements.

We will need some well known properties of $ \psi_{(n)} $ which follow directly from the definition of $ \psi_{(n)}$.

\begin{prop}
The element $ \psi_{(n)} $ is an idempotent:
$ \psi_{(n)} \psi_{(n)} = \psi_{(n)} $.
\end{prop}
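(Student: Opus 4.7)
The plan is to verify idempotency by a direct computation using the left-regular action of $S_n$ on itself. I would write
\begin{equation*}
\psi_{(n)} \psi_{(n)} = \frac{1}{(n!)^2} \sum_{w \in S_n} \sum_{w' \in S_n} w w',
\end{equation*}
and then observe that for each fixed $w \in S_n$, the map $w' \mapsto w w'$ is a bijection $S_n \to S_n$. Thus the inner sum $\sum_{w' \in S_n} w w'$ equals $\sum_{u \in S_n} u$, independent of $w$.

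Substituting this back, the outer sum contributes a factor of $|S_n| = n!$, giving
\begin{equation*}
\psi_{(n)} \psi_{(n)} = \frac{1}{(n!)^2} \cdot n! \cdot \sum_{u \in S_n} u = \frac{1}{n!} \sum_{u \in S_n} u = \psi_{(n)}.
\end{equation*}

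There is no real obstacle here; this is the classical fact that the normalized symmetrizer in $\mathbb{C}[S_n]$ is an idempotent, and it extends to $\mathbb{S}_n = Cl_n \rtimes \mathbb{C}[S_n]$ trivially since $\psi_{(n)}$ lies entirely in the subalgebra $\mathbb{C}[S_n] \subset \mathbb{S}_n$, so the computation takes place inside the group algebra. The only thing worth flagging is that we do not need to touch the Clifford part at all for this particular identity.
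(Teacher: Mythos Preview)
Your proof is correct and is exactly the standard direct computation the paper has in mind; the paper does not even write out a proof, simply noting that this property ``follows directly from the definition of $\psi_{(n)}$.'' Your remark that the computation takes place entirely in $\mathbb{C}[S_n] \subset \mathbb{S}_n$ is apt and nothing further is needed.
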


%\begin{prop}
%~\cite[Corollary 9.3]{N}
%The element $ \psi_{(n)} $ is an idempotent:
%$ \psi_{(n)} \psi_{(n)} = \psi_{(n)} $.
%\end{prop}

\begin{prop}
\label{absorbformulas}
There are equalities
\begin{equation*}
s_k \psi_{(n)} =  \psi_{(n)}
\end{equation*}
\begin{equation*}
\psi_{(n)} s_k = \psi_{(n)}.
\end{equation*}
\end{prop}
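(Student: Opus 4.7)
The plan is to reduce both identities to the observation that left (respectively, right) multiplication by a fixed group element permutes $S_n$ bijectively, so the uniform sum $\sum_{w \in S_n} w$ is invariant.

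First I would substitute the explicit definition~\eqref{psi} of $\psi_{(n)}$. For the first identity,
\begin{equation*}
s_k \psi_{(n)} = \frac{1}{n!} \sum_{w \in S_n} s_k w.
\end{equation*}
The map $L_{s_k} \colon S_n \to S_n$ given by $w \mapsto s_k w$ is a bijection (its inverse is left multiplication by $s_k^{-1} = s_k$), so as $w$ ranges over $S_n$, the element $s_k w$ also ranges over all of $S_n$ exactly once. Reindexing by $w' = s_k w$ yields
\begin{equation*}
s_k \psi_{(n)} = \frac{1}{n!} \sum_{w' \in S_n} w' = \psi_{(n)}.
\end{equation*}
The second identity follows by the identical argument applied to the bijection $R_{s_k}\colon w \mapsto w s_k$.

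There is no real obstacle here; the only small point worth noting is that the $s_k$ appearing in $\psi_{(n)} s_k$ is viewed as an element of $S_n \subset \mathbb{S}_n$ (the degree-zero part), so multiplication is just group multiplication in $S_n$, and no Clifford signs enter. This is the same standard calculation that shows the normalized sum of any finite group is a two-sided idempotent absorbed by every group element.
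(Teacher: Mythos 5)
Your proof is correct and is exactly the standard argument the paper has in mind; the paper omits the proof entirely, merely noting that the identities ``follow directly from the definition of $\psi_{(n)}$,'' and your reindexing via the bijections $w \mapsto s_k w$ and $w \mapsto w s_k$ is precisely what that remark is gesturing at. Your closing observation about $s_k$ living in the even part of $\mathbb{S}_n$ (so no Clifford signs appear) is a sensible sanity check, though not strictly needed since every term in the sum defining $\psi_{(n)}$ is also even.
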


%\begin{prop}
%~\cite[Equation 9.1]{N}
%\label{absorbformulas}
%There are equalities
%\begin{equation*}
%s_k \psi_{(n)} = (\frac{1}{\sqrt{k(k+1)} - \sqrt{(k-1)k}} + \frac{c_k c_{k+1}}{\sqrt{k(k+1)} - \sqrt{(k-1)k}}) \psi_{(n)}
%\end{equation*}
%\begin{equation*}
%\psi_{(n)} s_k = \psi_{(n)} (\frac{1}{\sqrt{k(k+1)} - \sqrt{(k-1)k}} - \frac{c_k c_{k+1}}{\sqrt{k(k+1)} - \sqrt{(k-1)k}}).
%\end{equation*}
%\end{prop}

For $ 0 \leq i \leq m $, let $ \rho_i \colon \mathbb{S}_n \rightarrow \mathbb{S}_{n+m} $ be a homomorphism of algebras given by
$ s_r \mapsto s_{r+i} $ and $ c_r \mapsto c_{r+i} $.
Then we set $ \psi^i_{\lambda} = \rho_i(\psi_{\lambda}) $.

We will make use of the next two propositions when manipulating graphical depictions of morphisms in the monoidal category to be defined later.
Their proofs are easy.

\begin{prop}
\label{idempotentslide}
There are equalities of elements in $ \mathbb{S}_{n+m} $:
\begin{enumerate}
\item $ (s_{i+1} \cdots s_{i+n}) \psi^i_{(n)} = \psi^{i+1}_{(n)}(s_{i+1} \cdots s_{i+n}) $
\item $ (s_{i+n} \cdots s_{i+1}) \psi^{i+1}_{(n)} = \psi^{i}_{(n)}(s_{i+n} \cdots s_{i+1}) $.
\end{enumerate}
\end{prop}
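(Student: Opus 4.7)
The plan is to reduce both equalities to a single conjugation computation. Recall that $\psi_{(n)} = \frac{1}{n!}\sum_{w \in S_n} w$, so $\psi^i_{(n)} = \rho_i(\psi_{(n)})$ is the normalized sum over the copy of $S_n$ generated by $\{s_{i+1}, \ldots, s_{i+n-1}\}$, i.e., the symmetric group permuting the positions $\{i+1, \ldots, i+n\}$. Similarly, $\psi^{i+1}_{(n)}$ is the normalized sum over the copy of $S_n$ permuting $\{i+2, \ldots, i+n+1\}$. To prove part (1), I would rewrite it as
\[
(s_{i+1} \cdots s_{i+n})\,\psi^i_{(n)}\,(s_{i+1}\cdots s_{i+n})^{-1} = \psi^{i+1}_{(n)},
\]
and hence it suffices to show that conjugation by $w := s_{i+1}s_{i+2}\cdots s_{i+n}$ carries the subgroup $\langle s_{i+1},\ldots,s_{i+n-1}\rangle$ bijectively onto $\langle s_{i+2},\ldots,s_{i+n}\rangle$ in a way that respects the uniform measure.

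The key computation is that $w$ is the $(n+1)$-cycle sending $i+k \mapsto i+k+1$ for $1 \le k \le n$ and $i+n+1 \mapsto i+1$. From this one reads off that $w\, s_{i+k}\, w^{-1} = s_{i+k+1}$ for every $1 \le k \le n-1$, which provides the required isomorphism of subgroups. Since conjugation is an automorphism preserving the number of elements, it sends the normalized sum $\psi^i_{(n)}$ to $\psi^{i+1}_{(n)}$, establishing (1). Part (2) is entirely analogous: $s_{i+n}s_{i+n-1}\cdots s_{i+1} = w^{-1}$ is the inverse cycle, and conjugation by $w^{-1}$ sends $\langle s_{i+2},\ldots,s_{i+n}\rangle$ back onto $\langle s_{i+1},\ldots,s_{i+n-1}\rangle$, so the same argument in reverse yields $w^{-1}\psi^{i+1}_{(n)} = \psi^i_{(n)} w^{-1}$.

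The only real subtlety is keeping track of the convention for composing permutations so that the cycle description of $w$ is correct; once that is fixed, the rest is a one-line verification that $w s_{i+k} w^{-1} = s_{i+k+1}$. No calculation with Clifford generators is needed because $\psi_{(n)}$ in the form \eqref{psi} lies entirely in the group algebra $\mathbb{C}[S_n]$, and the conjugating elements are themselves permutations.
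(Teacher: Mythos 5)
Your proof is correct, and since the paper dispatches this proposition with ``Their proofs are easy'' and gives no argument, your conjugation computation is exactly the natural route one would take. The observation that $w = s_{i+1}\cdots s_{i+n}$ acts as the $(n+1)$-cycle $(i+1,\,i+2,\ldots,\,i+n+1)$ (with functions composed right-to-left) gives $w\,s_{i+k}\,w^{-1} = s_{i+k+1}$ for $1 \le k \le n-1$, so conjugation by $w$ carries the subgroup $\langle s_{i+1},\ldots,s_{i+n-1}\rangle$ isomorphically onto $\langle s_{i+2},\ldots,s_{i+n}\rangle$ and therefore sends the averaged idempotent $\psi^i_{(n)}$ to $\psi^{i+1}_{(n)}$; this is precisely the content of both identities. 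Your remark that no Clifford generators appear and that the whole computation lives in $\mathbb{C}[S_{n+m}]$ is also apt, since $\psi_{(n)}$ in the form \eqref{psi} is purely a group-algebra element.
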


\begin{prop}
\label{idempotentabsorb}
There are equalities of elements in $ \mathbb{S}_{n+m} $:
\begin{equation*}
\psi_{(m+n)} \psi^i_{(n)} = \psi_{(m+n)} = \psi^i_{(n)} \psi_{(m+n)}.
\end{equation*}
\end{prop}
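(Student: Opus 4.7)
The plan is to reduce both equalities to repeated applications of Proposition \ref{absorbformulas} (applied at rank $m+n$, not $n$), together with the explicit formula \eqref{psi} for $\psi_{(n)}$. The homomorphism $\rho_i$ sends $s_r \mapsto s_{r+i}$, so its image of $S_n$ inside $S_{m+n}$ is precisely the parabolic subgroup generated by $s_{i+1}, \ldots, s_{i+n-1}$. Writing $S_n^{(i)} = \rho_i(S_n)$ for this subgroup, equation \eqref{psi} gives
\begin{equation*}
\psi^i_{(n)} \;=\; \frac{1}{n!}\sum_{w \in S_n^{(i)}} w,
\end{equation*}
so every summand is a word in the simple transpositions $s_{i+1}, \ldots, s_{i+n-1}$, each of which is among the generators $s_1, \ldots, s_{m+n-1}$ of $S_{m+n}$ and hence absorbs $\psi_{(m+n)}$ by Proposition \ref{absorbformulas}.

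Concretely, I would compute
\begin{equation*}
\psi_{(m+n)}\,\psi^i_{(n)} \;=\; \frac{1}{n!}\sum_{w \in S_n^{(i)}} \psi_{(m+n)}\, w,
\end{equation*}
and then argue by induction on the length of $w$: writing $w = s_{j_1}\cdots s_{j_\ell}$ with $i < j_k < i+n$, Proposition \ref{absorbformulas} strips off the rightmost simple transposition to give $\psi_{(m+n)} s_{j_1}\cdots s_{j_\ell} = \psi_{(m+n)} s_{j_1}\cdots s_{j_{\ell-1}}$, and iterating this yields $\psi_{(m+n)}\, w = \psi_{(m+n)}$ for every $w \in S_n^{(i)}$. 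Summing the $|S_n^{(i)}| = n!$ identical terms and dividing by $n!$ yields $\psi_{(m+n)}\,\psi^i_{(n)} = \psi_{(m+n)}$. The equality $\psi^i_{(n)}\,\psi_{(m+n)} = \psi_{(m+n)}$ follows by the mirror argument, using the left-absorption half of Proposition \ref{absorbformulas}.

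There is no real obstacle here: the statement is just the usual fact that the full symmetrizer absorbs any parabolic sub-symmetrizer, and everything reduces to the rank-$(m+n)$ case of Proposition \ref{absorbformulas}. The only sanity check needed is that the indices $i+1, \ldots, i+n-1$ all lie in the range $\{1, \ldots, m+n-1\}$, which is automatic from the standing assumption $0 \le i \le m$ on $\rho_i$.
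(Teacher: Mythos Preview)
Your proof is correct and is exactly the straightforward argument the paper has in mind: the paper simply declares the proof ``easy'' and omits it, relying on precisely the reduction to Proposition~\ref{absorbformulas} that you carry out.
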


%\begin{proof}
%This only seems to be true when $ i=0 $.  The proposition should state:
%$ \psi_{(m+n)} \psi^i_{(n)}  \psi_{(m+n)} = \alpha  \psi_{(m+n)} $ for some constant $ \alpha $.  This computation is easy for $ n=2 $ and all $ i $.
%\end{proof}

\subsection{Monoidal categories and Grothendieck groups}
Let $ \mathcal{C} $ be a $ \Z \times \Z_2 $-graded monoidal category with tensor product $ \otimes $.  We will usually write $P_1 \otimes P_2$ simply as $ P_1 P_2$.
If $ P $ is an object in $ \mathcal{C} $, then let the $ (i,j) $ graded piece of $ P $ be $ P_{i,j} $.
Define the shifted object $ P \langle r \rangle \lbrace s \rbrace $ by $ (P \langle r \rangle \lbrace s \rbrace)_{i,j} = P_{i-r,j-s} $.

The Grothendieck group $ K_0(\mathcal{C}) $ is the free abelian group generated by symbols $ [P] $ where $ P $ is a projective object of $ \mathcal{C} $ with relations
$ [P \oplus Q] = [P] + [Q] $ and $ [P] = [P \lbrace 1 \rbrace] $.
The Grothendieck group becomes a ring via the tensor product: $ [P][Q] = [P \otimes Q] $.
The Grothendieck group in fact has the structure of a $ \Z[q,q^{-1}]$-algebra by $ q^i[P] = [P \langle i \rangle] $.

\section{The category $ \mathcal{H}_{\Gamma}^- $}
\label{catH}

\subsection{The algebra $ B_{}^{\Gamma}$}
We recall the algebra $ B_{}^{\Gamma}$ defined in \cite{CL}.
Let $ V = \C^2 $ with fixed basis $ \lbrace v_1, v_2 \rbrace $ and $ \omega = v_1 \wedge v_2 $.  Let $ \Gamma $ a finite subgroup of $ SL(2,\C) $ with identity element $1$.  Then $ \Gamma $ acts on $ V $ and hence on the exterior algebra $ \Lambda^*(V) $ so we may define the group algebra of the semi-direct product
$ B^{\Gamma} := \Lambda^*(V) \rtimes \C[\Gamma] $.
This is a $ \Z$-graded algebra where the degree of $ (v, \gamma) $ is the degree of $ v $ in the exterior algebra $ \Lambda^*(V) $.  Denote the degree of an element $ x $ by $ |x| $.

Define a $ \C$-linear, supersymmetric, non-degenerate trace $ \tr \colon B^{\Gamma} \rightarrow \C $ by
$ \tr((f, \gamma)) = \delta_{f, \omega} \delta_{\gamma,1} 1. $
The associated non-degenerate bilinear form $ \langle, \rangle \colon B^{\Gamma} \times B^{\Gamma} \rightarrow \C $ is defined by
$ \langle a, b \rangle = \tr(ab)$.
If $ \mathcal{B} $ is a basis of $ B^{\Gamma} $, denote the dual basis with respect to $ \langle, \rangle $ by $ \check{\mathcal{B}} $ and if $ b \in \mathcal{B} $ let its dual be $ \check{b} $.

\subsection{The algebra $ B_{n}^{\Gamma} $}\label{BGn}
%Let $ {S}_n $ be the symmetric group of permutations of $ n $ elements generated by transpositions $ s_i $ for $ i=1, %\ldots, n-1 $.
%Let $ Cl_n $ be the Clifford algebra generated by $ c_i $ for $ i=1, \ldots, n $ with relations $ c_i^2 = 1 $ and $ c_i c_j = %-c_j c_i $ for $ i \neq j $.
%Then $ S_n $ acts on $ Cl_n $ by permutations on the subscripts of the Clifford generators.  Then the Hecke-Clifford algebra %$ \mathbb{S}_n $ is defined as the group algebra of the semi-direct product:
%\begin{equation*}
%\mathbb{S}_n = Cl_n \rtimes \C[S_n].
%\end{equation*}

The symmetric group acts on the $n$-fold tensor product $ B^{\Gamma} \otimes \cdots \otimes B^{\Gamma} $ by the formula
\begin{equation*}
s_i.(b_1 \otimes \cdots \otimes b_i \otimes b_{i+1} \otimes \cdots \otimes b_n) = (-1)^{|b_i||b_{i+1}|}(b_1 \otimes \cdots \otimes b_{i+1} \otimes b_{i} \otimes \cdots \otimes b_n).
\end{equation*}
Consider the (super)algebra $( B^\Gamma  )^{\otimes n} \otimes Cl_n$, where $(B^\Gamma)^{\otimes n}\otimes 1$ has $\Z_2$-degree 0. We may extend the action of $S_n$ to this algebra, and form the semi-direct product
\begin{equation*}
{B}_n^{\Gamma} = [(B^{\Gamma})^{ \otimes n} \otimes Cl_n] \rtimes S_n.
\end{equation*}
Observe that this algebra contains the Hecke-Clifford algebra as a subalgebra. In particular, $B_n^\Gamma$ comes equipped with an action $\mathbb{S}_n$, where
\begin{equation*}
c_i.(b_1 \otimes \cdots \otimes b_n) c  w = (b_1 \otimes \cdots \otimes b_n) c_ic  w,
\end{equation*}
where $b_1,\ldots,b_n\in B^\Gamma$, $c\in Cl_n$, $w\in S_n$, and we have written $(b_1\otimes\cdots\otimes b_n) c w : = (b_1 \otimes \cdots \otimes b_n)\otimes c \otimes w$. This algebra is $ \Z \times \Z_2 $-graded. We denote the $ \Z $-degree of an element $ x $ is denoted by $ |x| $ and the $ \Z_2$-degree of an element $ x $ is denoted by $ ||x|| $. That is, for $b\in (B^\Gamma)^n$, $c\in Cl_n$ and $w\in S_n$, $|bcw|=|b|$, and $||bcw||=||c||$.

The finite group $\Gamma$ has $r$ conjugacy classes and, for a strict partition $ \lambda $ of $ n $ and $ 1 \leq i\leq r$, there exists an idempotent $ e_{i,1,\lambda} $ of $ {B}_n^{\Gamma} $.  See ~\cite[Section 3.1.1]{CL} for more details when the
Hecke-Clifford algebra is replaced by the group algebra of the symmetric group and refer to ~\cite[Section 1.2]{JW} for the necessary modifications in the Hecke-Clifford algebra case.

\subsection{The category $ \mathcal{H}_{\Gamma}'^{-} $}
We define an additive, $ \C$-linear, $ \Z \times \Z_2$-graded monoidal category $ \mathcal{H}_{\Gamma}'^{-} $ as follows.
The objects of $ \mathcal{H}_{\Gamma}'^{-} $ are generated by $ {P} $ and $ {Q} $.
A general object is a formal sum of finite composition of $ {P}$'s and $ {Q}$'s with graded shifts.
The identity object is denoted by $ \bf{1} $.

The space of morphisms between two objects is a $ \Z \times \Z_2$-graded $\C$-algebra generated by planar diagrams modulo certain relations.
Using the letters $ X_i $ and $ Y_i $ to denote either $ P $ or $ Q $, the set of morphisms $ \Hom_{\mathcal{H}_{\Gamma}'^{-}}(X_{1} \cdots X_r, Y_1 \cdots Y_s) $ is generated
by diagrams which are unions of oriented, compact, 1-manifolds immersed into $ \R \times [0,1] $ which carry solid dots labeled by elements of $ B^{\Gamma} $ and hollow dots which are not labeled
such that the boundary of each diagram intersects $ \R \times \lbrace 0 \rbrace $ at points labeled $ X_1, \ldots, X_r $ and intersects $ \R \times \lbrace 1 \rbrace $ at points labeled $ Y_1, \ldots, Y_s $ from left to right.
A strand connects a point labeled by $ P $ or $ Q $ in $ \R \times \lbrace 0 \rbrace $ to a point labeled by $ P $ or $ Q $ respectively in $ \R \times \lbrace 1 \rbrace $.  If the boundary of a strand is contained entirely in $ \R \times \lbrace 0 \rbrace $ or $ \R \times \lbrace 1 \rbrace $ then the boundary points are labeled by different objects.
Furthermore, we make the convention that a strand is oriented upwards near a boundary point labeled by $ P $ and oriented downwards near a boundary point labeled by $ Q $.
These diagrams are taken modulo isotopies which do not change the relative position of dots.

The diagrams in ~\eqref{idpidq} on the left and right denote $ \text{id} \colon {P} \rightarrow {P} $ and $ \text{id} \colon {Q} \rightarrow {Q} $ respectively.
\begin{equation}
\label{idpidq}
\begin{tikzpicture}
\draw (0,0) -- (0,1)[->][thick];
\draw (4,0) -- (4,1)[<-][thick];
\end{tikzpicture}
\end{equation}

The relations that we impose on the diagrams are as follows:

\begin{equation}
\label{H1}
\begin{tikzpicture}[>=stealth]
\draw (0,0) -- (1,1)[->][thick];
\draw (1,0) -- (0,1)[->][thick];
\filldraw [black] (.25,.25) circle (2pt);
\draw (0,.3) node{$b$};
\draw (1.5,.5) node{=};
\draw (2,0) -- (3,1)[->][thick];
\draw (3,0) -- (2,1)[->][thick];
\filldraw [black] (2.75, .75) circle (2pt);
\draw (2.95,.65) node{$b$};

%\draw (5,0) -- (6,1)[<-][thick];
%\draw (6,0) -- (5,1)[<-][thick];
%\filldraw [black] (5.25,.25) circle (2pt);
%\draw (5,.3) node{$b$};
%\draw (6.5,.5) node{=};
%\draw (7,0) -- (8,1)[<-][thick];
%\draw (8,0) -- (7,1)[<-][thick];
%\filldraw [black] (7.75, .75) circle (2pt);
%\draw (7.95,.65) node{$b$};
\end{tikzpicture}
\end{equation}

\begin{equation}
\label{H2}
\begin{tikzpicture}[>=stealth]
\draw (0,0) -- (1,1)[->][thick];
\draw (1,0) -- (0,1)[->][thick];
\filldraw [black] (.25,.75) circle (2pt);
\draw (0,.7) node{$b$};
\draw (1.5,.5) node{=};
\draw (2,0) -- (3,1)[->][thick];
\draw (3,0) -- (2,1)[->][thick];
\filldraw [black] (2.75, .25) circle (2pt);
\draw (2.95,.35) node{$b$};

%\draw (5,0) -- (6,1)[<-][thick];
%\draw (6,0) -- (5,1)[<-][thick];
%\filldraw [black] (5.25,.75) circle (2pt);
%\draw (5,.7) node{$b$};
%\draw (6.5,.5) node{=};
%\draw (7,0) -- (8,1)[<-][thick];
%\draw (8,0) -- (7,1)[<-][thick];
%\filldraw [black] (7.75, .25) circle (2pt);
%\draw (7.95,.35) node{$b$};
\end{tikzpicture}
\end{equation}

\begin{equation}
\label{H3}
\begin{tikzpicture}[>=stealth]
\draw (0,0) -- (0,-.5) [->][thick];
\filldraw [black] (0,-.25) circle (2pt);
\draw (0,-.25) node [anchor=east] [black] {$b$};
\draw (1,0) -- (1,-.5) [thick];
\draw (0,0) arc (180:0:.5) [thick];
\draw (1.5,0) node {=};
\draw (2,0) -- (2,-.5)[->] [thick];
\filldraw [black] (3,-.25) circle (2pt);
\draw (3,-.25) node [anchor=west] [black] {$b$};
\draw (3,0) -- (3,-.5) [thick];
\draw (2,0) arc (180:0:.5) [thick];

\draw (5,0) -- (5,-.5) [thick];
\filldraw [black] (5,-.25) circle (2pt);
\draw (5,-.25) node [anchor=east] [black] {$b$};
\draw (6,0) -- (6,-.5)[->] [thick];
\draw (5,0) arc (180:0:.5) [thick];
\draw (6.5,0) node {=};
\draw (7,0) -- (7,-.5) [thick];
\filldraw [black] (8,-.25) circle (2pt);
\draw (8,-.25) node [anchor=west] [black] {$b$};
\draw (8,0) -- (8,-.5) [->][thick];
\draw (7,0) arc (180:0:.5) [thick];
\end{tikzpicture}
\end{equation}

\begin{equation}
\label{H4}
\begin{tikzpicture}[>=stealth]
\draw (0,0) -- (0,.5) [thick];
\filldraw [black] (0,.25) circle (2pt);
\draw (0,.25) node [anchor=east] [black] {$b$};
\draw (1,0) -- (1,.5) [->][thick];
\draw (0,0) arc (180:360:.5) [thick];
\draw (1.5,0) node {=};
\draw (2,0) -- (2,.5)[thick];
\filldraw [black] (3,.25) circle (2pt);
\draw (3,.25) node [anchor=west] [black] {$b$};
\draw (3,0) -- (3,.5) [->][thick];
\draw (2,0) arc (180:360:.5) [thick];

\draw (5,0) -- (5,.5)[->] [thick];
\filldraw [black] (5,.25) circle (2pt);
\draw (5,.25) node [anchor=east] [black] {$b$};
\draw (6,0) -- (6,.5) [thick];
\draw (5,0) arc (180:360:.5) [thick];
\draw (6.5,0) node {=};
\draw (7,0) -- (7,.5)[->] [thick];
\filldraw [black] (8,.25) circle (2pt);
\draw (8,.25) node [anchor=west] [black] {$b$};
\draw (8,0) -- (8,.5) [thick];
\draw (7,0) arc (180:360:.5) [thick];
\end{tikzpicture}.
\end{equation}

\begin{equation}
\label{H5}
\begin{tikzpicture}[>=stealth]
\draw (-1.5,0) -- (-1.5,-2)[<-][thick];
\draw (-1,-1) node{$=$};
\draw (0,-1) node{$(-1)^{|bb'|}$};
\draw (1,0) -- (1,-2)[<-][thick];
\filldraw [black] (-1.5, -1) circle (2pt);
\draw (-2.,-1) node{$b'b$};
\filldraw [black] (1, -.5) circle (2pt);
\filldraw [black] (1, -1.5) circle (2pt);
\draw (1.2,-.5) node{$b$};
\draw (1.2,-1.5) node{$b'$};

\draw (3,0) -- (3,-2)[->][thick];
\draw (3.5,-1) node{$=$};
%\draw (4.5,-1) node{$(-1)^{|b||b'|}$};
\draw (4.5,0) -- (4.5,-2)[->][thick];
\filldraw [black] (3, -1) circle (2pt);
\draw (2.5,-1) node{$bb'$};
\filldraw [black] (4.5, -.5) circle (2pt);
\filldraw [black] (4.5, -1.5) circle (2pt);
\draw (4.7,-.5) node{$b$};
\draw (4.7,-1.5) node{$b'$};
\end{tikzpicture}
\end{equation}

\begin{equation}
\label{H6}
\begin{tikzpicture}
\draw (0,0) -- (0,2)[->][thick];
\draw (0,-.5) node{};
\draw (1,-.5) node{};
\draw (1,0) -- (1,2)[->][thick];
\filldraw [black] (0, 1.5) circle (2pt);
\draw (-.5, 1.5) node{$b_1$};
\filldraw [black] (1, .5) circle (2pt);
\draw (1.5, .5) node{$b_2$};
\draw (.5,1) node{$\cdots$};
\draw (2.5,1) node{$=$};
\draw (4,1) node{$(-1)^{|b_1||b_2|}$};
\draw (5,0) -- (5,2)[->][thick];
\draw (6,0) -- (6,2)[->][thick];
\filldraw [black] (5, .5) circle (2pt);
\draw (4.5, .5) node{$b_1$};
\filldraw [black] (6, 1.5) circle (2pt);
\draw (6.5, 1.5) node{$b_2$};
\draw (5.5,1) node{$\cdots$};
\end{tikzpicture}
\end{equation}

%\begin{equation}
%\label{dotslide2}
%\begin{tikzpicture}
%\draw (0,0) -- (0,2)[<-][thick];
%\draw (0,-.5) node{};
%\draw (1,-.5) node{};
%\draw (1,0) -- (1,2)[<-][thick];
%\filldraw [black] (0, 1.5) circle (2pt);
%\draw (-.5, 1.5) node{$b_1$};
%\filldraw [black] (1, .5) circle (2pt);
%\draw (1.5, .5) node{$b_2$};
%\draw (.5,1) node{$\cdots$};
%\draw (2.5,1) node{$=$};
%\draw (4,1) node{$(-1)^{|b_1||b_2|}$};
%\draw (5,0) -- (5,2)[<-][thick];
%\draw (6,0) -- (6,2)[<-][thick];
%\filldraw [black] (5, .5) circle (2pt);
%\draw (4.5, .5) node{$b_1$};
%\filldraw [black] (6, 1.5) circle (2pt);
%\draw (6.5, 1.5) node{$b_2$};
%\draw (5.5,1) node{$\cdots$};
%\end{tikzpicture}
%\end{equation}

%\begin{equation}
%\label{crossslide}
%\begin{tikzpicture}[>=stealth]
%\draw (0,0) .. controls (1,1) .. (1,2)[<-][thick];
%\draw (1,0) .. controls (0,1) .. (0,2)[<-][thick];
%\draw (2,0) .. controls (2,1) .. (3,2)[<-][thick];
%\draw (3,0) .. controls (3,1) .. (2,2)[<-][thick];
%\draw (1.5,1) node{$ \cdots$};
%\draw (5,1) node{$ = $};
%\draw (6.5,1) node{$ - $};
%\draw (7,0) .. controls (7,1) .. (8,2)[<-][thick];
%\draw (8,0) .. controls (8,1) .. (7,2) [<-][thick];
%\draw (10,0) .. controls (9,1) .. (9,2) [<-][thick];
%\draw (9,0) .. controls (10,1) .. (10,2) [<-][thick];
%\draw (8.5,1) node{$ \cdots$};
%\end{tikzpicture}
%\end{equation}

\begin{equation}
\label{H7}
\begin{tikzpicture}[>=stealth]
\draw (0,0) -- (2,2)[->][thick];\draw (2,0) -- (0,2)[->][thick];
\draw (1,0) .. controls (0,1) .. (1,2)[->][thick];
\draw (2.5,1) node {=};
\draw (3,0) -- (5,2)[->][thick];
\draw (5,0) -- (3,2)[->][thick];
\draw (4,0) .. controls (5,1) .. (4,2)[->][thick];
\end{tikzpicture}
\end{equation}

\begin{equation}
\label{H8}
\begin{tikzpicture}
\draw (0,0) .. controls (1,1) .. (0,2)[->][thick];
\draw (1,0) .. controls (0,1) .. (1,2)[->][thick];
\draw (1.5,1) node{$=$};
%\draw (2,1) node {$-$};
\draw (2.5,0) -- (2.5,2) [->][thick];
\draw (3.5,0) -- (3.5,2) [->][thick];
\end{tikzpicture}
\end{equation}

\begin{equation}
\label{H9}
\begin{tikzpicture}
\draw (0,0) .. controls (1,1) .. (0,2)[->][thick];
\draw (1,0) .. controls (0,1) .. (1,2)[<-][thick];
\draw (1.5,1) node{$=$};
%\draw (2,1) node {$-$};
\draw (2.5,0) -- (2.5,2) [->][thick];
\draw (3.5,0) -- (3.5,2) [<-][thick];
\end{tikzpicture}
\end{equation}

\begin{equation}
\label{H10}
\begin{tikzpicture}[>=stealth]
\draw (0,0) .. controls (1,1) .. (0,2)[<-][thick];
\draw (1,0) .. controls (0,1) .. (1,2)[->] [thick];
\draw (1.5,1) node {=};
\draw (2,0) --(2,2)[<-][thick];
\draw (3,0) -- (3,2)[->][thick];
\draw (3.8,1) node{$-\sum_{b \in \mathcal{B}}$};
\draw (4,1.75) arc (180:360:.5) [thick];
\draw (4,2) -- (4,1.75) [thick];
\draw (5,2) -- (5,1.75) [thick][<-];
\draw (5,.25) arc (0:180:.5) [thick];
\filldraw [black] (4.5,1.25) circle (2pt);
\draw (4.5,1.25) node [anchor=south] {$b^\vee$};
\filldraw [black] (4.5,0.75) circle (2pt);
\draw (4.5,.75) node [anchor=north] {$b$};
\draw (5,0) -- (5,.25) [thick];
\draw (4,0) -- (4,.25) [thick][<-];

\draw (5.8,1) node{$+\sum_{b \in \mathcal{B}}$};
\draw (6,1.75) arc (180:360:.5) [thick];
\draw (6,2) -- (6,1.75) [thick];
\draw (7,2) -- (7,1.75) [thick][<-];
\draw (7,.25) arc (0:180:.5) [thick];
\filldraw [black] (6.5,1.25) circle (2pt);
\draw (6.5,1.25) node [anchor=south] {$b^\vee$};
\filldraw [black] (6.5,0.75) circle (2pt);
\draw (6.5,.75) node [anchor=north] {$b$};
\draw (7,0) -- (7,.25) [thick];
\draw (6,0) -- (6,.25) [thick][<-];
\draw [black] (7,.1) circle (2pt);
\draw [black] (6,1.75) circle (2pt);

%\draw (7.8,1) node{$+\sum_{b \in \mathcal{B}}$};
%\draw (8,1.75) arc (180:360:.5) [thick];
%\draw (8,2) -- (8,1.75) [thick];
%\draw (9,2) -- (9,1.75) [thick][<-];
%\draw (9,.25) arc (0:180:.5) [thick];
%\filldraw [black] (8.5,1.25) circle (2pt);
%\draw (8.5,1.25) node [anchor=south] {$b$};
%\filldraw [black] (8.5,0.75) circle (2pt);
%\draw (8.5,.75) node [anchor=north] {$b^\vee$};
%\draw (9,0) -- (9,.25) [thick];
%\draw (8,0) -- (8,.25) [thick][<-];
%\draw [black] (8,1.9) circle (2pt);

\end{tikzpicture}
\end{equation}

\begin{equation}
\label{H11}
\begin{tikzpicture}[>=stealth]
\draw [shift={+(0,0)}](0,0) arc (180:360:0.5cm) [thick];
\draw [shift={+(0,0)}][->](1,0) arc (0:180:0.5cm) [thick];
\filldraw [shift={+(1,0)}][black](0,0) circle (2pt);
\draw [shift={+(0,0)}](1,0) node [anchor=east] {$b$};
\draw [shift={+(0,0)}](1.75,0) node{$= \tr(b).$};
\end{tikzpicture}
\end{equation}

\begin{equation}
\label{H12}
\begin{tikzpicture}[>=stealth]
\draw  [shift={+(5,0)}](0,0) .. controls (0,.5) and (.7,.5) .. (.9,0) [thick];
\draw  [shift={+(5,0)}](0,0) .. controls (0,-.5) and (.7,-.5) .. (.9,0) [thick];
\draw  [shift={+(5,0)}](1,-1) .. controls (1,-.5) .. (.9,0) [thick];
\draw  [shift={+(5,0)}](.9,0) .. controls (1,.5) .. (1,1) [->] [thick];
\draw  [shift={+(5,0)}](1.5,0) node {$=$};
\draw  [shift={+(5,0)}](2,0) node {$0.$};
\end{tikzpicture}
\end{equation}

\begin{equation}
\label{H13}
\begin{tikzpicture}[>=stealth]
\draw (0,0) -- (1,1)[->][thick];
\draw (1,0) -- (0,1)[->][thick];
\draw [black] (.25,.25) circle (2pt);
%\draw (0,.3) node{$b$};
\draw (1.5,.5) node{=};
\draw (2,0) -- (3,1)[->][thick];
\draw (3,0) -- (2,1)[->][thick];
\draw [black] (2.75, .75) circle (2pt);
%\draw (2.95,.65) node{$b$};

%\draw (5,0) -- (6,1)[<-][thick];
%\draw (6,0) -- (5,1)[<-][thick];
%\filldraw [black] (5.25,.25) circle (2pt);
%\draw (5,.3) node{$b$};
%\draw (6.5,.5) node{=};
%\draw (7,0) -- (8,1)[<-][thick];
%\draw (8,0) -- (7,1)[<-][thick];
%\filldraw [black] (7.75, .75) circle (2pt);
%\draw (7.95,.65) node{$b$};
\end{tikzpicture}
\end{equation}

\begin{equation}
\label{H14}
\begin{tikzpicture}[>=stealth]
\draw (0,0) -- (1,1)[->][thick];
\draw (1,0) -- (0,1)[->][thick];
\draw [black] (.25,.75) circle (2pt);
%\draw (0,.7) node{$b$};
\draw (1.5,.5) node{=};
\draw (2,0) -- (3,1)[->][thick];
\draw (3,0) -- (2,1)[->][thick];
\draw [black] (2.75, .25) circle (2pt);
%\draw (2.95,.35) node{$b$};

%\draw (5,0) -- (6,1)[<-][thick];
%\draw (6,0) -- (5,1)[<-][thick];
%\filldraw [black] (5.25,.75) circle (2pt);
%\draw (5,.7) node{$b$};
%\draw (6.5,.5) node{=};
%\draw (7,0) -- (8,1)[<-][thick];
%\draw (8,0) -- (7,1)[<-][thick];
%\filldraw [black] (7.75, .25) circle (2pt);
%\draw (7.95,.35) node{$b$};
\end{tikzpicture}
\end{equation}

\begin{equation}
\label{H15}
\begin{tikzpicture}[>=stealth]
\draw (0,0) -- (0,-.5) [->][thick];
\draw [black] (0,-.25) circle (2pt);
%\draw (0,-.25) node [anchor=east] [black] {$b$};
\draw (1,0) -- (1,-.5) [thick];
\draw (0,0) arc (180:0:.5) [thick];
\draw (1.5,0) node {=};
\draw (1.8,0) node{$-$};
\draw (2,0) -- (2,-.5)[->] [thick];
\draw [black] (3,-.25) circle (2pt);
%\draw (3,-.25) node [anchor=west] [black] {$b$};
\draw (3,0) -- (3,-.5) [thick];
\draw (2,0) arc (180:0:.5) [thick];

\draw (5,0) -- (5,-.5) [thick];
\draw [black] (5,-.25) circle (2pt);
%\draw (5,-.25) node [anchor=east] [black] {$b$};
\draw (6,0) -- (6,-.5)[->] [thick];
\draw (5,0) arc (180:0:.5) [thick];
\draw (6.5,0) node {=};
\draw (7,0) -- (7,-.5) [thick];
\draw [black] (8,-.25) circle (2pt);
%\draw (8,-.25) node [anchor=west] [black] {$b$};
\draw (8,0) -- (8,-.5) [->][thick];
\draw (7,0) arc (180:0:.5) [thick];
\end{tikzpicture}
\end{equation}

\begin{equation}
\label{H16}
\begin{tikzpicture}[>=stealth]
\draw (0,0) -- (0,.5) [thick];
\draw [black] (0,.25) circle (2pt);
%\draw (0,.25) node [anchor=east] [black] {$b$};
\draw (1,0) -- (1,.5) [->][thick];
\draw (0,0) arc (180:360:.5) [thick];
\draw (1.5,0) node {=};
\draw (2,0) -- (2,.5)[thick];
\draw [black] (3,.25) circle (2pt);
%\draw (3,.25) node [anchor=west] [black] {$b$};
\draw (3,0) -- (3,.5) [->][thick];
\draw (2,0) arc (180:360:.5) [thick];

\draw (5,0) -- (5,.5)[->] [thick];
\draw [black] (5,.25) circle (2pt);
%\draw (5,.25) node [anchor=east] [black] {$b$};
\draw (6,0) -- (6,.5) [thick];
\draw (5,0) arc (180:360:.5) [thick];
\draw (6.5,0) node {=};
\draw (6.8,0) node{$-$};
\draw (7,0) -- (7,.5)[->] [thick];
\draw [black] (8,.25) circle (2pt);
%\draw (8,.25) node [anchor=west] [black] {$b$};
\draw (8,0) -- (8,.5) [thick];
\draw (7,0) arc (180:360:.5) [thick];
\end{tikzpicture}.
\end{equation}

\begin{equation}
\label{H17}
\begin{tikzpicture}[>=stealth]
\draw (0,0) -- (0,-2)[->][thick];
\draw (.5,-1) node{$=$};
\draw (1,0) -- (1,-2)[->][thick];
%\draw (-.2,-1) node{$-$};
%\draw [black] (0, -1) circle (2pt);
\draw [black] (1, -.5) circle (2pt);
\draw [black] (1, -1.5) circle (2pt);

\draw (3,0) -- (3,-2)[<-][thick];
\draw (3.5,-1) node{$=$};
\draw (4,0) -- (4,-2)[<-][thick];
%\draw [black] (3, -1) circle (2pt);
\draw (2.8,-1) node{$-$};
\draw [black] (4, -.5) circle (2pt);
\draw [black] (4, -1.5) circle (2pt);

\end{tikzpicture}
\end{equation}

\begin{equation}
\label{H18}
\begin{tikzpicture}[>=stealth]
\draw (0,0) -- (0,-2)[<-][thick];
\draw (.5,-1) node{$=$};
\draw (1,0) -- (1,-2)[<-][thick];
\draw (-.2,-.5) node{$b$};
\filldraw [black] (0, -.5) circle (2pt);
\draw [black] (0, -1.5) circle (2pt);

\draw [black] (1, -.5) circle (2pt);
\filldraw [black] (1, -1.5) circle (2pt);
\draw (1.2,-1.5) node{$b$};
\end{tikzpicture}
\end{equation}

\begin{equation}
\label{H19}
\begin{tikzpicture}
\draw (0,0) -- (0,2)[->][thick];
\draw (0,-.5) node{};
\draw (1,-.5) node{};
\draw (1,0) -- (1,2)[->][thick];
\draw [black] (0, 1.5) circle (2pt);
\draw [black] (1, .5) circle (2pt);
\draw (.5,1) node{$\cdots$};
\draw (1.5,1) node{$=$};
\draw (2,1) node{$-$};
\draw (2.3,0) -- (2.3,2)[->][thick];
\draw (3.3,0) -- (3.3,2)[->][thick];
\draw [black] (2.3, .5) circle (2pt);
\draw [black] (3.3, 1.5) circle (2pt);
\draw (2.8,1) node{$\cdots$};
\end{tikzpicture}
\end{equation}

\begin{equation}
\label{H20}
\begin{tikzpicture}[>=stealth]
\draw [shift={+(0,0)}](0,0) arc (180:360:0.5cm) [thick];
\draw [shift={+(0,0)}][->](1,0) arc (0:180:0.5cm) [thick];
\filldraw [shift={+(1,0)}][black](0,0) circle (2pt);
\draw [black](.83,.35) circle (2pt);
\draw [shift={+(0,0)}](1,0) node [anchor=east] {$b$};
\draw [shift={+(0,0)}](1.75,0) node{$= 0.$};

%\draw [shift={+(0,0)}](4,0) arc (180:360:0.5cm) [thick];
%\draw [shift={+(0,0)}][->](5,0) arc (0:180:0.5cm) [thick];
%\filldraw [shift={+(1,0)}][black](4,0) circle (2pt);
%\draw [black](4.5,.5) circle (2pt);
%\draw [shift={+(0,0)}](5,0) node [anchor=east] {$b$};
%\draw [shift={+(0,0)}](5.75,0) node{$= 0.$};
\end{tikzpicture}
\end{equation}

There is a $\Z \times \Z_2$-grading on the generating diagrams which make the relations above homogenous.  We set all crossings to have degree $ (0,0)$.  Counterclockwise oriented arcs have degree $ (-1,0) $ while clockwise oriented arcs have degree $ (1,0)$ .  A solid dot labeled by $ b $ has degree $ (|b|,0)$ while a hollow dot has degree $ (0,1) $.  We summarize the degrees in ~\eqref{degrees} where orientation is omitted from the diagram when the bidegree is independent of it.

\begin{equation}
\label{degrees}
\begin{tikzpicture}
\draw (-1,0) -- (13,0)[][very thick];
\draw (0,.5) node{degree};
\draw (0, -.5) node{generator};
\draw (1,1) -- (1,-2)[][very thick];
\draw (1.5,.5) node{$(0,1)$};
\draw (1.5,-.5) -- (1.5,-1.5)[thick];
\draw[black](1.5,-1) circle (2pt);
\draw (2,1) -- (2, -2)[very thick];
\draw (3,-.5) -- (3,-1.5)[thick];
\draw (3.2, -1) node{$ b$};
\draw (4,1) -- (4, -2)[very thick];
\draw (3, .5) node{$(|b|,0)$};
\filldraw[black](3,-1) circle (2pt);
\draw (4.5,-1.5) -- (5.5,-.5)[thick];
\draw (5.5, -1.5) -- (4.5, -.5)[thick];
\draw (5,.5) node{$(0, 0)$};
\draw (-1,1) -- (-1,-2)[very thick];
\draw (6,1) -- (6,-2)[very thick];
\draw (-1,1) -- (13,1)[][very thick];
\draw (-1,-2) -- (13,-2)[][very thick];
\draw (13,1) -- (13,-2)[][very thick];
\draw (11.5,1) -- (11.5,-2)[][very thick];
\draw (9.75,1) -- (9.75,-2)[][very thick];
\draw (8,1) -- (8,-2)[][very thick];
\draw (7,.5) node{$(-1, 0)$};
\draw (8.75,.5) node{$(1, 0)$};
\draw (10.5,.5) node{$(1, 0)$};
\draw (12.25,.5) node{$(-1, 0)$};

\draw (6.5,-.5) arc (180:360:.5)[->] [thick];
\draw (8.25,-.5) arc (180:360:.5)[<-] [thick];
\draw (10,-.95) arc (180:0:.5)[->] [thick];
\draw (11.75,-.95) arc (180:0:.5)[<-] [thick];

\end{tikzpicture}
\end{equation}

If $ P $ is an object concentrated in degree $ (0,0) $, then $ P \langle a \rangle \lbrace b \rbrace $ is concentrated in degree $ (a,b) $.

\subsection{The Karoubi envelope $ \mathcal{H}_{\Gamma}^-$}
Recall the idempotents discussed in $\S$\ref{BGn}. We let $ \mathcal{H}_{\Gamma}^- $ be the Karoubi envelope of $ \mathcal{H}_{\Gamma}'^-$.
In the Karoubi envelope we have direct summands of the objects $ P^n $ and $ Q^n $:
\begin{eqnarray}
%\label{defcupcap}
{P_i^{(n)}} :=
(P^n, e_{i,1,(n)})
&&
{Q_i^{(n)}} :=
(Q^n, e_{i,1,(n)}).
\end{eqnarray}
One of the main results of this paper is:

\begin{theorem}
\label{relationsinH}
There are isomorphisms of objects in $\mathcal{H}_{\Gamma}^- $:
\begin{enumerate}
\item $ P_i^{(n)} P_j^{(m)} \cong P_j^{(m)} P_i^{(n)} $ for all $ i,j $
\item $ Q_i^{(n)} Q_j^{(m)} \cong Q_j^{(m)} Q_i^{(n)}  $ for all $ i,j $
\item $  Q_i^{(n)} P_i^{(m)} \cong $
\begin{align*}
P_i^{(m)} Q_i^{(n)} \oplus \bigoplus_{k \geq 1} [&\bigoplus_{l=0}^k (P_i^{(m-k)} Q_i^{(n-k)} \langle k-2l \rangle \oplus P_i^{(m-k)} Q_i^{(n-k)} \langle k-2l \rangle \lbrace 1 \rbrace) \oplus\\
&\bigoplus_{l=0}^{k-2} (P_i^{(m-k)} Q_i^{(n-k)} \langle k-2l-2 \rangle \oplus P_i^{(m-k)} Q_i^{(n-k)} \langle k-2l-2 \rangle \lbrace 1 \rbrace)]
\end{align*}
for all $ i $
\item $ Q_i^{(n)} P_j^{(m)} \cong P_j^{(m)} Q_i^{(n)} \oplus \oplus_{k \geq 1} (P_j^{(m-k)} Q_i^{(n-k)} \oplus P_j^{(m-k)} Q_i^{(n-k)} \lbrace 1 \rbrace) $ for all $ i, j $
with $ a_{ij}=-1 $
\item $ Q_i^{(n)} P_j^{(m)} \cong P_j^{(m)} Q_i^{(n)} $ for all $ i, j $ with $ a_{ij} = 0 $.
\end{enumerate}
\end{theorem}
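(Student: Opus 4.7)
The overall plan is to reduce the five claimed isomorphisms at the level of the idempotent-projected objects $P_i^{(n)}, Q_i^{(n)}$ to analogous isomorphisms for pure products $P^n, Q^m$ in $\mathcal{H}_{\Gamma}'^-$, and then to project by the idempotents $e_{i,1,(n)}\otimes e_{j,1,(m)}$ living in the endomorphism algebras. The mediating tool is the auxiliary category alluded to in the introduction: one first establishes clean direct sum decompositions there (where the relevant morphism spaces are more transparent), and then transports them via a monoidal functor to $\mathcal{H}_{\Gamma}^-$, absorbing idempotents along the way using Propositions \ref{idempotentslide} and \ref{idempotentabsorb}.

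For relations (1) and (2) (the commutation of $P_i^{(n)}$'s and of $Q_i^{(n)}$'s), the basic input is the sideways crossing relation \eqref{H8} together with \eqref{H7}, which gives invertibility of the crossing up to lower-order corrections, and crucially \eqref{H5}--\eqref{H6} plus \eqref{H17}--\eqref{H19} which say dots (solid and hollow) slide past crossings with only sign changes. Concretely, I would construct explicit mutually inverse morphisms $P^n P^m \rightleftarrows P^m P^n$ out of a full braid of strands and verify, using how $\psi_{(n)}$ and $\psi_{(m)}$ absorb crossings, that these restrict to an isomorphism $P_i^{(n)} P_j^{(m)} \cong P_j^{(m)} P_i^{(n)}$. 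The $\Gamma$-idempotent factors are handled exactly as in \cite{CL}. Relation (5) is immediate from orthogonality of $e_{i,1,(n)}$ and $e_{j,1,(m)}$ when $a_{ij}=0$.

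The mixed relations (3) and (4) are where the real work lies. The engine is the fundamental ``categorified $[Q,P]$'' relation \eqref{H10}, which expresses $QP$ in terms of $PQ$ plus two families of curl-capped terms indexed by the basis $\mathcal{B}$ of $B^\Gamma$, one with a hollow dot and one without; these two families are precisely what produce the $\{0\}$ and $\{1\}$ copies in the statement. I would iterate \eqref{H10} inside $Q^n P^m$, then conjugate by the symmetrizing idempotents on either side. Using Propositions \ref{idempotentslide} and \ref{idempotentabsorb} repeatedly, the many iterated bubble terms collapse: applying $\psi_{(n)}$ on the right and $\psi_{(m)}$ on the left forces identifications between bubbles of various sizes, and counting the surviving bubble configurations gives a direct sum over a degree $k$ of iterated contractions. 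For $i\ne j$ (with $a_{ij}=-1$), the $\Gamma$-idempotents cut out a one-dimensional slice of $B^\Gamma$ for each level $k$, yielding the single copy of $P_j^{(m-k)}Q_i^{(n-k)}$ together with its $\{1\}$-shifted partner from the two terms in \eqref{H10}; this is relation (4).

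The principal obstacle is the case $i=j$ in relation (3), whose graded multiplicities $[k+1]+[k-1]$ (each appearing with both a $\{0\}$ and a $\{1\}$ copy) do not arise from a single round of bubble evaluation. The plan here is to analyze the endomorphism algebra of the bubble sitting over the idempotent $e_{i,1,(k)}$: counterclockwise oriented circles decorated by elements of $B^\Gamma$ form an algebra whose graded dimension accounts for the $[k+1]$, while the extra $[k-1]$ comes from insertions of hollow dots (relation \eqref{H20} kills one family but \eqref{H15}--\eqref{H16} keep another alive with a sign). Concretely, one enumerates the basis $\{b,\check b\}$ appearing in the dotted bubbles and tracks, via \eqref{H11} and the trace normalization on $B^\Gamma$, which configurations survive after the projections $e_{i,1,(n)}$ and $e_{i,1,(m)}$ are absorbed. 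Matching the resulting graded count against the expansion of $((1-x)/(1+x))^2$ from the proof of the previous Proposition will verify that the two sides agree, completing the decomposition. The fifth relation being trivial, this finishes the list.
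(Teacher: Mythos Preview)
Your overall strategy --- prove the decompositions in the auxiliary category and transport them via the functor $\eta$ --- is exactly what the paper does. But several details in your execution are either off or not proofs.

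First, your reason for (5) is wrong. The idempotents $e_{i,1,(n)}$ and $e_{j,1,(m)}$ live on different tensor factors, so ``orthogonality'' does not make sense here. What actually makes (5) trivial is that when $a_{ij}=0$ we have $\epsilon_{ij}=0$, so the correction terms in the double-crossing relation \eqref{K6} vanish and the crossing is literally invertible.

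Second, your sketches for (3) and (4) drift back and forth between the auxiliary category and $\mathcal{H}_\Gamma'^-$: you invoke \eqref{H10} inside $Q^nP^m$, but the whole point of the auxiliary category is that there the relevant relation is already node-labeled (relations \eqref{K6} and \eqref{K8}), so one never has to sum over the full basis $\mathcal{B}$ of $B^\Gamma$ and then cut down. The paper works entirely in $\mathcal{K}_\Gamma^-$ for these steps.

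Most seriously, your argument for (3) is not a proof of an isomorphism of objects. You propose to enumerate surviving bubble configurations and ``match the resulting graded count against the expansion of $((1-x)/(1+x))^2$.'' That is a Grothendieck-group computation, and it only tells you the two sides have the same class in $K_0$. Upgrading that to an actual isomorphism would require a Krull--Schmidt theorem for $\mathcal{K}_\Gamma^-$, which is not available at this point in the argument (it is established only later, via the Fock-space representation). The paper instead writes down explicit families of maps $A_{k,l,\diamond,c}$, $A_{k,l,\heartsuit,c}$ in one direction and $B_{k,l,\diamond,c}$, $B_{k,l,\heartsuit,c}$ in the other --- each given by a concrete diagram with a prescribed pattern of solid and hollow dots on nested caps/cups --- and checks directly that they are mutually inverse. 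Your sketch contains no such maps and no mechanism to produce them.
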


\begin{proof}
This follows from Propositions ~\ref{relationsinK1}, ~\ref{relationsinK2},  ~\ref{relationsinK4}, ~\ref{relationsinK3}, and ~\ref{relationsinK5} given in Section ~\ref{auxcat} utilizing a functor $ \eta $ defined in that section.
\end{proof}

%\begin{theorem}
%label{relationsinH}
%There are isomorphisms of 1-morphisms in $\mathcal{H}_{\Gamma}^- $:
%\begin{enumerate}
%\item $ (P_i^{(n)} P_j^{(m)})^{\oplus 2^{\lfloor \frac{m+n}{2} \rfloor}} \cong (P_j^{(m)} P_i^{(n)})^{\oplus 2^{\lfloor \frac{m+n}{2} \rfloor}} $ for all $ i,j \in V^-(\Gamma) $
%\item $ (Q_i^{(n)} Q_j^{(m)})^{\oplus 2^{\lfloor \frac{m+n}{2} \rfloor}} \cong (Q_j^{(m)} Q_i^{(n)})^{\oplus 2^{\lfloor \frac{m+n}{2} \rfloor}} $ for all $ i,j \in V^-(\Gamma) $
%\item $ (Q_i^{(n)} P_i^{(m)})^{\oplus 2^{\lfloor \frac{m+n}{2} \rfloor}} \cong \oplus_{k \geq 0} (P_i^{(m-k)} Q_i^{(n-k)})^{\oplus 2^{\lfloor \frac{m+n-2k}{2} \rfloor} [k+1]} $ for all $ i \in V^-(\Gamma) $
%\item $ (Q_i^{(n)} P_j^{(m)})^{\oplus 2^{\lfloor \frac{m+n}{2} \rfloor}} \cong \oplus_{k \geq 0} (P_j^{(m-k)} Q_i^{(n-k)})^{\oplus 2^{\lfloor \frac{m+n-2k}{2} \rfloor}} $ for all $ i  \neq j \in V^-(\Gamma) $
%\newline with $ a_{ij}=-1 $
%\item $ (Q_i^{(n)} P_j^{(m)})^{\oplus 2^{\lfloor \frac{m+n}{2} \rfloor}} \cong (P_j^{(m)} Q_i^{(n)})^{\oplus 2^{\lfloor \frac{m+n}{2} \rfloor}} $ for all $ i \neq j \in V^-(\Gamma) $ with $ a_{ij} = 0 $.
%\end{enumerate}
%\end{theorem}

\begin{prop}
There is a homomorphism of algebras
$ \phi \colon \mathfrak{h}_{q,\Gamma}^- \rightarrow K_0(\mathcal{H}_{\Gamma}^-) $.
\end{prop}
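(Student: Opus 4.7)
The plan is to define $\phi$ on generators by
\[ \phi(p_i^{(m)}) = [P_i^{(m)}], \qquad \phi(q_i^{(m)}) = [Q_i^{(m)}], \]
and extend $\mathbb{C}[q,q^{-1}]$-linearly using the identity $q^r[X] = [X\langle r\rangle]$ in $K_0(\mathcal{H}_\Gamma^-)$. Well-definedness will amount to checking that the five defining relations of $\mathfrak{h}_{q,\Gamma}^-$ from Definition \ref{qheis} hold among these classes, and this should follow by taking classes in each of the five isomorphisms supplied by Theorem \ref{relationsinH}. Relations $(1)$, $(2)$, and $(5)$ are immediate from the corresponding items. For relation $(4)$, item $(4)$ of Theorem \ref{relationsinH} gives
\[ [Q_i^{(n)}][P_j^{(m)}] = [P_j^{(m)}][Q_i^{(n)}] + \sum_{k \geq 1}\bigl([P_j^{(m-k)}Q_i^{(n-k)}] + [P_j^{(m-k)}Q_i^{(n-k)}\{1\}]\bigr), \]
and the defining relation $[X] = [X\{1\}]$ in $K_0$ will collapse each pair into $2[P_j^{(m-k)}Q_i^{(n-k)}]$, matching relation $(4)$.

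The main computation is relation $(3)$. Taking $[-]$ of item $(3)$ of Theorem \ref{relationsinH} and using both $q^r[X] = [X\langle r\rangle]$ and $[X] = [X\{1\}]$, the coefficient of $[P_i^{(m-k)}Q_i^{(n-k)}]$ on the right-hand side will be
\[ \sum_{l=0}^{k} 2 q^{k-2l} + \sum_{l=0}^{k-2} 2 q^{k-2l-2}. \]
I expect the first sum to run through the exponents $k, k-2, \ldots, -k$ and hence equal $2[k+1]$, while the second will run through $k-2, k-4, \ldots, -(k-2)$ and equal $2[k-1]$ (vanishing when $k=1$). Adding these yields $2([k+1]+[k-1])$, which is exactly the coefficient of $p_i^{(m-k)}q_i^{(n-k)}$ in relation $(3)$ of Definition \ref{qheis}.

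Essentially all of the substantive content has been offloaded to Theorem \ref{relationsinH}, so the present proposition should be pure bookkeeping. The main point to watch is the identification $[X] = [X\{1\}]$ in the Grothendieck group: this is what turns each pair of $\Z_2$-graded summands (distinguished only by a $\{1\}$-shift) into the extra factor of $2$ that separates the twisted Heisenberg relations from their untwisted counterparts in \cite{CL}. I do not anticipate any real obstacle here; the hard work is establishing Theorem \ref{relationsinH} in Section \ref{auxcat}, and once that is in place the homomorphism statement follows by the coefficient-matching outlined above.
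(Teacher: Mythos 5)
Your proposal is correct and follows the same route as the paper, whose proof is the single line ``This follows directly from Theorem~\ref{relationsinH}''; you have simply made explicit the Grothendieck-group bookkeeping—defining $\phi$ on the generators $p_i^{(m)}\mapsto[P_i^{(m)}]$, $q_i^{(m)}\mapsto[Q_i^{(m)}]$, applying $q^r[X]=[X\langle r\rangle]$ and $[X]=[X\{1\}]$, and matching the resulting coefficients $2[k+1]+2[k-1]$ and $2$ against Definition~\ref{qheis}—that the paper leaves implicit.
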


\begin{proof}
This follows directly from Theorem ~\ref{relationsinH}.
\end{proof}
We will later prove that this is an isomorphism.

%\section{The wreath product $ A_n^{\Gamma} $}
%Let $ V $ be the super vector space $ \C^{1|1} $ and $ \Gamma $ a finite subgroup of $ SL(2,\C) $.  Then $ \Gamma $ acts on $ V $ and hence on $ Sym^*(\check{V}) $ where $ \check{V} $ is the dual vector space of $ V $.
%Then we may form the semi-direct product $ Sym^*(\check{V}) \rtimes \Gamma $.

%The double cover of the hyperoctahedral group $ \tilde{H}_n $ is generated by the symmetric group $ S_n $, generators $ a_1, \ldots, a_n $ and a central element $ z $ of order $ 2 $.  Other relations include $ a_i^2 = z $ and
%$ a_i a_j = a_j a_i $ for $ i \neq j $, and $ w a_i = a_{w(i)} w $ for $ w \in S_n $.

%The group $ \tilde{H}_n $ acts on the $ n $-fold Cartesian product
%\begin{equation*}
%[(Sym^*(\check{V}) \rtimes \Gamma) \times \cdots \times (Sym^*(\check{V})] \rtimes \Gamma).
%\end{equation*}

%Thus we define algebra
%\begin{equation}
%A_n^{\Gamma} = (Sym^*(\check{V}) \rtimes \Gamma) \times \cdots \times (Sym^*(\check{V}) \rtimes \Gamma) \rtimes \tilde{H}_n.
%\end{equation}

%We let $ \mathcal{C}_n^{\Gamma} $ be the derived category of finite dimensional left graded spin super $ A_n^{\Gamma}$-modules.  The $ \Z$-grading on $ A_n^{\Gamma} $ comes from
%the grading on $ Sym^*(\check{V}) $.  The $ \Z_2 $-grading comes from the $ \Z_2 $-grading on $ \tilde{H}_n $.  A module is called spin if $ z $ acts by $ -1 $.

\section{Proof of categorical Heisenberg relations}
\label{auxcat}
\subsection{Auxiliary category $\mathcal{K}_{\Gamma}'^{-} $}
The objects of the $\Z \times \Z_2$-graded monoidal category $\mathcal{K}_{\Gamma}'^{-} $ are direct sums of finite sequences of elements $ \widetilde{P}_i $ or $ \widetilde{Q}_i $ for $ i=1, \ldots, r $ with graded shifts.
The identity object is denoted by $ \bf{1} $.
The space of morphisms between indecomposable objects consists of oriented compact one-manifolds immersed into $ \mathbb{R} \times [0,1] $ compatible with the labeling of the boundary points.  These morphisms are taken up to isotopy fixing the boundary and some local relations which we will describe shortly.
If $ i $ and $ j $ are adjacent in the Dynkin diagram then there is a morphism between $ \widetilde{P}_i $ and $ \widetilde{P}_j $ which we represent by a strand containing a solid dot.  Oriented strands which are bounded by the same element may also carry a solid dot.  Strands may carry a hollow dot.  We summarize these basic morphisms in ~\eqref{auxstrands}.
\begin{equation}
\label{auxstrands}
\begin{tikzpicture}[>=stealth]

\draw (-4,0) -- (-4,1)[->][thick];
\draw (-4,-.4) node{$i$};
\draw (-4,1.4) node{$i$};

\draw (-2,0) -- (-2,1)[->][thick];
\draw (-2,-.4) node{$i$};
\draw (-2,1.4) node{$i$};
\filldraw [black](-2,.5) circle (2pt);

\draw (0,0) -- (0,1)[->][thick];
\draw (0,-.4) node{$i$};
\draw (0,1.4) node{$j$};
\filldraw [black](0,.5) circle (2pt);

\draw (2,0) -- (2,1)[<-][thick];
\draw (2,-.4) node{$i$};
\draw (2,1.4) node{$j$};
\filldraw [black](2,.5) circle (2pt);

\draw (4,0) -- (4,1)[->][thick];
\draw (4,-.4) node{$i$};
\draw (4,1.4) node{$i$};
\draw [black](4,.5) circle (2pt);

\draw (6,0) -- (6,1)[<-][thick];
\draw (6,-.4) node{$i$};
\draw (6,1.4) node{$i$};
\draw [black](6,.5) circle (2pt);
\end{tikzpicture}
\end{equation}

Solid dots move freely along the arcs.  Hollow dots move freely along arcs as well except along clockwise cups or counterclockwise caps as indicated in the relations.
Hollows dots on different strands anti-commute past each other.  Degree one solid dots on different strands anti-commute as well.  All other dots freely move past each other.
Let upward and downward pointing strands with dots labeled by $ i $ on both boundaries be identity morphisms of objects  $ {\widetilde{P}}_i $ and $ {\widetilde{Q}}_i $ respectively.
We will also need the quantities $ \epsilon_{ij} $ for nodes $ i $ and $ j $.  Fix an orientation on the Dynkin diagram.  If $ i $ and $ j $ are not connected by an edge, then set
$ \epsilon_{ij} = 0 $.  If an oriented edge has tail $ i $ and head $ j $, then set $ \epsilon_{ij} = 1 $.  We also set $ \epsilon_{ij} = -\epsilon_{ji} $.

The relations are:

\begin{equation}
\label{K1}
\begin{tikzpicture}[>=stealth]
\draw (-3,0) -- (-3,2)[->][thick];
\draw (-3,-.4) node{$i$};
\draw (-3,2.4) node{$k$};
\draw (-3.2,1) node{$j$};
\filldraw [black](-3,.5) circle (2pt);
\filldraw [black](-3,1.5) circle (2pt);
\draw (-2.5,1) node{$=$};
\draw (-1.8,1) node{$ \delta_{ik} \epsilon_{ij}$};
\draw (-1,0) -- (-1,2)[->][thick];
\filldraw [black](-3,1) circle (2pt);
\draw (-1,-.4) node{$i$};
\draw (-1,2.4) node{$k$};

\draw (1,0) -- (1,2)[<-][thick];
\draw (1,-.4) node{$i$};
\draw (1,2.4) node{$k$};
\draw (.8,1) node{$j$};
\filldraw [black](1,.5) circle (2pt);
\filldraw [black](1,1.5) circle (2pt);
\draw (1.5,1) node{$=$};
\draw (2.2,1) node{$ \delta_{ik} \epsilon_{ij}$};
\draw (3,0) -- (3,2)[<-][thick];
\filldraw [black](3,1) circle (2pt);
\draw (3,-.4) node{$i$};
\draw (3,2.4) node{$k$};

\draw (5,0) -- (5,2)[<-][thick];
\draw (5,-.4) node{$i$};
\draw (5,2.4) node{$i$};
\draw (4.8,1) node{$i$};
\draw [black](5,.5) circle (2pt);
\draw [black](5,1.5) circle (2pt);
\draw (5.5,1) node{$=$};
%\draw (6.2,1) node{$-$};
\draw (6,0) -- (6,2)[<-][thick];
%\draw [black](7,1) circle (2pt);
\draw (6,-.4) node{$i$};
\draw (6,2.4) node{$i$};

\draw (8,0) -- (8,2)[->][thick];
\draw (8,-.4) node{$i$};
\draw (8,2.4) node{$i$};
\draw (7.8,1) node{$i$};
\draw [black](8,.5) circle (2pt);
\draw [black](8,1.5) circle (2pt);
\draw (8.5,1) node{$=\;$};
\draw (8.8,1) node{$-$};
\draw (9,0) -- (9,2)[->][thick];
%\draw [black](7,1) circle (2pt);
\draw (9,-.4) node{$i$};
\draw (9,2.4) node{$i$};

\end{tikzpicture}
\end{equation}

\begin{equation}
\label{K2}
\begin{tikzpicture}[>=stealth]
\draw (0,0) -- (0,2)[->][thick];
\filldraw [black](0,1) circle (2pt);
\draw [black](0,.5) circle (2pt);
\draw (0,-.2) node{$i$};
\draw (0,2.2) node{$j$};
\draw (1,1) node{$=$};
\draw (2,0) -- (2,2)[->][thick];
\filldraw [black](2,1) circle (2pt);
\draw [black](2,1.5) circle (2pt);
\draw (2,-.2) node{$i$};
\draw (2,2.2) node{$j$};

\draw (5,0) -- (5,2)[<-][thick];
\filldraw [black](5,1) circle (2pt);
\draw [black](5,.5) circle (2pt);
\draw (5,-.2) node{$i$};
\draw (5,2.2) node{$j$};
\draw (6,1) node{$=$};
\draw (7,0) -- (7,2)[<-][thick];
\filldraw [black](7,1) circle (2pt);
\draw [black](7,1.5) circle (2pt);
\draw (7,-.2) node{$i$};
\draw (7,2.2) node{$j$};
\end{tikzpicture}
\end{equation}

\begin{equation}
\label{K3}
\begin{tikzpicture}[>=stealth]
\draw (0,0) -- (2,2)[->][thick];
\draw (2,0) -- (0,2)[->][thick];
\draw (1,0) .. controls (0,1) .. (1,2)[->][thick];
\draw (0,-.4) node{$i$};
\draw (1,-.4) node{$j$};
\draw (2,-.4) node{$k$};
\draw (2.5,1) node {=};
\draw (3,0) -- (5,2)[->][thick];
\draw (5,0) -- (3,2)[->][thick];
\draw (4,0) .. controls (5,1) .. (4,2)[->][thick];
\draw (3,-.4) node{$i$};
\draw (4,-.4) node{$j$};
\draw (5,-.4) node{$k$};
\end{tikzpicture}
\end{equation}

\begin{equation}
\label{K4}
\begin{tikzpicture}
\draw (0,0) .. controls (1,1) .. (0,2)[->][thick];
\draw (1,0) .. controls (0,1) .. (1,2)[->][thick];
\draw (0,-.4) node{$i$};
\draw (1,-.4) node{$j$};
\draw (1.5,1) node{$=$};
%\draw (2,1) node {$-$};
\draw (2.5,0) -- (2.5,2) [->][thick];
\draw (3.5,0) -- (3.5,2) [->][thick];
\draw (2.5,-.4) node{$i$};
\draw (3.5,-.4) node{$j$};
\end{tikzpicture}
\end{equation}

\begin{equation}
\label{K5}
\begin{tikzpicture}
\draw (0,0) .. controls (1,1) .. (0,2)[->][thick];
\draw (1,0) .. controls (0,1) .. (1,2)[<-][thick];
\draw (0,-.4) node{$i$};
\draw (1,-.4) node{$j$};
\draw (1.5,1) node{$=$};
%\draw (2,1) node {$-$};
\draw (2.5,0) -- (2.5,2) [->][thick];
\draw (3.5,0) -- (3.5,2) [<-][thick];
\draw (2.5,-.4) node{$i$};
\draw (3.5,-.4) node{$j$};
\end{tikzpicture}
\end{equation}

\begin{equation}
\label{K6}
\begin{tikzpicture}[>=stealth]
\draw (0,0) .. controls (1,1) .. (0,2)[<-][thick];
\draw (1,0) .. controls (0,1) .. (1,2)[->] [thick];
\draw (0,-.4) node{$i$};
\draw (1,-.4) node{$j$};
\draw (1.5,1) node {=};
\draw (2,0) --(2,2)[<-][thick];
\draw (3,0) -- (3,2)[->][thick];
\draw (2,-.4) node{$i$};
\draw (3,-.4) node{$j$};

\draw (3.8,1) node{$- \epsilon_{ij}$};

\draw (4,1.75) arc (180:360:.5) [thick];
\draw (4,2) -- (4,1.75) [thick];
\draw (5,2) -- (5,1.75) [thick][<-];
\draw (5,.25) arc (0:180:.5) [thick];
\filldraw [black] (4.5,1.25) circle (2pt);
%\draw (4.5,1.25) node [anchor=south] {$b$};
\filldraw [black] (4.5,0.75) circle (2pt);
%\draw (4.5,.75) node [anchor=north] {$b^\vee$};
\draw (5,0) -- (5,.25) [thick];
\draw (4,0) -- (4,.25) [thick][<-];
\draw (4,-.4) node{$i$};
\draw (5,-.4) node{$j$};
\draw (4,2.4) node{$i$};
\draw (5,2.4) node{$j$};

\draw (5.8,1) node{$+ \epsilon_{ij}$};

\draw (6,1.75) arc (180:360:.5) [thick];
\draw (6,2) -- (6,1.75) [thick];
\draw (7,2) -- (7,1.75) [thick][<-];
\draw (7,.25) arc (0:180:.5) [thick];
\filldraw [black] (6.5,1.25) circle (2pt);
%\draw (4.5,1.25) node [anchor=south] {$b$};
\filldraw [black] (6.5,0.75) circle (2pt);
%\draw (4.5,.75) node [anchor=north] {$b^\vee$};
\draw (7,0) -- (7,.25) [thick];
\draw (6,0) -- (6,.25) [thick][<-];
\draw (6,-.4) node{$i$};
\draw (7,-.4) node{$j$};
\draw (6,2.4) node{$i$};
\draw (7,2.4) node{$j$};
\draw [black] (7,.1) circle (2pt);
\draw [black] (6,1.9) circle (2pt);

\draw (8.5,1) node {for $i \neq j$};

\end{tikzpicture}
\end{equation}

\begin{equation}
\label{K7}
\begin{tikzpicture}[>=stealth]
\draw [shift={+(0,0)}](0,0) arc (180:360:0.5cm) [thick];
\draw [shift={+(0,0)}][->](1,0) arc (0:180:0.5cm) [thick];
\filldraw [shift={+(1,0)}][black](0,0) circle (2pt);
\draw (.5,.8) node{$i$};
\draw (.5,-.8) node{$i$};

%\draw [shift={+(0,0)}](1,0) node [anchor=east] {$b$};
\draw [shift={+(0,0)}](1.75,0) node{$= 1.$};

\draw  [shift={+(5,0)}](0,0) .. controls (0,.5) and (.7,.5) .. (.9,0) [thick];
\draw  [shift={+(5,0)}](0,0) .. controls (0,-.5) and (.7,-.5) .. (.9,0) [thick];
\draw  [shift={+(5,0)}](1,-1) .. controls (1,-.5) .. (.9,0) [thick];
\draw  [shift={+(5,0)}](.9,0) .. controls (1,.5) .. (1,1) [->] [thick];
\draw (6,-1.4) node{$i$};

\draw  [shift={+(5,0)}](1.5,0) node {$=$};
\draw  [shift={+(5,0)}](2,0) node {$0.$};
\end{tikzpicture}
\end{equation}

\begin{equation}
\label{K8}
\begin{tikzpicture}[>=stealth]
\draw (0,0) .. controls (1,1) .. (0,2)[<-][thick];
\draw (1,0) .. controls (0,1) .. (1,2)[->] [thick];
\draw (0,-.4) node{$i$};
\draw (1,-.4) node{$i$};
\draw (1.5,1) node {=};
\draw (2,0) --(2,2)[<-][thick];
\draw (3,0) -- (3,2)[->][thick];
\draw (2,-.4) node{$i$};
\draw (3,-.4) node{$i$};

\draw (3.8,1) node{$-$};
\draw (4,1.75) arc (180:360:.5) [thick];
\draw (4,2) -- (4,1.75) [thick];
\draw (5,2) -- (5,1.75) [thick][<-];
\draw (5,.25) arc (0:180:.5) [thick];
\filldraw [black] (4.5,1.25) circle (2pt);
%\draw (4.5,1.25) node [anchor=south] {$b$};
%\filldraw [black] (4.5,0.75) circle (2pt);
%\draw (4.5,.75) node [anchor=north] {$b^\vee$};
\draw (5,0) -- (5,.25) [thick];
\draw (4,0) -- (4,.25) [thick][<-];
\draw (4,-.4) node{$i$};
\draw (5,-.4) node{$i$};
\draw (4,2.4) node{$i$};
\draw (5,2.4) node{$i$};

\draw (5.8,1) node{$-$};
\draw (6,1.75) arc (180:360:.5) [thick];
\draw (6,2) -- (6,1.75) [thick];
\draw (7,2) -- (7,1.75) [thick][<-];
\draw (7,.25) arc (0:180:.5) [thick];
%\filldraw [black] (6.5,1.25) circle (2pt);
%\draw (6.5,1.25) node [anchor=south] {$b$};
\filldraw [black] (6.5,0.75) circle (2pt);
%\draw (6.5,.75) node [anchor=north] {$b^\vee$};
\draw (7,0) -- (7,.25) [thick];
\draw (6,0) -- (6,.25) [thick][<-];
\draw (6,-.4) node{$i$};
\draw (7,-.4) node{$i$};
\draw (6,2.4) node{$i$};
\draw (7,2.4) node{$i$};

\draw (7.8,1) node{$+$};
\draw (8,1.75) arc (180:360:.5) [thick];
\draw (8,2) -- (8,1.75) [thick];
\draw (9,2) -- (9,1.75) [thick][<-];
\draw (9,.25) arc (0:180:.5) [thick];
\filldraw [black] (8.5,1.25) circle (2pt);
%\draw (4.5,1.25) node [anchor=south] {$b$};
%\filldraw [black] (4.5,0.75) circle (2pt);
%\draw (4.5,.75) node [anchor=north] {$b^\vee$};
\draw (9,0) -- (9,.25) [thick];
\draw (8,0) -- (8,.25) [thick][<-];
\draw (8,-.4) node{$i$};
\draw (9,-.4) node{$i$};
\draw (8,2.4) node{$i$};
\draw (9,2.4) node{$i$};
\draw [black] (9,.1) circle (2pt);
\draw [black] (8,1.9) circle (2pt);

\draw (9.8,1) node{$+$};
\draw (10,1.75) arc (180:360:.5) [thick];
\draw (10,2) -- (10,1.75) [thick];
\draw (11,2) -- (11,1.75) [thick][<-];
\draw (11,.25) arc (0:180:.5) [thick];
\filldraw [black] (10.5,.75) circle (2pt);
%\draw (4.5,1.25) node [anchor=south] {$b$};
%\filldraw [black] (4.5,0.75) circle (2pt);
%\draw (4.5,.75) node [anchor=north] {$b^\vee$};
\draw (11,0) -- (11,.25) [thick];
\draw (10,0) -- (10,.25) [thick][<-];
\draw (10,-.4) node{$i$};
\draw (11,-.4) node{$i$};
\draw (10,2.4) node{$i$};
\draw (11,2.4) node{$i$};
\draw [black] (11,.1) circle (2pt);
\draw [black] (10,1.9) circle (2pt);

\end{tikzpicture}
\end{equation}

\begin{equation}
\label{K9}
\begin{tikzpicture}[>=stealth]
\draw [shift={+(0,0)}](0,0) arc (180:360:0.5cm) [thick];
\draw [shift={+(0,0)}][->](1,0) arc (0:180:0.5cm) [thick];
\filldraw [shift={+(1,0)}][black](0,0) circle (2pt);
\draw [black](.5,.5) circle (2pt);
%\draw [shift={+(0,0)}](1,0) node [anchor=east] {$b$};
\draw [shift={+(0,0)}](1.75,0) node{$= 0.$};
\draw (.25,.75) node{$ i$};
\draw (.5,-.75) node{$ i$};

\draw [shift={+(0,0)}](4,0) arc (180:360:0.5cm) [thick];
\draw [shift={+(0,0)}][->](5,0) arc (0:180:0.5cm) [thick];
%\filldraw [shift={+(1,0)}][black](4,0) circle (2pt);
\draw [black](4.5,.5) circle (2pt);
%\draw [shift={+(0,0)}](5,0) node [anchor=east] {$b$};
\draw [shift={+(0,0)}](5.75,0) node{$= 0.$};
\draw (4.5,-.75) node{$ i$};

\draw [shift={+(0,0)}](8,0) arc (180:360:0.5cm) [thick];
\draw [shift={+(0,0)}][->](9,0) arc (0:180:0.5cm) [thick];
%\filldraw [shift={+(1,0)}][black](4,0) circle (2pt);
%\draw [black](4.5,.5) circle (2pt);
%\draw [shift={+(0,0)}](5,0) node [anchor=east] {$b$};
\draw [shift={+(0,0)}](9.75,0) node{$= 0.$};
\draw (8.5,-.75) node{$ i$};
\end{tikzpicture}
\end{equation}

\begin{equation}
\label{K13}
\begin{tikzpicture}[>=stealth]
\draw (0,0) -- (1,1)[->][thick];
\draw (1,0) -- (0,1)[->][thick];
\draw [black] (.25,.25) circle (2pt);
%\draw (0,.3) node{$b$};
\draw (1.5,.5) node{=};
\draw (2,0) -- (3,1)[->][thick];
\draw (3,0) -- (2,1)[->][thick];
\draw [black] (2.75, .75) circle (2pt);
%\draw (2.95,.65) node{$b$};

%\draw (5,0) -- (6,1)[<-][thick];
%\draw (6,0) -- (5,1)[<-][thick];
%\filldraw [black] (5.25,.25) circle (2pt);
%\draw (5,.3) node{$b$};
%\draw (6.5,.5) node{=};
%\draw (7,0) -- (8,1)[<-][thick];
%\draw (8,0) -- (7,1)[<-][thick];
%\filldraw [black] (7.75, .75) circle (2pt);
%\draw (7.95,.65) node{$b$};
\draw (0,-.25) node{$ i$};
\draw (1,-.25) node{$ i$};
\draw (2,-.25) node{$ i$};
\draw (3,-.25) node{$ i$};

\end{tikzpicture}
\end{equation}

\begin{equation}
\label{K14}
\begin{tikzpicture}[>=stealth]
\draw (0,0) -- (1,1)[->][thick];
\draw (1,0) -- (0,1)[->][thick];
\draw [black] (.25,.75) circle (2pt);
%\draw (0,.7) node{$b$};
\draw (1.5,.5) node{=};
\draw (2,0) -- (3,1)[->][thick];
\draw (3,0) -- (2,1)[->][thick];
\draw [black] (2.75, .25) circle (2pt);
%\draw (2.95,.35) node{$b$};

%\draw (5,0) -- (6,1)[<-][thick];
%\draw (6,0) -- (5,1)[<-][thick];
%\filldraw [black] (5.25,.75) circle (2pt);
%\draw (5,.7) node{$b$};
%\draw (6.5,.5) node{=};
%\draw (7,0) -- (8,1)[<-][thick];
%\draw (8,0) -- (7,1)[<-][thick];
%\filldraw [black] (7.75, .25) circle (2pt);
%\draw (7.95,.35) node{$b$};
\draw (0,-.25) node{$ i$};
\draw (1,-.25) node{$ i$};
\draw (2,-.25) node{$ i$};
\draw (3,-.25) node{$ i$};
\end{tikzpicture}
\end{equation}

\begin{equation}
\label{K15}
\begin{tikzpicture}[>=stealth]
\draw (0,0) -- (0,.5) [->][thick];
\draw [black] (0,.25) circle (2pt);
%\draw (0,.25) node [anchor=east] [black] {$b$};
\draw (1,0) -- (1,.5) [thick];
\draw (0,0) arc (180:360:.5) [thick];
\draw (1.5,0) node {=};
\draw (1.8,0) node {$-$};
\draw (2,0) -- (2,.5)[->] [thick];
\draw [black] (3,.25) circle (2pt);
%\draw (3,.25) node [anchor=west] [black] {$b$};
\draw (3,0) -- (3,.5) [thick];
\draw (2,0) arc (180:360:.5) [thick];

\draw (5,0) -- (5,-.5) [thick];
\draw [black] (5,-.25) circle (2pt);
%\draw (5,-.25) node [anchor=east] [black] {$b$};
\draw (6,0) -- (6,-.5)[->] [thick];
\draw (5,0) arc (180:0:.5) [thick];
\draw (6.5,0) node {=};
\draw (7,0) -- (7,-.5) [thick];
\draw [black] (8,-.25) circle (2pt);
%\draw (8,-.25) node [anchor=west] [black] {$b$};
\draw (8,0) -- (8,-.5) [->][thick];
\draw (7,0) arc (180:0:.5) [thick];
\draw (0,.75) node{$ i$};
\draw (1,.75) node{$ i$};
\draw (2,.75) node{$ i$};
\draw (3,.75) node{$ i$};
\draw (5,-.75) node{$ i$};
\draw (6,-.75) node{$ i$};
\draw (7,-.75) node{$ i$};
\draw (8,-.75) node{$ i$};

\end{tikzpicture}
\end{equation}

\begin{equation}
\label{K16}
\begin{tikzpicture}[>=stealth]
\draw (0,0) -- (0,.5) [thick];
\draw [black] (0,.25) circle (2pt);
%\draw (0,.25) node [anchor=east] [black] {$b$};
\draw (1,0) -- (1,.5) [->][thick];
\draw (0,0) arc (180:360:.5) [thick];
\draw (1.5,0) node {=};
\draw (2,0) -- (2,.5)[thick];
\draw [black] (3,.25) circle (2pt);
%\draw (3,.25) node [anchor=west] [black] {$b$};
\draw (3,0) -- (3,.5) [->][thick];
\draw (2,0) arc (180:360:.5) [thick];

\draw (5,0) -- (5,-.5)[->] [thick];
\draw [black] (5,-.25) circle (2pt);
%\draw (5,-.25) node [anchor=east] [black] {$b$};
\draw (6,0) -- (6,-.5) [thick];
\draw (5,0) arc (180:0:.5) [thick];
\draw (6.5,0) node {=};
\draw (6.8,0) node {$-$};
\draw (7,0) -- (7,-.5)[->] [thick];
\draw [black] (8,-.25) circle (2pt);
%\draw (8,-.25) node [anchor=west] [black] {$b$};
\draw (8,0) -- (8,-.5) [thick];
\draw (7,0) arc (180:0:.5) [thick];
\draw (0,.75) node{$ i$};
\draw (1,.75) node{$ i$};
\draw (2,.75) node{$ i$};
\draw (3,.75) node{$ i$};
\draw (5,-.75) node{$ i$};
\draw (6,-.75) node{$ i$};
\draw (7,-.75) node{$ i$};
\draw (8,-.75) node{$ i$};
\end{tikzpicture}.
\end{equation}

\begin{equation}
\begin{tikzpicture}
\draw (0,0) -- (0,2)[->][thick];
\draw (0,-.5) node{};
\draw (1,-.5) node{};
\draw (1,0) -- (1,2)[->][thick];
\draw [black] (0, 1.5) circle (2pt);
\draw [black] (1, .5) circle (2pt);
\draw (.5,1) node{$\cdots$};
\draw (1.5,1) node{$=$};
\draw (2,1) node{$-$};
\draw (2.3,0) -- (2.3,2)[->][thick];
\draw (3.3,0) -- (3.3,2)[->][thick];
\draw [black] (2.3, .5) circle (2pt);
\draw [black] (3.3, 1.5) circle (2pt);
\draw (2.8,1) node{$\cdots$};
\draw (0,-.25) node{$ i$};
\draw (1,-.25) node{$ i$};
\draw (2.3,-.25) node{$ i$};
\draw (3.3,-.25) node{$ i$};
\end{tikzpicture}
\end{equation}

There is a $\Z \times \Z_2$-grading on the generating diagrams which make the relations above homogenous.  We set all crossings to have degree $ (0,0)$.  Counterclockwise oriented arcs have degree $ (-1,0) $ while clockwise oriented arcs have degree $ (1,0)$ .  A solid dot between two nodes labeled $ i $ has degree $ (2,0)$ while a solid dot between the two adjacent nodes has degree $ (1,0) $.  Hollow dots have degree $ (0,1) $.  We summarize the degrees in ~\eqref{degrees} where orientation is omitted from the diagram when the bidegree is independent of it.

\begin{equation}
\label{degrees}
\begin{tikzpicture}
\draw (-1,0) -- (13,0)[][very thick];
\draw (0,.5) node{degree};
\draw (0, -.5) node{generator};
\draw (1,1) -- (1,-2)[][very thick];
\draw (1.5,.5) node{$(0,1)$};
\draw (1.5,-.5) -- (1.5,-1.5)[thick];
\draw (1.5,-1.7) node{$i$};
\draw (1.5,-.3) node{$i$};

\draw[black](1.5,-1) circle (2pt);
\draw (2,1) -- (2, -2)[very thick];
\draw (3,-.5) -- (3,-1.5)[thick];
\draw (3,-1.7) node{$i$};
\draw (3,-.3) node{$j$};

%\draw (3.2, -1) node{$ b$};
\draw (4,1) -- (4, -2)[very thick];
\draw (3, .5) node{$(|a_{ij}|,0)$};
\filldraw[black](3,-1) circle (2pt);
\draw (4.5,-1.5) -- (5.5,-.5)[thick];
\draw (5.5, -1.5) -- (4.5, -.5)[thick];
\draw (4.5,-1.7) node{$i$};
\draw (5.5,-1.7) node{$j$};

\draw (5,.5) node{$(0, 0)$};
\draw (-1,1) -- (-1,-2)[very thick];
\draw (6,1) -- (6,-2)[very thick];
\draw (-1,1) -- (13,1)[][very thick];
\draw (-1,-2) -- (13,-2)[][very thick];
\draw (13,1) -- (13,-2)[][very thick];
\draw (11.5,1) -- (11.5,-2)[][very thick];
\draw (9.75,1) -- (9.75,-2)[][very thick];
\draw (8,1) -- (8,-2)[][very thick];
\draw (7,.5) node{$(-1, 0)$};
\draw (8.75,.5) node{$(1, 0)$};
\draw (10.5,.5) node{$(1, 0)$};
\draw (12.25,.5) node{$(-1, 0)$};

\draw (6.5,-.5) arc (180:360:.5)[->] [thick];
\draw (6.5,-.3) node{$i$};
\draw (7.5,-.3) node{$i$};
\draw (8.25,-.5) arc (180:360:.5)[<-] [thick];
\draw (8.25,-.3) node{$i$};
\draw (9.25,-.3) node{$i$};
\draw (10,-.95) arc (180:0:.5)[->] [thick];
\draw (10,-1.2) node{$i$};
\draw (11,-1.2) node{$i$};
\draw (11.75,-.95) arc (180:0:.5)[<-] [thick];
\draw (11.75,-1.2) node{$i$};
\draw (12.75,-1.2) node{$i$};

\end{tikzpicture}
\end{equation}

The next example is a categorification of a basic twisted Heisenberg relation and follows directly from the relations above.

\begin{ex}
There is an isomorphism of objects:
\begin{equation*}
{\widetilde{Q}_i} {\widetilde{P}_i} \cong {\widetilde{P}_i} {\widetilde{Q}_i} \oplus \Id \langle -1 \rangle \oplus \Id \langle 1 \rangle \oplus \Id \langle -1 \rangle \lbrace 1 \rbrace \oplus \Id \langle 1 \rangle \lbrace 1 \rbrace.
\end{equation*}
%\end{prop}

%\begin{proof}
Let
\begin{equation*}
f =
\begin{pmatrix}
f_0 \\
f_1 \\
f_2 \\
f_3 \\
f_4 \\
f_5
\end{pmatrix}
\colon
{\widetilde{Q}_i} {\widetilde{P}_i} \rightarrow {\widetilde{P}_i} {\widetilde{Q}_i} \oplus \Id \langle -1 \rangle \oplus \Id \langle 1 \rangle \oplus \Id \langle -1 \rangle \lbrace 1 \rbrace \oplus \Id \langle 1 \rangle \lbrace 1 \rbrace
\end{equation*}
\begin{equation*}
g =
\begin{pmatrix}
g_0 & g_1 & g_2 & g_3 & g_4 & g_5
\end{pmatrix}
\colon {\widetilde{P}_i} {\widetilde{Q}_i} \oplus \Id \langle -1 \rangle \oplus \Id \langle 1 \rangle \oplus \Id \langle -1 \rangle \lbrace 1 \rbrace \oplus \Id \langle 1 \rangle \lbrace 1 \rbrace
\rightarrow {\widetilde{Q}_i} {\widetilde{P}_i}
\end{equation*}
where
\begin{equation*}
\begin{tikzpicture}[>=stealth]
\draw (-.5,.5) node{$f_0=$};
\draw (0,0) --(1,1)[<-][thick];
\draw (1,0) --(0,1)[->][thick];
\draw (2,.5) node{$f_1=$};
\draw (3.5,.25) arc (0:180:0.5cm)[->] [thick];
%\filldraw [black] (3.5,.35) circle (2pt);

\draw (4.5,.5) node{$f_2=$};
\draw (6,.25) arc (0:180:0.5cm)[->] [thick];
\filldraw [black] (5.5,.75) circle (2pt);
%\draw [black] (5.5,.75) circle (2pt);

\draw (7,.5) node{$f_3=$};
\draw (8.5,.25) arc (0:180:0.5cm)[->] [thick];
\draw [black] (8.5,.35) circle (2pt);
%\draw [black] (8,.75) circle (2pt);

\draw (9.5,.5) node{$f_4=$};
\draw (11,.25) arc (0:180:0.5cm)[->] [thick];
\draw [black] (11,.35) circle (2pt);
\filldraw [black] (10.5,.75) circle (2pt);
%\draw (0,0) arc (180:360:0.5cm) [thick];
\end{tikzpicture}
\end{equation*}

\begin{equation*}
\begin{tikzpicture}[>=stealth]
\draw (-.5,.5) node{$g_0=$};
\draw (0,0) --(1,1)[->][thick];
\draw (1,0) --(0,1)[<-][thick];

\draw (2,.5) node{$g_1=$};
\draw (3.5,.75) arc (360:180:0.5cm)[<-] [thick];
\filldraw [black] (3.0,.25) circle (2pt);

\draw (4.5,.5) node{$g_2=$};
\draw (6,.75) arc (360:180:0.5cm)[<-] [thick];
%\filldraw [black] (6,.35) circle (2pt);
%\draw [black] (5.5,.25) circle (2pt);

\draw (7,.5) node{$g_3=-$};
\draw (8.7,.75) arc (360:180:0.5cm)[<-] [thick];
\draw [black] (7.7,.65) circle (2pt);
\filldraw [black] (8.2,.25) circle (2pt);

\draw (9.5,.5) node{$g_4=-$};
\draw (11.2,.75) arc (360:180:0.5cm)[<-] [thick];
%\filldraw [black] (10.2,.65) circle (2pt);
\draw [black] (10.2,.65) circle (2pt);

%\draw (0,0) arc (180:360:0.5cm) [thick];
\end{tikzpicture}
\end{equation*}
and all strands are labeled by $i$.

Relation \eqref{K8} implies that $ g_0 f_0 + g_1 f_1 + g_2 f_2 + g_3 f_3 + g_4 f_4 $ is the identity map on
$ {\widetilde{Q}_i} {\widetilde{P}_i} $.

Relation \eqref{K5} implies that $ f_0 g_0 $ is the identity map.
The left hand side of relation \eqref{K7} implies that $ f_1 g_1 $ and $ f_2 g_2$  are identity maps.
Relation \eqref{K1} and the left hand side of \eqref{K7} imply that $ f_3 g_3 $ and $ f_4 g_4 $ are identity maps.

For $ k,l=1,2,3,4 $ and $ k \neq l $, $ f_k g_l = 0 $ by relation \eqref{K9}.
For $ l=1,2,3,4$ the equations $ f_0 g_l = 0 $ follow from the right hand side of \eqref{K7}.
This proves the decomposition in the example.

%\end{proof}
\end{ex}

\subsection{Karoubi envelope $\mathcal{K}_{\Gamma}^{-} $}
Denote the Karoubi envelope of $\mathcal{K}_{\Gamma}'^{-} $ by $\mathcal{K}_{\Gamma}^{-} $.
For a strict partition $ \lambda $ of $ n$, recall that we let $ e_{i, 1, \lambda} $ be a minimal idempotent in $ \text{End}(\widetilde{P}_i^n)$ or $ \text{End}(\widetilde{Q}_i^n)$.
In the Karoubi envelope we have direct summands of the objects $ \widetilde{P}_i^n $ and $ \widetilde{Q}_i^n $:

\begin{eqnarray}
%\label{defcupcap}
{\widetilde{P}_i^{(n)}} :=
(\widetilde{P}_i^n, e_{i,1,(n)})
&&
{\widetilde{Q}_i^{(n)}} :=
(\widetilde{Q}_i^n, e_{i,1,(n)})
\end{eqnarray}

We now introduce some graphical notation for morphisms in the Karoubi envelope.
Let the identity morphisms of $ \widetilde{P}_i^{(n)} $ and $ \widetilde{Q}_i^{(n)} $ be denoted graphically by the left and right hand sides of ~\eqref{idPkar} respectively.

\begin{equation}
\label{idPkar}
\begin{tikzpicture}[>=stealth]
\draw (0,0) rectangle (1,.5);
\draw (.5,.25) node {$i^n$};
\draw (0,1.5) rectangle (1,2);
\draw (.5,1.75) node {$i^n$};
\draw (0,.5) -- (0,1.5) [->][thick];
\draw (.25,.5) -- (.25,1.5) [->][thick];
\draw (.66,1) node {$\hdots$};
\draw (1,.5) -- (1,1.5) [->][thick];
\draw  (1.5,1) node{$=$};
\draw (2,0) rectangle (3,.5);
\draw (2.5,.25) node {$i^n$};
\draw (2,1.5) rectangle (3,2);
\draw (2.5,1.75) node {$i^n$};
\draw (2.5,.5) -- (2.5,1.5) [->][thick];

\draw (6,0) rectangle (7,.5);
\draw (6.5,.25) node {$i^n$};
\draw (6,1.5) rectangle (7,2);
\draw (6.5,1.75) node {$i^n$};
\draw (6,.5) -- (6,1.5) [<-][thick];
\draw (6.25,.5) -- (6.25,1.5) [<-][thick];
\draw (6.66,1) node {$\hdots$};
\draw (7,.5) -- (7,1.5) [<-][thick];
\draw  (7.5,1) node{$=$};
\draw (8,0) rectangle (9,.5);
\draw (8.5,.25) node {$i^n$};
\draw (8,1.5) rectangle (9,2);
\draw (8.5,1.75) node {$i^n$};
\draw (8.5,.5) -- (8.5,1.5) [<-][thick];
\end{tikzpicture}
\end{equation}

\subsection{The functor $ \eta \colon \mathcal{K}_{\Gamma}^{-} \rightarrow \mathcal{H}_{\Gamma}^{-} $}
We have an analogue of ~\cite[Proposition 3]{CL}.
\begin{prop}
There is a functor $ \eta \colon \mathcal{K}_{\Gamma}^{-} \rightarrow \mathcal{H}_{\Gamma}^{-} $
which maps the object $ \widetilde{P}_i $ to $ (P,e_{i,1,(1)}) $ and the object $ \widetilde{Q}_i $ to $ (Q,e_{i,1,(1)}) $.
The functor $ \eta $ maps morphisms as follows:

\begin{equation*}
\begin{tikzpicture}[>=stealth]
\draw (0,-.2) node {$i$};
\draw (0,1.2) node {$i$};
\draw (0,0) -- (0,1) [->][thick];
\filldraw [black] (0,.5) circle (2pt);

\draw (.3,.5) node {$\leadsto$};

\draw (1.2,.5) node {$\frac{|\Gamma|}{\text{dim}(V)}$};

\draw (1.9,0) -- (1.9,1) [->][thick];
\filldraw [black] (1.9,.5) circle (2pt);
\draw (2.2,.5) node {$\omega$};
\draw (4.5,.5) node {$\colon (P,e_{i,1,(1)}) \rightarrow (P,e_{i,1,(1)}) \langle 2 \rangle $};

\draw (8,-.2) node {$i$};
\draw (8,1.2) node {$i$};
\draw (8,0) -- (8,1) [->][thick];
\draw [black] (8,.5) circle (2pt);

\draw (8.3,.5) node {$\leadsto$};

%\draw (1.2,.5) node {$\frac{|\Gamma|}{\text{dim}(V)}$};

\draw (8.9,0) -- (8.9,1) [->][thick];
\draw [black] (8.9,.5) circle (2pt);
%\draw (2.2,.5) node {$\omega$};
\draw (11.5,.5) node {$\colon (P,e_{i,1,(1)}) \rightarrow (P,e_{i,1,(1)}) \lbrace 1 \rbrace $};

\end{tikzpicture}
\end{equation*}
\begin{equation*}
\begin{tikzpicture}[>=stealth]
\draw (0,-.2) node {$i$};
\draw (0,1.2) node {$j$};
\draw (0,0) -- (0,1) [->][thick];
\filldraw [black] (0,.5) circle (2pt);

\draw (.3,.5) node {$\leadsto$};

\draw (1.2,.5) node {$\frac{|\Gamma|}{\text{dim}(V)}$};

\draw (1.9,0) -- (1.9,1) [->][thick];
\filldraw [black] (1.9,.5) circle (2pt);
\draw (2.2,.5) node {$v_1$};
\draw (4.6,.5) node {$\colon (P,e_{i,1,(1)}) \rightarrow (P,e_{j,1,(1)}) \langle 1 \rangle $};
\draw (8,.5) node {if $ \epsilon_{ij}=1$};

\end{tikzpicture}
\end{equation*}
\begin{equation*}
\begin{tikzpicture}[>=stealth]
\draw (0,-.2) node {$i$};
\draw (0,1.2) node {$j$};
\draw (0,0) -- (0,1) [->][thick];
\filldraw [black] (0,.5) circle (2pt);

\draw (.3,.5) node {$\leadsto$};

\draw (1.2,.5) node {$\frac{|\Gamma|}{\text{dim}(V)}$};

\draw (1.9,0) -- (1.9,1) [->][thick];
\filldraw [black] (1.9,.5) circle (2pt);
\draw (2.2,.5) node {$v_2$};
\draw (4.6,.5) node {$\colon (P,e_{i,1,(1)}) \rightarrow (P,e_{j,1,(1)}) \langle 1 \rangle$};
\draw (8,.5) node {if $ \epsilon_{ij}=-1$};
\end{tikzpicture}
\end{equation*}
\begin{equation*}
\begin{tikzpicture}[>=stealth]
\draw (0,-.2) node {$i$};
\draw (1,-.2) node {$j$};
\draw (0,1.2) node {$j$};
\draw (1,1.2) node {$i$};
\draw (0,0) -- (1,1) [->][thick];
\draw (1,0) -- (0,1) [->][thick];
\draw (1.3,.5) node {$\leadsto$};

\draw (2,0) -- (3,1) [->][thick];
\draw (3,0) -- (2,1) [->][thick];
\draw (7,.5) node {$\colon (P,e_{i,1,(1)}) (P,e_{j,1,(1)}) \rightarrow (P,e_{j,1,(1)}) (P,e_{i,1,(1)}) $};

\end{tikzpicture}
\end{equation*}
\begin{equation*}
\begin{tikzpicture}[>=stealth]
\draw (0,-.2) node {$i$};
\draw (1,-.2) node {$i$};
%\draw (0,1.2) node {$j$};
%\draw (1,1.2) node {$i$};
\draw (0,0) .. controls (.5,.75) .. (1,0)[<-][thick];

\draw (1.3,.5) node {$\leadsto$};

\draw (2,0) .. controls (2.5,.75) .. (3,0)[<-][thick];

\draw (5.5,.5) node {$\colon (Q,e_{i,1,(1)}) (P,e_{i,1,(1)}) \rightarrow {\bf 1} \langle -1 \rangle$};

\end{tikzpicture}
\end{equation*}
\begin{equation*}
\begin{tikzpicture}[>=stealth]
\draw (0,-.2) node {$i$};
\draw (1,-.2) node {$i$};
%\draw (0,1.2) node {$j$};
%\draw (1,1.2) node {$i$};
\draw (0,0) .. controls (.5,.75) .. (1,0)[->][thick];

\draw (1.3,.5) node {$\leadsto$};

\draw (2,0) .. controls (2.5,.75) .. (3,0)[->][thick];

\draw (5.5,.5) node {$\colon (P,e_{i,1,(1)}) (Q,e_{i,1,(1)}) \rightarrow {\bf 1} \langle 1 \rangle $};

\end{tikzpicture}
\end{equation*}
\begin{equation*}
\begin{tikzpicture}[>=stealth]
\draw (0,1.2) node {$i$};
\draw (1,1.2) node {$i$};
%\draw (0,1.2) node {$j$};
%\draw (1,1.2) node {$i$};
\draw (0,1) .. controls (.5,.25) .. (1,1)[<-][thick];

\draw (1.3,.5) node {$\leadsto$};

\draw (2,1) .. controls (2.5,.25) .. (3,1)[<-][thick];

\draw (5.5,.5) node {$\colon {\bf 1} \rightarrow (P,e_{i,1,(1)}) (Q,e_{i,1,(1)}) \langle 1 \rangle $};

\end{tikzpicture}
\end{equation*}
\begin{equation*}
\begin{tikzpicture}[>=stealth]
\draw (0,1.2) node {$i$};
\draw (1,1.2) node {$i$};
%\draw (0,1.2) node {$j$};
%\draw (1,1.2) node {$i$};
\draw (0,1) .. controls (.5,.25) .. (1,1)[->][thick];

\draw (1.3,.5) node {$\leadsto$};

\draw (2,1) .. controls (2.5,.25) .. (3,1)[->][thick];

\draw (5.5,.5) node {$\colon {\bf 1} \rightarrow (Q,e_{i,1,(1)}) (P,e_{i,1,(1)}) \langle -1 \rangle $.};

\end{tikzpicture}
\end{equation*}
\end{prop}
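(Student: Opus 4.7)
The plan is to verify that $\eta$ is well defined by checking (i) that it makes sense on objects, and (ii) that its assignment on generating morphisms respects each of the defining relations K1--K16 of $\mathcal{K}_{\Gamma}'^{-}$. Well-definedness on objects is immediate: a labeled strand in $\mathcal{K}_\Gamma^-$ carrying the label $i$ corresponds to the summand $(P,e_{i,1,(1)})$ (or $(Q,e_{i,1,(1)})$ for a downward strand) of $\mathcal{H}_\Gamma^-$, and a horizontal sequence of labeled strands is sent to the corresponding tensor product of these summands, all of which live in the Karoubi envelope $\mathcal{H}_\Gamma^-$. For morphisms, I would pre- and post-compose each image of a generator with the appropriate idempotent, so that $\eta$ lands in the correct $\mathrm{Hom}$-space.

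I would then organize the relations into three groups. First, the purely topological relations K3 (braid/Reidemeister III), K4, K5 (parallel cap-cup relations), and the hollow-dot relations K13--K16 pull back directly from H7, H8, H9, and H13--H16, since $\eta$ sends these generators to their $\mathcal{H}$-analogues verbatim; the image idempotents commute past the crossings, caps, and cups by the relations in $\mathcal{H}_\Gamma^-$. The hollow/solid dot commutation K2 follows from H6 and H19 together with the fact that the hollow dot has $\Z_2$-degree one and anticommutes correctly with $\omega$, $v_1$, $v_2$ (all of odd $\Z_2$-parity is absent for the solid generators, so only the sign rule from the $\Z_2$-grading appears).

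The substantive computations concern K1, K6, K7, and K9, and these are exactly where the normalization factor $\tfrac{|\Gamma|}{\dim V}$ and the sign $\epsilon_{ij}$ are forced. For K1 I would combine two solid dots on a strand using H5: the composition sends the pair to $\bigl(\tfrac{|\Gamma|}{\dim V}\bigr)^{2}$ times a single solid dot labelled by the product in $\Lambda^{*}(V)$. For $v_1\cdot v_2=\omega$ and $v_2\cdot v_1=-\omega$, the idempotent $e_{k,1,(1)}$ absorbs $\omega$ and forces $k=i$, yielding exactly the scalar $\delta_{ik}\epsilon_{ij}$ times the identity on $(P,e_{i,1,(1)})$ (and dually for downward strands). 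For K7 the bubble evaluates via H11 to $\tr(\omega)=1$, scaled by the two $\tfrac{|\Gamma|}{\dim V}$ factors from the two dots and absorbed by the wreath-product normalization built into $e_{i,1,(1)}$; the constant is chosen precisely so the whole expression equals $1$. Relation K9 then follows from H20 because the hollow dot is preserved by $\eta$.

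The main obstacle will be K6, which encodes the upward-downward cap-cup move with the orientation sign $\epsilon_{ij}$. Here I would expand the sum over the basis $\mathcal{B}$ of $B^{\Gamma}$ on the right-hand side of H10, use the explicit dual basis with respect to $\langle\,,\rangle$, and then project onto the $(i,j)$-isotypic component by the idempotents $e_{i,1,(1)}$ and $e_{j,1,(1)}$. Only the basis vectors $v_1,v_2$ contribute to that projection, and their dual vectors are supported on the opposite edge; pairing them produces a single term whose sign is precisely $\epsilon_{ij}$ for the orientation chosen on the Dynkin diagram (with the opposite sign coming from the hollow-dot term via H15--H16). Verifying that the prefactors $\bigl(\tfrac{|\Gamma|}{\dim V}\bigr)^{2}$ cancel against the trace normalization is the delicate bookkeeping step, and it parallels \cite[Proposition 3]{CL} with the added Clifford/$\mathbb{Z}_2$-decorations. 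Once these computations are in place, extending $\eta$ to the Karoubi envelope is formal.
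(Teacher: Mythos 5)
Your plan is correct and is essentially the verification that the paper outsources: the paper's proof of this proposition is a one-line citation of \cite[Proposition 3]{CL} ``with trivial modifications,'' and your relation-by-relation check (including the normalization $\tfrac{|\Gamma|}{\dim V}$ forced by the bubble relation and the sign $\epsilon_{ij}$ forced by the double-crossing relation) is exactly what that citation encapsulates. One small slip: the same-strand solid/hollow dot commutation in $\mathcal{K}_{\Gamma}'^{-}$ should be deduced from relation \eqref{H18} (dots on a single strand), not from \eqref{H6} and \eqref{H19}, which govern dots on different strands.
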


\begin{proof}
This follows from the proof of ~\cite[Proposition 3]{CL} with trivial modifications.
\end{proof}

As in ~\cite[Proposition 4]{CL} we have the following isomorphism of Grothendieck groups which will be proved later:
\begin{prop}
The functor $ \eta $ induces an isomorphism $ K_0(\eta) \colon K_0(\mathcal{K}_{\Gamma}^{-}) \rightarrow K_0(\mathcal{H}_{\Gamma}^{-}) $.
\end{prop}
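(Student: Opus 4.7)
The plan is to adapt the proof of \cite[Proposition 4]{CL} to the twisted setting. I would establish surjectivity of $K_0(\eta)$ directly, and then obtain injectivity from a commutative triangle involving the map $\phi$ of the previous proposition and an analogous map $\tilde\phi \colon \mathfrak{h}_{q,\Gamma}^- \to K_0(\mathcal{K}_\Gamma^-)$.

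For surjectivity, I would note that Theorem~\ref{relationsinH} expresses every class in $K_0(\mathcal{H}_\Gamma^-)$ as a $\mathbb{Z}[q,q^{-1}]$-linear combination of monomials in the classes $[P_i^{(n)}]$ and $[Q_i^{(n)}]$. Since by construction of $\eta$ we have $\eta(\widetilde{P}_i^{(n)}) \cong P_i^{(n)}$ and $\eta(\widetilde{Q}_i^{(n)}) \cong Q_i^{(n)}$ (the idempotent $e_{i,1,(n)}$ defining $P_i^{(n)}$ in $\mathcal{H}_\Gamma^-$ is the image under $\eta$ of the idempotent defining $\widetilde{P}_i^{(n)}$ in $\mathcal{K}_\Gamma^-$), this generating set lies in the image of $K_0(\eta)$, proving surjectivity.

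For injectivity, I would use that the Propositions in Section~\ref{auxcat} establish isomorphisms in $\mathcal{K}_\Gamma^-$ that directly parallel Theorem~\ref{relationsinH}, yielding an algebra homomorphism $\tilde\phi \colon \mathfrak{h}_{q,\Gamma}^- \to K_0(\mathcal{K}_\Gamma^-)$ sending $p_i^{(n)} \mapsto [\widetilde{P}_i^{(n)}]$ and $q_i^{(n)} \mapsto [\widetilde{Q}_i^{(n)}]$; the same generating argument as above shows that $\tilde\phi$ is surjective, and the identification of the images of $\eta$ gives the commuting triangle $K_0(\eta)\circ\tilde\phi = \phi$. In Section~\ref{mainresult}, the 2-representation on wreath product module categories constructed in Section~\ref{2rep} is used to show that $\phi$ is injective. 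Since $\tilde\phi$ is surjective and $\phi = K_0(\eta)\circ\tilde\phi$ is injective, $\tilde\phi$ must be bijective; then $K_0(\eta) = \phi\circ\tilde\phi^{-1}$ is also bijective.

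The main obstacle is injectivity of $\phi$, which is deferred to Section~\ref{mainresult}. The strategy there is to identify the Grothendieck group of the category of finitely generated supermodules over $\bigoplus_n B_n^\Gamma$ with a twisted Fock space in the sense of \cite{JW}, and to show that the induced action of $P$ and $Q$ on this Grothendieck group categorifies the Heisenberg generators $p_i^{(1)}$, $q_i^{(1)}$ (together with their higher analogs). The faithfulness of the Fock space representation of $\mathfrak{h}_{q,\Gamma}^-$ then yields the injectivity of $\phi$, and all the pieces assemble into the desired isomorphism.
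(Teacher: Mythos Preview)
Your overall strategy matches the paper's: a commuting triangle with $\pi = \tilde\phi$, surjectivity of $\pi$ and of $K_0(\eta)$, and injectivity coming from the faithfulness of the Fock space action built via the functor of Section~\ref{2rep}. The logical skeleton is correct.

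There is, however, a real gap in both of your surjectivity claims. You assert that ``Theorem~\ref{relationsinH} expresses every class in $K_0(\mathcal{H}_\Gamma^-)$ as a $\mathbb{Z}[q,q^{-1}]$-linear combination of monomials in $[P_i^{(n)}]$ and $[Q_i^{(n)}]$''. It does not: that theorem only records relations among those particular objects; it says nothing about an arbitrary indecomposable of the Karoubi envelope. A priori, $\mathcal{H}_\Gamma^-$ could contain indecomposable summands of $P^aQ^b$ cut out by exotic idempotents not in the image of $\eta$. The same issue afflicts your surjectivity argument for $\tilde\phi$: you need to know that the $[\widetilde{P}_i^{(n)}]$ and $[\widetilde{Q}_j^{(m)}]$ actually generate $K_0(\mathcal{K}_\Gamma^-)$, and the Heisenberg relations alone do not give this.

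The paper fills this gap with a structural analysis that you omit entirely: Lemma~\ref{degreelemma} computes the degree-zero endomorphism algebra of $\prod_i \widetilde{P}_i^{m_i}\prod_j \widetilde{Q}_j^{n_j}$ as $\bigotimes_i \mathbb{S}_{m_i}\otimes\bigotimes_j \mathbb{S}_{n_j}$ (with no negative-degree endomorphisms), Proposition~\ref{degreeprop} uses this to obtain a Krull--Schmidt classification of indecomposables, and Corollary~\ref{catgenerators} (via the Hecke--Clifford branching rule, Lemma~\ref{inductionrestrictionlemma}) deduces that the classes $[\widetilde{P}_i^{(n)}]$, $[\widetilde{Q}_j^{(m)}]$ generate $K_0(\mathcal{K}_\Gamma^-)$. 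The surjectivity of $K_0(\eta)$ is then obtained by the analogous endomorphism computation inside $\mathcal{H}_\Gamma^-$. Without this ingredient your argument does not close.
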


\subsection{Isomorphism of objects}
By convention a diagram such as the one in ~\eqref{parcccups} means that there are $k$ parallel arcs.
We view a dot (hollow or solid) on an upper cup as to the upper right of a dot on a lower cup.
\begin{equation}
\label{parcccups}
\begin{tikzpicture}[>=stealth]
\draw (6.5,2) .. controls (7.25,1.25) .. (8,2)[->][thick];
\draw (7.25,1.7) node {$\scriptstyle{k}$};
\draw [black] (6.75,1.75) circle (2pt);
\filldraw [black] (7.25,1.4) circle (2pt);
\end{tikzpicture}
\end{equation}
By convention a diagram such as the one in ~\eqref{parcccaps} means that there are $k$ parallel arcs.
We view a dot (hollow or solid) on a lower cap as to the lower left of a dot on an upper cap.
\begin{equation}
\label{parcccaps}
\begin{tikzpicture}[>=stealth]
\draw (6.5,.5) .. controls (7.25,1.25) .. (8,.5)[<-][thick];
\draw (7.25,.85) node {$\scriptstyle{k}$};
\draw [black] (7.75,.75) circle (2pt);
\filldraw [black] (7.25,1.1) circle (2pt);
\end{tikzpicture}
\end{equation}

\begin{prop}
\label{graphicalidempotentslide}
For any orientations of the strands, there are graphical local relations
\begin{equation*}
\label{idPkar}
\begin{tikzpicture}[>=stealth]
\draw (0,0) rectangle (1, .5);
\draw (.5,.25) node {$i^n$};
\draw (.5,-1) -- (.5,0)[thick];
\draw (.5,.5) -- (1.5,1.5)[thick];
\draw (.5,1.5) -- (1.5,.5)[thick];
\draw (1.5,.5) -- (1.5,-1)[thick];
\draw (2,.25) node {$=$};

\draw(2.5,-1) -- (3.5,0)[thick];
\draw(3.5,-1) -- (2.5,0)[thick];
\draw(2.5,0) -- (2.5,1.5)[thick];
\draw (3.5,0) -- (3.5,.5)[thick];
\draw (3.5,1) -- (3.5,1.5)[thick];
\draw (3,.5) rectangle (4,1);
\draw (3.5,.75) node {$i^n$};

\draw (7,0) rectangle (8, .5);
\draw (7.5,.25) node {$i^n$};
\draw (6.5,-1) -- (6.5,0)[thick];
\draw (6.5,.5) -- (7.5,1.5)[thick];
\draw (6.5,1.5) -- (7.5,.5)[thick];
\draw (7.5,0) -- (7.5,-1)[thick];
\draw (6.5,.5) -- (6.5,0)[thick];
\draw (8.5,.25) node {$=$};

\draw(9,-1) -- (10,0)[thick];
\draw(10,-1) -- (9,0)[thick];
\draw(9,0) -- (9,.5)[thick];
\draw(9,1) -- (9,1.5)[thick];
\draw (10,0) -- (10,1.5)[thick];
\draw (8.5,.5) rectangle (9.5,1);
\draw (9,.75) node {$i^n$};
\end{tikzpicture}
\end{equation*}
\end{prop}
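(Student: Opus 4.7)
The plan is to translate the graphical statements into algebraic identities inside the Hecke--Clifford algebra and then invoke Proposition~\ref{idempotentslide}. By the conventions introduced in Section~\ref{catH} and the Karoubi construction of $\widetilde{P}_i^{(n)}$ and $\widetilde{Q}_i^{(n)}$, the box labelled $i^n$ represents the idempotent $e_{i,1,(n)}$, which factors through the symmetrizer $\psi_{(n)}$ of~\eqref{psi}. Equivalently, for the purpose of sliding crossings past the box, the diagram inside the box can be replaced by $\psi_{(n)}$ since the remaining factors of $e_{i,1,(n)}$ involve only endomorphisms of the single strand factors (i.e.\ dots and the group-algebra part of $B^\Gamma_n$) which commute past the crossings of an external strand by relations \eqref{H6}, \eqref{H19}, and the analogous relations in $\mathcal{K}_{\Gamma}'^-$.

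Next, I interpret the external strand crossing over (or under) the $n$-strand bundle. If the bundle occupies positions $1,\ldots,n$ and the external strand enters on the right at position $n+1$, then the diagram of $n$ consecutive crossings represents the product $s_{n}s_{n-1}\cdots s_{1} \in \mathbb{S}_{n+1}$ that moves the external strand from position $n+1$ to position $1$; the reverse orientation of the diagram represents $s_{1}s_{2}\cdots s_{n}$. With this dictionary, the first relation of the Proposition becomes the algebraic identity
\[
(s_{n}\cdots s_{1})\,\psi^{0}_{(n)} \;=\; \psi^{1}_{(n)}\,(s_{n}\cdots s_{1}),
\]
which is exactly the second formula of Proposition~\ref{idempotentslide} (with $i=0$). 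The second relation of the Proposition likewise reduces to the first formula of Proposition~\ref{idempotentslide}.

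It remains to check that the identity holds regardless of the orientations of the strands. When the external strand and the bundle point in opposite directions, the diagrammatic crossing is no longer literally a transposition in $\mathbb{S}_{n+1}$. However, using the isotopy relations~\eqref{H9}, \eqref{H10}, \eqref{K5}, \eqref{K6} (together with the corresponding relations for hollow dots) and the fact that the extra terms appearing in~\eqref{H10} and \eqref{K6} involve cups and caps that vanish against an adjacent $\psi_{(n)}$ by Propositions~\ref{absorbformulas} and~\ref{idempotentabsorb}, these mixed cases reduce to the same algebraic identity.

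The main obstacle is the bookkeeping in the mixed-orientation cases, where one has to verify that the correction terms produced by the crossing relations are killed when composed with the symmetrizer $\psi_{(n)}$. This is handled uniformly by noting that any such correction term contains either a factor of the form $s_k\psi_{(n)} - \psi_{(n)}$ (which vanishes by Proposition~\ref{absorbformulas}), or a bubble/turnback that either vanishes by~\eqref{H12} and~\eqref{K7}, or is absorbed by $\psi_{(n)}$ via Proposition~\ref{idempotentabsorb} after re-expressing the nested idempotents.
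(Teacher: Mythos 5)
Your core approach — translating the graphical slide into the algebraic identity of Proposition~\ref{idempotentslide} — is exactly what the paper does, and the same-orientation case is handled correctly. The identification of the box with $\psi_{(n)}$ in the auxiliary category is also fine: in $\mathcal{K}_\Gamma^-$ the idempotent $e_{i,1,(n)}$ is precisely $\psi_{(n)}$, the $B^\Gamma$-projection having already been absorbed into the labeling of the object $\widetilde{P}_i$.

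The mixed-orientation paragraph, however, is off the mark. The relations you cite — \eqref{H9}, \eqref{H10}, \eqref{K5}, \eqref{K6} — are the \emph{double-crossing} relations: they express the composite of two successive crossings (e.g.\ $QP \to PQ \to QP$) as the identity plus cup-and-cap correction terms, and calling them ``isotopy relations'' is a misnomer. In the slide relation the external strand crosses each bundle strand exactly once, on one side of the box; no double crossing ever occurs, so these relations and their correction terms are simply never invoked, and there is nothing for $\psi_{(n)}$ to absorb in the way you describe. What actually handles the mixed case is much less delicate: the internal crossings making up $\psi_{(n)}$ live entirely on the $n$-strand bundle and never touch the external strand, so sliding them past a single external crossing uses only far-commutation (a planar isotopy, which is built into the definition of the morphism spaces) and the triangle move (the braid relation \eqref{K3}/\eqref{H7}), both of which are insensitive to the orientation of the external strand. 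Once those moves are in hand, the resulting algebraic identity is exactly Proposition~\ref{idempotentslide} regardless of orientation, and the proof goes through as you intended.
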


\begin{proof}
This is simply a graphical depiction of Proposition ~\ref{idempotentslide}.
\end{proof}

\begin{prop}
\label{graphicalidempotentabsorb}
For any orientations of the strands, and with $ m \leq n $, there are graphical local relations
\begin{equation*}
\label{idPkar}
\begin{tikzpicture}[>=stealth]
\draw (0,0) rectangle (2, .5);
\draw (.5,-1.5) rectangle (1.5, -1);
\draw (.25, -2.5) -- (.25, 0);
\draw (1.75, -2.5) -- (1.75, 0);
\draw (.25, .5) -- (.25, 1.5);
\draw (1.75, .5) -- (1.75, 1.5);
\draw (1,-2.5) -- (1,-1.5);
\draw (1,-1) -- (1,0);
\draw (1,1.5) -- (1,.5);
\draw (1,.25) node{$i^n$};
\draw (1,-1.25) node{$i^m$};
\draw (2.5,-.5) node{$=$};

\draw (3,-.75) rectangle (5,-.25);
\draw (4,-.5) node{$i^n$};
\draw (4,-2.5) -- (4,-.75);
\draw (4,1.5) -- (4,-.25);

\draw (5.5, -.5) node{$=$};

\draw (6,-1.5) rectangle (8, -1);
\draw (6.5,0) rectangle (7.5, .5);
\draw (6.25, -2.5) -- (6.25, -1.5);
\draw (6.25, -1) -- (6.25, 1.5);
\draw (7.75, -2.5) -- (7.75, -1.5);
\draw (7.75, -1) -- (7.75, 1.5);

%\draw (7.75, -2.5) -- (7.75, 1.5);
%\draw (7.75, .5) -- (7.75, 1.5);
\draw (7,-2.5) -- (7,-1.5);
\draw (7,-1) -- (7,0);
\draw (7,1.5) -- (7,.5);
\draw (7,-1.25) node{$i^n$};
\draw (7,.25) node{$i^m$};

\end{tikzpicture}
\end{equation*}
where each strand is actually a set of parallel copies of strands.
\end{prop}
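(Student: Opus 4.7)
The plan is to recognize this as the graphical translation of Proposition~\ref{idempotentabsorb}, in the same way that Proposition~\ref{graphicalidempotentslide} is the graphical translation of Proposition~\ref{idempotentslide}. The boxes labeled $i^n$ and $i^m$ are shorthand for idempotent morphisms: the outer boxes represent $\psi_{(n)}\in\mathbb{S}_n\subset\mathrm{End}(\widetilde{P}_i^n)$ (or $\mathrm{End}(\widetilde{Q}_i^n)$), while the inner box represents $\psi^{\,j}_{(m)}$, where $j$ is the offset determined by the position at which the $m$ inner strands sit among the $n$ strands passing through the outer boxes. The horizontal pieces of the diagram that bypass the inner box are identity morphisms on the remaining $n-m$ strands, which, together with $\psi^{\,j}_{(m)}$, give an element of $\mathbb{S}_n$ that is literally $\psi^{\,j}_{(m)}$ under the standard embedding.

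First I would make this dictionary precise by reading off each of the two stacked diagrams as an element of the endomorphism algebra of $\widetilde{P}_i^n$ (respectively $\widetilde{Q}_i^n$): the left-hand diagram corresponds to $\psi_{(n)}\cdot\psi^{\,j}_{(n)}$ viewed as an element of that algebra after setting $(m+n)$ in Proposition~\ref{idempotentabsorb} equal to the present $n$ and $n$ there equal to the present $m$. By Proposition~\ref{idempotentabsorb}, this product equals $\psi_{(n)}$, which is exactly the right-hand single-box diagram. The other equality follows from the symmetric statement $\psi^{\,j}_{(m)}\psi_{(n)}=\psi_{(n)}$.

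Next I would address the phrase \emph{for any orientations of the strands}. The idempotents $e_{i,1,(n)}$ are built from the generators (crossings, dots labeled by elements of $B^{\Gamma}$, and hollow dots) whose graphical relations are essentially orientation-neutral up to the sign conventions already encoded in the defining relations of $\mathcal{K}_{\Gamma}'^{-}$. Since the statement reduces to the algebraic identity of Proposition~\ref{idempotentabsorb} inside the appropriate endomorphism algebra, the choice of upward versus downward orientation on individual strands only affects how we draw the same algebraic equation; the identity itself is orientation-independent. I would verify this in the all-upward case and then invoke this observation to cover the mixed-orientation cases.

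The main obstacle, to the extent that there is one, is bookkeeping: matching the graphical convention for dot placement on the parallel arcs (as specified in the remarks preceding~\eqref{parcccups} and~\eqref{parcccaps}) and the embedding $\rho_j$ defining $\psi^{\,j}_{(m)}$ so that the diagrams really do compute $\psi_{(n)}\psi^{\,j}_{(m)}$ rather than some differently signed variant. Once Proposition~\ref{graphicalidempotentslide} is in hand, the inner box can if necessary be slid into the canonical position $j=0$ before the absorption is applied, making this bookkeeping routine and completing the proof.
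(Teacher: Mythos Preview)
Your proposal is correct and takes essentially the same approach as the paper: the paper's proof is simply the one-line observation that this is a graphical depiction of Proposition~\ref{idempotentabsorb}. Your additional discussion of the dictionary, orientations, and bookkeeping is more explicit than the paper bothers to be, but the core idea is identical.
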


\begin{proof}
This is just a graphical depiction of Proposition ~\ref{idempotentabsorb}.
\end{proof}

\begin{prop}
\label{relationsinK1}
For all $ i,j $,
there is an isomorphism of objects in $\mathcal{K}_{\Gamma}^- $:
\begin{equation*}
\widetilde{P}_i^{(n)} \widetilde{P}_j^{(m)} \cong \widetilde{P}_j^{(m)} \widetilde{P}_i^{(n)}.
\end{equation*}
\end{prop}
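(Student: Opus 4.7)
The plan is to realize the isomorphism through an explicit block-swapping braid on the ambient objects, and then check that it commutes with the defining idempotents. Let $\tau\colon\widetilde P_i^n\widetilde P_j^m\to\widetilde P_j^m\widetilde P_i^n$ denote the diagram in which each of the $n$ upward strands labeled $i$ is braided past each of the $m$ upward strands labeled $j$, and let $\tau'\colon\widetilde P_j^m\widetilde P_i^n\to\widetilde P_i^n\widetilde P_j^m$ denote the corresponding reverse block-swap. Using the braid relation~\eqref{K3} to bring each matching pair of crossings adjacent and then applying relation~\eqref{K4} (which is valid for any labels) to collapse each such pair, one obtains $\tau'\tau=\mathrm{id}_{\widetilde P_i^n\widetilde P_j^m}$ and $\tau\tau'=\mathrm{id}_{\widetilde P_j^m\widetilde P_i^n}$.

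Abbreviate $E_1=e_{i,1,(n)}\otimes e_{j,1,(m)}$ and $E_2=e_{j,1,(m)}\otimes e_{i,1,(n)}$, which are by definition the identity morphisms of $\widetilde P_i^{(n)}\widetilde P_j^{(m)}$ and $\widetilde P_j^{(m)}\widetilde P_i^{(n)}$ respectively in the Karoubi envelope. Define
\begin{equation*}
f = E_2\circ\tau\circ E_1,\qquad g = E_1\circ\tau'\circ E_2.
\end{equation*}
The central input is the intertwining identity $\tau\circ E_1=E_2\circ\tau$, which I would establish by iterated application of Proposition~\ref{graphicalidempotentslide}: one slide moves a single-color idempotent box past one transverse strand, so $nm$ applications carry $e_{i,1,(n)}$ through the entire $j$-block and $e_{j,1,(m)}$ through the entire $i$-block. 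Given the intertwining, $\tau'E_2\tau=\tau'\tau\,E_1=E_1$, and hence
\begin{equation*}
gf = E_1\tau'E_2\,E_2\tau E_1 = E_1\tau'E_2\tau E_1 = E_1\cdot E_1\cdot E_1 = E_1,
\end{equation*}
which is precisely the identity of $\widetilde P_i^{(n)}\widetilde P_j^{(m)}$; the symmetric computation gives $fg=E_2$.

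The main obstacle will be the intertwining identity $\tau E_1=E_2\tau$. Morally this is automatic, since $E_1$ is supported on strands of two disjoint colors and $\tau$ is built only from $i/j$-crossings, so neither factor of $E_1$ can interact nontrivially with $\tau$. Rigorously, however, one must either invoke the graphical slide of Proposition~\ref{graphicalidempotentslide} in a form strong enough to handle the full idempotent $e_{i,1,(n)}$ (not merely the symmetrizer part $\psi_{(n)}$), or else decompose $e_{i,1,(n)}$ into its generating decorations --- same-color crossings, solid dots on a single strand, hollow Clifford dots, and $\Gamma$-data --- and verify that each slides past a transverse strand of the opposite color without correction, using the braid relation~\eqref{K3}, free motion of solid dots along strands, and the hollow-dot crossing relations~\eqref{K13},~\eqref{K14}. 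Once this bookkeeping is carried out, the rest of the proof is formal.
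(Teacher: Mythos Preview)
Your approach is essentially identical to the paper's: both define the isomorphism by sandwiching the block-crossing $\tau$ between the idempotent boxes, and both reduce the verification to sliding the idempotent boxes through the crossings via Proposition~\ref{graphicalidempotentslide} together with relation~\eqref{K4}. The paper simply stacks $g$ on $f$, slides the middle boxes up and absorbs them, and then straightens the strands, whereas you first establish $\tau'\tau=\mathrm{id}$ and then the intertwining $\tau E_1=E_2\tau$ separately; the content and the key inputs are the same, and the concern you flag about whether the slide handles the full idempotent $e_{i,1,(n)}$ versus only $\psi_{(n)}$ is one the paper itself does not address beyond citing Proposition~\ref{graphicalidempotentslide}.
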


\begin{proof}
Let $ f \colon \widetilde{P}_i^{(n)} \widetilde{P}_j^{(m)} \rightarrow \widetilde{P}_j^{(m)} \widetilde{P}_i^{(n)} $ and
$ g \colon \widetilde{P}_j^{(m)} \widetilde{P}_i^{(n)} \rightarrow \widetilde{P}_i^{(n)} \widetilde{P}_j^{(m)} $ be the maps given by
\begin{equation*}
\label{idPkar}
\begin{tikzpicture}[>=stealth]
\draw (-.5,1) node {$f =$};
\draw (0,0) rectangle (1,.5);
\draw (.5,.25) node {$i^n$};
\draw (0,1.5) rectangle (1,2);
\draw (.5,1.75) node {$j^m$};
\draw (.5,.5) -- (2.5,1.5) [->][thick];
\draw (2.5,.5) -- (.5,1.5) [->][thick];
\draw (2,0) rectangle (3,.5);
\draw (2.5,.25) node {$j^m$};
\draw (2,1.5) rectangle (3,2);
\draw (2.5,1.75) node {$i^n$};

\draw (5.5,1) node {$g =$};
\draw (6,0) rectangle (7,.5);
\draw (6.5,.25) node {$j^n$};
\draw (6,1.5) rectangle (7,2);
\draw (6.5,1.75) node {$i^m$};
\draw (6.5,.5) -- (8.5,1.5) [->][thick];
\draw (8.5,.5) -- (6.5,1.5) [->][thick];
\draw (8,0) rectangle (9,.5);
\draw (8.5,.25) node {$i^m$};
\draw (8,1.5) rectangle (9,2);
\draw (8.5,1.75) node {$j^n$};
\end{tikzpicture}
\end{equation*}
where the diagonal arrows are shorthand notation for parallel copies of the arrows where the number of copies is determined by the size of the rectangles in which the arrows begin or terminate.

Then we get for the composition $ gf$:
\begin{equation*}
\label{idPkar}
\begin{tikzpicture}[>=stealth]
\draw (-.5,1.75) node {$gf =$};
\draw (0,0) rectangle (1,.5);
\draw (.5,.25) node {$i^n$};
\draw (0,1.5) rectangle (1,2);
\draw (.5,1.75) node {$j^m$};
\draw (.5,.5) -- (2.5,1.5) [->][thick];
\draw (2.5,.5) -- (.5,1.5) [->][thick];
\draw (2,0) rectangle (3,.5);
\draw (2.5,.25) node {$j^m$};
\draw (2,1.5) rectangle (3,2);
\draw (2.5,1.75) node {$i^n$};

\draw (.5,2) -- (2.5,3) [->][thick];
\draw (2.5,2) -- (.5,3) [->][thick];
\draw (0,3) rectangle (1,3.5);
\draw (.5,3.25) node {$i^n$};
\draw (2,3) rectangle (3,3.5);
\draw (2.5,3.25) node {$j^m$};
\end{tikzpicture}
\end{equation*}

By Proposition ~\ref{graphicalidempotentslide}, we may slide the idempotents in the middle of the diagram to the top and absorb them into the top idempotents to get
\begin{equation*}
\label{idPkar}
\begin{tikzpicture}[>=stealth]
\draw (0,0) rectangle (1,.5);
\draw (.5,.25) node {$i^n$};
\draw (0,3) rectangle (1,3.5);
\draw (.5,3.25) node {$i^n$};
\draw (.5,.5) -- (2.5,1.75) [][thick];
\draw (2.5,.5) -- (.5,1.75) [][thick];
\draw (2,0) rectangle (3,.5);
\draw (2.5,.25) node {$j^m$};
\draw (2,3) rectangle (3,3.5);
\draw (2.5,3.25) node {$j^m$};
\draw (.5,1.75) -- (2.5,3) [->][thick];
\draw (2.5,1.75) -- (.5,3) [->][thick];
\end{tikzpicture}
\end{equation*}

Now by repeated use of \eqref{K4} we may separate all of the strands to get
\begin{equation*}
\label{idPkar}
\begin{tikzpicture}[>=stealth]
\draw (0,0) rectangle (1,.5);
\draw (.5,.25) node {$i^n$};
\draw (0,3) rectangle (1,3.5);
\draw (.5,3.25) node {$i^n$};
\draw (.5,.5) -- (.5,3) [->][thick];
\draw (2.5,.5) -- (2.5,3) [->][thick];
\draw (2,0) rectangle (3,.5);
\draw (2.5,.25) node {$j^m$};
\draw (2,3) rectangle (3,3.5);
\draw (2.5,3.25) node {$j^m$};
\end{tikzpicture}
\end{equation*}
which is the identity.  Similarly, $ fg $ is the identity as well.
\end{proof}

\begin{prop}
\label{relationsinK2}
For all $ i,j $,
there is an isomorphism of objects in $\mathcal{K}_{\Gamma}^- $:
\begin{equation*}
\widetilde{Q}_i^{(n)} \widetilde{Q}_j^{(m)} \cong \widetilde{Q}_j^{(m)} \widetilde{Q}_i^{(n)}.
\end{equation*}
\end{prop}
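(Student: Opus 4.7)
The plan is to mimic the proof of Proposition \ref{relationsinK1}, with the obvious modification that all strands are oriented downward instead of upward. Define maps
\begin{equation*}
f \colon \widetilde{Q}_i^{(n)} \widetilde{Q}_j^{(m)} \rightarrow \widetilde{Q}_j^{(m)} \widetilde{Q}_i^{(n)}, \qquad g \colon \widetilde{Q}_j^{(m)} \widetilde{Q}_i^{(n)} \rightarrow \widetilde{Q}_i^{(n)} \widetilde{Q}_j^{(m)}
\end{equation*}
by sandwiching, between the appropriate idempotents $e_{i,1,(n)}$ and $e_{j,1,(m)}$, the diagram consisting of $nm$ parallel (downward-oriented) crossings that braid the $n$ strands labeled $i$ past the $m$ strands labeled $j$. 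The composite $gf$ then consists of two such braidings stacked on top of each other with a middle layer carrying the idempotents $e_{i,1,(n)}$ and $e_{j,1,(m)}$.

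The proof proceeds in three steps, exactly paralleling Proposition \ref{relationsinK1}. First, I would use Proposition \ref{graphicalidempotentslide} repeatedly to push the two middle idempotents through the upper layer of crossings, where they get absorbed into the idempotents at the top by Proposition \ref{graphicalidempotentabsorb}. This reduces $gf$ to two stacked layers of crossings with idempotents only at the very top and bottom. Second, I would use the downward-strand analogue of relation \eqref{K4} to cancel each pair of crossings between a strand labeled $i$ and a strand labeled $j$, leaving only the identity diagram between the top and bottom idempotents. The computation of $fg$ is symmetric.

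The one genuine technical issue, which is the main obstacle, is that the relations \eqref{K3}--\eqref{K4} are stated only for upward-oriented strands, but here we need to resolve double crossings between two \emph{downward} strands. Fortunately this downward version can be derived from the stated relations by bending the strands: cup and cap the bottoms and tops of the two downward strands (using the cup/cap generators) to convert a downward double crossing into an upward double crossing with some extra cups and caps, apply \eqref{K4} to the resulting upward portion, and then use the mixed-orientation relations \eqref{K5} and the isotopy/triangle relations implicit in having adjoint structures to straighten out the remaining cups and caps. (The same manipulation is standard in \cite{CL}.) Once this lemma is in hand, the rest of the argument is a line-for-line translation of the proof of Proposition \ref{relationsinK1}, so I would state it as a short lemma and then say ``the proof proceeds exactly as in Proposition \ref{relationsinK1}, with all orientations reversed.''
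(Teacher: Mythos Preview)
Your proposal is correct and takes exactly the same approach as the paper, whose proof is the single line ``The proof of this is nearly identical to that of Proposition~\ref{relationsinK1}.'' You are in fact more careful than the paper in flagging the need for a downward-strand analogue of relation~\eqref{K4}; the paper simply takes this for granted (it follows, as you indicate, from the isotopy and adjunction relations, or alternatively by rotating the local picture in~\eqref{K4} by $180^\circ$).
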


\begin{proof}
The proof of this is nearly identical to that of Proposition ~\ref{relationsinK1}.
\end{proof}

Before we prove the next categorical relation we begin with some prepatory lemmas.

\begin{lemma}
\label{crossinglemma1}

Suppose $ i $ and $ j $ are adjacent nodes and $a+1=n$ and $ b+1=m$. Then there are equalities

\begin{equation*}
\begin{tikzpicture}[>=stealth]
\draw (0,0) rectangle (1,.5);
\draw (.5,.25) node {$i^{n}$};
\draw (0,2) rectangle (1,2.5);
\draw (.5,2.25) node {$i^{n}$};
%\draw (.3,.5) -- (.3,2) [<-][thick];
%\draw (.6,1.25) node {$\cdots$};
\draw (.5,.5) -- (.5,2) [<-][thick];
\draw (1.5,0) rectangle (2.5,.5);
\draw (2,.25) node {$j^{m}$};
\draw (1.5,2) rectangle (2.5,2.5);
\draw (2,2.25) node {$j^{m}$};
\draw (2,.5) -- (2,2) [->][thick];
\draw (0,.5) .. controls (1,1) .. (2.5,.5)[<-][thick];
\draw (0,2) .. controls (1,1.5) .. (2.5,2)[->][thick];
%\draw (1,.5) .. controls (1.25,.75) .. (1.5,.5)[<-][thick];
%\draw (1,2) .. controls (1.25,1.75) .. (1.5,2)[->][thick];
\filldraw [black] (1.6,1.7) circle (2pt);
\filldraw [black] (1.6,.8) circle (2pt);
%\filldraw [black] (1.25,1.8) circle (2pt);
%\filldraw [black] (1.25,.7) circle (2pt);
\draw (.3,1.25) node {$\scriptstyle{a}$};
\draw (2.2,1.25) node {$\scriptstyle{b}$};
%\draw (1.25,2.05) node {$\scriptstyle{c}$};
%\draw (1.25,.5) node {$\scriptstyle{c}$};

%\draw (3,1.25) node {$+$};

\draw [shift={+(-1,0)}](5,1.25) node {$=$};

%\draw (7.1,1.25) node {$2^{a+b}$};
\draw [shift={+(-2,0)}](7.5,0) rectangle (8.5,.5);
\draw [shift={+(-2,0)}](8,.25) node {$i^{n}$};
\draw [shift={+(-2,0)}](7.5,2) rectangle (8.5,2.5);
\draw [shift={+(-2,0)}](8,2.25) node {$i^{n}$};
\draw [shift={+(-2,0)}](7.5,.5) -- (7.5,2) [<-][thick];
\draw [shift={+(-2,0)}](9,0) rectangle (10,.5);
\draw [shift={+(-2,0)}](9.5,.25) node {$j^{m}$};
\draw [shift={+(-2,0)}](9.5,2.25) node {$j^{m}$};
\draw [shift={+(-2,0)}](9,2) rectangle (10,2.5);
\draw [shift={+(-2,0)}](10,.5) -- (10,2) [->][thick];
%\draw (8.5,2) .. controls (8.75,1.75) .. (9,2)[->][thick];
%\draw (8.5,.5) .. controls (8.75,.75) .. (9,.5)[<-][thick];
\draw [shift={+(-2,0)}](7.75,2) .. controls (8.75,1.25) .. (9.75,2)[->][thick];
\draw [shift={+(-2,0)}](7.75,.5) .. controls (8.75,1.25) .. (9.75,.5)[<-][thick];
\draw [shift={+(-2,0)}](7.7,1.25) node {$\scriptstyle{a}$};
\draw [shift={+(-2,0)}](9.8,1.25) node {$\scriptstyle{b}$};
%\filldraw [black] (8.75,1.8) circle (2pt);
%\filldraw [black] (8.75,.7) circle (2pt);
%\draw (8.75,2.05) node {$\scriptstyle{c}$};
%\draw (8.75,.5) node {$\scriptstyle{c}$};
\filldraw [shift={+(-2,0)}][black] (9.1,1.55) circle (2pt);
\filldraw [shift={+(-2,0)}][black] (9.1,.95) circle (2pt);

\end{tikzpicture}
\end{equation*}

\begin{equation*}
\begin{tikzpicture}[>=stealth]

\draw [shift={+(-3.5,0)}](3.5,0) rectangle (4.5,.5);
\draw [shift={+(-3.5,0)}](4,.25) node {$i^{n}$};
\draw [shift={+(-3.5,0)}](3.5,2) rectangle (4.5,2.5);
\draw [shift={+(-3.5,0)}](4,2.25) node {$i^{n}$};
%\draw (.3,.5) -- (.3,2) [<-][thick];
%\draw (.6,1.25) node {$\cdots$};
\draw [shift={+(-3.5,0)}](4,.5) -- (4,2) [<-][thick];
\draw [shift={+(-3.5,0)}](5,0) rectangle (6,.5);
\draw [shift={+(-3.5,0)}](5.5,.25) node {$j^{m}$};
\draw [shift={+(-3.5,0)}](5,2) rectangle (6,2.5);
\draw [shift={+(-3.5,0)}](5.5,2.25) node {$j^{m}$};
\draw [shift={+(-3.5,0)}](5.5,.5) -- (5.5,2) [->][thick];
\draw [shift={+(-3.5,0)}](3.5,.5) .. controls (4.5,1) .. (6,.5)[<-][thick];
\draw [shift={+(-3.5,0)}](3.5,2) .. controls (4.5,1.5) .. (6,2)[->][thick];
%\draw (4.5,.5) .. controls (4.75,.8) .. (5,.5)[<-][thick];
%\draw (4.5,2) .. controls (4.75,1.7) .. (5,2)[->][thick];
\draw [shift={+(-3.5,0)}][black] (4.4,1.65) circle (2pt);
\draw [shift={+(-3.5,0)}][black] (5.1,.8) circle (2pt);
\filldraw [shift={+(-3.5,0)}][black] (4.75,.85) circle (2pt);
\filldraw [shift={+(-3.5,0)}][black] (4.75,1.62) circle (2pt);
%\filldraw [black] (4.75,1.8) circle (2pt);
%\filldraw [black] (4.75,.7) circle (2pt);
\draw [shift={+(-3.5,0)}](3.8,1.25) node {$\scriptstyle{a}$};
\draw [shift={+(-3.5,0)}](5.7,1.25) node {$\scriptstyle{b}$};
%\draw (4.75,2.05) node {$\scriptstyle{c}$};
%\draw (4.75,.5) node {$\scriptstyle{c}$};

\draw (4,1.25) node {$=$};

%\draw (11.1,1.25) node {$2^{a+b}$};
\draw [shift={+(-6,0)}](11.5,0) rectangle (12.5,.5);
\draw [shift={+(-6,0)}](12,.25) node {$i^{n}$};
\draw [shift={+(-6,0)}](11.5,2) rectangle (12.5,2.5);
\draw [shift={+(-6,0)}](12,2.25) node {$i^{n}$};
\draw [shift={+(-6,0)}](11.5,.5) -- (11.5,2) [<-][thick];
\draw [shift={+(-6,0)}](13,0) rectangle (14,.5);
\draw [shift={+(-6,0)}](13.5,.25) node {$j^{m}$};
\draw [shift={+(-6,0)}](13.5,2.25) node {$j^{m}$};
\draw [shift={+(-6,0)}](13,2) rectangle (14,2.5);
\draw [shift={+(-6,0)}](14,.5) -- (14,2) [->][thick];
%\draw (12.5,2) .. controls (12.75,1.75) .. (13,2)[->][thick];
%\draw (12.5,.5) .. controls (12.75,.75) .. (13,.5)[<-][thick];
\draw [shift={+(-6,0)}](11.75,2) .. controls (12.75,1.25) .. (13.75,2)[->][thick];
\draw [shift={+(-6,0)}](11.75,.5) .. controls (12.75,1.25) .. (13.75,.5)[<-][thick];
\draw [shift={+(-6,0)}](11.7,1.25) node {$\scriptstyle{a}$};
\draw [shift={+(-6,0)}](13.8,1.25) node {$\scriptstyle{b}$};
%\filldraw [black] (12.75,1.8) circle (2pt);
%\filldraw [black] (12.75,.7) circle (2pt);
%\draw (12.75,2.05) node {$\scriptstyle{c}$};
%\draw (12.75,.5) node {$\scriptstyle{c}$};
\draw [black] [shift={+(-6,0)}](12.4,1.55) circle (2pt);
\draw [black] [shift={+(-6,0)}](13.1,.95) circle (2pt);

\filldraw [black] [shift={+(-6,0)}](12.75,1.05) circle (2pt);
\filldraw [black] [shift={+(-6,0)}](12.75,1.45) circle (2pt);
\draw (8.1,0) node {.};

\end{tikzpicture}
\end{equation*}

\end{lemma}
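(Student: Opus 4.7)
The plan is to reduce both equalities to trivial diagrammatic identities by exploiting the absorption and sliding properties of the idempotents $e_{i,1,(n)}$ and $e_{j,1,(m)}$ encoded in Propositions~\ref{graphicalidempotentabsorb} and~\ref{graphicalidempotentslide}. Since these idempotents contain the symmetrizer $\psi_{(n)}$ from~\eqref{psi}, a dot placed on any one of the $n$ (or $m$) parallel strands is effectively averaged over all positions by the idempotent; this symmetrization is the geometric mechanism underlying the claimed equalities.

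First I would split each of the four outer idempotent boxes into two pieces via Proposition~\ref{graphicalidempotentabsorb}, isolating the single strand of each bundle that participates in the cap/cup from the remaining $a = n-1$ and $b = m-1$ vertical strands. Within the resulting local picture (a cap and a cup, each carrying solid dots, joined to small one-strand idempotents), I would pull the solid dots through the crossings implicit in the arcs by repeated application of the dot-crossing relations in~\eqref{K1} and the dot-absorption relations in~\eqref{K2}. Combined with Proposition~\ref{graphicalidempotentslide}, this allows me to move the crossings freely past the small idempotents and to reposition each solid dot from one end of its arc to the other, modulo adjacency signs $\epsilon_{ij}$. Reabsorbing the auxiliary idempotents by the reverse application of Proposition~\ref{graphicalidempotentabsorb} should then collapse the diagram into the one on the RHS.

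For the second equality, the same sequence of moves is applied, but I must also transport the hollow dots alongside the solid ones. These are handled by relation~\eqref{K2} (moving a hollow dot past a solid dot on the same strand without sign) together with the anti-commutativity of hollow dots on distinct strands stated in the paragraph following~\eqref{auxstrands}. The hollow dot positions on the LHS and RHS differ in a way that matches exactly the motion induced by sliding them around the caps and cups via~\eqref{K15} and~\eqref{K16}.

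The main obstacle is the bookkeeping of signs: every time a hollow dot or a Clifford-odd solid dot passes another such morphism on a distinct strand, a super-sign is incurred, and each application of~\eqref{K1} introduces a factor $\epsilon_{ij}$. What must be verified is that the aggregate of these signs on the LHS matches the aggregate on the RHS. I expect the cancellation to follow from the fact that each dot traverses the same arc structure on both sides up to an even number of strand-crossings, so that the total number of odd-odd commutations has the same parity; the adjacency signs similarly pair off because each solid dot is moved an equal number of times past adjacency morphisms on the two sides. Confirming this parity count is the substantive content of the argument.
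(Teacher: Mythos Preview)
Your plan misreads what actually differs between the two sides. In both the LHS and the RHS the solid (and, in the second equality, hollow) dots sit in the middle of the cap and cup arcs, between the $a$-bundle and the $b$-bundle; they never need to move. What changes is only \emph{where the cap/cup strand enters the idempotent boxes}: on the LHS it enters at the outermost position of each idempotent and therefore crosses all $a$ (resp.\ $b$) vertical strands, while on the RHS it enters at the innermost position and crosses nothing. The content of the lemma is simply that those crossings may be absorbed into the idempotents.

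The paper's proof does exactly this in one line: each crossing of the cap (or cup) strand with a vertical strand is a simple transposition adjacent to an idempotent box, and by Proposition~\ref{absorbformulas} one has $s_k\psi_{(n)}=\psi_{(n)}=\psi_{(n)}s_k$. Absorbing the crossings one at a time, first on the $i$-side and then on the $j$-side, takes the LHS directly to the RHS. No dot ever meets a crossing, so no dot--crossing relation is invoked and no sign arises. Your citations of \eqref{K1} and \eqref{K2} are in any case misplaced: \eqref{K1} collapses two consecutive solid dots on a single strand, and \eqref{K2} commutes a hollow dot past a solid dot on a single strand; neither moves a dot through a crossing of distinct strands. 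Moreover your proposal to ``reposition each solid dot from one end of its arc to the other'' cannot work here, since that solid dot is precisely the label-changing morphism $\widetilde P_i\to\widetilde P_j$ and cannot be slid past either endpoint without violating the boundary labels. The sign bookkeeping you anticipate as the ``main obstacle'' is therefore a phantom; once you see that only crossing-absorption is needed, both equalities follow immediately and by the same argument.
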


\begin{proof}
Begin from the left and absorb the upper most and bottom most crossings into the idempotents using Proposition ~\ref{absorbformulas}.
Once the crossings formed by the $ a $ crossings are removed, continue from the right and absorb the $ b $ crossings in the same way.
\end{proof}

\begin{lemma}
\label{crossinglemma2}
Suppose $ i $ and $ j $ are adjacent nodes.
Then there is an equality of morphisms

\begin{equation*}
\begin{tikzpicture}[>=stealth]
\draw (0,0) rectangle (1,.5);
\draw (.5,.25) node {$i^{n}$};
\draw (0,2) rectangle (1,2.5);
\draw (.5,2.25) node {$i^{n}$};
%\draw (.3,.5) -- (.3,2) [<-][thick];
%\draw (.6,1.25) node {$\cdots$};
\draw (.5,.5) -- (.5,2) [<-][thick];
\draw (1.5,0) rectangle (2.5,.5);
\draw (2,.25) node {$j^{1}$};
\draw (1.5,2) rectangle (2.5,2.5);
\draw (2,2.25) node {$j^{1}$};
%\draw (0,.5) .. controls (1,1) .. (2.5,.5)[<-][thick];
%\draw (0,2) .. controls (1,1.5) .. (2.5,2)[->][thick];
%\draw (1,.5) .. controls (1.25,.75) .. (1.5,.5)[<-][thick];
%\draw (1,2) .. controls (1.25,1.75) .. (1.5,2)[->][thick];
\draw (2.5,2) .. controls (-.5,1.25) .. (2.5,.5)[<-][thick];
%\filldraw [black] (1.6,1.7) circle (2pt);
%\filldraw [black] (1.6,.8) circle (2pt);
%\filldraw [black] (1.25,1.8) circle (2pt);
%\filldraw [black] (1.25,.7) circle (2pt);
\draw (.3,.8) node {$\scriptstyle{n}$};
%\draw (1.25,2.05) node {$\scriptstyle{b}$};
%\draw (1.25,.5) node {$\scriptstyle{b}$};

\draw (3,1.25) node {$=$};

\draw (3.5,0) rectangle (4.5,.5);
\draw (4,.25) node {$i^{n}$};
\draw (3.5,2) rectangle (4.5,2.5);
\draw (4,2.25) node {$i^{n}$};
%\draw (.3,.5) -- (.3,2) [<-][thick];
%\draw (.6,1.25) node {$\cdots$};
\draw (4,.5) -- (4,2) [<-][thick];
\draw (5,0) rectangle (6,.5);
\draw (5.5,.25) node {$j^{m}$};
\draw (5,2) rectangle (6,2.5);
\draw (5.5,2.25) node {$j^{m}$};
%\draw (0,.5) .. controls (1,1) .. (2.5,.5)[<-][thick];
%\draw (0,2) .. controls (1,1.5) .. (2.5,2)[->][thick];
%\draw (4.5,.5) .. controls (4.75,.75) .. (5,.5)[<-][thick];
%\draw (4.5,2) .. controls (4.75,1.75) .. (5,2)[->][thick];
%\draw (6,2) .. controls (3,1.25) .. (6,.5)[<-][thick];
%\filldraw [black] (1.6,1.7) circle (2pt);
%\filldraw [black] (1.6,.8) circle (2pt);
\draw (5.5,.5) -- (5.5,2) [->][thick];
%\filldraw [black] (4.75,1.8) circle (2pt);
%\filldraw [black] (4.75,.7) circle (2pt);
\draw (3.9,1.25) node {$\scriptstyle{n}$};
%\draw (4.75,2.05) node {$\scriptstyle{b}$};
%\draw (4.75,.5) node {$\scriptstyle{b}$};

\draw (6.4,1.25) node {$-$};

\draw (7.3,1.25) node {$n \epsilon_{ij}$};

\draw (8,0) rectangle (9,.5);
\draw (8.5,.25) node {$i^{n}$};
\draw (8,2) rectangle (9,2.5);
\draw (8.5,2.25) node {$i^{n}$};
%\draw (.3,.5) -- (.3,2) [<-][thick];
%\draw (.6,1.25) node {$\cdots$};
\draw (8.3,.5) -- (8.3,2) [<-][thick];
\draw (9.5,0) rectangle (10.5,.5);
\draw (10,.25) node {$j^{1}$};
\draw (9.5,2) rectangle (10.5,2.5);
\draw (10,2.25) node {$j^{1}$};
%\draw (0,.5) .. controls (1,1) .. (2.5,.5)[<-][thick];
%\draw (0,2) .. controls (1,1.5) .. (2.5,2)[->][thick];
%\draw (9,.5) .. controls (9.25,.75) .. (9.5,.5)[<-][thick];
%\draw (9,2) .. controls (9.25,1.75) .. (9.5,2)[->][thick];
%\draw (6,2) .. controls (3,1.25) .. (6,.5)[<-][thick];
%\filldraw [black] (1.6,1.7) circle (2pt);
%\filldraw [black] (1.6,.8) circle (2pt);
%\filldraw [black] (9.25,1.8) circle (2pt);
%\filldraw [black] (9.25,.7) circle (2pt);
\draw (8.65,1.25) node {$\scriptstyle{n-1}$};
%\draw (9.25,2.05) node {$\scriptstyle{b}$};
%\draw (9.25,.5) node {$\scriptstyle{b}$};
\draw (8.5,.5) .. controls (9.25,1) .. (10,.5)[<-][thick];
\draw (8.5,2) .. controls (9.25,1.5) .. (10,2)[->][thick];
\filldraw [black] (9.25,1.6) circle (2pt);
\filldraw [black] (9.25,.9) circle (2pt);

\draw (11,1.25) node {$+$};

\draw (11.9,1.25) node {$n \epsilon_{ij}$};
\draw (12.6,0) rectangle (13.6,.5);
\draw (13.1,.25) node {$i^{n}$};
\draw (12.6,2) rectangle (13.6,2.5);
\draw (13.1,2.25) node {$i^{n}$};
%\draw (.3,.5) -- (.3,2) [<-][thick];
%\draw (.6,1.25) node {$\cdots$};
\draw (12.9,.5) -- (12.9,2) [<-][thick];
\draw (14.1,0) rectangle (15.1,.5);
\draw (14.6,.25) node {$j^{1}$};
\draw (14.1,2) rectangle (15.1,2.5);
\draw (14.6,2.25) node {$j^{1}$};
%\draw (0,.5) .. controls (1,1) .. (2.5,.5)[<-][thick];
%\draw (0,2) .. controls (1,1.5) .. (2.5,2)[->][thick];
%\draw (13.6,.5) .. controls (13.85,.75) .. (14.1,.5)[<-][thick];
%\draw (13.6,2) .. controls (13.85,1.75) .. (14.1,2)[->][thick];
%\draw (6,2) .. controls (3,1.25) .. (6,.5)[<-][thick];
%\filldraw [black] (1.6,1.7) circle (2pt);
%\filldraw [black] (1.6,.8) circle (2pt);
%\filldraw [black] (13.85,1.8) circle (2pt);
%\filldraw [black] (13.85,.7) circle (2pt);
\draw (13.25,1.25) node {$\scriptstyle{n-1}$};
%\draw (13.85,2.05) node {$\scriptstyle{b}$};
%\draw (13.85,.5) node {$\scriptstyle{b}$};
\draw (13.1,.5) .. controls (13.85,1) .. (14.6,.5)[<-][thick];
\draw (13.1,2) .. controls (13.85,1.5) .. (14.6,2)[->][thick];
\filldraw [black] (13.85,1.6) circle (2pt);
\filldraw [black] (13.85,.9) circle (2pt);

\draw [black] (13.5,1.75) circle (2pt);
\draw [black] (14.2,.75) circle (2pt);
\draw (15.2,0) node {.};

\end{tikzpicture}
\end{equation*}
\end{lemma}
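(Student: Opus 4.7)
The plan is to induct on $n$. The base case $n=1$ follows directly from relation \eqref{K6}: after inserting trivial $i^1$ and $j^1$ idempotents via Proposition \ref{graphicalidempotentabsorb}, the LHS of the lemma becomes exactly the LHS of \eqref{K6}, and the three summands on the RHS of \eqref{K6} match the lemma's RHS with $n=1$ (since then $n\epsilon_{ij} = \epsilon_{ij}$).

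For the inductive step with $n>1$, I factor $i^n = i^{n-1}\cdot i^1$ using Proposition \ref{graphicalidempotentabsorb}, isolating the single innermost $i$ strand (the one adjacent to the $j$ strand). The wrapping $j$ arc makes a pair of opposite-orientation crossings with this isolated strand, to which I apply relation \eqref{K6}. This produces three summands: (i) a ``straight-through'' term, in which the isolated $i$ strand now passes directly through the idempotent and the $j$ arc wraps only the remaining $i^{n-1}$ block; together with (ii) and (iii), two cup-cap correction terms between the isolated strand and the $j$ strand, carrying coefficients $-\epsilon_{ij}$ and $+\epsilon_{ij}$, distinguished by the absence or presence of hollow dot decorations.

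The inductive hypothesis applied to (i) yields the identity term of the lemma's RHS together with $(n-1)$ copies of each of the two correction terms. For (ii) and (iii), I use Proposition \ref{graphicalidempotentslide} to slide the cup-cap endpoints from the innermost strand position across the idempotent to the outermost (rightmost) position, and invoke Lemma \ref{crossinglemma1} to verify that the solid-dot placements along the cup and cap can be freely repositioned by idempotent absorption. This shows that each of (ii) and (iii) equals exactly one additional copy of the displayed correction diagram on the RHS of the lemma. Summing all contributions produces coefficients $1$, $-n\epsilon_{ij}$, and $+n\epsilon_{ij}$, as required.

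I expect the main obstacle to be sign and dot bookkeeping in the hollow-dotted correction term (iii). Since hollow dots on distinct strands anti-commute and relations \eqref{K15}, \eqref{K16} introduce signs when hollow dots pass across cups and caps, these must be tracked carefully when sliding the cup-cap structure across the idempotent and reabsorbing. The second equation of Lemma \ref{crossinglemma1} (the one with added hollow dots) will likely be needed to show that hollow-dot positions are immaterial after absorption. Once the $+\epsilon_{ij}$ sign in (iii) is verified, the induction closes and the lemma follows.
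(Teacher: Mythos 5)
Your overall strategy --- induct on $n$, isolate a single $i$ strand, resolve its double crossing with the $j$ arc via \eqref{K6}, and normalize the resulting cup/cap correction terms with Lemma \ref{crossinglemma1} --- coincides with the paper's proof, and your coefficient bookkeeping ($1$, $-n\epsilon_{ij}$, $+n\epsilon_{ij}$) is right.

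However, the parenthetical ``(the one adjacent to the $j$ strand)'' identifies the wrong strand, and this is not merely cosmetic. Relation \eqref{K6} is a \emph{local} relation: it resolves a pair of crossings that bound an empty bigon. When the $j$ arc wraps from the right around $n$ downward $i$-strands, the two crossings it makes with the rightmost $i$ strand (the one nearest the $j^1$ idempotent) bound a bigon containing all $n-1$ remaining $i$-strands and all $2(n-1)$ of their crossings with the arc; \eqref{K6} does not apply there. The only strand whose two crossings with the $j$ arc are adjacent, bounding an empty bigon, is the \emph{leftmost} one --- the strand deepest inside the arc's turnaround and farthest from the $j^1$ block. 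This is exactly the strand the paper's proof resolves (``the leftmost of the $n$ strands''). Your later sentence about sliding the cup-cap endpoints ``to the outermost (rightmost) position'' suggests that by ``innermost'' you actually meant the leftmost strand, in which case the argument goes through as in the paper; but then the parenthetical should be deleted or corrected. If you really intended the rightmost strand, you would first need an extra step (absorbing a sequence of $i$--$i$ crossings into the symmetrizer to bring it to the leftmost position) before \eqref{K6} is usable, and that step is missing.
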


\begin{proof}
The left hand side is equal to
\begin{equation*}
\begin{tikzpicture}[>=stealth]
\draw (0,0) rectangle (1,.5);
\draw (.5,.25) node {$i^{n}$};
\draw (0,2) rectangle (1,2.5);
\draw (.5,2.25) node {$i^{n}$};
%\draw (.3,.5) -- (.3,2) [<-][thick];
%\draw (.6,1.25) node {$\cdots$};
\draw (0,.5) -- (0,2) [<-][thick];
\draw (.5,.5) -- (.5,2) [<-][thick];
\draw (1.5,0) rectangle (2.5,.5);
\draw (2,.25) node {$j^{1}$};
\draw (1.5,2) rectangle (2.5,2.5);
\draw (2,2.25) node {$j^{1}$};
%\draw (0,.5) .. controls (1,1) .. (2.5,.5)[<-][thick];
%\draw (0,2) .. controls (1,1.5) .. (2.5,2)[->][thick];
%\draw (1,.5) .. controls (1.25,.75) .. (1.5,.5)[<-][thick];
%\draw (1,2) .. controls (1.25,1.75) .. (1.5,2)[->][thick];
\draw (2.5,2) .. controls (-.4,1.25) .. (2.5,.5)[<-][thick];
%\filldraw [black] (1.6,1.7) circle (2pt);
%\filldraw [black] (1.6,.8) circle (2pt);
%\filldraw [black] (1.25,1.8) circle (2pt);
%\filldraw [black] (1.25,.7) circle (2pt);
\draw (.75,.8) node {$\scriptstyle{n-1}$};
%\draw (1.25,2.05) node {$\scriptstyle{b}$};
%\draw (1.25,.5) node {$\scriptstyle{b}$};

\draw (3,1.25) node {$-\epsilon_{ij}$};
\draw (3.5,0) rectangle (4.5,.5);
\draw (4,.25) node {$i^{n}$};
\draw (3.5,2) rectangle (4.5,2.5);
\draw (4,2.25) node {$i^{n}$};
%\draw (.3,.5) -- (.3,2) [<-][thick];
%\draw (.6,1.25) node {$\cdots$};
\draw (3.8,.5) -- (3.8,2) [<-][thick];
\draw (5,0) rectangle (6,.5);
\draw (5.5,.25) node {$j^{1}$};
\draw (5,2) rectangle (6,2.5);
\draw (5.5,2.25) node {$j^{1}$};
%\draw (0,.5) .. controls (1,1) .. (2.5,.5)[<-][thick];
%\draw (0,2) .. controls (1,1.5) .. (2.5,2)[->][thick];
\draw (3.5,.5) .. controls (4.75,1) .. (6,.5)[<-][thick];
\draw (3.5,2) .. controls (4.75,1.5) .. (6,2)[->][thick];
%\draw (6,2) .. controls (3,1.25) .. (6,.5)[<-][thick];
%\filldraw [black] (1.6,1.7) circle (2pt);
%\filldraw [black] (1.6,.8) circle (2pt);
%\filldraw [black] (4.75,1.8) circle (2pt);
%\filldraw [black] (4.75,.7) circle (2pt);
\draw (4.15,1.25) node {$\scriptstyle{n-1}$};
%\draw (4.75,2.05) node {$\scriptstyle{b}$};
%\draw (4.75,.5) node {$\scriptstyle{b}$};
%\draw (4.5,.5) .. controls (4.75,.75) .. (5,.5)[<-][thick];
%\draw (4.5,2) .. controls (4.75,1.75) .. (5,2)[->][thick];
\filldraw [black] (4.75,1.6) circle (2pt);
\filldraw [black] (4.75,.9) circle (2pt);

\draw (6.5,1.25) node {$+\epsilon_{ij}$};
\draw (7,0) rectangle (8,.5);
\draw (7.5,.25) node {$i^{n}$};
\draw (7,2) rectangle (8,2.5);
\draw (7.5,2.25) node {$i^{n}$};
%\draw (.3,.5) -- (.3,2) [<-][thick];
%\draw (.6,1.25) node {$\cdots$};
\draw (7.3,.5) -- (7.3,2) [<-][thick];
\draw (8.5,0) rectangle (9.5,.5);
\draw (9,.25) node {$j^{m}$};
\draw (8.5,2) rectangle (9.5,2.5);
\draw (9,2.25) node {$j^{m}$};
%\draw (0,.5) .. controls (1,1) .. (2.5,.5)[<-][thick];
%\draw (0,2) .. controls (1,1.5) .. (2.5,2)[->][thick];
\draw (7,.5) .. controls (8.25,1) .. (9.5,.5)[<-][thick];
\draw (7,2) .. controls (8.25,1.5) .. (9.5,2)[->][thick];
%\draw (6,2) .. controls (3,1.25) .. (6,.5)[<-][thick];
%\filldraw [black] (1.6,1.7) circle (2pt);
%\filldraw [black] (1.6,.8) circle (2pt);
%\filldraw [black] (8.25,1.8) circle (2pt);
%\filldraw [black] (8.25,.7) circle (2pt);
\draw (7.65,1.25) node {$\scriptstyle{n-1}$};
%\draw (8.25,2.05) node {$\scriptstyle{b}$};
%\draw (8.25,.5) node {$\scriptstyle{b}$};
%\draw (8,.5) .. controls (8.25,.75) .. (8.5,.5)[<-][thick];
%\draw (8,2) .. controls (8.25,1.75) .. (8.5,2)[->][thick];
\filldraw [black] (8.25,1.6) circle (2pt);
\filldraw [black] (8.25,.9) circle (2pt);
\draw [black] (7.8,1.68) circle (2pt);
\draw [black] (8.6,.85) circle (2pt);

\end{tikzpicture}
\end{equation*}
by applying the basic double crossing relation to the leftmost of the $ n $ strands.
By Lemma \ref{crossinglemma1} this is equal to

\begin{equation*}
\begin{tikzpicture}[>=stealth]
\draw (0,0) rectangle (1,.5);
\draw (.5,.25) node {$i^{n}$};
\draw (0,2) rectangle (1,2.5);
\draw (.5,2.25) node {$i^{n}$};
%\draw (.3,.5) -- (.3,2) [<-][thick];
%\draw (.6,1.25) node {$\cdots$};
\draw (0,.5) -- (0,2) [<-][thick];
\draw (.5,.5) -- (.5,2) [<-][thick];
\draw (1.5,0) rectangle (2.5,.5);
\draw (2,.25) node {$j^{1}$};
\draw (1.5,2) rectangle (2.5,2.5);
\draw (2,2.25) node {$j^{1}$};
%\draw (0,.5) .. controls (1,1) .. (2.5,.5)[<-][thick];
%\draw (0,2) .. controls (1,1.5) .. (2.5,2)[->][thick];
%\draw (1,.5) .. controls (1.25,.75) .. (1.5,.5)[<-][thick];
%\draw (1,2) .. controls (1.25,1.75) .. (1.5,2)[->][thick];
\draw (2.5,2) .. controls (-.4,1.25) .. (2.5,.5)[<-][thick];
%\filldraw [black] (1.6,1.7) circle (2pt);
%\filldraw [black] (1.6,.8) circle (2pt);
%\filldraw [black] (1.25,1.8) circle (2pt);
%\filldraw [black] (1.25,.7) circle (2pt);
\draw (.8,.8) node {$\scriptstyle{n-1}$};
%\draw (1.25,2.05) node {$\scriptstyle{b}$};
%\draw (1.25,.5) node {$\scriptstyle{b}$};
%-3.4

\draw (3.4,1.25) node {$-\epsilon_{ij}$};
%\draw (3.9,1.25) node {$(a)$};
\draw (4.6,0) rectangle (5.6,.5);
\draw (5.1,.25) node {$i^{n}$};
\draw (4.6,2) rectangle (5.6,2.5);
\draw (5.1,2.25) node {$i^{n}$};
%\draw (.3,.5) -- (.3,2) [<-][thick];
%\draw (.6,1.25) node {$\cdots$};
\draw (4.9,.5) -- (4.9,2) [<-][thick];
\draw (6.1,0) rectangle (7.1,.5);
\draw (6.6,.25) node {$j^{1}$};
\draw (6.1,2) rectangle (7.1,2.5);
\draw (6.6,2.25) node {$j^{1}$};
%\draw (0,.5) .. controls (1,1) .. (2.5,.5)[<-][thick];
%\draw (0,2) .. controls (1,1.5) .. (2.5,2)[->][thick];
%\draw (5.6,.5) .. controls (5.85,.75) .. (6.1,.5)[<-][thick];
%\draw (5.6,2) .. controls (5.85,1.75) .. (6.1,2)[->][thick];
%\draw (6,2) .. controls (3,1.25) .. (6,.5)[<-][thick];
%\filldraw [black] (1.6,1.7) circle (2pt);
%\filldraw [black] (1.6,.8) circle (2pt);
%\filldraw [black] (5.85,1.8) circle (2pt);
%\filldraw [black] (5.85,.7) circle (2pt);
\draw (5.25,1.25) node {$\scriptstyle{n-1}$};
%\draw (5.85,2.05) node {$\scriptstyle{b}$};
%\draw (5.85,.5) node {$\scriptstyle{b}$};
\draw (5.1,.5) .. controls (5.85,1) .. (6.6,.5)[<-][thick];
\draw (5.1,2) .. controls (5.85,1.5) .. (6.6,2)[->][thick];
\filldraw [black] (5.85,1.6) circle (2pt);
\filldraw [black] (5.85,.9) circle (2pt);

%-3.4
\draw (8,1.25) node {$+\epsilon_{ij}$};

%\draw (8.5,1.25) node {$(a)$};
\draw (9.2,0) rectangle (10.2,.5);
\draw (9.7,.25) node {$i^{n}$};
\draw (9.2,2) rectangle (10.2,2.5);
\draw (9.7,2.25) node {$i^{n}$};
%\draw (.3,.5) -- (.3,2) [<-][thick];
%\draw (.6,1.25) node {$\cdots$};
\draw (9.5,.5) -- (9.5,2) [<-][thick];
\draw (10.7,0) rectangle (11.7,.5);
\draw (11.2,.25) node {$j^{1}$};
\draw (10.7,2) rectangle (11.7,2.5);
\draw (11.2,2.25) node {$j^{1}$};
%\draw (0,.5) .. controls (1,1) .. (2.5,.5)[<-][thick];
%\draw (0,2) .. controls (1,1.5) .. (2.5,2)[->][thick];
%\draw (10.2,.5) .. controls (10.45,.75) .. (10.7,.5)[<-][thick];
%\draw (10.2,2) .. controls (10.45,1.75) .. (10.7,2)[->][thick];
%\draw (6,2) .. controls (3,1.25) .. (6,.5)[<-][thick];
%\filldraw [black] (1.6,1.7) circle (2pt);
%\filldraw [black] (1.6,.8) circle (2pt);
%\filldraw [black] (10.45,1.8) circle (2pt);
%\filldraw [black] (10.45,.7) circle (2pt);
\draw (9.85,1.25) node {$\scriptstyle{n-1}$};
%\draw (10.45,2.05) node {$\scriptstyle{b}$};
%\draw (10.45,.5) node {$\scriptstyle{b}$};
\draw (9.7,.5) .. controls (10.45,1) .. (11.2,.5)[<-][thick];
\draw (9.7,2) .. controls (10.45,1.5) .. (11.2,2)[->][thick];
\filldraw [black] (10.45,1.6) circle (2pt);
\filldraw [black] (10.45,.9) circle (2pt);

\draw [black] (10,1.79) circle (2pt);
\draw [black] (10.8,.75) circle (2pt);
\draw (11.8,0) node {.};

\end{tikzpicture}
\end{equation*}
We then get the identity by induction on $ n $.

\end{proof}

\begin{lemma}
\label{crossinglemma4}
Suppose $ i $ and $ j $ are adjacent nodes,
%$ r+b=n $, and $ b+s=m $.
Then there is an equality of morphisms

\begin{equation*}
\begin{tikzpicture}[>=stealth]
\draw (0,0) rectangle (1,.5);
\draw (.5,.25) node {$i^{n}$};
\draw (0,2) rectangle (1,2.5);
\draw (.5,2.25) node {$i^{n}$};
%\draw (.3,.5) -- (.3,2) [<-][thick];
%\draw (.6,1.25) node {$\cdots$};
%\draw (.5,.5) -- (.5,2) [<-][thick];
\draw (1.5,0) rectangle (2.5,.5);
\draw (2,.25) node {$j^{m}$};
\draw (1.5,2) rectangle (2.5,2.5);
\draw (2,2.25) node {$j^{m}$};
%\draw (0,.5) .. controls (1,1) .. (2.5,.5)[<-][thick];
%\draw (0,2) .. controls (1,1.5) .. (2.5,2)[->][thick];
%\draw (1,.5) .. controls (1.25,.75) .. (1.5,.5)[<-][thick];
%\draw (1,2) .. controls (1.25,1.75) .. (1.5,2)[->][thick];
\draw (2.5,2) .. controls (-.5,1.25) .. (2.5,.5)[<-][thick];
\draw (0,2) .. controls (3,1.25) .. (0,.5)[->][thick];

%\filldraw [black] (1.6,1.7) circle (2pt);
%\filldraw [black] (1.6,.8) circle (2pt);
%\filldraw [black] (1.25,1.8) circle (2pt);
%\filldraw [black] (1.25,.7) circle (2pt);
\draw (2,1.25) node {$\scriptstyle{n}$};
\draw (.5,1.25) node {$\scriptstyle{m}$};

%\draw (1.25,2.05) node {$\scriptstyle{b}$};
%\draw (1.25,.5) node {$\scriptstyle{b}$};

\draw (3,1.25) node {$=$};

\draw (3.5,0) rectangle (4.5,.5);
\draw (4,.25) node {$i^{n}$};
\draw (3.5,2) rectangle (4.5,2.5);
\draw (4,2.25) node {$i^{n}$};
%\draw (.3,.5) -- (.3,2) [<-][thick];
%\draw (.6,1.25) node {$\cdots$};
%\draw (4,.5) -- (4,2) [<-][thick];
\draw (5,0) rectangle (6,.5);
\draw (5.5,.25) node {$j^{m}$};
\draw (5,2) rectangle (6,2.5);
\draw (5.5,2.25) node {$j^{m}$};
\draw (4,.5) .. controls (5.5,1.25) .. (4,2)[<-][thick];
\draw (5.5,.5) .. controls (4,1.25) .. (5.5,2)[->][thick];
%\draw (0,2) .. controls (1,1.5) .. (2.5,2)[->][thick];
%\draw (4.5,.5) .. controls (4.75,.75) .. (5,.5)[<-][thick];
%\draw (4.5,2) .. controls (4.75,1.75) .. (5,2)[->][thick];
%\draw (6,2) .. controls (3,1.25) .. (6,.5)[<-][thick];
%\filldraw [black] (1.6,1.7) circle (2pt);
%\filldraw [black] (1.6,.8) circle (2pt);
\draw (5.75,.5) -- (5.75,2) [->][thick];
%\filldraw [black] (4.75,1.8) circle (2pt);
%\filldraw [black] (4.75,.7) circle (2pt);
%\draw (4.75,2.05) node {$\scriptstyle{b}$};
%\draw (4.75,.5) node {$\scriptstyle{b}$};
\draw (5,1.25) node {$\scriptstyle{n}$};
\draw (4.05,1.25) node {$\scriptstyle{m-1}$};

%\draw (5.95,1.25) node {$-$};

\draw (7.1,1.25) node {$-$};
\draw (7.7,1.25) node {$n\epsilon_{ij}$};

\draw (8,0) rectangle (9,.5);
\draw (8.5,.25) node {$i^{n}$};
\draw (8,2) rectangle (9,2.5);
\draw (8.5,2.25) node {$i^{n}$};
%\draw (8.3,.5) -- (8.3,2) [<-][thick];
\draw (8.25,.5) .. controls (9.8,1.25) .. (8.25,2)[<-][thick];
\draw (10.25,.5) .. controls (8.7,1.25) .. (10.25,2)[->][thick];

\draw (9.5,0) rectangle (10.5,.5);
\draw (10,.25) node {$j^{m}$};
\draw (9.5,2) rectangle (10.5,2.5);
\draw (10,2.25) node {$j^{m}$};
%\draw (0,.5) .. controls (1,1) .. (2.5,.5)[<-][thick];
%\draw (0,2) .. controls (1,1.5) .. (2.5,2)[->][thick];
%\draw (9,.5) .. controls (9.25,.75) .. (9.5,.5)[<-][thick];
%\draw (9,2) .. controls (9.25,1.75) .. (9.5,2)[->][thick];
%\draw (6,2) .. controls (3,1.25) .. (6,.5)[<-][thick];
%\filldraw [black] (1.6,1.7) circle (2pt);
%\filldraw [black] (1.6,.8) circle (2pt);
%\filldraw [black] (9.25,1.8) circle (2pt);
%\filldraw [black] (9.25,.7) circle (2pt);
\draw (8.75,1.35) node {$\scriptstyle{m-1}$};
\draw (9.8,1.15) node {$\scriptstyle{n-1}$};

%\draw (9.25,2.05) node {$\scriptstyle{b}$};
%\draw (9.25,.5) node {$\scriptstyle{b}$};
\draw (8.5,.5) .. controls (9.25,1) .. (10,.5)[<-][thick];
\draw (8.5,2) .. controls (9.25,1.5) .. (10,2)[->][thick];
\filldraw [black] (9.25,1.6) circle (2pt);
\filldraw [black] (9.25,.9) circle (2pt);

%\draw (10.4,1.25) node {$+$};

\draw (11.6,1.25) node {$+$};
\draw (12.1,1.25) node {$n\epsilon_{ij}$};

\draw (12.6,0) rectangle (13.6,.5);
\draw (13.1,.25) node {$i^{n}$};
\draw (12.6,2) rectangle (13.6,2.5);
\draw (13.1,2.25) node {$i^{n}$};
%\draw (.3,.5) -- (.3,2) [<-][thick];
%\draw (.6,1.25) node {$\cdots$};
%\draw (12.9,.5) -- (12.9,2) [<-][thick];
\draw (14.1,0) rectangle (15.1,.5);
\draw (14.6,.25) node {$j^{m}$};
\draw (14.1,2) rectangle (15.1,2.5);
\draw (14.6,2.25) node {$j^{m}$};
%\draw (0,.5) .. controls (1,1) .. (2.5,.5)[<-][thick];
%\draw (0,2) .. controls (1,1.5) .. (2.5,2)[->][thick];
%\draw (13.6,.5) .. controls (13.85,.75) .. (14.1,.5)[<-][thick];
%\draw (13.6,2) .. controls (13.85,1.75) .. (14.1,2)[->][thick];
%\draw (6,2) .. controls (3,1.25) .. (6,.5)[<-][thick];
%\filldraw [black] (1.6,1.7) circle (2pt);
%\filldraw [black] (1.6,.8) circle (2pt);
%\filldraw [black] (13.85,1.8) circle (2pt);
%\filldraw [black] (13.85,.7) circle (2pt);
%\draw (13.85,2.05) node {$\scriptstyle{b}$};
%\draw (13.85,.5) node {$\scriptstyle{b}$};
\draw (13.1,.5) .. controls (13.85,1) .. (14.6,.5)[<-][thick];
\draw (13.1,2) .. controls (13.85,1.5) .. (14.6,2)[->][thick];
\filldraw [black] (13.85,1.6) circle (2pt);
\filldraw [black] (13.85,.9) circle (2pt);

\draw [black] (13.5,1.75) circle (2pt);
\draw [black] (14.2,.75) circle (2pt);
\draw (15.2,0) node {.};

\draw (12.85,.5) .. controls (14.4,1.25) .. (12.85,2)[<-][thick];
\draw (14.85,.5) .. controls (13.3,1.25) .. (14.85,2)[->][thick];

\draw (13.35,1.15) node {$\scriptstyle{m-1}$};
\draw (14.4,1.15) node {$\scriptstyle{n-1}$};

\end{tikzpicture}
\end{equation*}
\end{lemma}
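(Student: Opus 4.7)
The plan is to prove this statement by reducing to Lemma~\ref{crossinglemma2} via an inductive procedure, treating the two outer arcs (labeled $n$ and $m$) one at a time. Lemma~\ref{crossinglemma2} handles precisely the case where the $j$-side is a single strand; the present statement is its natural two-sided generalization to arbitrary $m$.

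For the base case $m=1$, the outer $m$-arc is a single strand that can be absorbed into the $i^n$ idempotent using Propositions~\ref{graphicalidempotentslide} and~\ref{graphicalidempotentabsorb}, reducing the diagram to the setting of Lemma~\ref{crossinglemma2}. For the inductive step, I would use Proposition~\ref{graphicalidempotentabsorb} to factor the $j^m$ idempotent so as to isolate a single $j$-strand from the bundle. Once isolated, Lemma~\ref{crossinglemma2} straightens the $n$-arc going around the $i^n$-block, producing the main term (straight strands through the idempotents) plus the two correction terms with coefficients $\pm n\epsilon_{ij}$. The factor $n$ arises from summing the $n$ positions into which the single $j$-strand may be inserted relative to the $i^n$-bundle, all of which give equivalent contributions by Proposition~\ref{absorbformulas}.

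In the main term, the remaining $m-1$ $j$-strands re-assemble back into a $j^m$-idempotent via Proposition~\ref{graphicalidempotentabsorb}, producing the $(m-1)$-labeled arc on the right-hand side. In the two correction terms, the $m-1$ extra strands are absorbed back into the $j^m$ idempotent on the boundary, leaving the correction diagrams as depicted. Throughout, Proposition~\ref{graphicalidempotentslide} is used to move crossings past idempotents, and relations~\eqref{K13}, \eqref{K14}, \eqref{K1} govern the movement of hollow and solid dots past crossings.

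The main obstacle will be verifying that extraneous terms vanish and that the dot decorations on the surviving correction terms are placed exactly as shown. In particular, one must apply relation~\eqref{K9} to kill any terms in which a single hollow dot appears on an isolated cap or cup, and one must carefully track the supercommutation of hollow and solid dots through the idempotents, as well as the signs coming from $\epsilon_{ij}$ via relation~\eqref{K6}. The bookkeeping is analogous to (and builds directly upon) that of Lemma~\ref{crossinglemma2}, so the argument does not introduce fundamentally new computational phenomena; the difficulty is purely combinatorial, ensuring that the outer $m$-arc survives unchanged in the main term and is absorbed cleanly in the error terms.
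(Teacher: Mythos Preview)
Your approach is close in spirit to the paper's but is organized differently, and it has one genuine gap.

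The paper does not run an induction on $m$. It applies the basic double-crossing relation~\eqref{K6} \emph{once}, to the innermost single double crossing between the $i$-bundle and the $j$-bundle, producing three terms (this is equation~\eqref{crossinglemma4eq1}). It then applies Lemma~\ref{crossinglemma2} to the main term and Lemma~\ref{crossinglemma1} to the two correction terms, and the identity follows. No inductive hypothesis is used: the first term on the right-hand side is literally the left-hand side with one fewer $j$-strand winding around, so the statement is a one-step recursion rather than an induction. Your ``base case $m=1$'' is simply Lemma~\ref{crossinglemma2} itself; no absorption into the $i^n$-idempotent is needed there.

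The real gap in your plan is the treatment of the two correction terms. After you apply Lemma~\ref{crossinglemma2} to an isolated $j$-strand, the resulting dotted cup/cap sit adjacent to the $i^n$-block while the remaining $m-1$ $j$-strands still wind around them. To match the displayed right-hand side you must commute that cup/cap past the $(m-1)$-bundle of double crossings, and this is exactly the content of Lemma~\ref{crossinglemma1}, which you never invoke. The local relations you cite (\eqref{K1}, \eqref{K13}, \eqref{K14}) and the idempotent-sliding/absorbing Propositions~\ref{graphicalidempotentslide}--\ref{graphicalidempotentabsorb} only handle dot moves and idempotent bookkeeping; they do not by themselves slide a dotted cup/cap through a bundle of $Q_i$--$P_j$ crossings. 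Without Lemma~\ref{crossinglemma1} (or an equivalent argument) the correction diagrams will not reduce to the stated form. A minor additional point: the factor $n$ in $\pm n\epsilon_{ij}$ is already built into Lemma~\ref{crossinglemma2}; it does not arise from ``summing $n$ positions'' as you describe.
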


\begin{proof}
Apply the basic crossing relation to the innermost double crossing of the $ n $ and $ m $ strands to get that the left hand side of the lemma is equal to

\begin{equation}
\label{crossinglemma4eq1}
\begin{tikzpicture}[>=stealth]
\draw (3.5,0) rectangle (4.5,.5);
\draw (4,.25) node {$i^{n}$};
\draw (3.5,2) rectangle (4.5,2.5);
\draw (4,2.25) node {$i^{n}$};
%\draw (.3,.5) -- (.3,2) [<-][thick];
%\draw (.6,1.25) node {$\cdots$};
%\draw (4,.5) -- (4,2) [<-][thick];
\draw (5,0) rectangle (6,.5);
\draw (5.5,.25) node {$j^{m}$};
\draw (5,2) rectangle (6,2.5);
\draw (5.5,2.25) node {$j^{m}$};
\draw (4,.5) .. controls (5.5,1.25) .. (4,2)[<-][thick];
\draw (5.5,.5) .. controls (4,1.25) .. (5.5,2)[->][thick];

\draw (3.5,.5) .. controls (5,1.25) .. (3.5,2)[<-][thick];
\draw (6,.5) .. controls (4.4,1.25) .. (6,2)[->][thick];

%\draw (0,2) .. controls (1,1.5) .. (2.5,2)[->][thick];
%\draw (4.5,.5) .. controls (4.75,.75) .. (5,.5)[<-][thick];
%\draw (4.5,2) .. controls (4.75,1.75) .. (5,2)[->][thick];
%\draw (6,2) .. controls (3,1.25) .. (6,.5)[<-][thick];

%\filldraw [black] (4.75,1.8) circle (2pt);
%\filldraw [black] (4.75,.7) circle (2pt);
%\draw (4.75,2.05) node {$\scriptstyle{b}$};
%\draw (4.75,.5) node {$\scriptstyle{b}$};
\draw (5.45,1.25) node {$\scriptstyle{n-1}$};
\draw (4.0,1.25) node {$\scriptstyle{m-1}$};

\draw (7,1.25) node {$-\epsilon_{ij}$};

\draw (8,0) rectangle (9,.5);
\draw (8.5,.25) node {$i^{n}$};
\draw (8,2) rectangle (9,2.5);
\draw (8.5,2.25) node {$i^{n}$};
%\draw (8.3,.5) -- (8.3,2) [<-][thick];
\draw (8.25,.5) .. controls (9.8,1.25) .. (8.25,2)[<-][thick];
\draw (10.25,.5) .. controls (8.7,1.25) .. (10.25,2)[->][thick];

\draw (9.5,0) rectangle (10.5,.5);
\draw (10,.25) node {$j^{m}$};
\draw (9.5,2) rectangle (10.5,2.5);
\draw (10,2.25) node {$j^{m}$};
%\draw (0,.5) .. controls (1,1) .. (2.5,.5)[<-][thick];
%\draw (0,2) .. controls (1,1.5) .. (2.5,2)[->][thick];
%\draw (9,.5) .. controls (9.25,.75) .. (9.5,.5)[<-][thick];
%\draw (9,2) .. controls (9.25,1.75) .. (9.5,2)[->][thick];
%\draw (6,2) .. controls (3,1.25) .. (6,.5)[<-][thick];
%\filldraw [black] (1.6,1.7) circle (2pt);
%\filldraw [black] (1.6,.8) circle (2pt);
%\filldraw [black] (9.25,1.8) circle (2pt);
%\filldraw [black] (9.25,.7) circle (2pt);
\draw (8.75,1.15) node {$\scriptstyle{m-1}$};
\draw (9.8,1.15) node {$\scriptstyle{n-1}$};

%\draw (9.25,2.05) node {$\scriptstyle{b}$};
%\draw (9.25,.5) node {$\scriptstyle{b}$};
\draw (8,.5) .. controls (9.25,1) .. (10.5,.5)[<-][thick];
\draw (8,2) .. controls (9.25,1.5) .. (10.5,2)[->][thick];
\filldraw [black] (9.25,1.6) circle (2pt);
\filldraw [black] (9.25,.9) circle (2pt);

\draw (11.45,1.25) node {$+\epsilon_{ij}$};

\draw (12.6,0) rectangle (13.6,.5);
\draw (13.1,.25) node {$i^{n}$};
\draw (12.6,2) rectangle (13.6,2.5);
\draw (13.1,2.25) node {$i^{n}$};
%\draw (.3,.5) -- (.3,2) [<-][thick];
%\draw (.6,1.25) node {$\cdots$};
%\draw (12.9,.5) -- (12.9,2) [<-][thick];
\draw (14.1,0) rectangle (15.1,.5);
\draw (14.6,.25) node {$j^{m}$};
\draw (14.1,2) rectangle (15.1,2.5);
\draw (14.6,2.25) node {$j^{m}$};
%\draw (0,.5) .. controls (1,1) .. (2.5,.5)[<-][thick];
%\draw (0,2) .. controls (1,1.5) .. (2.5,2)[->][thick];
%\draw (13.6,.5) .. controls (13.85,.75) .. (14.1,.5)[<-][thick];
%\draw (13.6,2) .. controls (13.85,1.75) .. (14.1,2)[->][thick];
%\draw (6,2) .. controls (3,1.25) .. (6,.5)[<-][thick];
%\filldraw [black] (1.6,1.7) circle (2pt);
%\filldraw [black] (1.6,.8) circle (2pt);
%\filldraw [black] (13.85,1.8) circle (2pt);
%\filldraw [black] (13.85,.7) circle (2pt);
%\draw (13.85,2.05) node {$\scriptstyle{b}$};
%\draw (13.85,.5) node {$\scriptstyle{b}$};
\draw (12.6,.5) .. controls (13.85,1) .. (15.1,.5)[<-][thick];
\draw (12.6,2) .. controls (13.85,1.5) .. (15.1,2)[->][thick];
\filldraw [black] (13.85,1.6) circle (2pt);
\filldraw [black] (13.85,.9) circle (2pt);

\draw [black] (13.1,1.8) circle (2pt);
\draw [black] (14.6,.7) circle (2pt);
\draw (15.2,0) node {.};

\draw (12.85,.5) .. controls (14.4,1.25) .. (12.85,2)[<-][thick];
\draw (14.85,.5) .. controls (13.3,1.25) .. (14.85,2)[->][thick];

\draw (13.35,1.15) node {$\scriptstyle{m-1}$};
\draw (14.4,1.15) node {$\scriptstyle{n-1}$};

\end{tikzpicture}
\end{equation}
Now apply Lemma \ref{crossinglemma2} to the first term above and Lemma \ref{crossinglemma1} to the next two terms above to get that \eqref{crossinglemma4eq1} is equal to the right hand side of the lemma.

\end{proof}

%For $ b, d \geq 1 $ and $ 0 \leq k \leq \text{min}(b-1,d-1) $, set
%\begin{eqnarray}
%\label{defofalpha}
%\alpha_{b,d,k} =
%\begin{cases}
%(2^b-1)(2^{b-1}-1) \cdots (2^{b-k+1}) \sum_{0 \leq \gamma_1 < \cdots < \gamma_k \leq d-1} 2^{\gamma_1} \cdots 2^{\gamma_k}   & \text{ if } b \geq d, k>0\\
%(2^d-1)(2^{d-1}-1) \cdots (2^{d-k+1}) \sum_{0 \leq \gamma_1 < \cdots < \gamma_k \leq b-1} 2^{\gamma_1} \cdots 2^{\gamma_k}   & \text{ if } b < d, k>0\\
%1 & \text{ if } k=0.
%\end{cases}
%\end{eqnarray}

\begin{lemma}
\label{crossinglemma5}
Let $ i $ and $ j $ be adjacent nodes, $ a+b= n $, and $ d+e=m $.  Then there is an equality
\begin{equation*}
\begin{tikzpicture}[>=stealth]
\draw (0,0) rectangle (1,.5);
\draw (.5,.25) node {$i^{n}$};
\draw (0,2) rectangle (1,2.5);
\draw (.5,2.25) node {$i^{n}$};
\draw (0,.5) -- (0,2) [<-][thick];
\draw (2.5,.5) -- (2.5,2) [->][thick];
\draw (1.5,0) rectangle (2.5,.5);
\draw (2,.25) node {$j^{m}$};
\draw (1.5,2) rectangle (2.5,2.5);
\draw (2,2.25) node {$j^{m}$};
%\draw (0,.5) .. controls (1,1) .. (2.5,.5)[<-][thick];
%\draw (0,2) .. controls (1,1.5) .. (2.5,2)[->][thick];
%\draw (1,.5) .. controls (1.25,.75) .. (1.5,.5)[<-][thick];
%\draw (1,2) .. controls (1.25,1.75) .. (1.5,2)[->][thick];
\draw (2,2) .. controls (.5,1.25) .. (2,.5)[<-][thick];
\draw (.5,2) .. controls (2,1.25) .. (.5,.5)[->][thick];

%\filldraw [black] (1.6,1.7) circle (2pt);
%\filldraw [black] (1.6,.8) circle (2pt);
%\filldraw [black] (1.25,1.8) circle (2pt);
%\filldraw [black] (1.25,.7) circle (2pt);
\draw (1.8,1.25) node {$\scriptstyle{b}$};
\draw (.7,1.25) node {$\scriptstyle{d}$};

%\draw (1.25,2.05) node {$\scriptstyle{c}$};
%\draw (1.25,.5) node {$\scriptstyle{c}$};
\draw (.1,1.25) node {$\scriptstyle{a}$};
\draw (2.4,1.25) node {$\scriptstyle{e}$};

\draw (3,1.25) node {$=$};

\draw (4.5,1.25) node {$\sum_{k=0}^{\text{min}(b,d)} \alpha_{b,d,k}$};
%\draw (4.5,1.25) node {$\sum_{k=0}^{\text{min}(b,d)} (-1)^k k!\binom{b}{k}\binom{d}{k}$};

\draw (6,0) rectangle (7,.5);
\draw (6.5,.25) node {$i^{n}$};
\draw (6,2) rectangle (7,2.5);
\draw (6.5,2.25) node {$i^{n}$};
\draw (6,.5) -- (6,2) [<-][thick];
\draw (8.5,.5) -- (8.5,2) [->][thick];
\draw (7.5,0) rectangle (8.5,.5);
\draw (8,.25) node {$j^{m}$};
\draw (7.5,2) rectangle (8.5,2.5);
\draw (8,2.25) node {$j^{m}$};
%\draw (0,.5) .. controls (1,1) .. (2.5,.5)[<-][thick];
%\draw (0,2) .. controls (1,1.5) .. (2.5,2)[->][thick];
%\draw (7,.5) .. controls (7.25,.75) .. (7.5,.5)[<-][thick];
%\draw (7,2) .. controls (7.25,1.75) .. (7.5,2)[->][thick];
\draw (6.5,.5) .. controls (7.25,1.25) .. (8,.5)[<-][thick];
\draw (6.5,2) .. controls (7.25,1.25) .. (8,2)[->][thick];

%\filldraw [black] (1.6,1.7) circle (2pt);
%\filldraw [black] (1.6,.8) circle (2pt);
%\filldraw [black] (7.25,1.8) circle (2pt);
%\filldraw [black] (7.25,.7) circle (2pt);
\filldraw [black] (7.25,1.4) circle (2pt);
\filldraw [black] (7.25,1.1) circle (2pt);
\draw [black] (7.75,.75) circle (2pt);
\draw [black] (6.75,1.75) circle (2pt);

\draw (7.25,1.7) node {$\scriptstyle{k}$};
\draw (7.25,.85) node {$\scriptstyle{k}$};
\draw (5.4,1.75) node {$\scriptstyle{a+b-k}$};
\draw (8,1.25) node {$\scriptstyle{d+e-k}$};

\draw (9.3,1.25) node {$+$};
\draw (11,1.25) node {$\sum_{k=0}^{\text{min}(b,d)} \beta_{b,d,k}$};
%\draw (10.5,1.25) node {$-\sum_{k=0}^{\text{min}(b,d)} (-1)^k k! \binom{b}{k} \binom{d}{k}$};

\draw (12.5,0) rectangle (13.5,.5);
\draw (13,.25) node {$i^{n}$};
\draw (12.5,2) rectangle (13.5,2.5);
\draw (13,2.25) node {$i^{n}$};
\draw (12.5,.5) -- (12.5,2) [<-][thick];
\draw (15,.5) -- (15,2) [->][thick];
\draw (14,0) rectangle (15,.5);
\draw (14.5,.25) node {$j^{m}$};
\draw (14,2) rectangle (15,2.5);
\draw (14.5,2.25) node {$j^{m}$};
%\draw (0,.5) .. controls (1,1) .. (2.5,.5)[<-][thick];
%\draw (0,2) .. controls (1,1.5) .. (2.5,2)[->][thick];
\draw (13.5,.5) .. controls (13.75,.75) .. (14,.5)[<-][thick];
\draw (13.5,2) .. controls (13.75,1.75) .. (14,2)[->][thick];
\draw (13,.5) .. controls (13.75,1.25) .. (14.5,.5)[<-][thick];
\draw (13,2) .. controls (13.75,1.25) .. (14.5,2)[->][thick];

%\filldraw [black] (1.6,1.7) circle (2pt);
%\filldraw [black] (1.6,.8) circle (2pt);
\filldraw [black] (13.75,1.8) circle (2pt);
\filldraw [black] (13.75,.7) circle (2pt);
\filldraw [black] (13.75,1.4) circle (2pt);
\filldraw [black] (13.75,1.1) circle (2pt);

\draw [black] (13.2,1.8) circle (2pt);
\draw [black] (14.3,.7) circle (2pt);

\draw (13.1,1.55) node {$\scriptstyle{k-1}$};
\draw (13.1,.95) node {$\scriptstyle{k-1}$};
\draw (11.9,1.75) node {$\scriptstyle{a+b-k}$};
\draw (14.4,1.25) node {$\scriptstyle{d+e-k}$};

\end{tikzpicture}
\end{equation*}
where $ \alpha_{b,d,k} = (\epsilon_{ij})^k k! \binom{b}{k} \binom{d}{k}, \beta_{b,d,k} = -(\epsilon_{ij})^k k \binom{b}{k} \binom{d}{k} k!  $.
% where an arc with a label $k$ means $k$ parallel arcs.
\end{lemma}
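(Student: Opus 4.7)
My plan is to induct on $\min(b,d)$, treating Lemma~\ref{crossinglemma4} as the atomic move that peels off one bubble at a time. The base case $\min(b,d)=0$ is immediate: if for instance $b=0$, then no $i$-strand crosses any $j$-strand, so the left-hand side already matches the $k=0$ summand $\alpha_{b,d,0}=1$ on the right-hand side, and the $\beta$-terms contribute nothing.

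For the inductive step with $b,d\ge 1$, I would apply Lemma~\ref{crossinglemma4} to the innermost crossing pair (the $i$-strand and $j$-strand closest to the center), rewriting the left-hand side as a sum of three diagrams: a ``straightened'' term in which the pair $(b,d)$ has been reduced to $(b-1,d-1)$, plus two bubble-carrying terms with coefficients proportional to $\epsilon_{ij}$, the second of which carries a hollow-dot pair. Before invoking the inductive hypothesis on each, I would use Propositions~\ref{graphicalidempotentslide} and~\ref{graphicalidempotentabsorb} to slide the residual $(b-1,d-1)$ double-crossing subconfiguration through the symmetrizer boxes so that it matches the shape of the inductive template; in the two bubble-carrying terms this also moves the newly created bubble next to the bubbles produced by expanding the inner subdiagram, so that they can be amalgamated into a single stack of $k$ bubbles of the form appearing on the right-hand side.

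The final step is coefficient-counting. Each application of Lemma~\ref{crossinglemma4} contributes one factor of $\epsilon_{ij}$ and a choice of which of the currently $b$ crossing $i$-strands and $d$ crossing $j$-strands to bubble off; after $k$ iterations this produces the factor $\epsilon_{ij}^{k}\binom{b}{k}\binom{d}{k}k!$ declared in $\alpha_{b,d,k}$ (the $k!$ corrects for the ordering in which the bubbles were peeled). For $\beta_{b,d,k}$, the hollow dot produced by the second bubble term of Lemma~\ref{crossinglemma4} can be placed on any of the $k$ bubbles in the assembled stack, which, together with the sign arising from Proposition~\ref{idempotentabsorb}, furnishes the extra factor $-k$ and explains why the $(k{-}1)$-subscript appears on the unlabeled parallel strands in the $\beta$-diagram.

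The main obstacle I anticipate is precisely this bookkeeping step: one must check that no duplicate contributions or sign errors arise when bubbles and hollow dots are reordered through the symmetrizer boxes, since hollow dots anticommute on distinct strands and their passage past the idempotent is only invariant up to the sign conventions built into Proposition~\ref{idempotentabsorb}. The graphical moves themselves are routine, but matching $\alpha_{b,d,k}$ and $\beta_{b,d,k}$ on the nose requires careful accounting of how many of the $k$ bubble-strand choices are genuinely distinct modulo symmetrization.
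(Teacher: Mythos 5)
Your strategy---induct using Lemma~\ref{crossinglemma4}, peeling off one bubble at a time and tracking coefficients---is the same route the paper takes, and your $\alpha$-coefficient count is right. But there is a genuine gap: the paper's proof rests on the observation that among $k$ nested cups or caps, hollow dots must land on at least $k-1$ of them or else the diagram vanishes. Without it, iterating Lemma~\ref{crossinglemma4} produces, for each $k$, one summand for each of the $\binom{k}{j}$ ways to put hollow dots on $j$ of the $k$ bubbles ($j=0,\dots,k$), giving $k+1$ distinct diagram shapes rather than the two appearing in the statement. It is precisely this vanishing that kills the $j\le k-2$ terms and collapses the sum to $\alpha$ (all $k$ cups dotted) and $\beta$ ($k-1$ dotted). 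You flag the bookkeeping as the anticipated obstacle, which is fair, but this step needs an actual argument; it is not automatic from idempotent-sliding, and it is what the paper's ``noting that'' clause is doing.

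There is also a misreading of the target diagrams that happens not to affect the final numbers: $\beta$ is not ``one hollow dot placed on one of $k$ bubbles'' but rather hollow dots on $k-1$ of the $k$ nested cups/caps (the innermost one being the odd one out), and $\alpha$ has hollow dots on all $k$, not on none. This matters for signs: the hollow-free bubble term of Lemma~\ref{crossinglemma4} carries $-\epsilon_{ij}$ while the hollow-dotted one carries $+\epsilon_{ij}$, so $\alpha_{b,d,k}=\epsilon_{ij}^k k!\binom{b}{k}\binom{d}{k}$ has no overall sign precisely because all $k$ contributions are hollow, and $\beta$'s extra $-k$ arises from choosing one of $k$ iterations to supply the $-\epsilon_{ij}$ plain bubble while the other $k-1$ supply $+\epsilon_{ij}$. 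Because $\binom{k}{1}=\binom{k}{k-1}$, your factor of $k$ came out right by coincidence, but with your reading the would-be $\alpha$ term would carry a spurious $(-1)^k$.
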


\begin{proof}
This follows by induction using Lemma \ref{crossinglemma4} and noting that $k$ cups or caps must contain a hollow dot on at least $k-1$ strands or else the diagram is zero.
\end{proof}

\begin{lemma}
\label{crossinglemma6}
Let $i$ and $j$ be adjacent nodes.
There are equalities of morphisms
\begin{equation}
\label{crossinglemma6eq1}
\begin{tikzpicture}[>=stealth]
\draw (0,0) rectangle (1,.5);
\draw (.5,.25) node {$j^{s-k}$};
\draw (0,1.5) rectangle (1,2);
\draw (.5,1.75) node {$i^{r}$};
\draw (0,.5) -- (3,1.5) [->][thick];
\draw (3,.5) -- (0,1.5) [<-][thick];
\draw (2,0) rectangle (3,.5);
\draw (2.5,.25) node {$i^{r-k}$};
\draw (2,1.5) rectangle (3,2);
\draw (2.5,1.75) node {$j^s$};

\draw (0,2) -- (3,3) [<-][thick];
\draw (3,2) -- (0,3) [->][thick];
\draw (0,3) rectangle (1,3.5);
\draw (.5,3.25) node {$j^{s-k}$};
\draw (2,3) rectangle (3,3.5);
\draw (2.5,3.25) node {$i^{r-k}$};

\draw (.5,1.5) .. controls (1.5,1) .. (2.5,1.5)[->][thick];
\draw (.5,2) .. controls (1.5,2.5) .. (2.5,2)[<-][thick];
\filldraw [black] (1.5,1.15) circle (2pt);
\filldraw [black] (1.5,2.35) circle (2pt);
\draw (1.65,1.3) node {$\scriptstyle{k}$};
\draw (1.65,2.25) node {$\scriptstyle{k}$};

\draw [black] (2.25,2.1) circle (2pt);
\draw [black] (.8,1.35) circle (2pt);

\draw (3.5,1.75) node {$=$};

\draw (4.5,1.75) node {$\frac{(-1)^k(\epsilon_{ij})^k}{k! \binom{r}{k} \binom{s}{k}}$};

\draw (5,0) rectangle (6,.5);
\draw (5.5,.25) node {$j^{s-k}$};
%\draw (5,1.5) rectangle (6,2);
%\draw (4.5,1.75) node {$i^{r}$};
%\draw (0,.5) -- (3,1.5) [<-][thick];
%\draw (3,.5) -- (0,1.5) [->][thick];
\draw (7,0) rectangle (8,.5);
\draw (7.5,.25) node {$i^{r-k}$};
%\draw (2,1.5) rectangle (3,2);
%\draw (2.5,1.75) node {$j^s$};

\draw (5.5,.5) -- (5.5,3) [->][thick];
\draw (7.5,.5) -- (7.5,3) [<-][thick];

\draw (5,3) rectangle (6,3.5);
\draw (5.5,3.25) node {$j^{s-k}$};
\draw (7,3) rectangle (8,3.5);
\draw (7.5,3.25) node {$i^{r-k}$};

\end{tikzpicture}
\end{equation}

\begin{equation}
\label{crossinglemma6eq2}
\begin{tikzpicture}[>=stealth]
\draw (0,0) rectangle (1,.5);
\draw (.5,.25) node {$j^{s-k}$};
\draw (0,1.5) rectangle (1,2);
\draw (.5,1.75) node {$i^{r}$};
\draw (1,.5) -- (3,1.5) [->][thick];
\draw (2,.5) -- (0,1.5) [<-][thick];
\draw (2,0) rectangle (3,.5);
\draw (2.5,.25) node {$i^{r-k}$};
\draw (2,1.5) rectangle (3,2);
\draw (2.5,1.75) node {$j^s$};

\draw (0,2) -- (2,3) [<-][thick];
\draw (3,2) -- (1,3) [->][thick];
\draw (0,3) rectangle (1,3.5);
\draw (.5,3.25) node {$j^{s-k}$};
\draw (2,3) rectangle (3,3.5);
\draw (2.5,3.25) node {$i^{r-k}$};

\draw (1,1.5) .. controls (1.5,1.25) .. (2,1.5)[->][thick];
\draw (.5,2) .. controls (1.5,2.7) .. (2.5,2)[<-][thick];

\draw (.5,1.5) .. controls (1.5,.8) .. (2.5,1.5)[->][thick];
\draw (1,2) .. controls (1.5,2.25) .. (2,2)[<-][thick];

\filldraw [black] (1.5,1.3) circle (2pt);
\filldraw [black] (1.5,2.5) circle (2pt);

\filldraw [black] (1.5,.95) circle (2pt);
\filldraw [black] (1.5,2.2) circle (2pt);

\draw (1.5,1.12) node {$\scriptstyle{k-1}$};
\draw (1.5,2.35) node {$\scriptstyle{k-1}$};

\draw [black] (1.1,1.1) circle (2pt);
\draw [black] (2.29,2.12) circle (2pt);

\draw (3.4,1.75) node {$=$};

\draw (4.5,1.75) node {$\frac{(-1)^{k-1}(\epsilon_{ij})^k}{k! \binom{r}{k} \binom{s}{k}k}$};

\draw (5,0) rectangle (6,.5);
\draw (5.5,.25) node {$j^{s-k}$};
%\draw (5,1.5) rectangle (6,2);
%\draw (4.5,1.75) node {$i^{r}$};
%\draw (0,.5) -- (3,1.5) [<-][thick];
%\draw (3,.5) -- (0,1.5) [->][thick];
\draw (7,0) rectangle (8,.5);
\draw (7.5,.25) node {$i^{r-k}$};
%\draw (2,1.5) rectangle (3,2);
%\draw (2.5,1.75) node {$j^s$};

\draw (5.5,.5) -- (5.5,3) [->][thick];
\draw (7.5,.5) -- (7.5,3) [<-][thick];

\draw (5,3) rectangle (6,3.5);
\draw (5.5,3.25) node {$j^{s-k}$};
\draw (7,3) rectangle (8,3.5);
\draw (7.5,3.25) node {$i^{r-k}$};
\draw (8.1,0) node {.};

\end{tikzpicture}
\end{equation}

\end{lemma}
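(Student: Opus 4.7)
The plan is to apply Lemma~\ref{crossinglemma5} to the portion of each diagram where the outer crossings meet the large middle idempotents $\widetilde{P}_j^{(s)}$ and $\widetilde{Q}_i^{(r)}$. Concretely, the two outer crossings together with the strands linking the boundary idempotents ($\widetilde{P}_j^{(s-k)}$, $\widetilde{Q}_i^{(r-k)}$) to the middle idempotents precisely match the configuration analyzed in Lemma~\ref{crossinglemma5}, with $b=r-k$, $d=s-k$, $a=k$, $e=k$. Applying that lemma expands each outer crossing region into a sum indexed by $k' \in \{0,\ldots,\min(r-k,s-k)\}$ of diagrams whose only crossings are replaced by cups and caps with $k'$ strands, with coefficients $\alpha_{b,d,k'}$ or $\beta_{b,d,k'}$.

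After this substitution, the new cups and caps (from the lemma) sit adjacent to the already-present inner cups and caps carrying $k$ solid dots and the hollow dots. The key observation is that almost all terms collapse: a cup-to-cup or cap-to-cap composition on the same side becomes a loop of parallel arcs, and by relations~\eqref{K7} and especially~\eqref{K9} (a hollow dot on an empty cup-cap circle vanishes) combined with the fact that a $k'$-strand bubble with fewer than $k'-1$ hollow dots vanishes, essentially one term in the sum survives. In equation~\eqref{crossinglemma6eq1}, the surviving contribution comes from the $k' = k$ term in the $\alpha$-branch, because the configuration of one hollow dot on each existing cup/cap forces the new cups to carry exactly the complementary pattern that yields a nonzero circle. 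In equation~\eqref{crossinglemma6eq2}, the different placement of the hollow dots (with one extra hollow dot paired against one fewer solid dot) forces survival of the $k' = k$ term in the $\beta$-branch instead.

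Once the surviving term is isolated, I would apply Proposition~\ref{graphicalidempotentabsorb} to absorb the $k$ new strands created by the expansion into the boundary idempotents $\widetilde{P}_j^{(s-k)}$ and $\widetilde{Q}_i^{(r-k)}$, together with Proposition~\ref{graphicalidempotentslide} to straighten everything, yielding the identity morphism on $\widetilde{P}_j^{(s-k)}\widetilde{Q}_i^{(r-k)}$. The scalar is then the product of three factors: the coefficient $\alpha_{r-k,s-k,k} = (\epsilon_{ij})^k k! \binom{r-k}{k}\binom{s-k}{k}$ (respectively $\beta_{r-k,s-k,k}$ for the second equality) from Lemma~\ref{crossinglemma5}; a sign of $(-1)^k$ arising from $k$ applications of the hollow-dot slide relations~\eqref{K15} and~\eqref{K16} as the hollow dots migrate past clockwise arcs; and a normalization of $1/(k!\binom{r}{k}\binom{s}{k})^2$ from the absorption of the size-$(r-k)$ and $(s-k)$ idempotents into the size-$r$ and $s$ ones. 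After algebraic simplification these combine to produce the stated coefficients $\frac{(-1)^k(\epsilon_{ij})^k}{k!\binom{r}{k}\binom{s}{k}}$ and $\frac{(-1)^{k-1}(\epsilon_{ij})^k}{k!\binom{r}{k}\binom{s}{k}\,k}$.

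The main obstacle is the combinatorial and sign bookkeeping. There are three independent sources of signs to track: the $\epsilon_{ij}$ factors from each crossing resolution in Lemma~\ref{crossinglemma5}, the $(-1)$ signs from the hollow-dot relations~\eqref{K15}--\eqref{K16}, and the super-anticommutation signs when hollow dots on different strands pass each other. There are likewise three combinatorial factors (two binomials from choosing which $k$ of the $r$ or $s$ parallel strands participate, and one factorial from permuting the symmetrization inside the idempotent). Verifying that the $k$-th term is truly the only survivor requires an explicit check that for $k' < k$ an unmatched hollow dot is trapped inside an empty bubble (killing the term via~\eqref{K9}), while for $k' > k$ the count of strands forces either an idempotent with excess structure or a nilpotent double-crossing that vanishes after symmetrization.
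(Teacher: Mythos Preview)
Your plan to invoke Lemma~\ref{crossinglemma5} does not apply to this configuration. That lemma resolves a region bounded above and below by \emph{two copies} of the same idempotents $i^n$, $j^m$, with the $a$- and $e$-bundles running straight between them. In Lemma~\ref{crossinglemma6} the large idempotents $i^r$, $j^s$ sit only once, in the middle; the outer idempotents are the strictly smaller $i^{r-k}$, $j^{s-k}$; and the $k$ strands you would assign to the ``straight'' $a,e$ role are cups and caps carrying dots, not through-strands. You cannot slide or double the middle idempotents into the shape Lemma~\ref{crossinglemma5} requires, because only $r-k$ of the $r$ strands leaving $i^r$ enter the crossing while the remaining $k$ go into the cups, so Proposition~\ref{graphicalidempotentslide} does not carry $i^r$ across. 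A concrete symptom of the mismatch: the summand you want is the one indexed by $k'=k$ in the $\alpha$-sum, but that sum only runs to $\min(b,d)=\min(r-k,s-k)$, so the term is absent once $k>\tfrac12\min(r,s)$; and even when present, the factors $\binom{r-k}{k}\binom{s-k}{k}$ in $\alpha_{r-k,s-k,k}$ do not cancel against your proposed normalization to produce $\frac{1}{k!\binom{r}{k}\binom{s}{k}}$.

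The paper's argument bypasses Lemma~\ref{crossinglemma5} entirely and is more elementary. One expands the middle symmetrizers $\psi_{(r)}=\frac{1}{r!}\sum_{w\in S_r}w$ and $\psi_{(s)}$ term by term. A pair of permutations avoids creating a left curl (hence avoids vanishing by~\eqref{K7}) only when the term from $i^r$ lies in the Young subgroup $S_{r-k}\times S_k$ and the term from $j^s$ is correspondingly restricted; every other pair dies. After absorbing the $S_{r-k}$ and $S_{s-k}$ pieces into the outer idempotents, what remains is $k$ disjoint counterclockwise circles, each carrying a single degree-two solid dot, each evaluating to $1$ by~\eqref{K7}. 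Counting surviving permutations and tracking hollow-dot signs gives the stated scalar. For~\eqref{crossinglemma6eq2} the surviving subgroups are $S_{r-k}\times S_{k-1}\times S_1$ and $S_1\times S_{k-1}\times S_{s-k}$, which accounts for the extra factor of $k$ in the denominator.
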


\begin{proof}
For the equality in \eqref{crossinglemma6eq1},
first expand the idempotents labeled $i^r$ and $j^s$ into a sum of elements from the symmetric groups $S_r$ and $S_s$ respectively.
The expanded diagram will contain a left hand curl if the term from the idempotent $i^r$ doesn't come from the subgroup $S_{r-k}\times S_k$ and if the term from the idempotent $j^s$ doesn't come from the subgroup $S_k \times S_{s-k}$.
Furthermore for each element in $S_{r-k} \times S_k$ there exists exactly $(s-k)!$ terms in the expansion of $j^s$ which will not produce a left hand curl.

After absorbing crossings into the top and bottom idempotents, the expansion results in $k$ circles each labeled $i$ with a degree two solid dot times the diagram
%Thus we may write the diagram in the left hand side of \eqref{crossinglemma6eq1} as
\begin{equation}
\label{crossinglemma6eq3}
\begin{tikzpicture}[>=stealth]
\draw (-2,3.75) node{$\frac{(-1)^k(\epsilon_{ij})^k}{k! \binom{r}{k} \binom{s}{k}}$};

\draw (0,1) rectangle (1,1.5);
\draw (.5,1.25) node {$j^{s-k}$};
\draw (0,3.5) rectangle (1,4);
\draw (.5,3.75) node {$i^{r-k}$};
\draw (5,1) rectangle (6,1.5);
\draw (5.5,1.25) node {$i^{r-k}$};
\draw (5,3.5) rectangle (6,4);
\draw (5.5,3.75) node {$j^{s-k}$};
\draw (0,6) rectangle (1,6.5);
\draw (.5,6.25) node {$j^{s-k}$};
\draw (5,6) rectangle (6,6.5);
\draw (5.5,6.25) node {$i^{r-k}$};

%\draw (1.5,3.5) rectangle (2.5,4);
%\draw (2,3.75) node {$i^{k}$};
%\draw (3.5,3.5) rectangle (4.5,4);
%\draw (4,3.75) node {$j^{k}$};

%\draw (2,4) .. controls (3,4.5) .. (4,4)[<-][thick];
%\draw (2,3.5) .. controls (3,3) .. (4,3.5)[->][thick];
%\filldraw [black] (3,4.4) circle (2pt);
%\filldraw [black] (3,3.1) circle (2pt);

\draw (.5,1.5) -- (5.5,3.5) [->][thick];
\draw (5.5,1.5) -- (.5,3.5) [<-][thick];

\draw (.5,4) -- (5.5,6) [<-][thick];
\draw (5.5,4) -- (.5,6) [->][thick];
\draw (6.1,1) node{.};
%\draw (1.65,1.3) node {$\scriptstyle{k}$};
%\draw (1.65,2.25) node {$\scriptstyle{k}$};
\end{tikzpicture}
\end{equation}
%An easy computation gives that
%\begin{equation}
%\label{crossinglemma6eq4}
%\begin{tikzpicture}[>=stealth]
%\draw (0,0) rectangle (1,.5);
%\draw (1.5,0) rectangle (2.5,.5);
%\draw (.5,.25) node {$i^{k}$};
%\draw (2,.25) node {$j^{k}$};
%\draw (.5,.5) .. controls (1.25,1) .. (2,.5)[<-][thick];
%\draw (.5,0) .. controls (1.25,-.5) .. (2,0)[->][thick];
%\filldraw [black] (1.25,.85) circle (2pt);
%\filldraw [black] (1.25,-.4) circle (2pt);
%\draw (3,0) node {$= \frac{1}{k!}.$};
%\end{tikzpicture}
%\end{equation}
%Thus after sliding the middle idempotents in \eqref{crossinglemma6eq3} to the top, using \eqref{crossinglemma6eq4} %and pulling apart the strands, we get the right hand side of \eqref{crossinglemma6eq1}.
Since each circle is equal to one, \eqref{crossinglemma6eq1} follows.

The identity \eqref{crossinglemma6eq2} is similar except that now left hand curls will be avoided only if the terms in the expansion of the idempotents labeled $i^r$ and $j^s$ come from the subgroups $S_{r-k} \times S_{k-1} \times S_1$
and $S_1 \times S_{k-1} \times S_{s-k}$ respectively.

\end{proof}

\begin{prop}
\label{relationsinK4}
For each pair of nodes $ i,j$ with $ a_{ij}=-1 $,
there is an isomorphism of objects in $\mathcal{K}_{\Gamma}^- $:
\begin{equation*}
\widetilde{Q}_i^{(n)} \widetilde{P}_j^{(m)} \cong  \widetilde{P}_j^{(m)}  \widetilde{Q}_i^{(n)} \oplus \bigoplus_{k \geq 1} (\widetilde{P}_j^{(m-k)} \widetilde{Q}_i^{(n-k)} \oplus \widetilde{P}_j^{(m-k)} \widetilde{Q}_i^{(n-k)} \lbrace 1 \rbrace).
\end{equation*}
\end{prop}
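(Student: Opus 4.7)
The approach mirrors the basic $n=m=1$ example worked out after the definition of $\mathcal{K}_\Gamma^-$, but now with thick strands carrying the idempotents $e_{i,1,(n)}$ and $e_{j,1,(m)}$. The idea is to write down explicit maps realizing each summand and then invoke Lemmas \ref{crossinglemma5} and \ref{crossinglemma6} to check that they form mutually inverse isomorphisms.

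First, I would define, for each $k$ with $0 \le k \le \min(n,m)$, four morphisms: the ``main'' map $f^{(0)} \colon \widetilde{Q}_i^{(n)}\widetilde{P}_j^{(m)} \to \widetilde{P}_j^{(m)}\widetilde{Q}_i^{(n)}$ given by a thick crossing of the idempotent boxes (as in Proposition \ref{relationsinK1}), together with cup-type maps $f^{(k)}_0, f^{(k)}_1 \colon \widetilde{Q}_i^{(n)} \widetilde{P}_j^{(m)} \to \widetilde{P}_j^{(m-k)}\widetilde{Q}_i^{(n-k)}\{\delta\}$ for $k \ge 1$ given by $k$ parallel $(i,j)$-cups carrying $k$ solid dots (between the two rectangles of size $n-k$ and $m-k$), with no hollow dot in the $f^{(k)}_0$ case and an additional hollow dot on one of the cup strands in the $f^{(k)}_1$ case. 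The reverse maps $g^{(0)}, g^{(k)}_0, g^{(k)}_1$ are defined analogously using $k$ parallel caps, with the scalar normalizations
$\frac{(-1)^k (\epsilon_{ij})^k}{k!\binom{n}{k}\binom{m}{k}}$ and $\frac{(-1)^{k-1}(\epsilon_{ij})^k}{k\, k!\binom{n}{k}\binom{m}{k}}$ dictated by Lemma \ref{crossinglemma6} so that $f^{(k)}_\alpha \circ g^{(k)}_\alpha = \mathrm{id}$.

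The next step is to check that the matrix of compositions is the identity matrix. The diagonal entries $f^{(k)}_\alpha g^{(k)}_\alpha = \mathrm{id}$ are precisely the content of the two equalities \eqref{crossinglemma6eq1}--\eqref{crossinglemma6eq2} in Lemma \ref{crossinglemma6}, applied to the idempotents of sizes $r = n-k$ and $s = m-k$ (with the scalars above chosen to cancel the prefactors). The off-diagonal vanishings split into several cases: for $k \ne l$ one applies Lemma \ref{crossinglemma6} again, noting that $k$ cups composed with $l$ caps produces too few or too many solid dots on the surviving caps/cups, forcing the resulting diagram to vanish via relation \eqref{K9} (hollow dots on a closed arc with no further decoration). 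The mixed parity case $f^{(k)}_0 g^{(k)}_1$ (and its reverse) vanishes because the excess hollow dot slides onto a cap that is killed by \eqref{K9} after the idempotents absorb the crossings. Finally, $f^{(0)} g^{(k)}_\alpha = 0$ and $f^{(k)}_\alpha g^{(0)} = 0$ follow from the same cap-absorption argument together with Proposition \ref{graphicalidempotentslide}.

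The completeness relation $g^{(0)} f^{(0)} + \sum_{k \ge 1}\sum_{\alpha=0,1} g^{(k)}_\alpha f^{(k)}_\alpha = \mathrm{id}_{\widetilde{Q}_i^{(n)}\widetilde{P}_j^{(m)}}$ is the real substance of the proof. I would establish it by computing the double-crossing of the full idempotent boxes using Lemma \ref{crossinglemma5}: this expands the crossing squared (which is the identity after accounting for the $f^{(0)}g^{(0)}$ term) as a sum of cup-cap diagrams indexed by $k$ with coefficients $\alpha_{n,m,k}$ and $\beta_{n,m,k}$. Matching these coefficients against the explicit normalizations of $g^{(k)}_\alpha f^{(k)}_\alpha$ (each of which is a cup-cap configuration of the type appearing on the right of Lemma \ref{crossinglemma5}, with either all $k$ solid dots or $k-1$ solid dots plus a hollow dot) shows that the summation reproduces exactly the identity. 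The main obstacle is bookkeeping the scalars so that the cup-cap diagrams arising from Lemma \ref{crossinglemma5} agree, term by term and with the correct signs, with $g^{(k)}_0 f^{(k)}_0 + g^{(k)}_1 f^{(k)}_1$; once this is verified the isomorphism follows, with the claimed grading shifts $\{1\}$ coming from the parity of the hollow dot contribution in the $f^{(k)}_1,g^{(k)}_1$ pair.
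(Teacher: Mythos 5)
Your overall strategy coincides with the paper's: write down the thick crossing $f_0$, cup/cap maps decorated with solid and hollow dots, verify the diagonal entries of the composition matrix via Lemma~\ref{crossinglemma6}, kill the off-diagonals via curl/circle arguments, and establish completeness via Lemma~\ref{crossinglemma5}. However, there are several concrete errors in the bookkeeping that need to be fixed before this becomes a proof.

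First, the scalar normalizations you place on the $g^{(k)}_\alpha$ are inverted. Lemma~\ref{crossinglemma6} states that the relevant cup-cap composite equals $\frac{(-1)^k(\epsilon_{ij})^k}{k!\binom{r}{k}\binom{s}{k}}\cdot\mathrm{id}$. To force $f^{(k)}_\alpha\circ g^{(k)}_\alpha = \mathrm{id}$ you must multiply one of the two maps by the \emph{reciprocal} of that coefficient, namely $k!\binom{n}{k}\binom{m}{k}(-\epsilon_{ij})^k$ (resp.\ $-k!\binom{n}{k}\binom{m}{k}k(-\epsilon_{ij})^k$ for the second family), which is what the paper uses. As written, your normalization would give $f^{(k)}_\alpha g^{(k)}_\alpha = c^2\cdot\mathrm{id}$ with $c$ the lemma's coefficient, not the identity. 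Related to this, in Lemma~\ref{crossinglemma6} the letters $r$ and $s$ refer to the sizes of the large idempotent boxes sitting in the \emph{middle} of the composite, so when applied to $f^{(k)}_\alpha g^{(k)}_\alpha$ (whose middle object is $\widetilde{Q}_i^{(n)}\widetilde{P}_j^{(m)}$) one has $r=n$, $s=m$, not $r=n-k$, $s=m-k$ as you state; your explicit scalars with $\binom{n}{k}\binom{m}{k}$ are consistent with $r=n$, $s=m$, so this is an internal inconsistency in your write-up rather than a second independent error.

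Second, the decoration pattern you propose does not match the hypotheses of Lemma~\ref{crossinglemma6}. Both identities \eqref{crossinglemma6eq1} and \eqref{crossinglemma6eq2} concern cup-cap diagrams which carry hollow dots on both the cup and the cap; your $f^{(k)}_0$ and $g^{(k)}_0$ carry none, so $f^{(k)}_0 g^{(k)}_0$ is not literally the left-hand side of either identity in the lemma. The paper sidesteps this by building \emph{both} $f_k$ and $f_k'$ (and their partners) out of hollow-dot-decorated caps/cups, shifted by $\{k\}$ and $\{k-1\}$ respectively (which agree with $\{0\}\oplus\{1\}$ since the grading is $\Z_2$). If you want to keep your $(\alpha=0,1)$ convention you would need to supplement Lemma~\ref{crossinglemma6} with a computation for the hollow-dot-free circle, and you would also need to check the resulting sign carefully, since removing a pair of hollow dots from a closed $i$-circle produces a $\pm 1$ via relation \eqref{K1} depending on orientation.

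Finally, the mechanism you invoke for the off-diagonal vanishings $f^{(k)}_\alpha g^{(l)}_\alpha = 0$ with $k\ne l$ is not quite right: Lemma~\ref{crossinglemma6} is stated only for matching numbers of cups and caps, and relation \eqref{K9} (hollow dot on a circle) is the mechanism for the \emph{mixed-parity} terms $f^{(k)}_0 g^{(l)}_1$ and $f^{(k)}_1 g^{(l)}_0$. The paper's argument for $k\ne l$ with matching parities is that expansion of the intermediate idempotents produces a left-hand curl in every surviving term, which vanishes by the right-hand relation in \eqref{K7}. You would want to replace your appeal to Lemma~\ref{crossinglemma6} for this case by the left-curl argument.

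Modulo these corrections, the skeleton you describe is exactly the proof in the paper, and the completeness check against Lemma~\ref{crossinglemma5} that you identify as ``the real substance'' is indeed the crux.
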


\begin{proof}
Set $ r = \text{min}(m,n) $.  Then let
\begin{equation*}
f =
\begin{pmatrix}
f_0 \\
f_1 \\
f_1'\\
\vdots \\
f_r\\
f_r'
\end{pmatrix}
\colon
\widetilde{Q}_i^{(n)} \widetilde{P}_j^{(m)} \rightarrow  \widetilde{P}_j^{(m)}  \widetilde{Q}_i^{(n)} \oplus \bigoplus_{k \geq 1} (\widetilde{P}_j^{(m-k)} \widetilde{Q}_i^{(n-k)} \lbrace k \rbrace \oplus \widetilde{P}_j^{(m-k)} \widetilde{Q}_i^{(n-k)} \lbrace k-1 \rbrace)
\end{equation*}
where
\begin{equation*}
f_0 \colon \widetilde{Q}_i^{(n)}  \widetilde{P}_j^{(m)}   \rightarrow \widetilde{P}_j^{(m)}  \widetilde{Q}_i^{(n)}
\end{equation*}
and for $ k>0 $,
\begin{equation*}
f_k \colon \widetilde{Q}_i^{(n)}  \widetilde{P}_j^{(m)}   \rightarrow \widetilde{P}_j^{(m-k)}  \widetilde{Q}_i^{(n-k)} \lbrace k \rbrace
\end{equation*}
\begin{equation*}
f_k' \colon \widetilde{Q}_i^{(n)}  \widetilde{P}_j^{(m)}   \rightarrow \widetilde{P}_j^{(m-k)}  \widetilde{Q}_i^{(n-k)} \lbrace k-1 \rbrace
\end{equation*}
where these maps are defined by the diagrams

\begin{equation*}
\label{idPkar}
\begin{tikzpicture}[>=stealth]
\draw (-.5,1.5) node {$f_0 =$};
\draw (0,0) rectangle (1,.5);
\draw (.5,.25) node {$i^n$};
\draw (0,2.5) rectangle (1,3);
\draw (.5,2.75) node {$j^m$};
\draw (.5,.5) -- (2.5,2.5) [<-][thick];
\draw (2.5,.5) -- (.5,2.5) [->][thick];
\draw (2,0) rectangle (3,.5);
\draw (2.5,.25) node {$j^m$};
\draw (2,2.5) rectangle (3,3);
\draw (2.5,2.75) node {$i^n$};

\draw (4.5,1.5) node {$f_k =$};
\draw (5,0) rectangle (6,.5);
\draw (5.5,.25) node {$i^n$};
\draw (5,2.5) rectangle (6,3);
\draw (5.5,2.75) node {$j^{m-k}$};
\draw (5.5,.5) -- (7.5,2.5) [<-][thick];
\draw (7.5,.5) -- (5.5,2.5) [->][thick];
\draw (7,0) rectangle (8,.5);
\draw (7.5,.25) node {$j^{m}$};
\draw (7,2.5) rectangle (8,3);
\draw (7.5,2.75) node {$i^{n-k}$};
\draw (6,.5) arc (180:0:0.5cm)[<-] [thick];
\draw [black] (6.97,.65) circle (2pt);
\filldraw [black] (6.5,1) circle (2pt);
\draw (6.8,.65) node{$\scriptstyle{k}$};

\draw (9.5,1.5) node {$f_k' =$};
\draw (10,0) rectangle (11,.5);
\draw (10.5,.25) node {$i^n$};
\draw (10,2.5) rectangle (11,3);
\draw (10.5,2.75) node {$j^{m-k}$};
\draw (10.5,.5) -- (12.5,2.5) [<-][thick];
\draw (12.5,.5) -- (10.5,2.5) [->][thick];
\draw (12,0) rectangle (13,.5);
\draw (12.5,.25) node {$j^{m}$};
\draw (12,2.5) rectangle (13,3);
\draw (12.5,2.75) node {$i^{n-k}$};
%\draw (11,.5) arc (180:0:0.5cm)[<-] [thick];

\draw (11,.5) .. controls (11.5,.7) .. (12,.5)[<-][thick];
\draw [black] (11.8,.8) circle (2pt);
\filldraw [black] (11.5,.65) circle (2pt);
%\draw (11.5,.75) node{$1$};

\draw (10.8,.5) .. controls (11.5,1.1) .. (12.2,.5)[<-][thick];
\filldraw [black] (11.5,.95) circle (2pt);

\draw (11.5,1.1) node{$\scriptstyle{k-1}$};
\draw (13.1,0) node{.};

\end{tikzpicture}
\end{equation*}

Next let
\begin{equation*}
g =
\begin{pmatrix}
g_0 & g_1 & g_1' & \cdots & g_r & g_r'
\end{pmatrix}
\colon
\widetilde{P}_j^{(m)}  \widetilde{Q}_i^{(n)} \oplus \bigoplus_{k \geq 1} (\widetilde{P}_j^{(m-k)} \widetilde{Q}_i^{(n-k)} \lbrace k \rbrace \oplus \widetilde{P}_j^{(m-k)} \widetilde{Q}_i^{(n-k)} \lbrace k-1 \rbrace) \rightarrow
\widetilde{Q}_i^{(n)} \widetilde{P}_j^{(m)}
\end{equation*}
where
\begin{equation*}
g_0 \colon \widetilde{P}_j^{(m)}  \widetilde{Q}_i^{(n)} \rightarrow \widetilde{Q}_i^{(n)}  \widetilde{P}_j^{(m)}
\end{equation*}
and for $ k>0 $,
\begin{equation*}
g_k \colon \widetilde{P}_j^{(m-k)}  \widetilde{Q}_i^{(n-k)} \rightarrow \widetilde{Q}_i^{(n)}  \widetilde{P}_j^{(m)} \lbrace k \rbrace
\end{equation*}
\begin{equation*}
g_k' \colon \widetilde{P}_j^{(m-k)}  \widetilde{Q}_i^{(n-k)} \rightarrow \widetilde{Q}_i^{(n)}  \widetilde{P}_j^{(m)} \lbrace k-1 \rbrace
\end{equation*}
and the maps are given by the diagrams

\begin{equation*}
\label{idPkar}
\begin{tikzpicture}[>=stealth]
\draw (-.5,1.5) node {$g_0 =$};
\draw (0,0) rectangle (1,.5);
\draw (.5,2.75) node {$i^n$};
\draw (0,2.5) rectangle (1,3);
\draw (.5,.25) node {$j^m$};
\draw (.5,.5) -- (2.5,2.5) [->][thick];
\draw (2.5,.5) -- (.5,2.5) [<-][thick];
\draw (2,0) rectangle (3,.5);
\draw (2.5,2.75) node {$j^m$};
\draw (2,2.5) rectangle (3,3);
\draw (2.5,.25) node {$i^n$};

\draw (4.5,1.5) node {$g_k =k! \binom{n}{k} \binom{m}{k} (-\epsilon_{ij})^k$};
\draw (5,0) rectangle (6,.5);
\draw (5.5,2.75) node {$i^n$};
\draw (5,2.5) rectangle (6,3);
\draw (5.5,.25) node {$j^{m-k}$};
\draw (5.5,.5) -- (7.5,2.5) [->][thick];
\draw (7.5,.5) -- (5.5,2.5) [<-][thick];
\draw (7,0) rectangle (8,.5);
\draw (7.5,2.75) node {$j^{m}$};
\draw (7,2.5) rectangle (8,3);
\draw (7.5,.25) node {$i^{n-k}$};
\draw (6,2.5) arc (-180:0:0.5cm)[->] [thick];
\filldraw [black] (6.5,2.0) circle (2pt);
\draw [black] (6.12,2.2) circle (2pt);
\draw (6.8,2.35) node{$\scriptstyle{k}$};

\draw (9.5,1.5) node {$g_k' = -k! \binom{n}{k} \binom{m}{k}k (-\epsilon_{ij})^k$};
\draw [shift={+(1,0)}](10,0) rectangle (11,.5);
\draw [shift={+(1,0)}](10.5,2.75) node {$i^n$};
\draw [shift={+(1,0)}](10,2.5) rectangle (11,3);
\draw [shift={+(1,0)}](10.5,.25) node {$j^{m-k}$};
\draw [shift={+(1,0)}](10.5,.5) -- (12.5,2.5) [->][thick];
\draw [shift={+(1,0)}](12.5,.5) -- (10.5,2.5) [<-][thick];
\draw [shift={+(1,0)}](12,0) rectangle (13,.5);
\draw [shift={+(1,0)}](12.5,2.75) node {$j^{m}$};
\draw [shift={+(1,0)}](12,2.5) rectangle (13,3);
\draw [shift={+(1,0)}](12.5,.25) node {$i^{n-k}$};
%\draw (11,2.5) arc (-180:0:0.5cm)[->] [thick];
%\filldraw [black] (12,2.35) circle (2pt);
%\draw (11.8,2.35) node{$k$};
%\draw [black] (11.5,2) circle (2pt);
%\draw (11.5,2.3) node{$1$};

\draw [shift={+(1,0)}](11,2.5) .. controls (11.5,2.3) .. (12,2.5)[->][thick];
\draw [shift={+(1,0)}][black] (11.2,2.19) circle (2pt);
\filldraw [shift={+(1,0)}][black] (11.5,2.35) circle (2pt);

\draw [shift={+(1,0)}](10.8,2.5) .. controls (11.5,1.9) .. (12.2,2.5)[->][thick];
\filldraw [shift={+(1,0)}][black] (11.5,2.05) circle (2pt);

\draw [shift={+(1,0)}](11.5,1.9) node{$\scriptstyle{k-1}$};
\draw [shift={+(1,0)}](13.1,0) node{.};

\end{tikzpicture}
\end{equation*}

It follows from Lemma \ref{crossinglemma5} that $ gf $ is the identity map.
Lemma \ref{crossinglemma6} and relation ~\eqref{K4} imply that $ f_k g_k $ and $ f_k' g_k' $ are the identity maps for all $ k $.
The maps $ f_k g_l $ and $ f_k' g_l' $ for $ k \neq l $ will be zero since each term will contain a left hand curl after expansion of the middle idempotents.
The maps $ f_k' g_l $ and $ f_k g_l' $ will vanish for all values of $ k $ and $ l $ since there will be a left hand curl or a closed circle with a single hollow dot on it.

\end{proof}

\begin{prop}
\label{relationsinK3}
For a node $ i $ of the Dynkin diagram, $  \widetilde{Q}_i^{(n)} \widetilde{P}_i^{(m)} \cong $
\begin{align*}
\widetilde{P}_i^{(m)} \widetilde{Q}_i^{(n)} \oplus \bigoplus_{k \geq 1} [&\bigoplus_{l=0}^k (\widetilde{P}_i^{(m-k)} \widetilde{Q}_i^{(n-k)} \langle -k+2l \rangle \oplus \widetilde{P}_i^{(m-k)} \widetilde{Q}_i^{(n-k)} \langle -k+2l \rangle \lbrace 1 \rbrace) \oplus\\
&\bigoplus_{l=1}^{k-1} (\widetilde{P}_i^{(m-k)} \widetilde{Q}_i^{(n-k)} \langle -k+2l \rangle \oplus \widetilde{P}_i^{(m-k)} \widetilde{Q}_i^{(n-k)} \langle -k+2l \rangle \lbrace 1 \rbrace)].
\end{align*}
\end{prop}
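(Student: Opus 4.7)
The plan is to follow the strategy of Proposition~\ref{relationsinK4}, constructing explicit mutually inverse maps $f$ and $g$ between $\widetilde{Q}_i^{(n)}\widetilde{P}_i^{(m)}$ and the right-hand side of the stated decomposition. The essential new ingredient compared to the adjacent-node case is that \eqref{K6} is replaced by the four-term relation \eqref{K8}, and that the shifts $\langle -k+2l \rangle$ now appear because a solid dot on a strand labeled $i$ to $i$ has degree $(2,0)$ according to the grading table for $\mathcal{K}_{\Gamma}^-$.

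Concretely, for each $k \geq 1$ I will define components $f_{k,l}, f_{k,l}' \colon \widetilde{Q}_i^{(n)}\widetilde{P}_i^{(m)} \to \widetilde{P}_i^{(m-k)}\widetilde{Q}_i^{(n-k)}\langle -k+2l\rangle$ (with and without a super-degree shift $\lbrace 1 \rbrace$) as follows: cross the outer strands and join them by $k$ nested cups carrying a total of $l$ solid degree-two dots, optionally marking a boundary strand with a hollow dot. The range $l = 0, \ldots, k$ yields one family of components, while the four-term nature of \eqref{K8} forces a second family indexed by $l = 1, \ldots, k-1$, distinguished by the placement of an additional pair of hollow dots on the nested cups. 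The dual maps $g_{k,l}, g_{k,l}'$ are obtained by reflecting these diagrams, with binomial and sign normalizations chosen so that $f_{k,l}g_{k,l} = \mathrm{id}$ holds after invoking an $i=j$ analog of Lemma~\ref{crossinglemma6}.

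For the identity $gf = \mathrm{id}$, the key is to prove an $i=j$ analog of Lemma~\ref{crossinglemma5}, expressing a double crossing of $\widetilde{Q}_i^n\widetilde{P}_i^m$ as a sum over all valid pairs $(k,l)$ of nested-cup/cap diagrams. This will be proved by induction on $n$, applying \eqref{K8} to the innermost crossing and invoking $i=j$ analogs of Lemmas~\ref{crossinglemma1} and~\ref{crossinglemma2} (derived from \eqref{K8}, \eqref{K1}, \eqref{K13}, and \eqref{K14}) to absorb the resulting cups and caps into the idempotents. The vanishing of off-diagonal entries $f_{k,l}g_{k',l'}$ for $(k,l) \neq (k',l')$ is then automatic from the curl-killing relations \eqref{K7} and \eqref{K9}, exactly as at the end of the proof of Proposition~\ref{relationsinK4}.

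The main obstacle is the bookkeeping forced by the four terms of \eqref{K8}: each one propagates through the inductive cup/cap absorption, and one must carefully segregate the two purely solid-dotted contributions (which produce the two $\bigoplus_{l=0}^{k}$ families of summands, with and without $\lbrace 1 \rbrace$ shift) from the two mixed hollow-dot contributions (which produce the two $\bigoplus_{l=1}^{k-1}$ families), while also verifying that no spurious terms survive the absorption. Getting the scalar normalizations in $g_{k,l}$ right so that every composition $f_{k,l}g_{k,l}$ yields exactly the identity rather than some nonzero scalar multiple is the most delicate point, and is essentially a Vandermonde-type calculation analogous to the coefficients $\alpha_{b,d,k}, \beta_{b,d,k}$ appearing in Lemma~\ref{crossinglemma5}.
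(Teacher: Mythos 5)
Your proposal matches the paper's proof in approach: the paper likewise constructs explicit maps (denoted $A_{k,l,\diamond,c}$, $A_{k,l,\heartsuit,c}$ and their duals $B_{k,l,\diamond,c}$, $B_{k,l,\heartsuit,c}$) built from a crossing of $i$-blocks joined by $k$ nested caps/cups carrying $l$ solid dots together with hollow dots in the two configurations you describe (one family for $l=0,\ldots,k$, the other for $l=1,\ldots,k-1$), with binomial normalizations on the $B$-maps, and leaves the inverse verification to the reader. Your sketch of that verification — $i=j$ analogs of the crossing lemmas derived from \eqref{K8} together with the curl-killing relations for off-diagonal vanishing — is exactly the intended route.
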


\begin{proof}
We construct maps back and forth and leave it to the reader to check that they are inverses.
Define
\begin{equation*}
A_{k,l,\diamond,c} \colon \widetilde{Q}_{i}^{(n)} \widetilde{P}_i^{(m)} \rightarrow \widetilde{P}_{i}^{(m-k)} \widetilde{Q}_i^{(n-k)} \langle -k+2l \rangle \lbrace c \rbrace
\end{equation*}

\begin{equation*}
\label{idPkar}
\begin{tikzpicture}[>=stealth]
\draw (4.5,1.5) node {$A_{k,l,\diamond,0} =$};
\draw (5,0) rectangle (6,.5);
\draw (5.5,.25) node {$i^n$};
\draw (5,2.5) rectangle (6,3);
\draw (5.5,2.75) node {$i^{m-k}$};
\draw (5.5,.5) -- (7.5,2.5) [<-][thick];
\draw (7.5,.5) -- (5.5,2.5) [->][thick];
\draw (7,0) rectangle (8,.5);
\draw (7.5,.25) node {$i^{m}$};
\draw (7,2.5) rectangle (8,3);
\draw (7.5,2.75) node {$i^{n-k}$};
\draw (6,.5) arc (180:0:0.5cm)[<-] [thick];
\filldraw [black] (7,.65) circle (2pt);
\draw (6.8,.65) node{$\scriptstyle{l}$};

\draw (9.5,1.5) node {$A_{k,l,\diamond,1} =$};
\draw (10,0) rectangle (11,.5);
\draw (10.5,.25) node {$i^n$};
\draw (10,2.5) rectangle (11,3);
\draw (10.5,2.75) node {$i^{m-k}$};
\draw (10.5,.5) -- (12.5,2.5) [<-][thick];
\draw (12.5,.5) -- (10.5,2.5) [->][thick];
\draw (12,0) rectangle (13,.5);
\draw (12.5,.25) node {$i^{m}$};
\draw (12,2.5) rectangle (13,3);
\draw (12.5,2.75) node {$i^{n-k}$};
\draw (11,.5) arc (180:0:0.5cm)[<-] [thick];
\draw [black] (11.95,.68) circle (2pt);
\draw (11.75,.65) node{$\scriptstyle{u}$};
\filldraw [black] (11.5,1) circle (2pt);
\draw (11.5,.75) node{$\scriptstyle{l}$};
\end{tikzpicture}
\end{equation*}
where the label $ l $ for the solid dot means put one copy of a solid dot on each of the first $ l $ strands counting from bottom to top and the label $ u $ for the hollow dot means put a single hollow dot on the upper most arc.

Define maps for $ 1 \leq l \leq k-1$
\begin{equation*}
A_{k,l,\heartsuit,c} \colon \widetilde{Q}_{i}^{(n)} \widetilde{P}_i^{(m)} \rightarrow \widetilde{P}_{i}^{(m-k)} \widetilde{Q}_i^{(n-k)} \langle -k+2l \rangle \lbrace c \rbrace
\end{equation*}

\begin{equation*}
\label{idPkar}
\begin{tikzpicture}[>=stealth]
\draw (4.5,1.5) node {$A_{k,l,\heartsuit,0} =$};
\draw (5,0) rectangle (6,.5);
\draw (5.5,.25) node {$i^n$};
\draw (5,2.5) rectangle (6,3);
\draw (5.5,2.75) node {$i^{m-k}$};
\draw (5.5,.5) -- (7.5,2.5) [<-][thick];
\draw (7.5,.5) -- (5.5,2.5) [->][thick];
\draw (7,0) rectangle (8,.5);
\draw (7.5,.25) node {$i^{m}$};
\draw (7,2.5) rectangle (8,3);
\draw (7.5,2.75) node {$i^{n-k}$};
\draw (6,.5) arc (180:0:0.5cm)[<-] [thick];
\draw [black] (6.9,.8) circle (2pt);
\draw (6.8,.65) node{$\scriptstyle{bu}$};
\filldraw [black] (6.5,1) circle (2pt);
\draw (6.5,.75) node{$\scriptstyle{l}$};

\draw (9.5,1.5) node {$A_{k,l,\heartsuit,1} =$};
\draw (10,0) rectangle (11,.5);
\draw (10.5,.25) node {$i^n$};
\draw (10,2.5) rectangle (11,3);
\draw (10.5,2.75) node {$i^{m-k}$};
\draw (10.5,.5) -- (12.5,2.5) [<-][thick];
\draw (12.5,.5) -- (10.5,2.5) [->][thick];
\draw (12,0) rectangle (13,.5);
\draw (12.5,.25) node {$i^{m}$};
\draw (12,2.5) rectangle (13,3);
\draw (12.5,2.75) node {$i^{n-k}$};
\draw (11,.5) arc (180:0:0.5cm)[<-] [thick];
\draw [black] (12,.65) circle (2pt);
\draw (11.8,.65) node{$\scriptstyle{b}$};
\filldraw [black] (11.5,1) circle (2pt);
\draw (11.5,.75) node{$\scriptstyle{l}$};
\end{tikzpicture}
\end{equation*}
where the label $ l $ for the solid dot means put one copy of a solid dot on each of the first $ l $ caps counting from bottom to top. The label $ b $ for the hollow dot means the bottom cap gets a single hollow dot. The label $ bu $ for the hollow dot means the bottom and top caps each get hollow dots positioned on a vertical line.

Define maps
\begin{equation*}
B_{k,l,\diamond,c} \colon \widetilde{P}_{i}^{(m-k)} \widetilde{Q}_i^{(n-k)} \langle -k+2l \rangle \lbrace c \rbrace \rightarrow \widetilde{Q}_{i}^{(n)} \widetilde{P}_i^{(m)}
\end{equation*}
\begin{equation*}
\label{idPkar}
\begin{tikzpicture}[>=stealth]
\draw [shift={+(-2,0)}](4.2,1.5) node {$B_{k,l,\diamond,0} ={\binom{n}{k}\binom{m}{k}\binom{k}{l}k!}$};
\draw [shift={+(-2,0)}](5,0) rectangle (6,.5);
\draw [shift={+(-2,0)}](5.5,2.75) node {$i^n$};
\draw [shift={+(-2,0)}](5,2.5) rectangle (6,3);
\draw [shift={+(-2,0)}](5.5,.25) node {$i^{m-k}$};
\draw [shift={+(-2,0)}](5.5,.5) -- (7.5,2.5) [->][thick];
\draw [shift={+(-2,0)}](7.5,.5) -- (5.5,2.5) [<-][thick];
\draw [shift={+(-2,0)}](7,0) rectangle (8,.5);
\draw [shift={+(-2,0)}](7.5,2.75) node {$i^{m}$};
\draw [shift={+(-2,0)}](7,2.5) rectangle (8,3);
\draw [shift={+(-2,0)}](7.5,.25) node {$i^{n-k}$};
\draw [shift={+(-2,0)}](6,2.5) arc (-180:0:0.5cm)[->] [thick];
\filldraw [shift={+(-2,0)}][black] (7,2.35) circle (2pt);
\draw [shift={+(-2,0)}](6.5,2.3) node{$\scriptstyle{k-l}$};

\draw [shift={+(-1,0)}](9.5,1.5) node {$B_{k,l,\diamond,1} =-{\binom{n}{k}\binom{m}{k}\binom{k-1}{l}k!k}$};
\draw (10,0) rectangle (11,.5);
\draw (10.5,2.75) node {$i^n$};
\draw (10,2.5) rectangle (11,3);
\draw (10.5,.25) node {$i^{m-k}$};
\draw (10.5,.5) -- (12.5,2.5) [->][thick];
\draw (12.5,.5) -- (10.5,2.5) [<-][thick];
\draw (12,0) rectangle (13,.5);
\draw (12.5,2.75) node {$i^{m}$};
\draw (12,2.5) rectangle (13,3);
\draw (12.5,.25) node {$i^{n-k}$};
\draw (11,2.5) arc (-180:0:0.5cm)[->] [thick];
\filldraw [black] (12,2.35) circle (2pt);
\draw (11.5,2.3) node{$\scriptstyle{k-l}$};
\draw [black] (11.5,2) circle (2pt);
\draw (11.5,1.75) node{$\scriptstyle{b}$};
\end{tikzpicture}
\end{equation*}
where the label $ k-l $ for the solid dot means that each of the first $ k-l $ cups starting from the bottom should carry a solid dot and the label $ b $ for the hollow dot means that the bottom cup should carry a hollow dot.  By convention we define $\binom{k-1}{k}=1$.

Define maps for $ 1 \leq l \leq k-1$
\begin{equation*}
B_{k,l,\heartsuit,c} \colon \widetilde{P}_{i}^{(m-k)} \widetilde{Q}_i^{(n-k)} \langle -k+2l \rangle \lbrace c \rbrace \rightarrow \widetilde{Q}_{i}^{(n)} \widetilde{P}_i^{(m)}
\end{equation*}
\begin{equation*}
\label{idPkar}
\begin{tikzpicture}[>=stealth]
\draw [shift={+(-4,0)}](4.5,1.5) node {$B_{k,l,\heartsuit,0} ={\binom{n}{k}\binom{m}{k}\binom{k-2}{l-1}k!k(k-1)}$};
\draw [shift={+(-1.5,0)}](5,0) rectangle (6,.5);
\draw [shift={+(-1.5,0)}](5.5,2.75) node {$i^n$};
\draw [shift={+(-1.5,0)}](5,2.5) rectangle (6,3);
\draw [shift={+(-1.5,0)}](5.5,.25) node {$i^{m-k}$};
\draw [shift={+(-1.5,0)}](5.5,.5) -- (7.5,2.5) [->][thick];
\draw [shift={+(-1.5,0)}](7.5,.5) -- (5.5,2.5) [<-][thick];
\draw [shift={+(-1.5,0)}](7,0) rectangle (8,.5);
\draw [shift={+(-1.5,0)}](7.5,2.75) node {$i^{m}$};
\draw [shift={+(-1.5,0)}](7,2.5) rectangle (8,3);
\draw [shift={+(-1.5,0)}](7.5,.25) node {$i^{n-k}$};
\draw [shift={+(-1.5,0)}](6,2.5) arc (-180:0:0.5cm)[->] [thick];
\filldraw [shift={+(-1.5,0)}][black] (7,2.35) circle (2pt);
\draw [shift={+(-1.5,0)}](6.5,2.35) node{$\scriptstyle{k-l}$};
\draw [shift={+(-1.5,0)}][black] (6.5,2) circle (2pt);
\draw [shift={+(-1.5,0)}](6.5,1.8) node{$\scriptstyle{bu}$};
\end{tikzpicture}
\end{equation*}

\begin{equation*}
\begin{tikzpicture}[>=stealth]
\draw [shift={+(-2.4,0)}](9.5,1.5) node {$B_{k,l,\heartsuit,1} ={\binom{n}{k}\binom{m}{k}\binom{k-1}{l-1}k!k}$};
\draw (10,0) rectangle (11,.5);
\draw (10.5,2.75) node {$i^n$};
\draw (10,2.5) rectangle (11,3);
\draw (10.5,.25) node {$i^{m-k}$};
\draw (10.5,.5) -- (12.5,2.5) [->][thick];
\draw (12.5,.5) -- (10.5,2.5) [<-][thick];
\draw (12,0) rectangle (13,.5);
\draw (12.5,2.75) node {$i^{m}$};
\draw (12,2.5) rectangle (13,3);
\draw (12.5,.25) node {$i^{n-k}$};
\draw (11,2.5) arc (-180:0:0.5cm)[->] [thick];
\filldraw [black] (12,2.35) circle (2pt);
\draw (11.5,2.35) node{$\scriptstyle{k-l}$};
\draw [black] (11.5,2) circle (2pt);
\draw (11.5,1.8) node{$\scriptstyle{u}$};
\end{tikzpicture}
\end{equation*}
where the label $ k-l $ for the solid dot means that each of the first $ k-l $ cups starting from the bottom should carry a solid dot. The label $ u $ for the hollow dot means that the uppermost cup should carry a hollow dot.
The label $ bu $ for the hollow dot means that the uppermost and bottom cups carry hollow dots positioned on a vertical line.

The maps $ A_{k,l,\diamond,c} $ and $ A_{k,l,\heartsuit,c} $ induce the isomorphism in the proposition while the maps $ B_{k,l,\diamond,c} $ and $ B_{k,l,\heartsuit,c} $ induce the inverse isomorphism.
%I need to be slightly careful about the relative positioning of the hollow dots if there are two of them.
\end{proof}

\begin{prop}
\label{relationsinK5}
For all $ i \neq j $ with $ a_{ij} = 0 $,
there is an isomorphism of objects in $\mathcal{K}_{\Gamma}^- $:
\begin{equation*}
\widetilde{Q}_i^{(n)} \widetilde{P}_j^{(m)} \cong \widetilde{P}_j^{(m)} \widetilde{Q}_i^{(n)}.
\end{equation*}
\end{prop}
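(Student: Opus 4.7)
The plan is to mirror the proof of Proposition \ref{relationsinK1}, using the crucial observation that when $a_{ij}=0$ (so $i$ and $j$ are non-adjacent nodes in the Dynkin diagram), the orientation convention gives $\epsilon_{ij}=0$. Consequently, relation \eqref{K6} simplifies: the two cup-cap correction terms on the right-hand side both vanish, leaving the clean local identity that an upward $j$-strand and downward $i$-strand cross past each other freely, just as two like-oriented strands do under \eqref{K5}.

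Concretely, I would define
\begin{equation*}
f \colon \widetilde{Q}_i^{(n)} \widetilde{P}_j^{(m)} \to \widetilde{P}_j^{(m)} \widetilde{Q}_i^{(n)}, \qquad g \colon \widetilde{P}_j^{(m)} \widetilde{Q}_i^{(n)} \to \widetilde{Q}_i^{(n)} \widetilde{P}_j^{(m)}
\end{equation*}
by the obvious diagrams consisting of $nm$ parallel crossings joining the input idempotents on the bottom to the output idempotents on the top, with the $n$ downward-oriented $i$-strands crossing the $m$ upward-oriented $j$-strands. These are well-defined because absorbing adjacent crossings into the symmetrizing idempotents $e_{i,1,(n)}$ and $e_{j,1,(m)}$ is controlled by Propositions \ref{idempotentslide} and \ref{idempotentabsorb}, depicted graphically as Propositions \ref{graphicalidempotentslide} and \ref{graphicalidempotentabsorb}.

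To check $gf = \mathrm{id}_{\widetilde{Q}_i^{(n)} \widetilde{P}_j^{(m)}}$, I would stack the two diagrams and apply Proposition \ref{graphicalidempotentslide} to slide the two middle idempotents outward and absorb them into the top idempotents via Proposition \ref{graphicalidempotentabsorb}. At this point the diagram is a rectangular array of double crossings between oppositely-oriented $i$- and $j$-strands sandwiched between the outer idempotents. Applying relation \eqref{K6} locally to each such double crossing and invoking $\epsilon_{ij}=0$ collapses every one of them to the pair of parallel straight strands with no correction terms. Iterating separates all strands and produces the identity morphism on $\widetilde{Q}_i^{(n)} \widetilde{P}_j^{(m)}$. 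The computation of $fg$ is identical with the roles of $P$ and $Q$ reversed.

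There is no real obstacle here: the hard cases (with nonzero $\epsilon_{ij}$, and in particular the $i=j$ case) were dispatched in Propositions \ref{relationsinK4} and \ref{relationsinK3}, where the cup-cap terms in \eqref{K6} and \eqref{K8} had to be dealt with via the cancellation lemmas. In the present proposition the vanishing $\epsilon_{ij}=0$ short-circuits all of that machinery, so the argument reduces to the same clean idempotent-slide-and-absorb computation that handled $\widetilde{P}_i^{(n)} \widetilde{P}_j^{(m)} \cong \widetilde{P}_j^{(m)} \widetilde{P}_i^{(n)}$.
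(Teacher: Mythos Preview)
Your proposal is correct and takes essentially the same approach as the paper: both rest on the observation that when $a_{ij}=0$ one has $\epsilon_{ij}=0$, so the cup-cap correction terms in relation \eqref{K6} vanish and the mutually inverse crossing maps $f_0,g_0$ already give the isomorphism. The paper phrases this as a degenerate case of Proposition~\ref{relationsinK4} while you phrase it as a variant of Proposition~\ref{relationsinK1}, but the underlying computation---slide and absorb the middle idempotents, then collapse all double crossings to straight strands---is the same.
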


\begin{proof}
The proof of this proposition is similar to the proof of Proposition ~\ref{relationsinK4} except that now the last term two terms in the right hand side of ~\eqref{K6} vanish in this case making this proposition easier.
%~\eqref{K6} is used to separate the strands.
%The second term in the right hand side of ~\eqref{K6} vanishes in this case.
\end{proof}

%\begin{prop}
%\label{relationsinK6}
%There are isomorphisms of objects in $\mathcal{K}_{\Gamma}^- $:
%\begin{enumerate}
%\item $ ((\widetilde{P}_i \widetilde{P}_j)^{(n,m)})^{\oplus 2^{\lfloor \frac{m+n}{2} \rfloor}} \cong ((\widetilde{P}_j \widetilde{P}_i)^{(m,n)})^{\oplus 2^{\lfloor \frac{m+n}{2} \rfloor}} $ for all $ i,j \in V^-(\Gamma) $
%\item $ ((\widetilde{Q}_i \widetilde{Q}_j)^{(n,m)})^{\oplus 2^{\lfloor \frac{m+n}{2} \rfloor}} \cong ((\widetilde{Q}_j \widetilde{Q}_i)^{(m,n)})^{\oplus 2^{\lfloor \frac{m+n}{2} \rfloor}} $ for all $ i,j \in V^-(\Gamma) $
%\item $ ((\widetilde{Q}_i \widetilde{P}_i)^{(n,m)})^{\oplus 2^{\lfloor \frac{m+n}{2} \rfloor}} \cong \oplus_{k \geq 0} ((\widetilde{P}_i\widetilde{Q}_i)^{(m-k, n-k)})^{\oplus 2^{\lfloor \frac{m+n-2k}{2} \rfloor} [k+1]} $ for all $ i \in V^-(\Gamma) $
%\item $ ((\widetilde{Q}_i \widetilde{P}_j)^{(n,m)})^{\oplus 2^{\lfloor \frac{m+n}{2} \rfloor}} \cong \oplus_{k \geq 0} ((\widetilde{P}_j \widetilde{Q}_i^{(m-k,n-k)})^{\oplus 2^{\lfloor \frac{m+n-2k}{2} \rfloor}} $ for all $ i  \neq j \in V^-(\Gamma) $
%\newline with $ a_{ij}=-1 $
%\item $ ((\widetilde{Q}_i \widetilde{P}_j)^{(n,m)})^{\oplus 2^{\lfloor \frac{m+n}{2} \rfloor}} \cong ((\widetilde{P}_j \widetilde{Q}_i)^{(m,n)})^{\oplus 2^{\lfloor \frac{m+n}{2} \rfloor}} $ for all $ i \neq j \in V^-(\Gamma) $ with $ a_{ij} = 0 $.
%\end{enumerate}
%\end{prop}

%\begin{proof}
%This follows from Propositions ~\ref{relwithcl}, ~\ref{relationsinK1}, ~\ref{relationsinK2}, ~\ref{relationsinK3}, ~\ref{relationsinK4}, and ~\ref{relationsinK5}.
%\end{proof}

\section{A representation of $\mathcal{H}_{\Gamma}^-$}
\label{2rep}
\subsection{The category $ C_{n}^{\Gamma} $ and functors $ P(n), Q(n) $}
Let $ C_{n}^{\Gamma} $ be the category of $ \Z \times \Z_2$-graded finite-dimensional left $ B_{n}^{\Gamma} $-modules.

$ B_{n}^{\Gamma} $ is naturally a subalgebra of $ B_{n+1}^{\Gamma} $ where $ b_1 \otimes \cdots \otimes  b_n $ maps to $ b_1 \otimes \cdots \otimes b_n \otimes 1 $, $ s_i $ maps to $ s_i $ and $ c_i $ maps to $ c_i $.
Thus $ B_{n+1}^{\Gamma} $ is a $ (B_{n+1}^{\Gamma}, B_{n}^{\Gamma}) $-bimodule.
Now we define functors:
\begin{equation*}
P(n) \colon C_{n}^{\Gamma} \rightarrow C_{n+1}^{\Gamma} \hspace{.5in} M \mapsto B_{n+1}^{\Gamma} \otimes_{B_{n}^{\Gamma}} M \langle 1 \rangle
\end{equation*}
\begin{equation*}
Q(n) \colon C_{n+1}^{\Gamma} \rightarrow C_{n}^{\Gamma} \hspace{.5in} M \mapsto B_{n+1}^{\Gamma} \otimes_{B_{n+1}^{\Gamma}} M
\end{equation*}
where in the definition of $ Q(n) $, we view the first tensor factor $ B_{n+1}^{\Gamma} $ as a $ (B_{n}^{\Gamma}, B_{n+1}^{\Gamma})$-bimodule.

\subsection{Natural transformations}
\subsubsection{$X(b)$}
Utilizing the inclusion
\begin{equation*}
B_{}^{\Gamma} \mapsto B_{n+1}^{\Gamma} \hspace{.5in} a \mapsto (1 \otimes \cdots \otimes 1 \otimes a)
\end{equation*}
which anti-commutes with the subalgebra $ B_{n}^{\Gamma} $, for a homogeneous element $ b \in B_{}^{\Gamma} $ we define
\begin{equation*}
X(b) \colon B_{n+1}^{\Gamma} \rightarrow B_{n+1}^{\Gamma} \hspace{.5in} x \mapsto (-1)^{|x||b|} xb
\end{equation*}
giving a natural transformation
\begin{equation*}
X(b) \colon P(n) \rightarrow P(n) \langle |b| \rangle.
\end{equation*}

\subsubsection{$X(c_{n+1})$}
The $ (B_{n+1}^{\Gamma}, B_n^{\Gamma})$-bimodule map $ B_{n+1}^{\Gamma} \rightarrow B_{n+1}^{\Gamma} $ where $ x \mapsto (-1)^{||x||} xc_{n+1} $ defines a natural transformation
\begin{equation*}
X(c_{n+1}) \colon P(n) \rightarrow P(n) \lbrace 1 \rbrace.
\end{equation*}

\subsubsection{$Y(b)$}
Recall the inclusion
\begin{equation*}
B_{}^{\Gamma} \mapsto B_{n+1}^{\Gamma} \hspace{.5in} a \mapsto (1 \otimes \cdots \otimes 1 \otimes a).
\end{equation*}
Then for a homogeneous element $ b \in B_{}^{\Gamma} $ we define
\begin{equation*}
Y(b) \colon B_{n+1}^{\Gamma} \rightarrow B_{n+1}^{\Gamma} \hspace{.5in} x \mapsto bx
\end{equation*}
giving a natural transformation
\begin{equation*}
Y(b) \colon Q(n) \rightarrow Q(n) \langle |b| \rangle.
\end{equation*}

\subsubsection{$Y(c_{n+1})$}
The $ (B_n^{\Gamma}, B_{n+1}^{\Gamma})$-bimodule map $ B_{n+1}^{\Gamma} \rightarrow B_{n+1}^{\Gamma} $ where $ x \mapsto c_{n+1} x $ defines a natural transformation
\begin{equation*}
Y(c_{n+1}) \colon Q(n) \rightarrow Q(n) \lbrace 1 \rbrace.
\end{equation*}

\subsubsection{$T$}
The functor $ P(n+1) \circ P(n) \colon C_{n}^{\Gamma} \rightarrow C_{n+2}^{\Gamma} $ is given by tensoring with the
$ (B_{n+2}^{\Gamma}, B_{n}^{\Gamma}) $-bimodule
\begin{equation*}
B_{n+2}^{\Gamma} \langle 1 \rangle \otimes_{B_{n+1}^{\Gamma}} B_{n+1}^{\Gamma} \langle 1 \rangle \cong B_{n+2}^{\Gamma} \langle 2 \rangle.
\end{equation*}
There is a $ (B_{n+2}^{\Gamma}, B_{n}^{\Gamma}) $-bimodule homomorphism $ T \colon B_{n+2}^{\Gamma} \rightarrow B_{n+2}^{\Gamma} $
given by mapping $ x \mapsto x s_{n+1} $.  This gives a natural transformation:
\begin{equation*}
T \colon P(n+1) \circ P(n) \rightarrow P(n+1) \circ P(n).
\end{equation*}

\subsubsection{$T_{--}$}
The functor $ Q(n) \circ Q(n+1) \colon C_{n+2}^{\Gamma} \rightarrow C_{n}^{\Gamma} $ is given by tensoring with the
$ (B_{n}^{\Gamma}, B_{n+2}^{\Gamma}) $-bimodule
\begin{equation*}
B_{n+1}^{\Gamma}  \otimes_{B_{n+1}^{\Gamma}} B_{n+2}^{\Gamma}  \cong B_{n+2}^{\Gamma}.
\end{equation*}
There is a $ (B_{n}^{\Gamma}, B_{n+2}^{\Gamma}) $-bimodule homomorphism $ T \colon B_{n+2}^{\Gamma} \rightarrow B_{n+2}^{\Gamma} $
given by mapping $ x \mapsto s_{n+1} x $.  This gives a natural transformation:
\begin{equation*}
T \colon Q(n) \circ Q(n+1) \rightarrow Q(n) \circ Q(n+1).
\end{equation*}

\subsubsection{$T_{-+}, T_{+-}$}
The functor $ Q(n) P(n) $ is given by tensoring with the
$ (B_{n}^{\Gamma}, B_{n}^{\Gamma}) $-bimodule
\begin{equation*}
B_{n+1}^{\Gamma}  \otimes_{B_{n+1}^{\Gamma}} B_{n+1}^{\Gamma} \langle 1 \rangle \cong B_{n+1}^{\Gamma} \langle 1 \rangle.
\end{equation*}

The functor $ P(n-1) Q(n-1) $ is given by tensoring with
$ (B_{n}^{\Gamma}, B_{n}^{\Gamma}) $-bimodule
\begin{equation*}
B_{n}^{\Gamma} \langle 1 \rangle \otimes_{B_{n-1}^{\Gamma}} B_{n}^{\Gamma}.
\end{equation*}

Now following \cite[Section 15.6]{Klesh} we have a $ (B_{n}^{\Gamma}, B_{n}^{\Gamma}) $-bimodule homomorphism
\begin{equation*}
\phi_{-+} \colon B_{n+1}^{\Gamma} \langle 1 \rangle \rightarrow B_{n}^{\Gamma} \langle 1 \rangle \otimes_{B_{n-1}^{\Gamma}} B_{n}^{\Gamma}.
\end{equation*}
This homomorphism maps an element $ g \in B_{n}^{\Gamma} $ to zero and an element $ g s_n h $ to $ g \otimes h $ where $ g, h \in B_{n}^{\Gamma} $.
This bimodule homomorphism induces a morphism of functors
\begin{equation*}
T_{-+} \colon Q(n) P(n) \rightarrow P(n-1) Q(n-1).
\end{equation*}

There is also a $ (B_{n}^{\Gamma}, B_{n}^{\Gamma}) $-bimodule homomorphism
\begin{equation*}
\phi_{+-} \colon B_{n}^{\Gamma} \langle 1 \rangle \otimes_{B_{n-1}^{\Gamma}} B_{n}^{\Gamma} \rightarrow B_{n+1}^{\Gamma} \langle 1 \rangle.
\end{equation*}
This homomorphism maps an element $ g \otimes h $ to $ g s_n h $.
See \cite[Section 15.6]{Klesh} for more details and compare with \cite[Section 3.3]{Kh} in the untwisted case.
The map $ \phi_{+-} $ gives rise to a morphism of functors
\begin{equation*}
T_{+-} \colon P(n-1) Q(n-1) \rightarrow Q(n) P(n).
\end{equation*}

\subsubsection{$\adj_{QP}^{\Id}$}
The functor $ Q(n) \circ P(n) \colon C_{n}^{\Gamma} \rightarrow C_{n}^{\Gamma} $ is given by tensoring with the
$ (B_{n}^{\Gamma}, B_{n}^{\Gamma}) $-bimodule
\begin{equation*}
B_{n+1}^{\Gamma}  \otimes_{B_{n+1}^{\Gamma}} B_{n+1}^{\Gamma} \langle 1 \rangle \cong B_{n+1}^{\Gamma} \langle 1 \rangle.
\end{equation*}
We define a $ (B_{n}^{\Gamma}, B_{n}^{\Gamma}) $-bimodule map
\begin{equation*}
B_{n+1}^{\Gamma} \rightarrow B_{n}^{\Gamma} \langle -2 \rangle
\end{equation*}
given by
%\begin{equation*}
%s_n \mapsto 0 \hspace{.5in} c_{n+1} b  \mapsto 0 \hspace{.5in} 1\in \Lambda^2(V) \mapsto 0 \hspace{.5in} v_1 \mapsto 0 \hspace{.5in} v_2 \mapsto 0 \hspace{.5in} g \omega \mapsto \delta_{g,e}.
%\end{equation*}
\begin{equation*}
(g_1 \otimes \cdots \otimes g_{n+1}) s_n \mapsto 0 \hspace{.5in} (g_1 \otimes \cdots \otimes g_{n+1}) c_{n+1} \mapsto 0 \hspace{.5in} (g_1 \otimes \cdots \otimes g_n \otimes v_1) \mapsto 0
\end{equation*}
\begin{equation*}
(g_1 \otimes \cdots \otimes g_n \otimes v_2) \mapsto 0 \hspace{.5in} (g_1 \otimes \cdots \otimes g_n \otimes \gamma \omega) \mapsto \delta_{\gamma,e}(g_1 \otimes \cdots \otimes g_n)
\end{equation*}
for $ \gamma \in \Gamma $.
This gives rise to a natural transformation
\begin{equation*}
\adj_{QP}^{\Id} \colon Q(n) \circ P(n) \rightarrow \Id \langle-1 \rangle.
\end{equation*}

\subsubsection{$\adj_{\Id}^{QP}$}
The subalgebra inclusion of $ B_{n}^{\Gamma} $ in $ B_{n+1}^{\Gamma} $ gives rise to a natural transformation
\begin{equation*}
\adj_{\Id}^{QP} \colon \Id \rightarrow Q(n) \circ P(n) \langle -1 \rangle.
\end{equation*}

\subsubsection{$ \adj_{PQ}^{\Id}$}
There is a natural multiplication map
\begin{equation*}
B_{n+1}^{\Gamma} \otimes_{B_{n}^{\Gamma}} B_{n+1}^{\Gamma} \rightarrow B_{n+1}^{\Gamma} \hspace{.5in} a \otimes b \mapsto ab.
\end{equation*}
This gives rise to a natural transformation
\begin{equation*}
\adj_{PQ}^{\Id} \colon P(n) \circ Q(n) \rightarrow \Id \langle 1 \rangle.
\end{equation*}

\subsubsection{$\adj_{\Id}^{PQ}$}
Recall that $ \mathcal{B} $ is a basis of $ B^{\Gamma} $.
For $ b \in \mathcal{B} $, let $ b_l = 1 \otimes \cdots \otimes 1 \otimes b \otimes 1 \otimes \cdots \otimes 1$ be the element of $ B^{\Gamma}_{n+1} $ where $ b $ is in the $ l$th component of
$ B^{\Gamma} \otimes \cdots \otimes B^{\Gamma}$.
\begin{lemma}
There is a $ (B_{n+1}^{\Gamma}, B_{n+1}^{\Gamma}) $-bimodule homomorphism $ B_{n+1}^{\Gamma} \rightarrow B_{n+1}^{\Gamma} \otimes_{B_{n}^{\Gamma}} B_{n+1}^{\Gamma} \langle 2 \rangle $
\begin{equation*}
1 \mapsto \sum_{b \in \mathcal{B}} \sum_{i=1}^{n+1} (s_i \cdots s_n \check{b}_{n+1} \otimes b_{n+1} s_n \cdots s_i + s_i \cdots s_n c_{n+1} \check{b}_{n+1} \otimes b_{n+1} c_{n+1} s_n \cdots s_i).
\end{equation*}
\end{lemma}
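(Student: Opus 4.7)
The plan is to observe that any bimodule map out of the regular bimodule $B_{n+1}^\Gamma$ is uniquely determined by the image of $1$, and such a map is well-defined precisely when that image is central in the target bimodule. So, writing
\begin{equation*}
\Theta := \sum_{b \in \mathcal{B}} \sum_{i=1}^{n+1} \bigl(s_i \cdots s_n \check{b}_{n+1} \otimes b_{n+1} s_n \cdots s_i + s_i \cdots s_n c_{n+1} \check{b}_{n+1} \otimes b_{n+1} c_{n+1} s_n \cdots s_i\bigr),
\end{equation*}
the entire content of the lemma is the centrality condition $a\Theta = \Theta a$ in $B_{n+1}^\Gamma \otimes_{B_n^\Gamma} B_{n+1}^\Gamma$ for every $a \in B_{n+1}^\Gamma$.

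I would check this on a generating set of $B_{n+1}^\Gamma$, split into four cases: (i) $a \in B_n^\Gamma$, (ii) $a = s_n$, (iii) $a = c_{n+1}$, and (iv) $a = b_{n+1}'$ for $b' \in B^\Gamma$. Case (i) is the easiest: the balanced tensor product absorbs $B_n^\Gamma$-elements from one side to the other, and the dual-basis property $\tr(b \check{b}') = \delta_{b,b'}$ for $B^\Gamma$ guarantees invariance of the inner sum when $a$ is a Clifford generator $c_k$ or a $B^\Gamma$-slot $b_k$ with $k \leq n$, together with a standard manipulation of the symmetric-group factor for $a = s_k$ with $k < n$. Case (ii) is a telescoping argument: multiplication by $s_n$ on the left shifts the index $i$ in each summand, and the resulting reindexed sum reproduces multiplication by $s_n$ on the right; this is essentially the same telescope used in Khovanov's original untwisted construction.

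Cases (iii) and (iv) are where the twisted/super structure is genuinely being used, and they are the heart of the lemma. For (iii), the two summands of $\Theta$ have been arranged so that conjugation by $c_{n+1}$ swaps them up to the Koszul sign $c_{n+1}\check{b}_{n+1} = (-1)^{\|\check{b}\|}\check{b}_{n+1}c_{n+1}$ together with the relation $c_{n+1}^2=1$; the Clifford-decorated term exists precisely to absorb this defect, and without it the lemma would fail. For (iv), I would transport $b'$ across the middle $\otimes_{B_n^\Gamma}$ using the completeness identity $\sum_b b' \check{b}\otimes b \equiv \sum_b \check{b}\otimes bb'$ derived from super-symmetry of the trace on $B^\Gamma$, while simultaneously tracking the super-signs produced as $b'$ is conjugated past $s_n \cdots s_i$ and (in the second summand) past $c_{n+1}$.

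The main obstacle is the sign and index bookkeeping in case (iv): one must simultaneously manage the $\Gamma$-action, the super-sign from the Clifford generators, and the Koszul signs that appear whenever $B^\Gamma$-elements cross the symmetric group letters in the tensor product. Conceptually, $\Theta$ is the coproduct of $1$ for the super-Frobenius extension $B_n^\Gamma \hookrightarrow B_{n+1}^\Gamma$ (with its natural trace coming from the inclusion), and the statement is the coordinate expression of the standard fact that such a Casimir is bimodule-central; my plan is to carry this out explicitly, with the Clifford term playing the role of the super-correction to the ordinary wreath-product Casimir already appearing in \cite{CL}.
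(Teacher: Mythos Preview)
Your proposal is correct and follows essentially the same route as the paper: both reduce the lemma to checking that $\Theta$ is central in $B_{n+1}^\Gamma \otimes_{B_n^\Gamma} B_{n+1}^\Gamma$ and verify this on generators by direct computation. The only difference is organizational---the paper groups the cases by generator type ($s_k$, $c_k$, and $(B^\Gamma)^{\otimes(n+1)}$, the last deferred to \cite{CL}) rather than by your ``old versus new'' split, and handles each $s_k$ and $c_k$ by comparing $k$ against the summation index $i$; in particular your case (ii) for $s_n$ is not a pure index-shift for the terms with $i<n$ (those commute with $s_n$ individually via the braid relation and passing $s_{n-1}$ across the tensor), while the $i\in\{n,n+1\}$ terms swap as you describe.
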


\begin{proof}
Consider the element
\begin{equation*}
\rho(i,b_l) = s_i \cdots s_n \check{b}_l \otimes b_l s_n \cdots s_i + s_i \cdots s_n c_{n+1} \check{b}_l \otimes b_l c_{n+1} s_n \cdots s_i.
\end{equation*}
When $ l=n+1 $, this is a term in the summation appearing in the lemma.  We must prove that for any $ \zeta \in B^{\Gamma}_n $ that
\begin{equation*}
\zeta \sum_{b \in \mathcal{B}} \sum_{i=1}^{n+1} \rho(i,b_{n+1}) = \sum_{b \in \mathcal{B}} \sum_{i=1}^{n+1} \rho(i,b_{n+1}) \zeta.
\end{equation*}

Let $ \zeta = s_k $ where $ k < i-1$.  Then it is clear that $ s_k \rho(i,b_{n+1}) = \rho(i,b_{n+1}) s_k $ since $ s_k $ commutes with every transposition appearing in $ \rho(i,b_{n+1}) $
and it does not permute the component that $ b $ appears in.

Now assume that $ k>i $. Then
\begin{eqnarray*}
s_k \rho(i,b_{n+1}) &= &s_k(s_i \cdots s_n \check{b}_{n+1} \otimes b_{n+1} s_n \cdots s_i + s_i \cdots s_n c_{n+1} \check{b}_{n+1} \otimes b_{n+1} c_{n+1} s_n \cdots s_i)\\
&= &s_i \cdots s_k s_{k-1} s_k \cdots s_n \check{b}_{n+1} \otimes b_{n+1} s_n \cdots s_i +\\
& & s_i \cdots s_k s_{k-1} s_k \cdots s_n c_{n+1} \check{b}_{n+1} \otimes b_{n+1} c_{n+1} s_n \cdots s_i\\
&= &s_i \cdots s_{k-1} s_{k} s_{k-1} \cdots s_n \check{b}_{n+1} \otimes b_{n+1} s_n \cdots s_i +\\
& & s_i \cdots s_{k-1} s_{k} s_{k-1} \cdots s_n c_{n+1} \check{b}_{n+1} \otimes b_{n+1} c_{n+1} s_n \cdots s_i\\
&=& s_i \cdots s_n s_{k-1} \check{b}_{n+1} \otimes b_{n+1} s_n \cdots s_i + s_i \cdots s_n s_{k-1} c_{n+1} \check{b}_{n+1} \otimes b_{n+1} c_{n+1} s_n \cdots s_i\\
&=& s_i \cdots s_n  \check{b}_{n+1} \otimes b_{n+1} s_{k-1} s_n \cdots s_i + s_i \cdots s_n s_{k-1} c_{n+1} \check{b}_{n+1} \otimes b_{n+1} c_{n+1} s_{k-1} s_n \cdots s_i\\
&=& \rho(i,b_{n+1}) s_k.
\end{eqnarray*}
Note that in the fifth equality, we used the fact that the transposition $ s_{k-1} $ doesn't permute $ b_{n+1} $ to another component nor does it change the Clifford generator $ c_{n+1} $.
Next consider
%for $ w = s_k $ or $ s_{k+1} $,
\begin{eqnarray*}
s_k(\rho(k,b_{n+1})+\rho(k+1,b_{n+1})) &= &s_k s_k \cdots s_n \check{b}_{n+1} \otimes b_{n+1} s_n \cdots s_k +\\
& &s_k s_{k+1} \cdots s_n \check{b}_{n+1} \otimes b_{n+1} s_n \cdots s_{k+1}+\\
& & s_k s_k \cdots s_n c_{n+1} \check{b}_{n+1} \otimes b_{n+1} c_{n+1} s_n \cdots s_k +\\
& & s_k s_{k+1} \cdots s_n c_{n+1} \check{b}_{n+1} \otimes b_{n+1} c_{n+1} s_n \cdots s_{k+1}\\
&= &(\rho(k,b_{n+1})+\rho(k+1,b_{n+1}))s_k.
\end{eqnarray*}
Thus when $ \zeta $ is in the symmetric group, $ \zeta $ commutes with $ \sum_{b \in \mathcal{B}} \sum_{i=1}^{n+1} \rho(i,b_{n+1}) $ as desired.

Now assume $ k < i$.  Then
\begin{eqnarray*}
c_k \rho(i,b_{n+1}) &= &c_k(s_i \cdots s_n \check{b}_{n+1} \otimes b_{n+1} s_n \cdots s_i + s_i \cdots s_n c_{n+1} \check{b}_{n+1} \otimes b_{n+1} c_{n+1} s_n \cdots s_i)\\
& = &s_i \cdots s_n c_k \check{b}_{n+1} \otimes b_{n+1} s_n \cdots s_i - s_i \cdots s_n c_{n+1} \check{b}_{n+1} c_k \otimes b_{n+1} c_{n+1} s_n \cdots s_i\\
& = &s_i \cdots s_n \check{b}_{n+1} \otimes b_{n+1} s_n \cdots s_i c_k+ s_i \cdots s_n c_{n+1} \check{b}_{n+1} \otimes b_{n+1} c_{n+1} s_n \cdots s_i c_k\\
& = &\rho(i,b_{n+1}) c_k.
\end{eqnarray*}

For $ k =i $ we have
\begin{eqnarray*}
c_i \rho(i,b_{n+1}) &= &c_i(s_i \cdots s_n \check{b}_{n+1} \otimes b_{n+1} s_n \cdots s_i + s_i \cdots s_n c_{n+1} \check{b}_{n+1} \otimes b_{n+1} c_{n+1} s_n \cdots s_i)\\
& = & s_i \cdots s_n \check{b}_{n+1} c_{n+1} \otimes b_{n+1} s_n \cdots s_i + s_i \cdots s_n \check{b}_{n+1} \otimes b_{n+1} c_{n+1} s_n \cdots s_i
\end{eqnarray*}
Similarly we get
\begin{equation*}
\rho(i,b_{n+1}) c_i = s_i \cdots s_n \check{b}_{n+1} c_{n+1} \otimes b_{n+1} s_n \cdots s_i + s_i \cdots s_n \check{b}_{n+1} \otimes b_{n+1} c_{n+1} s_n \cdots s_i.
\end{equation*}

Now for $ k > i $,
\begin{eqnarray*}
c_k \rho(i,b_{n+1}) &= &c_k(s_i \cdots s_n \check{b}_{n+1} \otimes b_{n+1} s_n \cdots s_i + s_i \cdots s_n c_{n+1} \check{b}_{n+1} \otimes b_{n+1} c_{n+1} s_n \cdots s_i)\\
&= & s_i \cdots s_n \check{b}_{n+1} c_{k-1} \otimes b_{n+1} s_n \cdots s_i - s_i \cdots s_n c_{n+1} c_{k-1} \check{b}_{n+1} \otimes b_{n+1} c_{n+1} s_n \cdots s_i\\
&= & s_i \cdots s_n \check{b}_{n+1} \otimes c_{k-1} b_{n+1} s_n \cdots s_i + s_i \cdots s_n c_{n+1} \check{b}_{n+1} \otimes b_{n+1} c_{n+1} c_{k-1} s_n \cdots s_i\\
&= & s_i \cdots s_n \check{b}_{n+1} \otimes b_{n+1} s_n \cdots s_i  c_k+ s_i \cdots s_n c_{n+1} \check{b}_{n+1} \otimes b_{n+1} c_{n+1} s_n \cdots s_i c_k\\
&= & \rho(i,b_{n+1}) c_k.
\end{eqnarray*}

Thus when $ \zeta $ is a Clifford generator, $ \zeta $ commutes with $ \sum_{b \in \mathcal{B}} \sum_{i=1}^{n+1} \rho(i,b_{n+1}) $.

Finally, $ \zeta $ commutes with $ \sum_{b \in \mathcal{B}} \sum_{i=1}^{n+1} \rho(i,b_{n+1}) $ for $ \zeta \in B^{\Gamma} \otimes \cdots \otimes B^{\Gamma} $ by
~\cite[Section 4.3.8]{CL}.

\end{proof}

This induces a natural transformation
\begin{equation*}
\adj_{\Id}^{PQ} \colon \Id \rightarrow P(n) Q(n) \langle 1 \rangle.
\end{equation*}

\subsection{Relations between natural transformations}
Now we introduce some notation for compositions of natural transformations.  Suppose $ \alpha $ is a natural transformation of the functor $ F $ and $ \beta $ is a natural transformation of the functor $ G $.
Furthermore assume that the composition of functors $ FG $ makes sense.  Then we have the horizontal composition of natural transformations
$ \alpha \beta \colon FG \rightarrow FG$.
Assume that $ \gamma $ is another natural transformation of $ F $.  Then we may vertically compose natural transformations to get:
$ \gamma \circ \alpha \colon F \rightarrow F $.
We use the same notation when the functors are given by tensoring with bimodules and the natural transformations are bimodule homomorphisms.
\subsubsection{Equation ~\ref{H1}}
\begin{prop}
\label{repH1}
If $ b \in B^{\Gamma} $, then
\begin{equation*}
T \circ (X(b)\Id) = (\Id X(b)) \circ T \colon P(n+1) P(n) \rightarrow P(n+1) P(n).
\end{equation*}
\end{prop}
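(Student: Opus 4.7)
The plan is to unpack both sides as concrete bimodule endomorphisms of $B_{n+2}^{\Gamma}$ and then reduce the equality to a single identity inside that algebra.

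First, I would recall that the composite functor $P(n+1)\circ P(n)$ is realized by tensoring with the $(B_{n+2}^{\Gamma}, B_{n}^{\Gamma})$-bimodule
\[
B_{n+2}^{\Gamma}\otimes_{B_{n+1}^{\Gamma}} B_{n+1}^{\Gamma}\;\cong\; B_{n+2}^{\Gamma},
\]
with appropriate grading shift. Under this identification, the natural transformation $T$ is right multiplication by $s_{n+1}$. The transformation $X(b)\,\Id$ is induced by $X(b)$ acting on the outer factor $P(n+1)$, which applies the inclusion $B^{\Gamma}\hookrightarrow B_{n+2}^{\Gamma}$, $a\mapsto 1\otimes\cdots\otimes 1\otimes a$ in the $(n+2)$-nd slot; so on $B_{n+2}^{\Gamma}$ it is $x\mapsto (-1)^{|x||b|}\,x\,b_{n+2}$. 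Similarly, $\Id\,X(b)$ comes from $X(b)$ on the inner $P(n)$, which lands in the $(n+1)$-st slot; tensoring with $B_{n+2}^{\Gamma}$ on the left and using the identification above, this becomes $x\mapsto (-1)^{|x||b|}\,x\,b_{n+1}$ (using that $|s_{n+1}|=0$).

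Next, I would compute the two composites directly on an arbitrary homogeneous $x\in B_{n+2}^{\Gamma}$. One obtains
\[
T\circ(X(b)\,\Id)(x) = (-1)^{|x||b|}\,x\,b_{n+2}\,s_{n+1},\qquad
(\Id\,X(b))\circ T(x) = (-1)^{|x||b|}\,x\,s_{n+1}\,b_{n+1}.
\]
Thus the proposition reduces to the single algebra identity $b_{n+2}\,s_{n+1} = s_{n+1}\,b_{n+1}$ in $B_{n+2}^{\Gamma}$. This is immediate from the semi-direct product description: the action of $S_{n+2}$ on $(B^{\Gamma})^{\otimes(n+2)}\otimes Cl_{n+2}$ has $s_{n+1}.b_{n+1}=b_{n+2}$ (with no sign, since only one slot carries a non-trivial element), and the defining relation of the semi-direct product gives $s_{n+1}\,b_{n+1} = (s_{n+1}.b_{n+1})\,s_{n+1} = b_{n+2}\,s_{n+1}$.

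The only real care needed is bookkeeping for the two signs $(-1)^{|x||b|}$, which appear symmetrically on both sides and cancel, together with verifying that the identification $B_{n+2}^{\Gamma}\otimes_{B_{n+1}^{\Gamma}} B_{n+1}^{\Gamma}\cong B_{n+2}^{\Gamma}$ is compatible with these natural transformations (this is routine since both $X(b)\,\Id$ and $\Id\,X(b)$ are bimodule maps and the identification is the canonical one $z\otimes y\mapsto zy$). I do not anticipate a substantive obstacle; the main content is simply translating the horizontal/vertical compositions of natural transformations into right multiplication by the corresponding elements of $B_{n+2}^{\Gamma}$ and invoking the semi-direct product relation.
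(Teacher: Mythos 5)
Your proposal is correct and follows essentially the same route as the paper: both identify $P(n+1)P(n)$ with tensoring by the bimodule $B_{n+2}^{\Gamma}$, compute both composites as right multiplication by $b$ in the appropriate slot followed or preceded by right multiplication by $s_{n+1}$, and reduce to the semi-direct product relation $s_{n+1}\,b_{n+1}=b_{n+2}\,s_{n+1}$. The only cosmetic difference is that you make the slot indices $b_{n+1}$, $b_{n+2}$ and the semi-direct product identity explicit where the paper describes them in words.
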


\begin{proof}
The functor $ P(n+1) P(n) $ is given by tensoring with the $ (B_{n+2}^{\Gamma}, B_{n}^{\Gamma})$-bimodule $ B_{n+2}^{\Gamma} $.
If $ x \in B_{n+2}^{\Gamma} $, then $ X(b)\Id $ maps $ x $ to $ (-1)^{|x||b|} xb $ where $ b $ is in the last
component of $ B_{1}^{\Gamma} \otimes \cdots \otimes B_{1}^{\Gamma} $.
Then $ T $ maps $ (-1)^{|x||b|} xb $ to $ (-1)^{|x||b|} xb s_{n+1} $ where again $ b $ is in the last component of the tensor product.

For the right hand side of the equation, $ x $ maps to $ x s_{n+1} $ under $ T $.  Then under $ \Id X(b) $, $ x s_{n+1} $ maps to $ (-1)^{|x||b|} x s_{n+1} b $ where $ b $ is in the second to last
component of $ B_{1}^{\Gamma} \otimes \cdots \otimes B_{1}^{\Gamma} $.
In the module $ B_{n+2}^{\Gamma} $, $ (-1)^{|x||b|} x s_{n+1} b = (-1)^{|x||b|} xb s_{n+1} $ where now in the right hand side of the equation $ b $ is in the last component which verifies the proposition.
\end{proof}

\subsubsection{Equation ~\ref{H2}}
\begin{prop}
\label{repH2}
If $ b \in B^{\Gamma} $, then
\begin{equation*}
(X(b) \Id) \circ T  = T \circ (\Id X(b)) \colon P(n+1) P(n) \rightarrow P(n+1) P(n).
\end{equation*}
\end{prop}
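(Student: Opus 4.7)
The plan is to mimic the proof of Proposition~\ref{repH1}: evaluate both sides on an arbitrary element $x\in B_{n+2}^{\Gamma}$, the $(B_{n+2}^{\Gamma},B_n^{\Gamma})$-bimodule representing $P(n+1)P(n)$, and reduce to a single commutation identity in the semidirect product.

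First I would unwind the two compositions. For the right-hand side, $\Id X(b)$ is the inner transformation and inserts $b$ into the $(n+1)$st tensor component, sending $x$ to $(-1)^{|x||b|}xb_{n+1}$; then $T$ multiplies by $s_{n+1}$ on the right, producing $(-1)^{|x||b|}xb_{n+1}s_{n+1}$. For the left-hand side, $T$ first sends $x$ to $xs_{n+1}$ and then $X(b)\Id$ inserts $b$ into the outermost position $n+2$, yielding $(-1)^{|xs_{n+1}||b|}xs_{n+1}b_{n+2}$; the sign collapses to $(-1)^{|x||b|}$ since $s_{n+1}$ has $\Z$-degree zero.

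The key step is then to verify $b_{n+1}s_{n+1}=s_{n+1}b_{n+2}$ in $B_{n+2}^{\Gamma}$. By the semidirect product relation, $s_{n+1}b_{n+1}=(s_{n+1}.b_{n+1})s_{n+1}$; the signed $S_n$-action on $(B^{\Gamma})^{\otimes(n+2)}$ swaps slots $n+1$ and $n+2$ with Koszul sign $(-1)^{|b|\cdot 0}=1$, because the $(n+2)$nd slot holds the identity. Hence $s_{n+1}b_{n+1}=b_{n+2}s_{n+1}$, and since $s_{n+1}^2=1$ this rearranges to $b_{n+1}s_{n+1}=s_{n+1}b_{n+2}$. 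Substituting this into the right-hand computation matches it to the left-hand computation.

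I do not anticipate any serious obstacle; the argument is completely parallel to Proposition~\ref{repH1} and reduces to the verification above. The only subtlety is the bookkeeping of Koszul signs when moving $b$ past $s_{n+1}$, which is trivial here because the other factor involved in the transposition is the identity of $B^{\Gamma}$.
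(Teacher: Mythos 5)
Your proposal is correct and follows exactly the approach the paper intends: the paper's proof of Proposition~\ref{repH2} simply declares it ``very similar'' to Proposition~\ref{repH1}, and your computation fills in those details faithfully, reducing both sides to $(-1)^{|x||b|}x\,s_{n+1}b_{n+2}=(-1)^{|x||b|}x\,b_{n+1}s_{n+1}$ via the semidirect-product relation $s_{n+1}b_{n+1}=b_{n+2}s_{n+1}$ (trivial Koszul sign since the other slot holds $1$). The only blemish is cosmetic: you wrote ``$S_n$-action'' where you mean the $S_{n+2}$-action on $(B^{\Gamma})^{\otimes(n+2)}$.
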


\begin{proof}
The proof of this is very similar to that of Proposition ~\ref{repH2}.
\end{proof}

\subsubsection{Equation ~\ref{H3}}
\begin{prop}
\label{repH3}
If $ b \in B^{\Gamma} $, then there are equalities of natural transformations
\begin{enumerate}
\item $ (\adj_{QP}^{\Id}) \circ (Y(b) \Id) = (\adj_{QP}^{\Id}) \circ (\Id X(b)) $
\item $ (\adj_{PQ}^{\Id}) \circ (X(b) \Id) = (\adj_{PQ}^{\Id}) \circ (\Id Y(b)) $.
\end{enumerate}
\end{prop}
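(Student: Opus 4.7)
My plan is to verify each equality at the level of bimodule maps between the representing bimodules. The proposition compares two ways of moving $b \in B^{\Gamma}$ across an adjunction counit, and the equality reduces to supersymmetry of the trace on $B^{\Gamma}$ in case (1) and to associativity of multiplication in case (2).

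For part (2), the functor $P(n)Q(n)$ is represented by the $(B_{n+1}^{\Gamma}, B_{n+1}^{\Gamma})$-bimodule $B_{n+1}^{\Gamma}\langle 1\rangle \otimes_{B_{n}^{\Gamma}} B_{n+1}^{\Gamma}$, and $\adj_{PQ}^{\Id}$ is the multiplication map $a \otimes a' \mapsto aa'$. Using the Koszul sign convention $(F\beta)(m \otimes n) = (-1)^{|\beta||m|}\, m \otimes \beta(n)$ for horizontal composition with a graded natural transformation of degree $|b|$, a direct computation shows that on a typical tensor $a \otimes a'$ both composites become $(-1)^{|a||b|}\, a\, b_{(n+1)}\, a'$, so the identity is immediate.

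For part (1), $Q(n)P(n)$ is represented by $B_{n+1}^{\Gamma}\langle 1\rangle$; under this identification $(Y(b)\Id)$ acts as $z \mapsto b_{(n+1)} z$ and $(\Id\, X(b))$ acts as $z \mapsto (-1)^{|b||z|}\, z\, b_{(n+1)}$. Setting $\pi := \adj_{QP}^{\Id}$ (the map that extracts the coefficient of $\omega$ paired with the identity element of $\Gamma$ in the $(n+1)$-st tensor slot, killing all terms containing a nontrivial Clifford or symmetric-group factor), the claim reduces to
\[
\pi(b_{(n+1)} z) \;=\; (-1)^{|b||z|}\, \pi(z\, b_{(n+1)}) \quad \text{for all } z \in B_{n+1}^{\Gamma}.
\]
I would prove this by decomposing $z$ in the normal form $z = f c w$ with $f \in (B^{\Gamma})^{\otimes(n+1)}$, $c$ a Clifford monomial, and $w \in S_{n+1}$. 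When $c \neq 1$ or $w \neq e$, both sides vanish because $b_{(n+1)}$ commutes with every $c_i$ and every $s_i$ for $i<n$, while $s_n b_{(n+1)} = b_{(n)} s_n$, so multiplying by $b_{(n+1)}$ on either side preserves the nontrivial Clifford or symmetric-group factor, which is then killed by $\pi$. When $z$ is a pure tensor $g_1 \otimes \cdots \otimes g_{n+1}$, the identity reduces --- after collecting the super signs from moving $b$ past the first $n$ tensor factors --- to the supersymmetry $\tr(b\, g_{n+1}) = (-1)^{|b||g_{n+1}|}\, \tr(g_{n+1}\, b)$ of the trace on $B^{\Gamma}$.

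The main technical obstacle is disentangling the two sources of signs: the Koszul signs arising from horizontal composition of graded natural transformations, and the super signs arising from the super-tensor structure on $(B^{\Gamma})^{\otimes(n+1)} \otimes Cl_{n+1}$. These should conspire correctly because $(B^{\Gamma})^{\otimes n}$ is declared to carry $\Z_2$-degree zero, but the conspiracy needs to be checked carefully in each subcase.
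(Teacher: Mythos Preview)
Your overall strategy matches the paper's: reduce part~(1) to the supersymmetry of the trace on $B^{\Gamma}$ and part~(2) to associativity of multiplication, tracking the Koszul signs. Part~(2) is fine.

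There is, however, a gap in your case analysis for part~(1). You describe $\pi = \adj_{QP}^{\Id}$ as ``killing all terms containing a nontrivial Clifford or symmetric-group factor,'' and then dispose of the case $c \neq 1$ or $w \neq e$ by saying both sides vanish. That is not what $\pi$ does: it is a $(B_n^{\Gamma}, B_n^{\Gamma})$-bimodule map, so it only kills terms involving $s_n$ (equivalently $w \notin S_n$) or $c_{n+1}$ (equivalently $c \notin Cl_n$). An element such as $(g_1 \otimes \cdots \otimes g_{n+1}) c w$ with $c \in Cl_n$ and $w \in S_n$ is \emph{not} sent to zero; rather $\pi$ sends it to $\tr(g_{n+1})(g_1 \otimes \cdots \otimes g_n) c w$. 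So your two cases do not exhaust the possibilities.

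The fix is easy and is exactly what the paper does: split according to whether $w \in S_n$ or not. If $w \notin S_n$ both sides vanish (your commutation remarks handle this). If $w \in S_n$ (and one may similarly assume $c \in Cl_n$), then $c$ and $w$ commute with $b_{(n+1)}$ and, being in $B_n^{\Gamma}$, pass through $\pi$ on the right; the computation then reduces to your pure-tensor case and hence to $\tr(b\,g_{n+1}) = (-1)^{|b||g_{n+1}|}\tr(g_{n+1}\,b)$.
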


\begin{proof}
Both sides of the first equality are composites of maps
\begin{equation*}
Q(n)P(n) \rightarrow Q(n)P(n) \rightarrow \Id
\end{equation*}
corresponding to maps of bimodules
\begin{equation*}
B_{n+1}^{\Gamma} \otimes_{B_{n+1}^{\Gamma}} B_{n+1}^{\Gamma} \rightarrow B_{n+1}^{\Gamma} \otimes_{B_{n+1}^{\Gamma}} B_{n+1}^{\Gamma} \rightarrow B_{n}^{\Gamma}.
\end{equation*}
Let $ g = (g_1 \otimes \cdots \otimes g_{n+1}) \otimes cw $ where $ g_i \in B^{\Gamma} $, $ c \in Cl_{n+1} $ and $ w \in S_{n+1} $.
Let $ b \in B_{}^{\Gamma} $.
Then the left hand side of the equality maps $ g $ under $ Y(b) \Id $ to $ bg $ upon identifying the bimodule associated to $ Q(n) P(n) $ with $ B_{n+1}^{\Gamma} $.
The right hand side maps $ g $ under $ \Id X(b) $ to $ (-1)^{|g||b|} gb $ after making the same identification.

Now we have
\begin{equation*}
bg = (-1)^{|b||g_1 \cdots g_n|} (g_1 \otimes \cdots \otimes g_n \otimes b g_{n+1}) \otimes cw
\end{equation*}
\begin{equation*}
(-1)^{|g||b|} gb = (-1)^{|b||g_1 \cdots g_n| + |b||g_{n+1}|} (g_1 \otimes \cdots \otimes g_n \otimes g_{n+1}) \otimes cwb.
\end{equation*}
If $ w \notin S_n $, then both elements get mapped to zero under $ \adj_{QP}^{\Id} $ so we may assume that $ w \in S_n $.
Then
\begin{equation}
\label{something}
(-1)^{|g||b|} gb = (-1)^{|b||g_1 \cdots g_n| + |b||g_{n+1}|} (g_1 \otimes \cdots \otimes g_n \otimes g_{n+1}b) \otimes cw
\end{equation}
and so applying the map $ \adj_{QP}^{\Id} $ we get
\begin{equation*}
\adj_{QP}^{\Id}((-1)^{|g||b|} gb) = (-1)^{|b||g_1 \cdots g_n| + |b||g_{n+1}|} \tr(g_{n+1} b)(g_1 \otimes \cdots \otimes g_n \otimes 1) \otimes cw .
\end{equation*}
Similarly we get
\begin{equation*}
\adj_{QP}^{\Id}(bg) = (-1)^{|b||g_1 \cdots g_n|} \tr(bg_{n+1}) (g_1 \otimes \cdots \otimes g_n \otimes 1) \otimes cw .
\end{equation*}
The first equality in the proposition follows since $ tr(g_{n+1} b) = (-1)^{|g_{n+1}||b|} tr(b g_{n+1}) $.

Now we prove the second equality.
Both sides are composites of natural transformations
\begin{equation*}
P(n) Q(n) \rightarrow P(n) Q(n) \rightarrow \Id
\end{equation*}
corresponding to maps of bimodules
\begin{equation*}
B_{n}^{\Gamma} \otimes_{B_{n-1}^{\Gamma}} B_{n}^{\Gamma} \rightarrow B_{n}^{\Gamma} \otimes_{B_{n-1}^{\Gamma}} B_{n}^{\Gamma} \rightarrow B_{n}^{\Gamma}.
\end{equation*}
The left hand side acts on an element $ g \otimes h $ as follows:
\begin{equation*}
g \otimes h \mapsto (-1)^{|b||g|} gb \otimes h \mapsto (-1)^{|b||g|} gbh.
\end{equation*}
The right hand side acts on an element $ g \otimes h $ as follows:
\begin{equation*}
g \otimes h \mapsto (-1)^{|b||g|} g \otimes bh \mapsto (-1)^{|b||g|} gbh
\end{equation*}
where the factor $ (-1)^{|b||g|} $ is a consequence of how a tensor products of homomorphisms acts on supermodules.
\end{proof}

\subsubsection{Equation ~\ref{H4}}
\begin{prop}
\label{repH4}
If $ b \in B^{\Gamma} $, then there are equalities of natural transformations
\begin{enumerate}
\item $ (Y(b) \Id) \circ (\adj_{\Id}^{QP}) = (\Id X(b)) \circ (\adj_{\Id}^{QP}) $
\item $ (X(b) \Id) \circ (\adj_{\Id}^{PQ}) = (\Id Y(b)) \circ (\adj_{\Id}^{PQ}) $.
\end{enumerate}
\end{prop}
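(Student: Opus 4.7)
Both equalities assert that two natural transformations are equal, and since the functors involved are representable by bimodules, the natural transformations correspond to bimodule maps; so in each case it suffices to compute how both sides act on the image of $1$.

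For (1), since $\adj^{QP}_\Id$ is the subalgebra inclusion $B_n^\Gamma \hookrightarrow B_{n+1}^\Gamma$, I evaluate the two compositions on an arbitrary $z \in B_n^\Gamma$: the left side produces $bz$ while the right side produces $(-1)^{|z||b|}zb$, with $b$ placed in slot $n{+}1$. These agree because an element of $B^\Gamma$ in slot $n{+}1$ super-commutes with the entire subalgebra $B_n^\Gamma$, this being the only identity that needs to be invoked.

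For (2), I use that $\adj^{PQ}_\Id(1) = \Psi$ is the explicit Casimir sum from the lemma preceding this proposition. I will apply $X(b)\otimes \Id$ and $\Id \otimes Y(b)$ to $\Psi$, taking care to include the super sign $(-1)^{|b||\psi^{(1)}|}$ that the Koszul rule attaches to $\Id \otimes Y(b)$ (since $Y(b)$ carries $\Z_2$-degree $|b|\bmod 2$). Because $c_{n+1}$ and the symmetric-group factors $s_j$ do not interact nontrivially with slot $n{+}1$ (beyond $c_{n+1}$ commuting past $B^\Gamma$-entries), the resulting identity reduces summand-by-summand in $\Psi$ to the super-Casimir identity
\begin{equation*}
\sum_{b' \in \mathcal{B}}(-1)^{|\check{b}'||b|}\bigl(\check{b}'b\otimes b' - \check{b}'\otimes bb'\bigr) = 0 \qquad \text{in } B^\Gamma \otimes B^\Gamma,
\end{equation*}
which I will then deduce from the non-degeneracy of the supersymmetric trace: pairing both sides against an arbitrary $y\otimes z$ and invoking the Frobenius-duality relations $\sum_{b'}\tr(b'z)\check{b}' = z$ and $\sum_{b'}\tr(\check{b}'y)b' = (-1)^{|y|}y$ (a Nakayama-type identity reflecting the supersymmetric character of the trace) will yield equal scalars on both sides.

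The hard part will be the careful accounting of super signs: those intrinsic to the definitions of $X$ and $Y$, those generated by the Koszul rule for horizontal composition of super natural transformations, and those hidden in the dual-basis expansion of $B^\Gamma$. Once these signs are tracked consistently, the identity becomes a routine consequence of the Frobenius structure on $B^\Gamma$, in parallel with the argument already given for Proposition~\ref{repH3}.
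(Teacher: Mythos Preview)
Your approach is correct and will work, but it takes a different route from the paper. The paper's proof is one line: Proposition~\ref{repH4} follows from Proposition~\ref{repH3} together with the zigzag identities of Proposition~\ref{repisotopy1}. The point is that $P$ and $Q$ are biadjoint, so any relation involving the counit maps $\adj_{QP}^{\Id}$, $\adj_{PQ}^{\Id}$ can be ``rotated'' via the zigzag relations into the corresponding relation for the unit maps $\adj_{\Id}^{QP}$, $\adj_{\Id}^{PQ}$. Concretely, to prove~(1) one takes mates under the adjunction $P\dashv Q$: the mate of $(\Id_Q X(b))\circ\adj_{\Id}^{QP}$ is immediately $X(b)$ by a zigzag, while the mate of $(Y(b)\Id_P)\circ\adj_{\Id}^{QP}$ becomes $X(b)$ after one application of repH3(2) to slide the dot across $\adj_{PQ}^{\Id}$. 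Part~(2) is handled symmetrically.

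Your direct computation---especially in part~(2), where you unpack the explicit Casimir formula for $\adj_{\Id}^{PQ}$ and reduce to a dual-basis identity in $B^\Gamma\otimes B^\Gamma$---is more laborious but has the virtue of being self-contained and of making the Frobenius structure on $B^\Gamma$ visibly responsible for the relation. The paper's argument trades that explicitness for economy: once the biadjunction is in place, every ``cap'' identity automatically yields its ``cup'' counterpart, so repH4 comes for free from repH3 without ever touching the Casimir element. (The same pattern recurs later: repH16 is deduced from repH15 and repisotopy1 in the identical way.) If you adopt the paper's method you avoid the delicate sign bookkeeping you anticipate in part~(2) entirely.
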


\begin{proof}
This follows from Propositions ~\ref{repH3} and ~\ref{repisotopy1}.
\end{proof}

\subsubsection{Equation ~\ref{H5}}
\begin{prop}
\label{repH5a}
There is an equality of natural transformations
\begin{equation*}
X(b'b) = (-1)^{|bb'|} X(b) \circ X(b') \colon P(n) \rightarrow P(n).
\end{equation*}
\end{prop}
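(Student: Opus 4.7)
The plan is to verify the identity by direct calculation on a homogeneous element of the underlying $(B_{n+1}^\Gamma, B_n^\Gamma)$-bimodule $B_{n+1}^\Gamma$ that represents the functor $P(n)$, keeping careful track of Koszul signs.

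First I would evaluate $X(b) \circ X(b')$ on a homogeneous $x \in B_{n+1}^\Gamma$ by applying $X(b')$ and then $X(b)$. Since each application of $X(\cdot)$ introduces a sign from the definition $x \mapsto (-1)^{|x||b|}xb$, with $b$ placed in the last tensor factor, two signs arise: one with exponent $|x||b'|$ from $X(b')$, and a second with exponent $(|x|+|b'|)|b|$ from $X(b)$ applied to the element $X(b')(x)$, which is homogeneous of degree $|x|+|b'|$. Rearranging, the total sign exponent equals $|x|(|b|+|b'|) + |b||b'|$.

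Second, I would compare with $X(b'b)(x) = (-1)^{|x||b'b|} x(b'b) = (-1)^{|x|(|b|+|b'|)} xb'b$, using the elementary fact that the embedding $B^\Gamma \hookrightarrow B_{n+1}^\Gamma$ into the last tensor factor is an algebra homomorphism, so that $x(b'b) = xb'b$ as elements of $B_{n+1}^\Gamma$. Subtracting the two exponents isolates the scalar $(-1)^{|b||b'|}$ relating the two expressions, which matches the sign $(-1)^{|bb'|}$ appearing in the statement under the convention of the graphical calculus recorded in relation \eqref{H5}.

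The only substantive difficulty is the bookkeeping of super signs; in particular, one must interpret the exponent $|bb'|$ in the statement as the Koszul product $|b||b'|$ rather than the degree of the product $b'b$ (which is $|b|+|b'|$). Comparison with the diagrammatic relation \eqref{H5}, where exactly this sign already appears, fixes the intended reading, and no further geometric or algebraic input beyond associativity of the $B^\Gamma$-action on the last tensor factor is required.
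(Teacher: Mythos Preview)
Your proof is correct and follows essentially the same approach as the paper: both evaluate $X(b)\circ X(b')$ and $X(b'b)$ on a homogeneous element $x$ of the bimodule $B_{n+1}^{\Gamma}$, track the Koszul signs, and compare. Your explicit remark that the exponent $|bb'|$ must be read as $|b||b'|$ (consistent with relation~\eqref{H5}) is a useful clarification that the paper leaves implicit.
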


\begin{proof}
Let $ x $ be an element of the $ (B_{n+1}^{\Gamma}, B_{n}^{\Gamma})$-bimodule $ B_{n+1}^{\Gamma} $.
Then $ X(b'b) \colon x \mapsto (-1)^{|x||b'b|} xb'b $.
On the other hand,
\begin{equation*}
X(b) \circ X(b') \colon x \mapsto (-1)^{|x||b'|} xb' \mapsto (-1)^{|x||b'|} (-1)^{|xb'||b|} xb'b.
\end{equation*}
Therefore the natural transformations are equal.
\end{proof}

\begin{prop}
\label{repH5b}
There is an equality of natural transformations
\begin{equation*}
Y(b'b) =
Y(b') \circ Y(b) \colon Q(n) \rightarrow Q(n).
\end{equation*}
\end{prop}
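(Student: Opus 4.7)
The proof is a direct computation, closely parallel to (but notably simpler than) the proof of Proposition \ref{repH5a}. First I would unwind the definitions: $Y(b)$ is induced by the $(B_n^\Gamma, B_{n+1}^\Gamma)$-bimodule map $B_{n+1}^\Gamma \to B_{n+1}^\Gamma \langle |b| \rangle$ given by $x \mapsto bx$, where $b \in B^\Gamma$ is embedded in the last tensor factor via $a \mapsto 1 \otimes \cdots \otimes 1 \otimes a$. Vertical composition of these natural transformations corresponds to the ordinary composition of the underlying bimodule maps.

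Next I would simply compute:
$$(Y(b') \circ Y(b))(x) \;=\; Y(b')(bx) \;=\; b'(bx) \;=\; (b'b)x \;=\; Y(b'b)(x),$$
where the penultimate equality is associativity of multiplication in $B_{n+1}^\Gamma$. This gives the desired identity.

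There is no real obstacle here; the proposition is a one-line verification once the definitions are recalled. In contrast to Proposition \ref{repH5a}, no sign appears in the formula, because $Y(b)$ is defined by plain left multiplication without the twisting factor $(-1)^{|x||b|}$ present in the definition of $X(b)$. The two sign contributions $(-1)^{|x||b'|}$ and $(-1)^{|xb'||b|}$ that had to be collected in the $X(b) \circ X(b')$ computation simply do not arise on the $Y$-side, so associativity alone suffices and no Koszul correction is needed.
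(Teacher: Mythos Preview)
Your proof is correct and takes essentially the same approach as the paper: both simply unfold the definition of $Y(b)$ as left multiplication and use associativity to conclude $Y(b')\circ Y(b)(x) = b'(bx) = (b'b)x = Y(b'b)(x)$. Your added remark explaining why no sign correction appears (in contrast to Proposition~\ref{repH5a}) is accurate and a nice clarification, though the paper does not include it.
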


\begin{proof}
Let $ x $ be an element of the $ (B_{n}^{\Gamma}, B_{n+1}^{\Gamma})$-bimodule $ B_{n+1}^{\Gamma} $.
Then $ Y(b'b) \colon x \mapsto b'b x $.
On the other hand,
\begin{equation*}
Y(b') \circ Y(b) \colon x \mapsto
bx \mapsto
b'b x.
\end{equation*}
Therefore the natural transformations are equal.
\end{proof}

\subsubsection{Equation ~\ref{H6}}
Consider the functor $ P(n+r) \cdots P(n) $.
Let
\begin{equation*}
X_k(b) = \underbrace{\Id \cdots \Id}_{r-k+1} X(b) \underbrace{\Id \cdots \Id}_{k-1}
\end{equation*}

\begin{prop}
\label{repH6}
If $ k \neq l $, then there is an equality:
\begin{equation*}
X_l(b') X_k(b) =  (-1)^{|b||b'|} X_k(b) X_l(b') \colon P(n+r) \cdots P(n) \rightarrow P(n+r) \cdots P(n).
\end{equation*}
\end{prop}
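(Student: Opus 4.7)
The plan is to unwind both sides as bimodule homomorphisms on $B_{n+r+1}^{\Gamma}$ and compare signs carefully. Recall that the functor $P(n+r)\cdots P(n)$ is given by tensoring with the $(B_{n+r+1}^{\Gamma}, B_n^{\Gamma})$-bimodule $B_{n+r+1}^{\Gamma}$ (up to a grading shift), so a natural transformation between two such compositions is determined by a $(B_{n+r+1}^{\Gamma}, B_n^{\Gamma})$-bimodule endomorphism of $B_{n+r+1}^{\Gamma}$. Under this identification, $X_k(b)$ corresponds to the right multiplication map $x \mapsto (-1)^{|x||b|} x\,b^{(k)}$, where $b^{(k)}$ denotes $b$ inserted into the appropriate tensor slot of $(B^\Gamma)^{\otimes(n+r+1)}$ (the slot depending on $k$, as in the single-strand case treated in Proposition~\ref{repH5a}).

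First I would carry out this reduction precisely: by essentially the same calculation as in Proposition~\ref{repH5a} (applied one strand at a time), $X_l(b')\circ X_k(b)$ corresponds to $x \mapsto (-1)^{|x||b|}(-1)^{|xb^{(k)}||b'|}\, x\, b^{(k)} b'^{(l)}$, and similarly $X_k(b)\circ X_l(b')$ corresponds to $x \mapsto (-1)^{|x||b'|}(-1)^{|xb'^{(l)}||b|}\, x\, b'^{(l)} b^{(k)}$. Since $|b^{(k)}|=|b|$ and $|b'^{(l)}|=|b'|$, expanding the signs gives an overall factor of $(-1)^{|x|(|b|+|b'|)}$ in both expressions, so the claim reduces to showing $b^{(k)} b'^{(l)} = (-1)^{|b||b'|} b'^{(l)} b^{(k)}$ inside $(B^\Gamma)^{\otimes(n+r+1)}$.

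Second, this last identity is immediate from the definition of the supersymmetric algebra structure on $(B^\Gamma)^{\otimes(n+r+1)}$: when $k\neq l$, the elements $b^{(k)}$ and $b'^{(l)}$ live in distinct tensor factors, and the Koszul sign rule governing the multiplication in a tensor product of superalgebras gives exactly the sign $(-1)^{|b||b'|}$ upon transposing them. Combining this with the sign computation from the previous step yields the desired equality of natural transformations.

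There is no real obstacle here; the only thing to be careful about is the bookkeeping of signs coming from three independent sources, namely (i) the definition of $X(b)$, which includes a $(-1)^{|x||b|}$ factor, (ii) the Koszul sign from horizontal composition of natural transformations on $\Z_2$-graded categories, and (iii) the Koszul sign from commuting $b^{(k)}$ past $b'^{(l)}$ in $(B^\Gamma)^{\otimes(n+r+1)}$. Once these are tracked, the equation $X_l(b')X_k(b) = (-1)^{|b||b'|}X_k(b)X_l(b')$ falls out directly, in parallel with the corresponding statement for the $Y$'s which would be proved analogously using Proposition~\ref{repH5b}.
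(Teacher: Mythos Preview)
Your proof is correct and follows essentially the same approach as the paper: both identify the functor with the bimodule $B_{n+r+1}^{\Gamma}$, compute the effect of each composite as signed right multiplication by $b$ and $b'$ placed in distinct tensor slots, observe that the sign prefactors agree, and conclude from the Koszul sign rule in $(B^{\Gamma})^{\otimes(n+r+1)}$. One minor remark: the common sign in both composites is actually $(-1)^{|x|(|b|+|b'|)+|b||b'|}$ rather than $(-1)^{|x|(|b|+|b'|)}$, but since this factor is the same on both sides your reduction to $b^{(k)}b'^{(l)} = (-1)^{|b||b'|}b'^{(l)}b^{(k)}$ is unaffected.
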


\begin{proof}
Without loss of generality, assume $ k > l $.
The functor $ P(n+r) \cdots P(n) \rightarrow P(n+r) \cdots P(n) $ is given by tensoring with the
$ (B_{n+r+1}^{\Gamma}, B_{n}^{\Gamma})$-bimodule $ B_{n+r+1}^{\Gamma} $.
Let $ x \in B_{n+r+1}^{\Gamma} $.
Then $ X_k(b) $ maps $ x $ to
\begin{equation}
\label{aaa}
(-1)^{|x||b|} [x][(1\otimes \cdots \otimes 1\otimes b \otimes 1 \otimes \cdots \otimes 1)]
\end{equation}
where $ b $ is in component $ (n+k) $ in the $ (n+r+1)$-fold tensor product.
Then $ X_l(b') $ maps the element in ~\eqref{aaa} to:
\begin{equation}
\label{aa}
(-1)^{|x||b|}(-1)^{|b'||xb|} [x][(1 \otimes \cdots \otimes 1 \otimes b \otimes 1 \otimes \cdots \otimes 1)][(1 \otimes \cdots \otimes 1 \otimes b' \otimes 1 \otimes \cdots \otimes 1)]
\end{equation}
where $ b' $ is in component $ (n+l) $ in the $ (n+r+1)$-fold tensor product.

Similarly, $ X_k(b) X_l(b') $ maps $ x $ to
\begin{equation}
\label{ab}
(-1)^{|x||b'|}(-1)^{|b||xb'|} [x][(1 \otimes \cdots \otimes 1 \otimes b' \otimes 1 \otimes \cdots \otimes 1)][(1 \otimes \cdots \otimes 1 \otimes b \otimes 1 \otimes \cdots \otimes 1)]
\end{equation}
where again $ b' $ is in position $ n+l $ and $ b $ is in position $ n+k $.
Note that the powers of $-1$ in ~\eqref{aa} and ~\eqref{ab} are the same.  The last two factors in ~\eqref{ab} commute up to multiplication
by $ (-1)^{|b||b'|} $ which gives the proposition.
\end{proof}

\subsubsection{Equation ~\ref{H7}}
\begin{prop}
There is an equality of natural transformations:
\begin{equation*}
(T \Id) \circ (\Id T) \circ (T \Id) = (\Id T) \circ (T \Id) \circ (\Id T) \colon P(n+2) P(n+1) P(n) \rightarrow P(n+2) P(n+1) P(n).
\end{equation*}
\end{prop}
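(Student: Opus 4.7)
The plan is to verify this braid-type identity at the level of bimodules, by unwinding each side as right multiplication by an element of the symmetric group $S_{n+3}$ and invoking the braid relation.

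First I would identify the functor $P(n+2) \circ P(n+1) \circ P(n)$ with tensoring by the $(B_{n+3}^{\Gamma}, B_n^{\Gamma})$-bimodule
\begin{equation*}
B_{n+3}^{\Gamma}\langle1\rangle \otimes_{B_{n+2}^{\Gamma}} B_{n+2}^{\Gamma}\langle1\rangle \otimes_{B_{n+1}^{\Gamma}} B_{n+1}^{\Gamma}\langle1\rangle \;\cong\; B_{n+3}^{\Gamma}\langle 3\rangle,
\end{equation*}
using the chain of inclusions of subalgebras. Under this identification every natural transformation between compositions of $P$'s becomes a $(B_{n+3}^{\Gamma}, B_n^{\Gamma})$-bimodule endomorphism of $B_{n+3}^{\Gamma}$.

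Next I would trace through the definitions of the horizontal compositions. The natural transformation $T$ on $P(n+2)\circ P(n+1)$ is defined by right multiplication by $s_{n+2}$ on $B_{n+3}^{\Gamma}$, so horizontally composing with $\Id_{P(n)}$ (tensoring over $B_{n+1}^{\Gamma}$ on the right) gives that $T\Id$ on $P(n+2) P(n+1) P(n)$ is the map $x\mapsto x s_{n+2}$ on $B_{n+3}^{\Gamma}$. Dually, $T$ on $P(n+1)\circ P(n)$ is right multiplication by $s_{n+1}$ on $B_{n+2}^{\Gamma}$; horizontally composing on the left with $\Id_{P(n+2)}$ (tensoring $B_{n+3}^{\Gamma}\otimes_{B_{n+2}^{\Gamma}}-$) yields that $\Id T$ on $P(n+2)P(n+1)P(n)$ is the map $x\mapsto x s_{n+1}$ on $B_{n+3}^{\Gamma}$. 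Here I would remark that since $s_{n+1}$ and $s_{n+2}$ lie in the symmetric-group factor $S_{n+3}$ (with no Clifford or $B^{\Gamma}$-tensor components), there are no super signs to keep track of.

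With these identifications, vertical composition of natural transformations corresponds to composition of right-multiplication maps, which in turn corresponds to left-to-right multiplication of the symmetric-group elements. Therefore
\begin{equation*}
(T\Id)\circ(\Id T)\circ(T\Id) \;=\; \bigl(x\mapsto x\, s_{n+2}s_{n+1}s_{n+2}\bigr),
\end{equation*}
\begin{equation*}
(\Id T)\circ(T\Id)\circ(\Id T) \;=\; \bigl(x\mapsto x\, s_{n+1}s_{n+2}s_{n+1}\bigr).
\end{equation*}
The equality of the two sides now reduces entirely to the braid relation $s_{n+2}s_{n+1}s_{n+2}=s_{n+1}s_{n+2}s_{n+1}$ in $S_{n+3}$, which completes the proof. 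There is no real obstacle here: the only care needed is the bookkeeping that confirms which transposition each horizontal composition produces and that the horizontal/vertical interchange for bimodule maps agrees with ordinary composition of right-multiplication endomorphisms.
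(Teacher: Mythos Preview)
Your proof is correct and follows essentially the same approach as the paper: identify the composite functor with the bimodule $B_{n+3}^{\Gamma}$, compute that $T\Id$ and $\Id T$ act as right multiplication by $s_{n+2}$ and $s_{n+1}$ respectively, and reduce to the braid relation in $S_{n+3}$. The only difference is that you spell out the horizontal-composition bookkeeping and the absence of super signs a bit more explicitly.
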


\begin{proof}
The functor $ P(n+2) P(n+1) P(n) $ is given by tensoring with the $ (B_{n+3}^{\Gamma}, B_{n}^{\Gamma})$-bimodule $ B_{n+3}^{\Gamma} $.
For $ x \in B_{n+3}^{\Gamma} $,
\begin{equation*}
(T \Id) \circ (\Id T) \circ (T \Id) \colon x \mapsto x s_{n+2} \mapsto x s_{n+2} s_{n+1} \mapsto x s_{n+2} s_{n+1} s_{n+2}
\end{equation*}
\begin{equation*}
(\Id T) \circ (T \Id) \circ (\Id T) \colon x \mapsto x s_{n+1} \mapsto x s_{n+1} s_{n+2} \mapsto x s_{n+1} s_{n+2} s_{n+1}.
\end{equation*}
The proposition follows since $ s_{n+2} s_{n+1} s_{n+2} = s_{n+1} s_{n+2} s_{n+1} $ in the Hecke-Clifford algebra.
\end{proof}

\subsubsection{Equation ~\ref{H8}}
\begin{prop}
There is an equality of natural transformations:
\begin{equation*}
T \circ T = \Id \Id \colon P(n+1) P(n) \rightarrow P(n+1) P(n).
\end{equation*}
\end{prop}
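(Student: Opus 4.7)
The plan is straightforward: unpack the definition of $T$ as a bimodule map and use the fact that the symmetric group generator $s_{n+1}$ is an involution.

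Recall that the functor $P(n+1) \circ P(n)$ is represented by tensoring with the $(B_{n+2}^{\Gamma}, B_{n}^{\Gamma})$-bimodule $B_{n+2}^{\Gamma}$ (up to the grading shift $\langle 2\rangle$), and that $T$ is induced by the bimodule endomorphism of $B_{n+2}^{\Gamma}$ sending $x \mapsto x s_{n+1}$. First I would verify, as in the previous proposition, that this is indeed a well-defined $(B_{n+2}^{\Gamma}, B_{n}^{\Gamma})$-bimodule map: left $B_{n+2}^{\Gamma}$-linearity is automatic since right multiplication by $s_{n+1}$ commutes with left multiplication, and right $B_{n}^{\Gamma}$-linearity holds because $s_{n+1}$ commutes with every element of $B_{n}^{\Gamma}\subseteq B_{n+2}^\Gamma$ (the symmetric group part acts by permutations only on indices $\le n$, the Clifford generators $c_i$ for $i\le n$ commute with $s_{n+1}$, and the $B^\Gamma$-tensor factors in the first $n$ slots are untouched by $s_{n+1}$).

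Next I would compute the composite directly on an arbitrary element $x\in B_{n+2}^{\Gamma}$:
\begin{equation*}
(T\circ T)(x) = T(x s_{n+1}) = x s_{n+1} s_{n+1} = x,
\end{equation*}
using that $s_{n+1}^2 = 1$ in $S_{n+2}$ and hence in $B_{n+2}^\Gamma$. Thus $T\circ T$ is the identity bimodule endomorphism of $B_{n+2}^\Gamma$, which corresponds to the identity natural transformation $\Id\Id$ of $P(n+1)P(n)$.

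There is no real obstacle here; the only potential subtlety is keeping track of sign conventions from the $\Z_2$-grading, but since $s_{n+1}$ lies in $S_n\subset B_{n+2}^\Gamma$ which has $\Z_2$-degree zero, no Koszul signs appear, and the identity $s_{n+1}^2=1$ holds on the nose. Hence the proposition follows immediately.
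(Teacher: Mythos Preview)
Your proof is correct and takes essentially the same approach as the paper: identify $P(n+1)P(n)$ with the bimodule $B_{n+2}^\Gamma$, apply $T$ twice to get $x\mapsto xs_{n+1}s_{n+1}=x$, and conclude. (Minor typo: you wrote ``$s_{n+1}$ lies in $S_n$'' near the end when you meant $S_{n+2}$.)
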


\begin{proof}
The functor $ P(n+1) P(n) $ is given by the bimodule $ B_{n+2}^{\Gamma} $.
The natural transformation $ T \circ T $ maps an element $ x $ to $ x s_{n+1} $ and then to $ x s_{n+1} s_{n+1} = x $.
Thus this is the identity map.
\end{proof}

\subsubsection{Equation ~\ref{H9}}
\begin{prop}
There is an equality of natural transformations
\begin{equation*}
T_{-+} \circ T_{+-} = (\Id \Id) \colon P(n) Q(n) \rightarrow P(n) Q(n).
\end{equation*}
\end{prop}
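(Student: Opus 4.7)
The plan is to work at the level of the bimodules underlying the functors and verify the identity on generators. Recall that $P(n) Q(n)$ is tensoring with the $(B_{n+1}^{\Gamma}, B_{n+1}^{\Gamma})$-bimodule $M := B_{n+1}^{\Gamma} \langle 1 \rangle \otimes_{B_n^{\Gamma}} B_{n+1}^{\Gamma}$, while $Q(n+1) P(n+1)$ is tensoring with $B_{n+2}^{\Gamma} \langle 1 \rangle$. Under these identifications $T_{+-}$ is induced by $\phi_{+-} \colon M \to B_{n+2}^{\Gamma}\langle 1\rangle$, $g \otimes h \mapsto g s_{n+1} h$, and $T_{-+}$ is induced by $\phi_{-+} \colon B_{n+2}^{\Gamma}\langle 1\rangle \to M$, which annihilates the subbimodule $B_{n+1}^{\Gamma}$ and sends $g s_{n+1} h \mapsto g \otimes h$ for $g,h \in B_{n+1}^{\Gamma}$. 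So it suffices to show $\phi_{-+} \circ \phi_{+-} = \mathrm{id}_M$.

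First I would check this on a pure tensor: given $g \otimes h \in M$, the composition gives $g \otimes h \stackrel{\phi_{+-}}{\longmapsto} g s_{n+1} h \stackrel{\phi_{-+}}{\longmapsto} g \otimes h$, which is the identity by the very definition of $\phi_{-+}$ on elements of the form $g s_{n+1} h$. Since $M$ is generated as an abelian group by such pure tensors and both $\phi_{+-}, \phi_{-+}$ are $(B_{n+1}^{\Gamma}, B_{n+1}^{\Gamma})$-bimodule maps, this computation determines the composition globally.

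The only subtle point is that the defining rule $g s_{n+1} h \mapsto g \otimes h$ must be independent of the chosen decomposition (an element of $B_{n+2}^{\Gamma}$ can be written in the form $g s_{n+1} h$ in many ways), and similarly that $\phi_{+-}$ descends to the balanced tensor product. These well-definedness claims underlie the Mackey-type decomposition
\begin{equation*}
B_{n+2}^{\Gamma} \;\cong\; B_{n+1}^{\Gamma} \;\oplus\; \bigl(B_{n+1}^{\Gamma} \otimes_{B_n^{\Gamma}} B_{n+1}^{\Gamma}\bigr)
\end{equation*}
of $(B_{n+1}^{\Gamma},B_{n+1}^{\Gamma})$-bimodules, in which $\phi_{+-}$ is the inclusion of the second summand and $\phi_{-+}$ is the corresponding projection. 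This decomposition is already recorded in \cite[Section 15.6]{Klesh} (and is directly analogous to the untwisted case in \cite[Section 3.3]{Kh}), so it may be invoked rather than reproved.

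The main (mild) obstacle is thus purely bookkeeping: once we trust the Mackey decomposition above, the proof reduces to the one-line computation on a pure tensor. I would therefore open the proof by identifying the two functors with the bimodules $M$ and $B_{n+2}^{\Gamma}\langle 1 \rangle$, cite the decomposition from \cite[Section 15.6]{Klesh} to legitimize $\phi_{-+}$ as the projection complementary to $\phi_{+-}$, and conclude that $\phi_{-+}\circ\phi_{+-} = \mathrm{id}_M$, which induces the desired equality of natural transformations.
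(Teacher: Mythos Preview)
Your proof is correct and takes the same approach as the paper: the paper's proof is the single sentence ``By the definition of $T_{+-}$ and $T_{-+}$, an element $g \otimes h$ gets mapped to $g s_n h$ which gets mapped to $g \otimes h$ so the composite map is the identity,'' which is exactly your pure-tensor computation. Your additional discussion of well-definedness via the Mackey decomposition from \cite[Section 15.6]{Klesh} is not needed here since the paper already cited that reference when defining $\phi_{-+}$, but it does no harm.
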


\begin{proof}
By the definition of $ T_{+-} $ and $ T_{-+} $, an element $ g \otimes h $ gets mapped to $ g s_n h $ which gets mapped to $ g \otimes h $ so the composite map is the identity.
\end{proof}

\subsubsection{Equation ~\ref{H10}}
\begin{prop}
There is an equality of natural transformations from $ Q(n) P(n) \rightarrow Q(n) P(n) $:
\begin{align*}
T_{+-} \circ T_{-+} = (\Id \Id) &-  \sum_{b \in \mathcal{B}} (Y(\check{b}) \Id) \circ (\adj_{\Id}^{QP}) \circ (\adj_{QP}^{\Id}) \circ (Y({b}) \Id) \\
&+  \sum_{b \in \mathcal{B}}  (Y(c_{n+1})\Id) \circ (Y(\check{b}) \Id) \circ (\adj_{\Id}^{QP}) \circ (\adj_{QP}^{\Id}) \circ (Y({b}) \Id) \circ (\Id X(c_{n+1}))\\
%& \sum_{b \in \mathcal{B}}  (Y(c_{n+1}) \Id) \circ (\Id X(b)) \circ (\adj_{\Id}^{QP}) \circ (\adj_{QP}^{\Id}) \circ (\Id X(\check{b})).
\end{align*}
\end{prop}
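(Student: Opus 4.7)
The plan is to verify this as an equality of $(B_n^\Gamma, B_n^\Gamma)$-bimodule endomorphisms of $B_{n+1}^\Gamma$, which is the bimodule representing $Q(n)P(n)$. The composite $T_{+-} \circ T_{-+}$ comes from $\phi_{+-} \circ \phi_{-+} : B_{n+1}^\Gamma \to B_{n+1}^\Gamma$, which by construction annihilates $B_n^\Gamma \subset B_{n+1}^\Gamma$ and acts as the identity on elements of the form $g s_n h$ with $g,h \in B_n^\Gamma$. So the task reduces to showing that the right-hand side computes the complementary projection, i.e.\ the identity minus the projection onto the ``no-$s_n$'' piece of $B_{n+1}^\Gamma$.

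The first step I would carry out is to record a Mackey-type decomposition of $B_{n+1}^\Gamma$ as a $(B_n^\Gamma,B_n^\Gamma)$-bimodule into $N_0 \oplus N_1 \oplus M$, where $N_0$ is spanned by elements of the form $g\cdot(1^{\otimes n}\otimes b)$, $N_1$ is spanned by elements of the form $g\cdot(1^{\otimes n}\otimes b)c_{n+1}$ (with $g\in B_n^\Gamma$, $b\in\mathcal B$), and $M$ is spanned by elements $g s_n h$. This uses both the braid-style rewriting $s_i c_j = c_{s_i(j)} s_i$ and the obvious $B^\Gamma$-bimodule structure on the $(n+1)$-st tensor slot. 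By inspection, $\phi_{+-}\circ\phi_{-+}$ is the identity on $M$ and zero on $N_0\oplus N_1$, so the claim becomes
\[
\text{RHS} = \Id_{B_{n+1}^\Gamma} - \pi_{N_0} - \pi_{N_1},
\]
where $\pi_{N_\epsilon}$ is the projection onto $N_\epsilon$ along the Mackey decomposition.

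The second step is to unwind each summand on the right-hand side as a bimodule endomorphism of $B_{n+1}^\Gamma$. For $x\in B_{n+1}^\Gamma$,
\[
(Y(\check b)\Id)\circ(\adj_{\Id}^{QP})\circ(\adj_{QP}^{\Id})\circ(Y(b)\Id)\colon x \longmapsto \check b \cdot \iota\bigl(\adj_{QP}^{\Id}(bx)\bigr),
\]
where $\iota$ is the inclusion $B_n^\Gamma\hookrightarrow B_{n+1}^\Gamma$. Since $\adj_{QP}^{\Id}$ extracts the coefficient of $1^{\otimes n}\otimes\omega$ (at the trivial group element) via the trace on $B^\Gamma$, summing over $b\in\mathcal B$ and using the duality $\langle b,\check b\rangle = \tr(b\check b)=\delta$ exactly reproduces the identity on $N_0$ and kills $N_1\oplus M$; this is the projection $\pi_{N_0}$. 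Since the relation reads $-\sum_b\cdots$, the sign is correct. The analogous computation for the second sum, with the extra $Y(c_{n+1})$ on the outside and $X(c_{n+1})$ on the inside, uses the fact that $\adj_{QP}^{\Id}$ vanishes on vectors of the form $(\,\cdots\otimes b)c_{n+1}$; the insertion of $c_{n+1}$'s moves the $N_1$-component into the $N_0$-component where the adjunctions see it, then puts it back. With the super-sign $(-1)^{||x||}$ built into $X(c_{n+1})$ the net result is $+\pi_{N_1}$, matching the $+\sum_b\cdots$ in the statement.

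The main obstacle will be the sign bookkeeping in the second term. The $c_{n+1}$-twisting combines the $\Z_2$-sign on $X(c_{n+1})$, the anticommutation of $c_{n+1}$ with odd elements of $B_n^\Gamma$ when moving through $\phi_{\pm\mp}$, and the super trace conventions on $B^\Gamma$. I would handle this by evaluating both sides on the basis vectors $(g_1\otimes\cdots\otimes g_n\otimes b)c_{n+1}^\epsilon w$ with $w\in S_{n+1}$ chosen from a fixed set of minimal $(S_n,S_n)$-double coset representatives in $S_{n+1}$, so that each basis vector lies in exactly one of $N_0$, $N_1$, $M$, and then checking the equality case by case using Proposition~\ref{repH5a} and the completeness relation $\sum_b \check b\otimes b$ for the bilinear form on $B^\Gamma$.
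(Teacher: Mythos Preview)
Your approach is essentially the paper's: both identify $Q(n)P(n)$ with $B_{n+1}^\Gamma$ and check the identity on generators split according to whether $s_n$ and $c_{n+1}$ appear. Your Mackey decomposition $N_0\oplus N_1\oplus M$ is exactly the paper's Case 2 / Case 3 / Case 1 trichotomy, and your identification of $\phi_{+-}\circ\phi_{-+}$ with the projection onto $M$ is the paper's starting observation.

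One correction: your claimed sign on the $c_{n+1}$-term is off. With the conventions in force, the second sum $\sum_b (Y(c_{n+1})\Id)\circ\cdots\circ(\Id X(c_{n+1}))$ evaluates on an element of $N_1$ to $-x$, not $+x$; so that operator is $-\pi_{N_1}$, and together with the $+$ sign in the statement this gives the required $-\pi_{N_1}$ contribution. The extra minus comes from the interplay of the $(-1)^{||x||}$ in $X(c_{n+1})$, the anticommutation of $c_{n+1}$ with the odd Clifford part of $x$, and $c_{n+1}^2=1$. Since you already flagged this sign tracking as the delicate point and planned a basis-by-basis check (which is exactly what the paper does), you would catch this; just be aware that your heuristic prediction of $+\pi_{N_1}$ is wrong.
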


\begin{proof}
The left hand side is a map of functors
\begin{equation*}
Q(n) P(n) \rightarrow P(n-1) Q(n-1) \rightarrow Q(n) P(n)
\end{equation*}
corresponding to a map of $ (B_n^{\Gamma}, B_n^{\Gamma})$-bimodules
\begin{equation*}
B_{n+1}^{\Gamma} \otimes_{B_{n+1}^{\Gamma}} B_{n+1}^{\Gamma} \rightarrow
B_{n}^{\Gamma} \otimes_{B_{n-1}^{\Gamma}} B_{n}^{\Gamma}   \rightarrow
B_{n+1}^{\Gamma} \otimes_{B_{n+1}^{\Gamma}} B_{n+1}^{\Gamma}.
\end{equation*}

Let $ x \otimes 1 \in B_{n+1}^{\Gamma} \otimes_{B_{n+1}^{\Gamma}} B_{n+1}^{\Gamma} $ and let
$ x = (x_1 \otimes \cdots \otimes x_{n+1})wc $ where $ w \in S_{n+1} $ and $ c \in Cl_{n+1} $.
There are three cases to consider.

Case 1: $ w \notin S_n $.  Then let $ x = g s_n h $ where $ g, h \in B_{n}^{\Gamma} $.  Then by definition
$ T_{+-} \circ T_{-+} (g s_n h) = g s_n h $.
Due to the presence of $ s_n $ in $ g s_n h $, the second and third terms in the right hand side map $ g s_n h $ to zero.  Since the first term is the identity, we have equality in this case.

Case 2: $ w \in S_n $ and $ c \in Cl_n $. Then $ T_{-+} $ maps $ x \otimes 1 $ to zero.  The third term in the right hand side will also maps this element to zero because a factor of $ c_{n+1} $ will be introduced.
The second term maps $ x \otimes 1 $ to
\begin{align*}
& - \sum_b (Y(\check{b})\Id) \circ (\adj_{\Id}^{QP}) \circ (\adj_{QP}^{\Id})(b(x_1 \otimes \cdots \otimes x_n \otimes x_{n+1})wc \otimes 1) \\
= & - \sum_b (-1)^{|b||x_1 \cdots x_n|} (Y(\check{b})\Id) \circ (\adj_{\Id}^{QP}) \circ (\adj_{QP}^{\Id})((x_1 \otimes \cdots \otimes x_n \otimes bx_{n+1})wc \otimes 1) \\
= & - \sum_b (-1)^{|b||x_1 \cdots x_n|} \tr(b x_{n+1}) (Y(\check{b})\Id) ((x_1 \otimes \cdots \otimes x_n \otimes 1)wc \otimes 1) \\
= & - \sum_b (-1)^{|b||x_1 \cdots x_n|} (-1)^{|\check{b}||x_1 \cdots x_n|} \tr(b x_{n+1}) ((x_1 \otimes \cdots \otimes x_n \otimes \check{b})wc \otimes 1) \\
= & -x \otimes 1.
\end{align*}
Since the first term on the right hand side is the identity, the right hand side applied to $ x \otimes 1 $ is zero as well.

Case 3: $ w \in S_n $ and $ c \notin Cl_{n} $.  This is similar to case two.  The left hand side applied to $ x \otimes 1 $ is zero again.  Now the second map in the right hand side maps $ x \otimes 1 $ to zero
while the third map sends $ x \otimes 1 $ to $ -x \otimes 1 $.
\end{proof}

\subsubsection{Equation ~\ref{H11}}
\begin{prop}
There is an equality of natural transformations:
\begin{equation*}
(\adj_{QP}^{\Id}) \circ (\Id X(b)) \circ (\adj_{\Id}^{QP}) \colon \Id \rightarrow \Id = \text{tr}(b).
\end{equation*}
\end{prop}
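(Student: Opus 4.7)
The plan is to trace the claimed composition through the bimodule-level description of each natural transformation, applied to the generator $1$ of the identity functor (viewed on the level of the $(B_n^\Gamma,B_n^\Gamma)$-bimodule $B_n^\Gamma$).

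First, the map $\adj_{\Id}^{QP}\colon \Id\to Q(n)\circ P(n)\langle -1\rangle$ is realized by the subalgebra inclusion $B_n^\Gamma\hookrightarrow B_{n+1}^\Gamma$, so it sends $1\mapsto 1\in B_{n+1}^\Gamma$. Next, the natural transformation $\Id\, X(b)$ is realized on the bimodule side by right multiplication by the element $(1\otimes\cdots\otimes 1\otimes b)$ of $B_{n+1}^\Gamma$, with a sign $(-1)^{|x||b|}$ which is trivial when $x=1$. Thus after this step the image is the element $(1\otimes\cdots\otimes 1\otimes b)\in B_{n+1}^\Gamma$.

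Now apply $\adj_{QP}^{\Id}$. By its defining formula, this bimodule map kills every basis element of $B_{n+1}^\Gamma$ containing a transposition, a Clifford generator $c_{n+1}$, or a factor $v_1$ or $v_2$ in the last tensor slot, and sends $(g_1\otimes\cdots\otimes g_n\otimes\gamma\omega)\mapsto \delta_{\gamma,e}(g_1\otimes\cdots\otimes g_n)$. Decomposing $b\in B^\Gamma$ with respect to the basis obtained by combining the $\C$-basis of $\Lambda^*(V)$ with the group elements of $\Gamma$, only the components of $b$ of the form $\gamma\omega$ with $\gamma=e$ survive, and their coefficients sum to exactly $\tr(b)$ by the defining formula $\tr((f,\gamma))=\delta_{f,\omega}\delta_{\gamma,1}$. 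Hence $1\in B_n^\Gamma$ is sent to $\tr(b)\cdot 1$, which is precisely the natural transformation $\tr(b)\cdot\mathrm{id}_{\Id}$.

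There is essentially no obstacle; the argument is a direct computation of a three-step composite of bimodule maps, applied to the generator $1$. The only subtlety is keeping track of the $\Z_2$-grading sign in $X(b)$, which is trivial here because $|1|=0$, and matching the output of $\adj_{QP}^{\Id}$ against the definition of $\tr$, which is immediate from the formula given in the definition of $\tr$.
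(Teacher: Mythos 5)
Your proof is correct and follows exactly the route the paper intends: the paper simply states that the claim ``follows directly from the definition of $\adj_{QP}^{\Id}$,'' and your computation — tracing $1 \mapsto 1 \mapsto (1\otimes\cdots\otimes 1\otimes b) \mapsto \tr(b)$ through the three bimodule maps, with the observation that the sign in $X(b)$ is trivial for $x=1$ — is exactly the unwinding of that definition.
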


\begin{proof}
This follows directly from the definition of $ \adj_{QP}^{\Id} $.
\end{proof}

\subsubsection{Equation ~\ref{H12}}
\begin{prop}
The natural transformation
$  (\adj_{QP}^{\Id} \Id) \circ (\Id T) \circ (\adj_{\Id}^{QP} \Id) $ of $ P(n) $ is zero.
\end{prop}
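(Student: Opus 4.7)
The plan is to translate the three natural transformations into explicit bimodule homomorphisms and observe that the entire composite sends every element to $\adj_{QP}^{\Id}(xs_{n+1})$, which vanishes by the defining rule $\adj_{QP}^{\Id}(s_{n+1})=0$ combined with the bimodule property of $\adj_{QP}^{\Id}$.

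First I would identify the bimodule representing each functor in the composition. The functor $P(n)$ corresponds to the $(B_{n+1}^{\Gamma}, B_n^{\Gamma})$-bimodule $B_{n+1}^{\Gamma}$, while $Q(n+1)P(n+1)P(n)$ corresponds (up to the shift $\langle -1\rangle$ coming from $\adj_{\Id}^{QP}$ and the shift $\langle 1\rangle$ built into $P(n+1)$) to $B_{n+2}^{\Gamma}$ viewed as a $(B_{n+1}^{\Gamma}, B_n^{\Gamma})$-bimodule via the inclusion. The unit transformation $\adj_{\Id}^{QP}\,\Id$ applied to $P(n)$ is then just the inclusion $B_{n+1}^{\Gamma}\hookrightarrow B_{n+2}^{\Gamma}$, sending $x\mapsto x$. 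The middle transformation $\Id\, T$ acts as $T$ on the right-hand $P(n+1)P(n)$ factor of $Q(n+1)P(n+1)P(n)$, which in bimodule terms is right multiplication by $s_{n+1}$, so it sends $x\mapsto xs_{n+1}$. Finally, $\adj_{QP}^{\Id}\,\Id$ applies the $(B_{n+1}^{\Gamma}, B_{n+1}^{\Gamma})$-bimodule map $\adj_{QP}^{\Id}\colon B_{n+2}^{\Gamma}\to B_{n+1}^{\Gamma}\langle -2\rangle$ on the leftmost $Q(n+1)P(n+1)$ factor. Composing vertically, the total map is $x\mapsto \adj_{QP}^{\Id}(xs_{n+1})$ from $B_{n+1}^{\Gamma}$ to $B_{n+1}^{\Gamma}\langle -2\rangle$.

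The key step is then to observe that $\adj_{QP}^{\Id}(xs_{n+1})=0$ for every $x\in B_{n+1}^{\Gamma}$. This is immediate from the definition of $\adj_{QP}^{\Id}$ given in the subsection on $\adj_{QP}^{\Id}$: the very first defining rule $(g_1\otimes\cdots\otimes g_{n+2})\,s_{n+1}\mapsto 0$ (with all $g_i=1$) tells us $\adj_{QP}^{\Id}(s_{n+1})=0$. Since $\adj_{QP}^{\Id}$ is a $(B_{n+1}^{\Gamma}, B_{n+1}^{\Gamma})$-bimodule map and $x\in B_{n+1}^{\Gamma}$, the left module property gives $\adj_{QP}^{\Id}(xs_{n+1})=x\cdot\adj_{QP}^{\Id}(s_{n+1})=0$, so the whole composite is zero.

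The main obstacle is purely bookkeeping: one must verify the bimodule interpretation of each intermediate functor, keep track of the shifts (composition of functors corresponds to tensor product of bimodules in reverse order), and confirm that $\Id\, T$ indeed implements right multiplication by $s_{n+1}$ on the correct tensor factor. Once those identifications are made, the argument collapses to the single observation that $s_{n+1}$ lies in the kernel of $\adj_{QP}^{\Id}$, which is exactly how $\adj_{QP}^{\Id}$ was designed.
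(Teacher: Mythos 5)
Your proposal is correct and follows essentially the same route as the paper: both translate the composite into bimodule homomorphisms $B_{n+1}^{\Gamma}\to B_{n+2}^{\Gamma}\otimes_{B_{n+2}^{\Gamma}}B_{n+2}^{\Gamma}\otimes_{B_{n+1}^{\Gamma}}B_{n+1}^{\Gamma}\to\cdots\to B_{n+1}^{\Gamma}$ and reduce the vanishing to the single fact $\adj_{QP}^{\Id}(s_{n+1})=0$. If anything, your version is slightly more careful: the paper writes the middle step as $1\otimes 1\otimes x\mapsto 1\otimes s_{n+1}\otimes x$, whereas $T$ is right-multiplication by $s_{n+1}$ and the correct image under the identification is $xs_{n+1}$; you track this correctly and finish with the bimodule left-linearity $\adj_{QP}^{\Id}(xs_{n+1})=x\cdot\adj_{QP}^{\Id}(s_{n+1})=0$.
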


\begin{proof}
This natural transformation is a map of bimodules
\begin{equation*}
B_{n+1}^{\Gamma} \rightarrow B_{n+2}^{\Gamma} \otimes_{B_{n+2}^{\Gamma}} B_{n+2}^{\Gamma} \otimes_{B_{n+1}^{\Gamma}} B_{n+1}^{\Gamma} \rightarrow
B_{n+2}^{\Gamma} \otimes_{B_{n+2}^{\Gamma}} B_{n+2}^{\Gamma} \otimes_{B_{n+1}^{\Gamma}} B_{n+1}^{\Gamma}
\rightarrow B_{n+1}^{\Gamma}.
\end{equation*}
It maps the element $ x \in B_{n+1}^{\Gamma} $ as follows:
\begin{equation*}
x \mapsto 1 \otimes 1 \otimes x \mapsto 1 \otimes s_{n+1} \otimes x \mapsto 0
\end{equation*}
since $ \adj_{QP}^{\Id} $ maps $ s_{n+1} $ to zero.
\end{proof}

\subsubsection{Equation ~\ref{H13}}
\begin{prop}
\label{repH13}
There is an equality of natural transformations
\begin{equation*}
T \circ (X(c_{n+2})\Id) = (\Id X(c_{n+1})) \circ T \colon P(n+1) P(n) \rightarrow P(n+1) P(n).
\end{equation*}
\end{prop}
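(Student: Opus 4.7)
The plan is to reduce everything to a computation of bimodule endomorphisms on $B_{n+2}^{\Gamma}$, which represents the functor $P(n+1)P(n)$, and then to verify the identity using a single Hecke--Clifford relation.

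First I would recall that $P(n+1)P(n)$ is given by tensoring with the $(B_{n+2}^{\Gamma},B_n^{\Gamma})$-bimodule $B_{n+2}^{\Gamma}\langle 2\rangle$, obtained from the canonical isomorphism $B_{n+2}^{\Gamma}\otimes_{B_{n+1}^{\Gamma}}B_{n+1}^{\Gamma}\cong B_{n+2}^{\Gamma}$ via $x\otimes y\mapsto xy$. Under this identification, $T$ becomes right multiplication by $s_{n+1}$, and $X(c_{n+2})\,\Id$ becomes the bimodule map $z\mapsto (-1)^{||z||}zc_{n+2}$ (this is immediate from the definition of $X(c_{n+2})$ as an endomorphism of $B_{n+2}^{\Gamma}$, horizontally composed on the right with the identity of $B_{n+1}^{\Gamma}$).

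The slightly subtler ingredient is $\Id\,X(c_{n+1})$: this is the horizontal composition $1_{B_{n+2}^\Gamma}\otimes X(c_{n+1})$ of an even map with an odd map. Using the Koszul sign rule for tensor products of $\Z_2$-graded maps, one computes
\[
(1\otimes X(c_{n+1}))(z\otimes 1) \;=\; (-1)^{||z||}\,z\otimes c_{n+1},
\]
which under the identification corresponds to the map $z\mapsto (-1)^{||z||}zc_{n+1}$ on $B_{n+2}^{\Gamma}$.

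With these three descriptions in hand, the remaining step is a direct computation. The left-hand side sends $z$ to $(-1)^{||z||}zc_{n+2}s_{n+1}$, while the right-hand side sends $z$ to $(-1)^{||zs_{n+1}||}zs_{n+1}c_{n+1}=(-1)^{||z||}zs_{n+1}c_{n+1}$. The identity of the two then comes down to the Hecke--Clifford relation $c_{n+2}s_{n+1}=s_{n+1}c_{n+1}$, which holds inside $\mathbb{S}_{n+2}\subset B_{n+2}^{\Gamma}$ since $s_{n+1}$ conjugates $c_{n+1}$ to $c_{n+2}$. The main thing to be careful about is tracking the super-sign arising from the horizontal composition with the odd morphism $X(c_{n+1})$; once that sign is in place, the verification is mechanical and parallels the proofs of Propositions~\ref{repH1} and~\ref{repH2}.
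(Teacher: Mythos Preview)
Your proof is correct and follows essentially the same route as the paper: identify $P(n+1)P(n)$ with the bimodule $B_{n+2}^{\Gamma}$, compute each side as a bimodule endomorphism, and reduce to the Hecke--Clifford relation $c_{n+2}s_{n+1}=s_{n+1}c_{n+1}$. Your explicit justification of the Koszul sign in $\Id\,X(c_{n+1})$ is a bit more detailed than the paper's version, but the argument is the same.
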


\begin{proof}
The functor $ P(n+1) P(n) $ is given by tensoring with the $ (B_{n+2}^{\Gamma}, B_{n}^{\Gamma})$-bimodule $ B_{n+2}^{\Gamma} $.
If $ x \in B_{n+2}^{\Gamma} $, then $ X(c_{n+2})\Id $ maps $ x $ to $ (-1)^{||x||}xc_{n+2} $.
Then $ T $ maps $ (-1)^{||x||} xc_{n+2} $ to $ (-1)^{||x||} xc_{n+2} s_{n+1} $.

For the right hand side of the equation, $ x $ maps to $ x s_{n+1} $ under $ T $.  Then under $ \Id X(c_{n+1}) $, $ x s_{n+1} $ maps to $ (-1)^{||x||} x s_{n+1} c_{n+1} $.
In the bimodule $ B_{n+2}^{\Gamma} $, $ x s_{n+1} c_{n+1} = x c_{n+2} s_{n+1} $ which verifies the proposition.
\end{proof}

\subsubsection{Equation ~\ref{H14}}
\begin{prop}
\label{repH14}
There is an equality of natural transformations
\begin{equation*}
(X(c_{n+2}) \Id) \circ T  = T \circ (\Id X(c_{n+1})) \colon P(n+1) P(n) \rightarrow P(n+1) P(n).
\end{equation*}
\end{prop}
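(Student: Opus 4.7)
The plan is to verify this by direct computation on the bimodule $B_{n+2}^{\Gamma}$ that represents the functor $P(n+1)P(n)$, in complete parallel with the proof of Proposition \ref{repH13}. Since the two natural transformations being compared are both given by explicit right-multiplication operations on this bimodule, it suffices to show they agree on an arbitrary element $x \in B_{n+2}^{\Gamma}$.

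First I would apply the left-hand side $(X(c_{n+2}) \Id) \circ T$ to $x$: the map $T$ sends $x \mapsto x s_{n+1}$, and then $X(c_{n+2}) \Id$ sends this to $(-1)^{||x s_{n+1}||} x s_{n+1} c_{n+2}$. Because the permutation $s_{n+1}$ lies in the even part of the $\Z_2$-grading, we have $||x s_{n+1}|| = ||x||$, so the LHS produces $(-1)^{||x||} x s_{n+1} c_{n+2}$.

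Next I would apply the right-hand side $T \circ (\Id X(c_{n+1}))$ to $x$: the map $\Id X(c_{n+1})$ sends $x \mapsto (-1)^{||x||} x c_{n+1}$, and then $T$ sends this to $(-1)^{||x||} x c_{n+1} s_{n+1}$. Comparing with the LHS, equality reduces to the identity $s_{n+1} c_{n+2} = c_{n+1} s_{n+1}$ inside $B_{n+2}^{\Gamma}$, which is immediate from the defining action of the symmetric group on the Clifford generators (the transposition $s_{n+1}$ swaps the subscripts $n+1$ and $n+2$).

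There is no real obstacle here; the computation is essentially identical to that of Proposition \ref{repH13}, with the only bookkeeping subtlety being the sign that arises from moving $X(c_{n+2})$ past a morphism of $\Z_2$-degree $||x s_{n+1}||$, which collapses to $(-1)^{||x||}$ because $s_{n+1}$ is purely even. The use of the compatibility $s_{n+1} c_{n+2} = c_{n+1} s_{n+1}$ plays the same role as $c_{n+2} s_{n+1} = s_{n+1} c_{n+1}$ did in the previous proposition, just read in the opposite direction.
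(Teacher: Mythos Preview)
Your proof is correct and follows exactly the approach the paper intends: the paper's own proof merely says ``The proof of this is very similar to that of Proposition~\ref{repH13},'' and you have simply written out those details, reducing the claim to the identity $s_{n+1} c_{n+2} = c_{n+1} s_{n+1}$ in the Hecke--Clifford algebra.
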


\begin{proof}
The proof of this is very similar to that of Proposition ~\ref{repH13}.
\end{proof}

%%%%
\subsubsection{Equation ~\ref{H15}}
\begin{prop}
\label{repH15}
As maps from $ P(n)Q(n) \rightarrow \Id $ and $ Q(n)P(n) \rightarrow \Id$ respectively, there are equalities:
\begin{enumerate}
\item $ (\adj_{PQ}^{\Id}) \circ (X(c_{n+1}) \Id) = (\adj_{PQ}^{\Id}) \circ (\Id Y(c_{n+1})) $
\item $ (\adj_{QP}^{\Id}) \circ (Y(c_{n+1}) \Id) = -(\adj_{QP}^{\Id}) \circ (\Id X(c_{n+1})) $.
\end{enumerate}
\end{prop}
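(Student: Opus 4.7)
The overall strategy is to translate each natural transformation into an explicit bimodule map and verify the equalities by direct computation, using the Koszul sign convention $(f\otimes g)(a\otimes b) = (-1)^{||g||\,||a||}\,f(a)\otimes g(b)$ for tensor products of $\Z_2$-graded bimodule homomorphisms.

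Equation (1) is a short computation on the bimodule $B_{n+1}^\Gamma\otimes_{B_n^\Gamma} B_{n+1}^\Gamma$ underlying $P(n)Q(n)$, with $\adj_{PQ}^{\Id}$ given by multiplication. Since $\Id$ is even, there is no extra Koszul sign, and
\[
(X(c_{n+1})\Id)(a\otimes b) = X(c_{n+1})(a)\otimes b = (-1)^{||a||}\,ac_{n+1}\otimes b,
\]
while sliding the odd transformation $Y(c_{n+1})$ past $a$ produces
\[
(\Id\,Y(c_{n+1}))(a\otimes b) = (-1)^{||a||}\,a\otimes c_{n+1}b.
\]
Multiplication yields the same element $(-1)^{||a||}\,ac_{n+1}b$ in both cases, establishing (1).

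For equation (2), the bimodule for $Q(n)P(n)$ is $B_{n+1}^\Gamma$ (identifying $B_{n+1}^\Gamma\otimes_{B_{n+1}^\Gamma}B_{n+1}^\Gamma$ with $B_{n+1}^\Gamma$ via multiplication); under this identification $(Y(c_{n+1})\Id)$ is left multiplication by $c_{n+1}$, while $(\Id\,X(c_{n+1}))$ is the map $z\mapsto (-1)^{||z||}\,zc_{n+1}$. I will test both sides on standard basis elements $z = (g_1\otimes\cdots\otimes g_{n+1})cw$ of $B_{n+1}^\Gamma$, with $c$ in Clifford normal form and $w\in S_{n+1}$. Using that $(B^\Gamma)^{\otimes(n+1)}$ is $\Z_2$-even (hence commutes with $c_{n+1}$), that $c_{n+1}$ anticommutes with each $c_i$ for $i\le n$, that $c_{n+1}$ commutes with $s_i$ for $i<n$, and that $c_{n+1}^2=1$, a short case analysis shows that both $\adj_{QP}^{\Id}(c_{n+1}z)$ and $\adj_{QP}^{\Id}((-1)^{||z||}zc_{n+1})$ vanish unless $w\in S_n$, $g_{n+1}=\omega$, and $c = c'c_{n+1}$ with $c'\in Cl_n$. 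In this surviving case I compute
\[
c_{n+1}z = (-1)^{||c'||}(g_1\otimes\cdots\otimes g_n\otimes\omega)c'w,
\]
\[
(-1)^{||z||}zc_{n+1} = (-1)^{||c'||+1}(g_1\otimes\cdots\otimes g_n\otimes\omega)c'w,
\]
and applying $\adj_{QP}^{\Id}$ produces $(-1)^{||c'||}(g_1\otimes\cdots\otimes g_n)c'w$ and $(-1)^{||c'||+1}(g_1\otimes\cdots\otimes g_n)c'w$ respectively. The overall minus sign on the right-hand side of (2) then forces the two expressions to agree.

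The main obstacle is the super-sign bookkeeping: the sign $(-1)^{||x||}$ built into $X(c_{n+1})$, the Koszul sign in the tensor product of natural transformations, and the sign $(-1)^{||c'||}$ arising from permuting $c_{n+1}$ past $c'$ in the Clifford algebra all must combine exactly to reproduce the minus sign in (2). Beyond this, the proof reduces to standard commutation relations in the semidirect product $Cl_{n+1}\rtimes S_{n+1}$ together with the relation $c_{n+1}^2=1$.
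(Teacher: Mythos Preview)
Your proof is correct and follows essentially the same approach as the paper: both translate the natural transformations into explicit bimodule maps on $B_{n+1}^\Gamma\otimes_{B_n^\Gamma}B_{n+1}^\Gamma$ (for part~1) and $B_{n+1}^\Gamma$ (for part~2), then verify the equalities by a direct case analysis on whether $w\in S_n$ and whether the Clifford part contains $c_{n+1}$, tracking the Koszul sign. The only cosmetic differences are that the paper writes basis elements in the order $(g_1\otimes\cdots\otimes g_{n+1})wc$ rather than $(g_1\otimes\cdots\otimes g_{n+1})cw$, and expresses the surviving value via $\tr(g_{n+1})$ rather than restricting to $g_{n+1}=\omega$; these are equivalent by linearity.
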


\begin{proof}
The first item is an equality of natural transformations
\begin{equation*}
P(n) Q(n) \rightarrow P(n) Q(n) \rightarrow \Id
\end{equation*}
corresponding to a map of bimodules
\begin{equation*}
B_{n+1}^{\Gamma} \otimes_{B_{n}^{\Gamma}} B_{n+1}^{\Gamma} \rightarrow B_{n+1}^{\Gamma} \otimes_{B_{n}^{\Gamma}} B_{n+1}^{\Gamma} \rightarrow B_{n+1}^{\Gamma}.
\end{equation*}
Let $ g = (g_1 \otimes \cdots \otimes g_{n+1})c_g w_g $ and $ h= (h_1 \otimes \cdots \otimes h_{n+1}) c_h w_h $
where $ c_g, c_h \in Cl_{n+1} $ and $ w_g, w_h \in S_{n+1} $.

The first map on the left hand side ($X(c_{n+1}) \Id$)  takes the element $ g \otimes h $ to
\begin{equation}
\label{cliffcapslide}
(-1)^{||g||} [(g_1 \otimes \cdots \otimes g_{n+1}) c_g w_g c_{n+1}] \otimes [(h_1 \otimes \cdots \otimes h_{n+1})c_h w_h].
\end{equation}
Then $ \adj_{PQ}^{\Id} $ takes the element in \eqref{cliffcapslide} to
\begin{equation*}
(-1)^{||g||}[(g_1 \otimes \cdots \otimes g_{n+1})c_g w_g c_{n+1} c_h][(h_1 \otimes \cdots \otimes h_{n+1})w_h].
\end{equation*}

The first map on the right hand side ($\Id Y(c_{n+1})$) takes the element $ g \otimes h $ to
\begin{equation}
\label{cliffcapslide2}
(-1)^{||g||} [(g_1 \otimes \cdots \otimes g_{n+1})c_g w_g] \otimes [c_{n+1} (h_1 \otimes \cdots \otimes h_{n+1})c_h w_h]
\end{equation}
where now the factor $ (-1)^{||g||} $ comes from the sign convention of tensoring super homomorphisms.
The map $ \adj_{PQ}^{\Id} $ sends the element in \eqref{cliffcapslide2} to
\begin{equation*}
(-1)^{||g||}[(g_1 \otimes \cdots \otimes g_{n+1})c_g w_g c_{n+1} c_h][(h_1 \otimes \cdots \otimes h_{n+1})w_h]
\end{equation*}
which verifies the first equality.

Both sides of the second equality are natural transformations
\begin{equation*}
Q(n) P(n) \rightarrow Q(n) P(n) \rightarrow \Id
\end{equation*}
corresponding to maps of bimodules
\begin{equation*}
B_{n+1}^{\Gamma} \otimes_{B_{n+1}^{\Gamma}} B_{n+1}^{\Gamma} \rightarrow B_{n+1}^{\Gamma} \otimes_{B_{n+1}^{\Gamma}} B_{n+1}^{\Gamma} \rightarrow B_{n}^{\Gamma}.
\end{equation*}
Let $ g = (g_1 \otimes \cdots \otimes g_{n+1})wc $.  Then $ Y(c_{n+1}) \Id $ applied to $ g \otimes 1 $ is
\begin{equation*}
c_{n+1}(g_1 \otimes \cdots \otimes g_{n+1})wc = (g_1 \otimes \cdots \otimes g_{n+1}) c_{n+1} wc.
\end{equation*}

If $ c \in Cl_n $ or if $ w \notin S_n $, then this goes to zero under $ \adj_{QP}^{\Id} $.

If $ c \notin Cl_{n} $ and $ w \in S_n $, then this element gets mapped by $ \adj_{QP}^{\Id} $ to
\begin{equation*}
(-1)^{||g||-1} \tr(g_{n+1})(g_1 \otimes \cdots \otimes g_n \otimes 1) w c c_{n+1}
\end{equation*}
since $ c_{n+1} $ commutes past $ w $ and $ c_{n+1} c = (-1)^{||g||-1} c c_{n+1} $ since $ c $ does contain a factor of $ c_{n+1} $.

On the other hand, $ \Id X(c_{n+1}) $ applied to $ g \otimes 1 $ is
\begin{equation*}
(-1)^{||g||} \tr(g_{n+1})(g_1 \otimes \cdots \otimes g_{n+1})w c c_{n+1}
\end{equation*}
where the factor $ (-1)^{||g||} $ comes from the sign convention of tensor products of super module homomorphisms.
\end{proof}

\subsubsection{Equation ~\ref{H16}}
\begin{prop}
\label{repH16}
As maps from $ \Id \rightarrow Q(n) P(n) $ and $ \Id \rightarrow P(n)Q(n) $ respectively, there are equalites:
\begin{enumerate}
\item $ (Y(c_{n+1}) \Id) \circ (\adj_{\Id}^{QP}) = (\Id X(c_{n+1})) \circ (\adj_{\Id}^{QP}) $
\item $ (X(c_{n+1}) \Id) \circ (\adj_{\Id}^{PQ}) = -(\Id Y(c_{n+1})) \circ (\adj_{\Id}^{PQ}) $.
\end{enumerate}
\end{prop}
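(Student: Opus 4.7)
The plan is to verify both equalities by direct computation at the level of bimodule homomorphisms, paralleling the treatment of Proposition \ref{repH15}, from which these identities are the ``cap'' versions of the corresponding ``cup'' relations.

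For part (1), the computation should be short. Starting from $1 \in B_n^{\Gamma}$ (the generator of the bimodule corresponding to $\Id$), applying $\adj_{\Id}^{QP}$ sends $1 \mapsto 1 \otimes 1 \in B_{n+1}^{\Gamma} \otimes_{B_{n+1}^{\Gamma}} B_{n+1}^{\Gamma}$. The composition $(Y(c_{n+1}) \Id)$ then sends this to $c_{n+1} \otimes 1$, while $(\Id X(c_{n+1}))$ sends it to $1 \otimes c_{n+1}$ (the super sign from horizontal composition being trivial on the even element $1$). Since $c_{n+1} \in B_{n+1}^{\Gamma}$, these two elements coincide across the balanced tensor product, giving the equality.

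For part (2), I would follow the same strategy used to derive Proposition \ref{repH4} from Proposition \ref{repH3}: use the zigzag (snake) identities for the adjunctions between $P(n)$ and $Q(n)$ to convert the statement about the unit $\adj_{\Id}^{PQ}$ into the statement about the counit $\adj_{PQ}^{\Id}$ established in part (2) of Proposition \ref{repH15}. Concretely, the biadjunction relations identify an equation $(f \Id) \circ \adj_{\Id}^{PQ} = \alpha\, (\Id g) \circ \adj_{\Id}^{PQ}$ with the equation $\adj_{PQ}^{\Id} \circ (\Id g) = \alpha\, \adj_{PQ}^{\Id} \circ (f \Id)$; under this correspondence, the sign $\alpha = -1$ appearing on the right of part (2) here matches precisely the $-1$ appearing in \ref{repH15}(2).

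The main obstacle is correctly bookkeeping super signs. Horizontal composition of super natural transformations introduces factors $(-1)^{\|a\|\cdot\|f\|}$ when $f$ is applied to the second tensor factor, and $Y(c_{n+1})$ versus $X(c_{n+1})$ differ in whether the sign is built into the defining formula or arises from the tensor convention. As a sanity check, and as an alternative to the adjunction argument, one can verify part (2) directly against the explicit formula for $\adj_{\Id}^{PQ}(1)$ given in the lemma preceding $\S 6.2.1$: repeatedly use $c_{n+1}^2 = 1$, the anticommutations $c_{n+1} c_j = -c_j c_{n+1}$ for $j \leq n$, and the Hecke--Clifford relation $s_n c_{n+1} = c_n s_n$ to push a $c_{n+1}$ dot from one tensor factor to the other; the overall sign $-1$ then arises from collecting the accumulated signs together with the sign in the definition of $X(c_{n+1})$.
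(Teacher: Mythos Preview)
Your proposal is correct. For part~(2) you take exactly the paper's route: the proof there reads in full ``This follows from Propositions~\ref{repH15} and~\ref{repisotopy1},'' i.e.\ one transports the counit relation across the zigzag identities, just as you outline (and just as Proposition~\ref{repH4} is obtained from Proposition~\ref{repH3}). For part~(1) you instead give a short direct computation on the generator $1$, which is a legitimate alternative: since $\adj_{\Id}^{QP}$ is the subalgebra inclusion and the tensor is over $B_{n+1}^{\Gamma}$, the identity $c_{n+1}\otimes 1 = 1\otimes c_{n+1}$ settles it immediately. This is marginally cleaner than the paper's uniform adjunction argument, at the cost of not treating both parts in parallel; either way the content is the same.
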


\begin{proof}
This follows from Propositions ~\ref{repH15} and ~\ref{repisotopy1}.
\end{proof}

%%
%\subsubsection{Equation ~\ref{H16}}
%\begin{prop}
%\label{repH16a}
%As maps from $ \Id \rightarrow Q(n) \circ P(n) $, there is an equality:
%\begin{equation*}
%(Y(c_{n+1}) \Id) \circ (\adj_{\Id}^{QP}) = (\Id X(c_{n+1})) \circ (\adj_{\Id}^{QP})
%\end{equation*}
%\end{prop}

%%%
%\begin{prop}
%\label{repH16b}
%As maps from $ P(n) \circ Q(n) \rightarrow \Id $, there is an equality:
%\begin{equation*}
%(\adj_{QP}^{\Id}) \circ (Y(c_{n+1}) \Id) = (\adj_{QP}^{\Id}) \circ (\Id X(c_{n+1})).
%\end{equation*}
%\end{prop}

\subsubsection{Equation ~\ref{H17}}
\begin{prop}
\begin{enumerate}
\item The map $ X(c_{n+1}) \circ X(c_{n+1}) \colon P(n) \rightarrow P(n) $ is $ -\Id $.
\item The map $ Y(c_{n+1}) \circ Y(c_{n+1}) \colon Q(n) \rightarrow Q(n) $ is $ \Id $.
\end{enumerate}
\end{prop}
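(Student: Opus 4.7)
The plan is to reduce both statements to direct computations on the bimodules representing $P(n)$ and $Q(n)$, tracking the super-signs carefully and using $c_{n+1}^2=1$.

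First I would unpack the first claim. The functor $P(n)$ corresponds to the $(B_{n+1}^\Gamma, B_n^\Gamma)$-bimodule $B_{n+1}^\Gamma\langle 1\rangle$, and $X(c_{n+1})$ is the bimodule map sending a homogeneous element $x \in B_{n+1}^\Gamma$ to $(-1)^{||x||} x c_{n+1}$. Applying it twice I obtain
\[
x \longmapsto (-1)^{||x||} x c_{n+1} \longmapsto (-1)^{||x c_{n+1}||}(-1)^{||x||}\, x c_{n+1}^2.
\]
Since $||c_{n+1}||=1$, we have $||x c_{n+1}|| = ||x||+1$, so the accumulated sign is $(-1)^{2||x||+1}=-1$; combining this with the Clifford relation $c_{n+1}^2=1$ in $Cl_{n+1}\subset B_{n+1}^\Gamma$, the composite becomes $x\mapsto -x$. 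This identifies $X(c_{n+1})\circ X(c_{n+1})$ with $-\Id$ on the underlying bimodule, hence with $-\Id$ as a natural transformation.

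For the second claim, $Q(n)$ corresponds to $B_{n+1}^\Gamma$ as a $(B_n^\Gamma, B_{n+1}^\Gamma)$-bimodule, and $Y(c_{n+1})$ is the bimodule endomorphism $x\mapsto c_{n+1}x$; crucially, as noted in the definition of $Y(c_{n+1})$, left multiplication by $c_{n+1}$ is a genuine bimodule map requiring no super-sign correction. Iterating gives $x\mapsto c_{n+1}^2 x = x$, so the composite is $\Id$.

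The main obstacle is purely notational: one must keep the super-sign bookkeeping straight and recall why the sign $(-1)^{||x||}$ appears in the definition of $X(c_{n+1})$ but not in that of $Y(c_{n+1})$. The former is forced by the requirement that right multiplication by the odd element $c_{n+1}$ commute in the super sense with left multiplication by $B_{n+1}^\Gamma$, and this asymmetry between the $X$ and $Y$ conventions is precisely what produces the discrepancy in sign between the two parts of the proposition. Once these conventions are fixed, everything reduces to $c_{n+1}^2=1$.
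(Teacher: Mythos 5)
Your proof is correct and takes essentially the same approach as the paper, which simply observes that both follow from $c_{n+1}^2=1$; you have merely spelled out the super-sign bookkeeping, confirming that the sign $(-1)^{||x||}$ in the definition of $X(c_{n+1})$ accumulates to $(-1)^{2||x||+1}=-1$ over the double composite while $Y(c_{n+1})$ carries no sign.
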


\begin{proof}
These follow easily from the fact that $ c_{n+1}^2 = 1 $ in the Hecke-Clifford algebra.
\end{proof}

\subsubsection{Equation ~\ref{H18}}
\begin{prop}
Let $ b \in B_{}^{\Gamma} $.  Then there is an equality of natural transformations
$ X(b) X(c_{n+1}) = X(c_{n+1}) X(b) \colon P(n) \rightarrow P(n) $.
\end{prop}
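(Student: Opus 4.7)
The plan is to unwind the definitions of the two natural transformations on the $(B_{n+1}^\Gamma, B_n^\Gamma)$-bimodule $B_{n+1}^\Gamma$ and verify that both sides map an arbitrary element $x \in B_{n+1}^\Gamma$ to the same element. The crucial input, implicit in the construction of $B_{n+1}^\Gamma$ as a semi-direct product $[(B^\Gamma)^{\otimes (n+1)} \otimes Cl_{n+1}] \rtimes S_{n+1}$, is that $(B^\Gamma)^{\otimes(n+1)} \otimes 1$ lies in $\Z_2$-degree $0$. Hence the image of any $b \in B^\Gamma$ under the embedding $b \mapsto 1 \otimes \cdots \otimes 1 \otimes b$ commutes, not merely supercommutes, with the Clifford generator $c_{n+1}$.

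First I would compute the composition $X(b) \circ X(c_{n+1})$ on $x$: the first map sends $x \mapsto (-1)^{\lVert x \rVert} x c_{n+1}$, and the second sends this further to $(-1)^{\lVert x \rVert}(-1)^{|xc_{n+1}||b|}\, x c_{n+1} b$. Since $c_{n+1}$ has $\Z$-degree $0$, the exponent collapses to $\lVert x \rVert + |x||b|$. Next I would compute $X(c_{n+1}) \circ X(b)$ on $x$: the first map produces $(-1)^{|x||b|} xb$, and the second produces $(-1)^{|x||b|}(-1)^{\lVert xb \rVert}\, x b c_{n+1}$. Because $b$ has $\Z_2$-degree $0$, we have $\lVert xb \rVert = \lVert x \rVert$, and the two sign prefactors coincide.

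It therefore remains to show $x c_{n+1} b = x b c_{n+1}$ in $B_{n+1}^\Gamma$, which follows at once from the commutation of $c_{n+1}$ with the image of $B^\Gamma$ in the $(n{+}1)$-st tensor slot, as noted above. Assembling the two computations yields the asserted equality of natural transformations.

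The main (and only) potential obstacle is bookkeeping: one must keep the $\Z$- and $\Z_2$-degree conventions straight to see that the two sign factors match, and one must explicitly invoke the fact that $B^\Gamma$ sits in $\Z_2$-degree $0$ so that $b$ commutes (rather than anti-commutes) with $c_{n+1}$. Once these two observations are in hand, the verification is purely formal.
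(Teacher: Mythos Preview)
Your proof is correct and follows essentially the same approach as the paper. The paper's proof is a one-line remark that $c_{n+1}$ and $b$ commute in $B_{n+1}^\Gamma$; you spell out the additional sign bookkeeping (tracking the $\Z$-degree sign from $X(b)$ against the $\Z_2$-degree sign from $X(c_{n+1})$ and using $|c_{n+1}|=0$, $\lVert b\rVert=0$), which the paper leaves implicit.
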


\begin{proof}
This follows since the elements $ c_{n+1} $ and $ b $ commute in the algebra $ B_{n+1}^{\Gamma} $.
\end{proof}

\subsubsection{Equation ~\ref{H19}}
Consider the functor $ P(n+r) \cdots P(n) $.
Let
\begin{equation*}
X_k(c_{n+k}) = \underbrace{\Id \cdots \Id}_{r-k+1} X(c_{n+k}) \underbrace{\Id \cdots \Id}_{k-1}
\end{equation*}

\begin{prop}
If $ k \neq l $, then there is an equality:
\begin{equation*}
X_l(c_{n+l}) X_k(c_{n+k}) =  -X_k(c_{n+k}) X_l(c_{n+l}) \colon P(n+r) \cdots P(n) \rightarrow P(n+r) \cdots P(n).
\end{equation*}
\end{prop}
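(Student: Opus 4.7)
\medskip

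\noindent\textbf{Proof proposal.} The plan is to imitate the proof of Proposition~\ref{repH6} (the analogous anti-commutation for solid dots), exchanging $b,b'\in B^\Gamma$ for the Clifford generators $c_{n+k}, c_{n+l}$ and tracking a slightly different Koszul sign, so that the two $(-1)^{||x||}$ factors combine with the Clifford anti-commutation to produce the claimed global sign $-1$.

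First I would write the functor $P(n+r)\cdots P(n)\colon C_n^\Gamma\to C_{n+r+1}^\Gamma$ as tensoring with the $(B^\Gamma_{n+r+1},B^\Gamma_n)$-bimodule $B^\Gamma_{n+r+1}$, exactly as in the proof of Proposition~\ref{repH6}. Under this identification, the definition of $X(c_{n+1})$ together with the whiskering conventions shows that for $x\in B^\Gamma_{n+r+1}$ the natural transformation $X_k(c_{n+k})$ acts by
\[
X_k(c_{n+k})(x)\;=\;(-1)^{||x||}\,x\,c_{n+k},
\]
where $c_{n+k}$ is interpreted as the $(n+k)$-th Clifford generator inside $B^\Gamma_{n+r+1}$. (This is the same formula that appeared in the verifications of Propositions~\ref{repH13} and~\ref{repH14}; the whiskered identities on the other strands contribute nothing since each $\Id$ is $\Z_2$-even.)

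With this formula in hand, I would simply compose in the two orders. Without loss of generality assume $k<l$. Then
\[
X_l(c_{n+l})X_k(c_{n+k})(x)\;=\;(-1)^{||x||}(-1)^{||x\,c_{n+k}||}\,x\,c_{n+k}\,c_{n+l}\;=\;-\,x\,c_{n+k}\,c_{n+l},
\]
because $||x\,c_{n+k}||=||x||+1 \pmod 2$, so the two parity signs combine to $-1$. The same computation with $k$ and $l$ swapped yields
\[
X_k(c_{n+k})X_l(c_{n+l})(x)\;=\;-\,x\,c_{n+l}\,c_{n+k}.
\]
Now use the defining relation $c_{n+k}c_{n+l}=-c_{n+l}c_{n+k}$ in $Cl_{n+r+1}$ (valid since $k\neq l$) to conclude
\[
X_l(c_{n+l})X_k(c_{n+k})(x)\;=\;-\,x\,c_{n+k}\,c_{n+l}\;=\;x\,c_{n+l}\,c_{n+k}\;=\;-\,X_k(c_{n+k})X_l(c_{n+l})(x),
\]
as required.

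There is really no main obstacle here: the only subtlety is bookkeeping of signs, since the hollow dot has odd $\Z_2$-degree and so each insertion of $c_{n+j}$ flips the parity of the element on which a subsequent $X_l(c_{n+l})$ acts. Once the Koszul-style sign $(-1)^{||x||}$ in the definition of $X(c_{n+1})$ is correctly propagated through the whiskering, the identity reduces to the Clifford relation $c_ic_j=-c_jc_i$. This completes the proof.
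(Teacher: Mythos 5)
Your proof is correct and follows the same line the paper intends: the paper's proof simply says this case is "nearly identical to the proof of Proposition~\ref{repH6}," i.e., identify the composite functor $P(n+r)\cdots P(n)$ with tensoring by the bimodule $B_{n+r+1}^\Gamma$, track the Koszul signs in the two orders of composition, and invoke the Clifford relation. Your writeup just makes those signs explicit (the two parity factors combining to $-1$, and then $c_{n+k}c_{n+l}=-c_{n+l}c_{n+k}$ giving the claimed anti-commutation); the "without loss of generality $k<l$" is harmless but unnecessary since the computation is manifestly symmetric.
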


\begin{proof}
This is nearly identical to the proof of Proposition ~\ref{repH6}.
\end{proof}

\subsubsection{Equation ~\ref{H20}}
\begin{prop}
There is an equality of natural transformations:
\begin{equation*}
(\adj_{QP}^{\Id}) \circ (\Id X(c_{n+1}) \circ (\Id X(b)) \circ (\adj_{\Id}^{QP}) \colon \Id \rightarrow \Id = 0
\end{equation*}
\end{prop}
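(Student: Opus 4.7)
\begin{proof}
Since everything in sight is a $(B_n^{\Gamma}, B_n^{\Gamma})$-bimodule map, it suffices to trace the composition applied to $1 \in B_n^{\Gamma}$, regarded as the identity functor's value on the regular module.

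The unit $\adj_{\Id}^{QP}$ is induced by the inclusion $B_n^{\Gamma} \hookrightarrow B_{n+1}^{\Gamma}$, so it sends $1 \in B_n^{\Gamma}$ to $1 \in B_{n+1}^{\Gamma}$, viewed as an element of the bimodule $B_{n+1}^{\Gamma} \otimes_{B_{n+1}^{\Gamma}} B_{n+1}^{\Gamma}$ representing $Q(n)\circ P(n)$. Applying $\Id X(b)$ multiplies on the right by the element $(1\otimes\cdots\otimes 1\otimes b)$ sitting in the $(n{+}1)$st tensor factor (with an irrelevant sign), yielding
\[
(1 \otimes \cdots \otimes 1 \otimes b) \;\in\; B_{n+1}^{\Gamma}.
\]
Applying $\Id X(c_{n+1})$ then multiplies on the right by $c_{n+1}$, producing (up to a sign)
\[
(1 \otimes \cdots \otimes 1 \otimes b)\, c_{n+1} \;\in\; B_{n+1}^{\Gamma}.
\]

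This element is of the form $(g_1\otimes\cdots\otimes g_{n+1})\, c_{n+1}$ with $g_i=1$ for $i\le n$ and $g_{n+1}=b$. By the defining rules of $\adj_{QP}^{\Id}$, any element containing a right factor of $c_{n+1}$ is sent to zero. Hence the full composition vanishes on $1$, and therefore vanishes as a natural transformation $\Id \to \Id$.
\end{proof}

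The only potential subtlety is bookkeeping of the $\Z_2$-signs introduced by $X(b)$ and $X(c_{n+1})$ acting through a supermodule structure, but since the final step is an outright annihilation by $\adj_{QP}^{\Id}$, these signs do not affect the conclusion. No nontrivial obstacle arises.
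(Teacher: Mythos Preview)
Your proof is correct and follows essentially the same approach as the paper: both observe that the element produced before applying $\adj_{QP}^{\Id}$ is $(1\otimes\cdots\otimes 1\otimes b)\,c_{n+1}$ (up to sign), which lies in the kernel of $\adj_{QP}^{\Id}$ by definition. The paper's proof is simply a one-line version of your more detailed trace.
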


\begin{proof}
This follows directly from the definition of $ \adj_{QP}^{\Id} $ since $ b c_{n+1} $ gets mapped to zero.
\end{proof}

\subsubsection{Isotopies in the plane}
\begin{equation}
\label{isotopy1}
\begin{tikzpicture}
\draw[thick] (0,0) to (0,1) arc(180:0:.5) arc(180:360:.5) to (2,2)[->];
\draw (2.5,1) node {$=$};
\draw [thick] (3,0) -- (3,2)[->];
\draw (3.5,1) node {$=$};
\draw[thick] (4,2)[<-] to (4,1) arc(180:360:.5) arc(180:0:.5) to (6,0);

\draw[thick] (8,0) to (8,1)[<-] arc(180:0:.5) arc(180:360:.5) to (10,2);
\draw (10.5,1) node {$=$};
\draw [thick] (11,0) -- (11,2)[<-];
\draw (11.5,1) node {$=$};
\draw[thick] (12,2) to (12,1) arc(180:360:.5) arc(180:0:.5) to (14,0)[->];
\end{tikzpicture}
\end{equation}

\begin{prop}
\label{repisotopy1}
There are equalities of natural transformations corresponding to the diagrams in ~\eqref{isotopy1}
\begin{enumerate}
\item $ (\adj_{PQ}^{\Id} \Id) \circ (\Id \adj_{\Id}^{QP}) = \Id = (\Id \adj_{QP}^{\Id}) \circ (\adj_{\Id}^{PQ} \Id) $
\item $ (\adj_{QP}^{\Id} \Id) \circ (\Id \adj_{\Id}^{PQ}) = \Id = (\Id \adj_{PQ}^{\Id}) \circ (\adj_{\Id}^{QP} \Id) $.
\end{enumerate}
\end{prop}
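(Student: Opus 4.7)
The plan is to verify each of the four zig-zag identities by translating the compositions into bimodule homomorphisms and checking directly that they act as the identity. The identities split into two classes based on which unit/counit pair is used.

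First, the two ``easy'' identities use only $\adj_{PQ}^{\Id}$ (multiplication) and $\adj_{\Id}^{QP}$ (inclusion). For the first equality in (1), the functor $P(n)$ corresponds to $B_{n+1}^{\Gamma}$ as a $(B_{n+1}^{\Gamma}, B_n^{\Gamma})$-bimodule, and after collapsing the trivial factor $\otimes_{B_{n+1}^{\Gamma}} B_{n+1}^{\Gamma}$, the composite $P(n) Q(n) P(n)$ corresponds to $B_{n+1}^{\Gamma} \otimes_{B_n^{\Gamma}} B_{n+1}^{\Gamma}$. Under these identifications $\Id \adj_{\Id}^{QP}$ is $x \mapsto x \otimes 1$, and $\adj_{PQ}^{\Id} \Id$ is the multiplication $a \otimes b \mapsto ab$, so the composition is $x \mapsto x \otimes 1 \mapsto x$. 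The second equality in (2), for $Q(n)$, is symmetric: $x \mapsto 1 \otimes x \mapsto x$.

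The two ``hard'' identities involve $\adj_{\Id}^{PQ}$ (the explicit sum over dual bases $\mathcal{B}$ of $B^{\Gamma}$ and over coset representatives) and $\adj_{QP}^{\Id}$ (the trace). For the second equality in (1), the composition sends $x \in B_{n+1}^{\Gamma}$ via
\[
x \longmapsto \sum_{b \in \mathcal{B}} \sum_{i=1}^{n+1} \bigl( s_i \cdots s_n \check{b}_{n+1} \otimes b_{n+1} s_n \cdots s_i \cdot x + s_i \cdots s_n c_{n+1} \check{b}_{n+1} \otimes b_{n+1} c_{n+1} s_n \cdots s_i \cdot x \bigr),
\]
followed by applying the trace to the second tensor factor. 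The key point is that $\sigma_i := s_i \cdots s_n$ for $1 \le i \le n+1$ is a system of minimal-length coset representatives for $S_n \backslash S_{n+1}$, with $\sigma_i(n+1) = i$. Writing $x = (x_1 \otimes \cdots \otimes x_{n+1}) c_x w_x$ in standard form, the trace vanishes on any summand whose symmetric-group part is not in $S_n$, whose Clifford part contains $c_{n+1}$, or whose slot-$(n+1)$ factor is not of the form $\gamma \omega$ with $\gamma = e$. The sum over $i$ picks out the unique coset representative matching $w_x$; the two Clifford-type summands account for whether $c_x$ contains $c_i$; and the completeness relation $\sum_b \check{b} \otimes b$ (the Casimir of the trace pairing on $B^{\Gamma}$) reassembles the slot-$(n+1)$ factor. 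After collecting signs, the surviving term is exactly $x$. The first equality of (2) is handled by the symmetric argument with the roles of the two tensor factors interchanged.

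The main obstacle is the bookkeeping in the hard identities: tracking the supercommutation signs from the $\Z_2$-grading, the $\Gamma$-components of $B^{\Gamma}$, and how the Clifford generators interact with the coset representatives $\sigma_i^{\pm 1}$ under the semidirect product structure. Conceptually, the pair $(\adj_{\Id}^{PQ}, \adj_{QP}^{\Id})$ endows the inclusion $B_n^{\Gamma} \subset B_{n+1}^{\Gamma}$ with a Frobenius-extension structure whose Frobenius form is the trace and whose dual Casimir is the sum defining $\adj_{\Id}^{PQ}$; the snake identities then reduce to the standard completeness relations for this duality, which organizes the otherwise intricate sign and index bookkeeping.
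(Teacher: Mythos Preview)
Your proposal is correct and follows essentially the same approach as the paper: the easy zig-zags are dispatched exactly as you describe, and for the hard ones the paper carries out precisely the computation you outline, organizing it as a four-case analysis according to whether $w \in S_n$ and whether $c \in Cl_n$ (your ``unique coset representative matching $w_x$'' and ``whether $c_x$ contains the relevant Clifford generator'' respectively). Your Frobenius-extension framing is a helpful conceptual addition that the paper leaves implicit, but the underlying mechanism---coset representatives, Clifford matching, and the dual-basis completeness relation for $B^{\Gamma}$---is identical.
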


\begin{proof}
Since the proofs of both sets of equalities are similar, we only prove the first set.

The map $ (\adj_{PQ}^{\Id} \Id) \circ (\Id \adj_{\Id}^{QP}) $ is a composite of natural transformations
\begin{equation*}
P(n) \rightarrow P(n)Q(n)P(n) \rightarrow P(n)
\end{equation*}
which corresponds to a composite of bimodule maps
\begin{equation*}
B_{n+1}^{\Gamma} \rightarrow B_{n+1}^{\Gamma} \otimes_{B_{n}^{\Gamma}} B_{n+1}^{\Gamma} \otimes_{B_{n+1}^{\Gamma}} B_{n+1}^{\Gamma} \rightarrow B_{n+1}^{\Gamma}.
\end{equation*}
Let $ g \in B_{n+1}^{\Gamma} $.  Then $ \Id \adj_{\Id}^{QP} $ maps $ g $ to $ g \otimes 1 \otimes 1 $.  Then $ \adj_{PQ}^{\Id} \Id $ maps $ g \otimes 1 \otimes1 $ to $ g $ which verifies the first equality.

The map $ (\Id \adj_{QP}^{\Id}) \circ (\adj_{\Id}^{PQ} \Id) $ is a composite of natural transformations
\begin{equation*}
P(n) \rightarrow P(n)Q(n)P(n) \rightarrow P(n)
\end{equation*}
which corresponds to a composite of bimodule maps
\begin{equation*}
B_{n+1}^{\Gamma} \rightarrow B_{n+1}^{\Gamma} \otimes_{B_{n}^{\Gamma}} B_{n+1}^{\Gamma} \otimes_{B_{n+1}^{\Gamma}} B_{n+1}^{\Gamma} \rightarrow B_{n+1}^{\Gamma}.
\end{equation*}

Let $ g = (g_1 \otimes \cdots \otimes g_{n+1})(cw) $ where $ c \in Cl_{n+1} $ and $ w \in S_{n+1} $.
Under the map $ \adj_{\Id}^{PQ} \Id $, the element $ g $ gets mapped to
\begin{equation}
X = \sum_{b \in \mathcal{B}} \sum_{i=1}^{n+1} (s_i \cdots s_n \check{b}_{n+1} \otimes b_{n+1} s_n \cdots s_i \otimes g + s_i \cdots s_n c_{n+1} \check{b}_{n+1} \otimes b_{n+1} c_{n+1} s_n \cdots s_i \otimes g).
\end{equation}
We now consider four cases for $ c $ and $ w $.

Case 1: $ c \in Cl_n $ and $ w \in S_n $.  In this case, the terms
$  s_i \cdots s_n c_{n+1} \check{b}_{n+1} \otimes b_{n+1} c_{n+1} s_n \cdots s_i \otimes g $ get mapped to zero under $ \Id \adj_{QP}^{\Id} $ due to the presence of $ c_{n+1} $ for $ i = n+1 $ and due to the presence of
$ s_n $ for $ i \neq n+1 $.
Similarly, the terms
$ s_i \cdots s_n \check{b}_{n+1} \otimes b_{n+1} s_n \cdots s_i \otimes g $ for $ i \neq n+1 $ get mapped to zero by $ \Id \adj_{QP}^{\Id} $ due to the presence of $ s_n $.
Thus
\begin{align*}
\Id \adj_{QP}^{\Id}(X) &= \Id \adj_{QP}^{\Id}(\sum_b \check{b}_{n+1} \otimes b_{n+1}(g_1 \otimes \cdots \otimes g_{n+1})cw)\\
&= \Id \adj_{QP}^{\Id}(\sum_b (-1)^{|b_{n+1}||g_1 \cdots g_n|}  \check{b}_{n+1} \otimes (g_1 \otimes \cdots \otimes b_{n+1} g_{n+1})cw).
\end{align*}
Let $ g_{n+1} = \gamma^{-1} \check{v} $ where $ \gamma \in \Gamma $ and $ v \in \Lambda^2 V$.  Then all of the terms above are zero unless $ b_{n+1} = v \gamma $ in which case it is equal to
\begin{equation*}
(-1)^{|b_{n+1}||g_1 \cdots g_n|}  \check{b}_{n+1} \otimes (g_1 \otimes \cdots \otimes g_n \otimes 1)cw =
(-1)^{|b_{n+1}||g_1 \cdots g_n|+|\check{b}_{n+1}||g_1 \cdots g_n|} (g_1 \otimes \cdots \otimes g_n \otimes \check{b}_{n+1})cw.
\end{equation*}
Since $ \check{b}_{n+1} = g_{n+1} $ and $ |b_{n+1}|+|\check{b}_{n+1}|=2$, we get
$ \Id \adj_{QP}^{\Id}(X) = g $.

Case 2: $ c = c_{n+1} c' $ where $ c' \in Cl_n $ and $ w \in S_n $.
The terms $ s_i \cdots s_n \check{b}_{n+1} \otimes b_{n+1} s_n \cdots s_i \otimes g $ in $ X $ go to zero under $ \Id \adj_{QP}^{\Id} $ due to the presence of $ c_{n+1} $.
Due to the presence of $ s_n $, the terms
$ s_i \cdots s_n c_{n+1} \check{b}_{n+1} \otimes b_{n+1} c_{n+1}  s_n \cdots s_i \otimes g $ go to zero under $ \Id \adj_{QP}^{\Id} $ unless $ i = n+1 $ so instead of considering the image of $ X $ under $ \Id \adj_{QP}^{\Id} $,
it suffices to consider the image of the element
\begin{equation*}
\sum_b c_{n+1} \check{b}_{n+1} \otimes b_{n+1} c_{n+1} \otimes (g_1 \otimes \cdots \otimes g_{n+1}) c_{n+1} c' w =
\sum_b c_{n+1} \check{b}_{n+1} \otimes b_{n+1}  \otimes (g_1 \otimes \cdots \otimes g_{n+1}) c' w.
\end{equation*}
Now the arguments from case 1 show that this element goes to $ g $ under $ \Id \adj_{QP}^{\Id} $.

Case 3: $ c \in Cl_n $ and $ w \in S_{n+1} $ but $ w \notin S_n  $.
We may write $ w = w^p w_p $ where $ w_p \in S_n $ and $ w^p $ is a shortest length left coset representative in $ S_{n+1} / S_n $.
Then $ w^p \in \lbrace e, s_n, s_{n-1} s_n, \ldots, s_1 \cdots s_n \rbrace $.
Let $ g = (g_1 \otimes \cdots \otimes g_n \otimes g_{n+1}) (wc) $.
Suppose $ w^p = s_i \cdots s_n $.  Then under $ \Id \adj_{QP}^{\Id} $, all terms of $ X $ will vanish except
\begin{align*}
& \sum_b (s_i \cdots s_n \check{b}_{n+1} \otimes b_{n+1} s_n \cdots s_i) \otimes (g_1 \otimes \cdots \otimes g_{n+1})(s_i \cdots s_n w_p c) \\
= & \sum_b (s_i \cdots s_n \check{b}_{n+1} \otimes b_{n+1}) \otimes (w^p)^{-1}.(g_1 \otimes \cdots \otimes g_{n+1}) w_p c\\
= & \sum_b (s_i \cdots s_n \check{b}_{n+1} \otimes b_{n+1}) \otimes (g_1 \otimes \cdots \otimes g_{i-1} \otimes g_{i+1} \otimes \cdots \otimes g_{n+1} \otimes g_i) w_p c\\
= & \sum_b (-1)^{|b_{n+1}||g_1 \cdots g_{i-1} g_{i+1} \cdots g_{n+1}|}
(s_i \cdots s_n \check{b}_{n+1}) \otimes (g_1 \otimes \cdots \otimes g_{i-1} \otimes g_{i+1} \otimes \cdots \otimes g_{n+1} \otimes b_{n+1} g_i) w_p c\\
\end{align*}
Let $ g_i = \gamma^{-1} \check{v} $ for $ \gamma \in \Gamma $.  Then under $ \Id \adj_{QP}^{\Id} $, the terms go to zero unless
$ b_{n+1} = v \gamma $ in which case the term goes to
\begin{align*}
& (-1)^{|v||g_1 \cdots g_{i-1} g_{i+1} \cdots g_{n+1}|}
(s_i \cdots s_n \check{(v \gamma)}) \otimes (g_1 \otimes \cdots \otimes g_{i-1} \otimes g_{i+1} \otimes \cdots \otimes g_{n+1} \otimes 1) w_p c\\
= & (s_i \cdots s_n) \otimes (g_1 \otimes \cdots \otimes g_{i-1} \otimes g_{i+1} \otimes \cdots \otimes g_{n+1} \otimes \check{(v \gamma)}) w_p c\\
= & g
\end{align*}
where the last equality follows after commuting $ s_i \cdots s_n $ until it is next to $ w_p c $.

Case 4: $ c = c_{n+1} c' $ where $ c' \in Cl_n $ and $ w \in S_{n+1} $ but $ w \notin S_n $.
Recall the elements $ w_p $ and $ w^p $ from case 3.
Once again, most of the terms in $ X $ get mapped to zero by $ \Id \adj_{QP}^{\Id} $ due to the presence of $ s_n $ so instead of $ X $, it suffices to consider the element
\begin{align*}
& \sum_b s_i \cdots s_n \check{b}_{n+1} \otimes b_{n+1}(g_1 \otimes \cdots \otimes g_{i-1} \otimes g_{i+1} \otimes \cdots \otimes g_{n+1} \otimes g_i) w_p c_{n+1} c' + \\
& \sum_b s_i \cdots s_n c_{n+1} \check{b}_{n+1} \otimes b_{n+1} c_{n+1}(g_1 \otimes \cdots \otimes g_{i-1} \otimes g_{i+1} \otimes \cdots \otimes g_{n+1} \otimes g_i) w_p c_{n+1} c' .
\end{align*}
Due to the presence of $ c_{n+1} $ in the first term, it goes to zero under $ \Id \adj_{QP}^{\Id} $.  The second summation goes to $ g $ as in case 3.
\end{proof}

 \begin{equation}
 \label{isotopy4}
\begin{tikzpicture}
   \draw[->,thick] (0.5,1) .. controls (0.5,0) and (1.5,0) .. (1.5,1);
  \draw[<-,thick] (1,1) to (0.5,0);
  \draw (2.25,0.5) node {=};
  \draw[->,thick] (3,1) .. controls (3,0) and (4,0) .. (4,1);
  \draw[<-,thick] (3.5,1) to (4,0);
   \draw[->,thick] (6.5,1) .. controls (6.5,0) and (7.5,0) .. (7.5,1);
  \draw[->,thick] (7,1) to (6.5,0);
  \draw (8.25,0.5) node {=};
  \draw[->,thick] (9,1) .. controls (9,0) and (10,0) .. (10,1);
  \draw[->,thick] (9.5,1) to (10,0);
\end{tikzpicture}
\end{equation}

\begin{prop}
\label{repisotopy4}
There are equalities of natural transformations corresponding to the diagrams in ~\eqref{isotopy4}
\begin{enumerate}
\item $ (T_{+-} \Id) \circ (\Id \adj_{\Id}^{QP}) = (\Id T) \circ (\adj_{\Id}^{QP} \Id) $
\item $ (T_{--} \Id) \circ (\Id \adj_{\Id}^{QP}) = (\Id T_{+-}) \circ (\adj_{\Id}^{QP} \Id) $.
\end{enumerate}
\end{prop}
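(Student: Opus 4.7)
The plan is to prove both equalities by the same template used in Proposition~\ref{repisotopy1}: translate each side into an explicit homomorphism of bimodules and verify they act identically on a generator. The new ingredients compared to that proposition are the mixed and downward crossings $T_{+-}$ and $T_{--}$, each of which is multiplication by a single transposition.

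For part (1), both composites are natural transformations $P(n) \to Q(n+1) P(n+1) P(n) \langle -1\rangle$, corresponding to maps of $(B_{n+1}^{\Gamma}, B_n^{\Gamma})$-bimodules. After collapsing tautological factors via $B_k^{\Gamma} \otimes_{B_k^{\Gamma}} M \cong M$, the source bimodule is $B_{n+1}^{\Gamma}$, the target is $B_{n+2}^{\Gamma}$, and the intermediate bimodule for $P(n) Q(n) P(n)$ is $B_{n+1}^{\Gamma} \otimes_{B_n^{\Gamma}} B_{n+1}^{\Gamma}$. I expect the left-hand side to send $x \in B_{n+1}^{\Gamma}$ to $x \otimes 1$ via the adjunction unit $\Id \adj_{\Id}^{QP}$ and then to $x s_{n+1} \in B_{n+2}^{\Gamma}$ via $\phi_{+-}$ on the first two tensor factors (the content of $T_{+-}\Id$). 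The right-hand side should send $x$ first to $x \in B_{n+2}^{\Gamma}$ via the subalgebra inclusion $\adj_{\Id}^{QP}\Id$ and then to $x s_{n+1}$ via right multiplication by $s_{n+1}$ (the definition of $T$). The two composites visibly agree.

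Part (2) is entirely parallel: both sides are natural transformations $Q(n+1) \to Q(n+1) Q(n+2) P(n+2) \langle -1\rangle$, the source bimodule is $B_{n+2}^{\Gamma}$, the target is $B_{n+3}^{\Gamma}$, and the intermediate bimodule for $Q(n+1) P(n+1) Q(n+1)$ is $B_{n+2}^{\Gamma} \otimes_{B_{n+1}^{\Gamma}} B_{n+2}^{\Gamma}$. I expect the left-hand composite to send $x$ to $x \in B_{n+3}^{\Gamma}$ via $\Id \adj_{\Id}^{QP}$ and then to $s_{n+2} x$ via $T_{--}\Id$ (left multiplication by $s_{n+2}$), while the right-hand composite sends $x$ to $1 \otimes x$ via $\adj_{\Id}^{QP}\Id$ and then to $1 \cdot s_{n+2} \cdot x = s_{n+2} x$ via $\Id T_{+-}$. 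The main obstacle throughout is the bookkeeping: correctly identifying each composite functor's bimodule at the right index level, tracking the $\Z \times \Z_2$-degree shifts, and reading horizontal compositions as tensor products of bimodule maps over the appropriate algebras. No super-sign corrections arise, since the adjunction units are subalgebra inclusions and the $T$-type maps multiply by the even elements $s_{n+1}$ or $s_{n+2}$, so once the dictionary is assembled both equalities reduce to the associativity of multiplication in $B_k^{\Gamma}$.
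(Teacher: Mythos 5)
Your argument is correct and follows the same route as the paper: both sides of each equality are translated into bimodule homomorphisms and checked on a generator $x$, with the adjunction units identified as subalgebra inclusions and the $T$-type maps as left/right multiplication by a single even transposition. The only differences are cosmetic — you collapse the tautological $B_k^\Gamma \otimes_{B_k^\Gamma}(\cdot)$ factors earlier and shift the index $n$ — so the computation $x \mapsto x s_{n+1}$ (resp.\ $x \mapsto s_{n+2} x$) on each side is exactly the paper's verification.
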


\begin{proof}
The left hand side of the first equality is a composition of bimodule homomorphisms:
\begin{equation*}
B_{n+1}^{\Gamma} \rightarrow B_{n+1}^{\Gamma} \otimes_{B_{n}^{\Gamma}} B_{n+1}^{\Gamma} \otimes_{B_{n+1}^{\Gamma}} B_{n+1}^{\Gamma} \rightarrow
B_{n+2}^{\Gamma} \otimes_{B_{n+2}^{\Gamma}} B_{n+2}^{\Gamma} \otimes_{B_{n+1}^{\Gamma}} B_{n+1}^{\Gamma}.
\end{equation*}
This composition maps an element $ g $ as follows:
\begin{equation*}
g \mapsto g \otimes 1 \otimes 1 \mapsto g s_n \otimes 1 \otimes 1.
\end{equation*}
The right hand side of the first equality is a composition of maps:
\begin{equation*}
B_{n+1}^{\Gamma} \rightarrow B_{n+2}^{\Gamma} \otimes_{B_{n+2}^{\Gamma}} B_{n+2}^{\Gamma} \otimes_{B_{n+1}^{\Gamma}} B_{n+1}^{\Gamma} \rightarrow
B_{n+2}^{\Gamma} \otimes_{B_{n+2}^{\Gamma}} B_{n+2}^{\Gamma} \otimes_{B_{n+1}^{\Gamma}} B_{n+1}^{\Gamma}.
\end{equation*}
This composition of morphisms maps an element $ g $ as follows:
\begin{equation*}
g \mapsto 1 \otimes 1 \otimes g \mapsto 1 \otimes 1 \otimes g s_n = g s_n \otimes 1 \otimes 1.
\end{equation*}
This proves the first equality.

The left hand side of the second equality is a composition of bimodule homomorphisms:
\begin{equation*}
B_{n+1}^{\Gamma} \rightarrow B_{n+1}^{\Gamma} \otimes_{B_{n+1}^{\Gamma}} B_{n+2}^{\Gamma} \otimes_{B_{n+2}^{\Gamma}} B_{n+2}^{\Gamma} \rightarrow
B_{n+1}^{\Gamma} \otimes_{B_{n+1}^{\Gamma}} B_{n+2}^{\Gamma} \otimes_{B_{n+2}^{\Gamma}} B_{n+2}^{\Gamma}.
\end{equation*}
This composition maps an element $ g $ as follows:
\begin{equation*}
g \mapsto g \otimes 1 \otimes 1 = 1 \otimes g \otimes 1 \mapsto 1 \otimes s_{n+1}g \otimes 1 = 1 \otimes 1 \otimes s_{n+1}g.
\end{equation*}
The right hand side of the second equality is a composition of bimodule homomorphisms:
\begin{equation*}
B_{n+1}^{\Gamma} \rightarrow B_{n+1}^{\Gamma} \otimes_{B_{n+1}^{\Gamma}} B_{n+1}^{\Gamma} \otimes_{B_{n}^{\Gamma}} B_{n+1}^{\Gamma} \rightarrow
B_{n+1}^{\Gamma} \otimes_{B_{n+1}^{\Gamma}} B_{n+2}^{\Gamma} \otimes_{B_{n+2}^{\Gamma}} B_{n+2}^{\Gamma}.
\end{equation*}
This composition maps an element $ g $ as follows:
\begin{equation*}
g \mapsto 1 \otimes 1 \otimes g \mapsto 1 \otimes 1 \otimes s_{n+1}g.
\end{equation*}
This verifies the second equality.
\end{proof}

\begin{equation}
\label{isotopy5}
\begin{tikzpicture}
  \draw[<-,thick] (0.5,0) .. controls (0.5,1) and (1.5,1) .. (1.5,0);
  \draw[<-,thick] (1.5,1) to (1,0);
  \draw (2.25,0.5) node {=};
  \draw[<-,thick] (3,0) .. controls (3,1) and (4,1) .. (4,0);
  \draw[<-,thick] (3,1) to (3.5,0);
  \draw[<-,thick] (6.5,0) .. controls (6.5,1) and (7.5,1) .. (7.5,0);
  \draw[->,thick] (7.5,1) to (7,0);
  \draw (8.25,0.5) node {=};
  \draw[<-,thick] (9,0) .. controls (9,1) and (10,1) .. (10,0);
  \draw[->,thick] (9,1) to (9.5,0);
\end{tikzpicture}
 \end{equation}

 %%%
\begin{prop}
\label{repisotopy5}
There are equalities of natural transformations corresponding to the diagrams in ~\eqref{isotopy5}
\begin{enumerate}
\item $ (\adj_{QP}^{\Id} \Id) \circ (\Id T_{}) = (\Id \adj_{QP}^{\Id}) \circ (T_{-+} \Id) $
\item $ (\adj_{QP}^{\Id} \Id) \circ (\Id T_{-+}) = (\Id \adj_{QP}^{\Id}) \circ (T_{--} \Id) $.
\end{enumerate}
\end{prop}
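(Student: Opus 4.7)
Both equalities follow the pattern established in Proposition~\ref{repisotopy4}: each asserts that two compositions of natural transformations realize the same bimodule homomorphism. The plan is to identify the common source and target of both sides, unfold the corresponding bimodule maps, and verify the equality on a generating set of elements.

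For equality~(1), both sides are natural transformations $Q(n+1)P(n+1)P(n) \to P(n)$, so we are comparing $(B_{n+1}^{\Gamma}, B_n^{\Gamma})$-bimodule homomorphisms $B_{n+2}^{\Gamma} \to B_{n+1}^{\Gamma}$. I would compute the LHS as $z \mapsto \adj_{QP}^{\Id}(z\,s_{n+1})$, using that $\Id T$ is right multiplication by $s_{n+1}$ and the cap $\adj_{QP}^{\Id}$ collapses the $(n+2)$-slot. The RHS first applies $T_{-+} \Id$, which sends $g\,s_{n+1}\,h \mapsto g \otimes h$ for $g, h \in B_{n+1}^{\Gamma}$ and vanishes on $B_{n+1}^{\Gamma}$; it is then composed with $\Id \adj_{QP}^{\Id}$, which sends $g \otimes h \mapsto g \cdot \adj_{QP}^{\Id}(h)$, where now the cap $\adj_{QP}^{\Id}$ is the one for $Q(n)P(n)$, collapsing the $(n+1)$-slot of $h$. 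The verification splits into two cases: for $z \in B_{n+1}^{\Gamma}$ both sides vanish (the RHS because $T_{-+}$ kills it; the LHS because $z\,s_{n+1}$ has its final permutation $s_{n+1} \notin S_{n+1}$), and for $z = g\,s_{n+1}\,h$ the identity $s_{n+1}^2 = 1$ reduces the LHS to $\adj_{QP}^{\Id}(g\,\sigma(h))$, where $\sigma(h) := s_{n+1}\,h\,s_{n+1}$ transports the $(n+1)$-slot data of $h$ into the $(n+2)$-slot. A direct componentwise calculation in the wreath product then identifies $\adj_{QP}^{\Id}(g\,\sigma(h))$ with $g \cdot \adj_{QP}^{\Id}(h)$.

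Equality~(2) is handled in exactly the same style. Both sides are bimodule maps $B_{n+2}^{\Gamma} \to B_{n+1}^{\Gamma}$ encoding natural transformations $Q(n)Q(n+1)P(n+1) \to Q(n)$. The RHS is $z \mapsto \adj_{QP}^{\Id}(s_{n+1}\,z)$, using that $T_{--}$ is left multiplication by $s_{n+1}$; the LHS applies $T_{-+}$ to the right $QP$ pair and then caps the left $QP$, giving $\adj_{QP}^{\Id}(g)\cdot h$ on $z = g\,s_{n+1}\,h$ and zero on $z \in B_{n+1}^{\Gamma}$. The case analysis mirrors that of~(1): on $z = g\,s_{n+1}\,h$ the identity $s_{n+1}\,z = \sigma(g)\,h$ dualizes the earlier computation, and on $z \in B_{n+1}^{\Gamma}$ both sides vanish because $s_{n+1}\,z$ has a leading $s_{n+1}$ that cannot be absorbed by the cap.

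The principal technical burden will be the careful tracking of super signs arising from commuting homogeneous elements past one another in $B_{n+2}^{\Gamma}$, together with a clear understanding of the normal form of $\sigma(h)$, which is not itself in $B_{n+1}^{\Gamma}$. However, both bimodule maps on each side annihilate the same obstructions to the cap---namely, elements whose normal form ends with $s_{n+1}$, with $c_{n+2}$, or with a non-$\omega$ element in the final $B^{\Gamma}$-slot---so the equality reduces to matching explicit expressions on the residual non-vanishing terms, a routine verification in the spirit of Proposition~\ref{repisotopy4} and of the analysis of equation~\eqref{H10}.
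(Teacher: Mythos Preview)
Your proposal is correct and follows essentially the same approach as the paper: both identify the two sides as explicit bimodule maps $B_{n+2}^{\Gamma}\to B_{n+1}^{\Gamma}$ (resp.\ $B_{n+1}^{\Gamma}\to B_{n}^{\Gamma}$) and verify equality by a case analysis on whether the input lies in the smaller subalgebra or has the form $g\,s_{n+1}\,h$. Your framing of the nontrivial case as the single identity $\adj_{QP}^{\Id}(g\,\sigma(h)) = g\cdot \adj_{QP}^{\Id}(h)$ (and its dual) is a nice conceptual compression of what the paper does; the paper instead unpacks this directly into subcases according to whether $w_h\in S_n$ and $c_h\in Cl_n$, which is exactly the ``routine verification'' you allude to and which you will need to carry out to handle the Clifford and symmetric-group obstructions to the cap.
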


\begin{proof}
We begin with the first equality.
The left hand side is a composite of natural transformations
\begin{equation*}
Q(n+1) P(n+1) P(n) \rightarrow Q(n+1) P(n+1) P(n) \rightarrow P(n)
\end{equation*}
corresponding to a composition of bimodules homomorphisms
\begin{equation*}
B_{n+2}^{\Gamma} \otimes_{B_{n+2}^{\Gamma}} B_{n+2}^{\Gamma} \otimes_{B_{n+1}^{\Gamma}} B_{n+1}^{\Gamma} \rightarrow
B_{n+2}^{\Gamma} \otimes_{B_{n+2}^{\Gamma}} B_{n+2}^{\Gamma} \otimes_{B_{n+1}^{\Gamma}} B_{n+1}^{\Gamma} \rightarrow
B_{n+1}^{\Gamma}.
\end{equation*}
The right hand side is a composite of natural transformations
\begin{equation*}
Q(n+1) P(n+1) P(n) \rightarrow P(n) Q(n) P(n) \rightarrow P(n)
\end{equation*}
corresponding to a composition of bimodules homomorphisms
\begin{equation*}
B_{n+2}^{\Gamma} \otimes_{B_{n+2}^{\Gamma}} B_{n+2}^{\Gamma} \otimes_{B_{n+1}^{\Gamma}} B_{n+1}^{\Gamma} \rightarrow
B_{n+1}^{\Gamma} \otimes_{B_{n}^{\Gamma}} B_{n+1}^{\Gamma} \otimes_{B_{n+1}^{\Gamma}} B_{n+1}^{\Gamma} \rightarrow
B_{n+1}^{\Gamma}.
\end{equation*}

Case 1: $ g \in B_{n+1}^{\Gamma} $.
Then $ T_{-+} \Id $ sends $ g \otimes 1 \otimes 1  $ to zero so the right hand side is zero.
For the left hand side we get,
\begin{equation*}
(\Id T)(g \otimes 1 \otimes 1) = (\Id T)(1 \otimes g \otimes 1) = 1 \otimes g s_{n+1} \otimes 1.
\end{equation*}
Now we must compute $ \adj_{QP}^{\Id} \Id(1 \otimes g s_{n+1} \otimes 1) $.
Since $ g $ is assumed to be in $ B_{n+1}^{\Gamma} $, the term $ s_{n+1} $ does not appear in $ g $ so $ s_{n+1} $ does appear
in $ g s_{n+1} $.  Thus $ \adj_{QP}^{\Id} \Id(1 \otimes g s_{n+1} \otimes 1) = 0 $.

Case 2: $ g \notin B_{n+1}^{\Gamma} $.  Then $ g = x s_{n+1} y $ where $ x, y \in B_{n+1}^{\Gamma} $.
Let $ x = (x_1 \otimes \cdots \otimes x_{n+1} \otimes 1)c_x w_x $ and $ y = (y_1 \otimes \cdots \otimes y_{n+1} \otimes 1) c_y w_y $
where $ c_x, c_y \in Cl_{n+1} $ and $ w_x, w_y \in S_{n+1} $.

Subcase 2a: $ g \notin B_{n+1}^{\Gamma} $, $ w_y \in S_n $, and $ c_y \in Cl_n $.
For the right hand side we compute,
\begin{equation*}
(T_{-+} \Id)(g \otimes 1 \otimes 1) = x \otimes y \otimes 1 = x \otimes (y_1 \otimes \cdots \otimes y_{n+1} \otimes 1)c_y w_y \otimes 1.
\end{equation*}
Since $ w_y \in S_n $ and $ c_y \in Cl_n $,
\begin{equation*}
(\Id \adj_{QP}^{\Id})(x \otimes (y_1 \otimes \cdots \otimes y_{n+1} \otimes 1)c_y w_y \otimes 1) = \tr(y_{n+1})(x(y_1 \otimes \cdots \otimes y_n \otimes 1 \otimes 1)c_y w_y).
\end{equation*}

For the left hand side we see that
\begin{align*}
(\adj_{QP}^{\Id}) \circ (\Id T)(1 \otimes g \otimes 1) &= (\adj_{QP}^{\Id} \Id)(1 \otimes g s_{n+1} \otimes 1) \\
&= \adj_{QP}^{\Id} \Id (1 \otimes x s_{n+1}(y_1 \otimes \cdots \otimes y_{n+1} \otimes 1)c_y w_y s_{n+1} \otimes 1).
\end{align*}
Since $ c_y \in Cl_n $ and $ w_y \in S_n $, we may bring $ s_{n+1} $ to the left of these two elements and then past $ (y_1 \otimes \cdots \otimes y_{n+1} \otimes 1) $ to get
that the above is equal to
\begin{equation*}
\adj_{QP}^{\Id} \Id(1 \otimes x(y_1 \otimes \cdots \otimes y_n \otimes 1 \otimes y_{n+1})c_y w_y \otimes 1) =
\tr(y_{n+1})x(y_1 \otimes \cdots \otimes y_n \otimes 1 \otimes 1)c_y w_y
\end{equation*}
which verifies the equality in this case.

Subcase 2b: $ g \notin B_{n+1}^{\Gamma} $, $ w_y \in S_n $, $ c_y \in Cl_{n+1} $, but $ c_y \notin Cl_n $.
For the left hand side we compute
\begin{align*}
(\adj_{QP}^{\Id} \Id) \circ (\Id T)(g \otimes 1 \otimes 1) &= (\adj_{QP}^{\Id} \Id)(1 \otimes gs_{n+1} \otimes 1) \\
&= \adj_{QP}^{\Id} \Id(1 \otimes x s_{n+1}(y_1 \otimes \cdots \otimes y_{n+1} \otimes 1)c_y w_y s_{n+1} \otimes 1)\\
&= \adj_{QP}^{\Id} \Id(1 \otimes x(y_1 \otimes \cdots \otimes 1 \otimes y_{n+1})(s_{n+1}.c_y) w_y \otimes 1).
\end{align*}
Since $ c_y \notin Cl_n $, the term $ c_{n+2} $ must occur in $ s_{n+1}.c_y $.
Since $ x \in B_{n+1}^{\Gamma} $,
$ c_{n+2} $ must occur in the expression
$ 1 \otimes x(y_1 \otimes \cdots \otimes 1 \otimes y_{n+1})(s_{n+1}.c_y) w_y \otimes 1 $
so $ \adj_{QP}^{\Id} \Id $ must send it to zero.

For the right hand side,
\begin{align*}
(\Id \adj_{QP}^{\Id}) \circ (T_{-+} \circ \Id)(1 \otimes g \otimes 1) &= \Id \adj_{QP}^{\Id}(x \otimes y \otimes 1)\\
&= \Id \adj_{QP}^{\Id}(x \otimes (y_1 \otimes \cdots \otimes y_{n+1} \otimes 1)c_y w_y \otimes 1) \\
&= 0
\end{align*}
since $ c_{n+1} $ must occur in
$ x \otimes (y_1 \otimes \cdots \otimes y_{n+1} \otimes 1)c_y w_y \otimes 1 $.

Subcase 2c: $ g \notin B_{n+1}^{\Gamma} $, $ w_y \notin S_n $.
For the left hand side we compute
\begin{align*}
(\adj_{QP}^{\Id} \Id) \circ (\Id T)(1 \otimes g \otimes 1) &= \adj_{QP}^{\Id} \Id(1 \otimes g s_{n+1} \otimes 1)\\
&= (\adj_{QP}^{\Id} \Id)(1 \otimes x s_{n+1} y s_{n+1} \otimes 1)\\
&=(\adj_{QP}^{\Id} \Id)(1 \otimes x s_{n+1}(y_1 \otimes \cdots \otimes y_{n+1} \otimes 1)c_y w_y s_{n+1} \otimes 1)\\
&= 0
\end{align*}
since the two occurrences of $ s_{n+1} $ do not cancel because $ w_y \notin S_n $.

For the right hand side,
\begin{align*}
(\Id \adj_{QP}^{\Id}) \circ (T_{-+} \Id)(g \otimes 1 \otimes 1) &= \Id \adj_{QP}^{\Id}(x \otimes y \otimes 1) \\
&= \Id \adj_{QP}^{\Id}(x \otimes (y_1 \otimes \cdots \otimes y_{n+1} \otimes 1)c_y w_y \otimes 1)\\
&= 0
\end{align*}
since $ s_{n+1} $ must occur in
$ x \otimes (y_1 \otimes \cdots \otimes y_{n+1} \otimes 1)c_y w_y \otimes 1 $
because of the assumption that $ w_y \notin S_n $.
This finishes the first equality.

Now for the second equality.   The left hand side is a composite of natural transformations
\begin{equation*}
Q(n-1) Q(n) P(n) \rightarrow Q(n-1) P(n-1) Q(n-1) \rightarrow Q(n-1)
\end{equation*}
corresponding to a composition of bimodule homomorphisms
\begin{equation*}
B_{n}^{\Gamma} \otimes_{B_{n}^{\Gamma}} B_{n+1}^{\Gamma} \otimes_{B_{n+1}^{\Gamma}} B_{n+1}^{\Gamma} \rightarrow
B_{n}^{\Gamma} \otimes_{B_{n}^{\Gamma}} B_{n}^{\Gamma} \otimes_{B_{n-1}^{\Gamma}} B_{n}^{\Gamma} \rightarrow
B_{n}^{\Gamma}.
\end{equation*}
The right hand side is a composite of natural transformations
\begin{equation*}
Q(n-1) Q(n) P(n) \rightarrow Q(n-1) Q(n) P(n) \rightarrow Q(n-1)
\end{equation*}
corresponding to a composition of bimodule homomorphisms
\begin{equation*}
B_{n}^{\Gamma} \otimes_{B_{n}^{\Gamma}} B_{n+1}^{\Gamma} \otimes_{B_{n+1}^{\Gamma}} B_{n+1}^{\Gamma} \rightarrow
B_{n}^{\Gamma} \otimes_{B_{n}^{\Gamma}} B_{n+1}^{\Gamma} \otimes_{B_{n+1}^{\Gamma}} B_{n+1}^{\Gamma} \rightarrow
B_{n}^{\Gamma}.
\end{equation*}
Let $ 1 \otimes 1 \otimes g \in B_{n}^{\Gamma} \otimes_{B_{n}^{\Gamma}} B_{n+1}^{\Gamma} \otimes_{B_{n+1}^{\Gamma}} B_{n+1}^{\Gamma} \rightarrow $ where
$ g = (g_1 \otimes \cdots \otimes g_n \otimes g_{n+1})cw $ where $ w \in S_{n+1} $ and $ c \in Cl_{n+1} $.

Case 1: $ g \in B_n^{\Gamma} $.  Then the left hand side applied to $ 1 \otimes 1 \otimes g $ is easily seen to be zero.
For the right hand side,
\begin{equation*}
T_{--} \Id (1 \otimes 1 \otimes g) = 1 \otimes s_n \otimes g = 1 \otimes 1 \otimes s_n g.
\end{equation*}
Since $ g \in B_n^{\Gamma} $, it is clear that $ s_n $ appears in $ s_n g $ so $ 1 \otimes 1 \otimes s_n g $ goes to zero under $ \Id \adj_{QP}^{\Id} $ which verifies the equality.

Case 2: $ g \notin B_n^{\Gamma} $.  Then $ g = x s_n y $ where $ x,y \in B_n^{\Gamma} $.
Let $ x = (x_1 \otimes \cdots \otimes x_n \otimes 1)c_x w_x $ and $ y =(y_1 \otimes \cdots \otimes y_n \otimes 1)c_y w_y $ where
$ c_x, c_y \in Cl_n $ and $ w_x, w_y \in S_n $.

Subcase 2a: $ g \notin B_n^{\Gamma} $, $ w_x \in S_{n-1}$, and $ c_x \in Cl_{n-1} $.
We consider the map on the left hand side.
First, $ \Id T_{-+}(1 \otimes 1 \otimes g) = 1 \otimes x \otimes y $.
Then
\begin{equation*}
\adj_{QP}^{\Id}(1 \otimes x \otimes y) = \tr(x_n)(x_1 \otimes \cdots \otimes x_{n-1} \otimes 1 \otimes 1)c_x w_x \otimes y
\end{equation*}
since the term $ s_{n-1} $ does not occur in $ w_x $ by assumption.

Now for the right hand side we have:
\begin{align*}
(T_{--} \Id)(1 \otimes 1 \otimes g) &= 1 \otimes s_n \otimes g\\
&=1 \otimes 1 \otimes s_n x s_n y\\
&=1 \otimes 1 \otimes s_n(x_1 \otimes \cdots \otimes x_n \otimes 1)c_x w_x s_n y\\
&=1 \otimes 1 \otimes (x_1 \otimes \cdots \otimes x_{n-1} \otimes 1 \otimes x_n)s_n c_x w_x s_n y\\
&=1 \otimes 1 \otimes (x_1 \otimes \cdots \otimes x_{n-1} \otimes 1 \otimes x_n)(s_n. c_x) w_x y\\
&=1 \otimes 1 \otimes (x_1 \otimes \cdots \otimes x_{n-1} \otimes 1 \otimes x_n) c_x w_x y
\end{align*}
since $ c_x \in Cl_{n-1} $.  Since $ y_{n+1} =1 $ by assumption, the element above gets mapped by $ \Id \adj_{QP}^{\Id} $ to
$ \tr(x_n)(x_1 \otimes \cdots \otimes x_{n-1} \otimes 1 \otimes 1)c_x w_x \otimes y  $
which verifies the equality.

Subcase 2b: $ g \notin B_n^{\Gamma} $, $ w_x \in S_{n-1} $, $ c_x \notin Cl_{n-1} $, but $ c_x \in Cl_n $.
For the left hand side,
\begin{equation*}
\adj_{QP}^{\Id} \Id(1 \otimes (x_1 \otimes \cdots \otimes x_n)c_x w_x \otimes y) = 0
\end{equation*}
since $ c_x \in Cl_n $ and $ c_x \notin Cl_{n-1} $.

For the right hand side,
\begin{equation*}
(\Id \adj_{QP}^{\Id}) \circ (T_{--} \Id)(1 \otimes 1 \otimes g) = (\Id \adj_{QP}^{\Id})(1 \otimes 1 \otimes (x_1 \otimes \cdots \otimes x_{n-1} \otimes 1 \otimes x_n)(s_n . c_x)w_x y).
\end{equation*}
Since the term $ c_n $ must occur in $ c_x $ by assumption, the term $ c_{n+1} $ must occur in $ s_n . c_x $.   Thus the expression above must be zero which verifies the equality.

Subcase 2c: $ g \notin B_{n}^{\Gamma} $, $ w_x \notin S_{n-1} $.
For the left hand side,
$ (\Id T_{-+})(1 \otimes 1 \otimes g) = 1 \otimes x \otimes y $.
Since $ w_x \notin S_{n-1} $, $ 1 \otimes x \otimes y $ must go to zero under $ \adj_{QP}^{\Id} \Id $.

For the right hand side,
\begin{align*}
T_{--} \Id(1 \otimes 1 \otimes g) &= 1 \otimes 1 \otimes s_n x s_n y \\
&= 1 \otimes 1 \otimes s_n(x_1 \otimes \cdots \otimes x_n \otimes 1)c_x w_x s_n(y_1 \otimes \cdots y_n \otimes 1) c_y w_y\\
&= 1 \otimes 1 \otimes (x_1 \otimes \cdots \otimes 1 \otimes x_n)s_n c_x w_x s_n(y_1 \otimes \cdots y_n \otimes 1) c_y w_y\\
&= 1 \otimes 1 \otimes (x_1 \otimes \cdots \otimes 1 \otimes x_n)(s_n .c_x) s_n w_x s_n(y_1 \otimes \cdots y_n \otimes 1) c_y w_y\\
&= 1 \otimes 1 \otimes (x_1 \otimes \cdots \otimes 1 \otimes x_n)(s_n .c_x) (s_n w_x s_n.(y_1 \otimes \cdots y_n \otimes 1)) s_n w_x s_n w_y (w_y . c_y).\\
\end{align*}
This goes to zero under $ \Id \adj_{QP}^{\Id} $ since $ s_n w_x s_n w_y $ will go to zero because the term $ s_n $ must be present since $ w_x \notin S_{n-1} $ so the two occurrences of $ s_n $ will not cancel.
\end{proof}

\begin{equation}
\label{isotopy2}
\begin{tikzpicture}
   \draw[<-,thick] (0.5,1) .. controls (0.5,0) and (1.5,0) .. (1.5,1);
  \draw[<-,thick] (1,1) to (0.5,0);
  \draw (2.25,0.5) node {=};
  \draw[<-,thick] (3,1) .. controls (3,0) and (4,0) .. (4,1);
  \draw[<-,thick] (3.5,1) to (4,0);
   \draw[<-,thick] (6.5,1) .. controls (6.5,0) and (7.5,0) .. (7.5,1);
  \draw[->,thick] (7,1) to (6.5,0);
  \draw (8.25,0.5) node {=};
  \draw[<-,thick] (9,1) .. controls (9,0) and (10,0) .. (10,1);
  \draw[->,thick] (9.5,1) to (10,0);
\end{tikzpicture}
\end{equation}

\begin{prop}
\label{repisotopy2}
There are equalities of natural transformations corresponding to the diagrams in ~\eqref{isotopy2}
\begin{enumerate}
\item $ (T \Id) \circ (\Id \adj_{\Id}^{PQ}) = (\Id T_{-+}) \circ (\adj_{\Id}^{PQ} \Id) $
\item $ (T_{-+} \Id) \circ (\Id \adj_{\Id}^{PQ}) = (\Id T_{--}) \circ (\adj_{\Id}^{PQ} \Id) $.
\end{enumerate}
\end{prop}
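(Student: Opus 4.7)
The plan is to prove both equalities in Proposition~\ref{repisotopy2} by an element-chase through bimodule homomorphisms, following the same strategy used in Propositions~\ref{repisotopy1} and~\ref{repisotopy5}. For each equality, both sides are natural transformations between the same composite functors, so they correspond to bimodule maps with a common source and target, and equality can be tested by evaluating on the generator of the source bimodule and matching summands term by term.

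For equality (1), both sides give $(B_{n+1}^{\Gamma}, B_n^{\Gamma})$-bimodule maps from $B_{n+1}^{\Gamma}$ into a tensor product of the form $B_{n+1}^{\Gamma} \otimes_{B_{n-1}^{\Gamma}} B_n^{\Gamma}$ (after the standard simplification $B_{k+1}^{\Gamma} \otimes_{B_k^{\Gamma}} B_k^{\Gamma} \cong B_{k+1}^{\Gamma}$). On the LHS, $\Id\,\adj_{\Id}^{PQ}$ inserts the explicit cup formula $1 \mapsto \sum_{b,i=1}^n \rho^{(n)}(i,b)$ at the inner level, and then $T\,\Id$ right-multiplies the first tensor factor by $s_n$; the identities $\check{b}_n s_n = s_n \check{b}_{n+1}$ and $c_n s_n = s_n c_{n+1}$ rewrite each summand with the $s_n$ absorbed into the string $s_i\cdots s_n$. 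On the RHS, $\adj_{\Id}^{PQ}\,\Id$ inserts the larger cup formula $1 \mapsto \sum_{b,i=1}^{n+1} \rho^{(n+1)}(i,b)$ at the outer level, producing an element of $B_{n+1}^{\Gamma} \otimes_{B_n^{\Gamma}} B_{n+1}^{\Gamma}$, and $\Id\,T_{-+}$ then applies $\phi_{-+}$ to the second tensor factor. Since $\phi_{-+}$ annihilates $B_n^{\Gamma}$ and sends $a s_n h \mapsto a \otimes h$ for $a,h \in B_n^{\Gamma}$, the $i=n+1$ summand (whose second factor lies in $B_n^{\Gamma}$ or gets killed by its $c_{n+1}$ decoration) drops out, and the remaining $i \leq n$ terms unfold into precisely the expressions produced on the LHS.

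For equality (2), both sides are natural transformations $Q(n) \Rightarrow P(n-1)\circ Q(n-1)\circ Q(n)$, and the argument is parallel. The LHS applies $\Id\,\adj_{\Id}^{PQ}$ (inserting the cup formula at the outer level, producing an element of $B_{n+1}^{\Gamma} \otimes_{B_n^{\Gamma}} B_{n+1}^{\Gamma}$ representing the composite $QPQ$) and then $T_{-+}\,\Id$, which applies $\phi_{-+}$ to the outer $QP$ subcomposite (the first factor). The RHS applies $\adj_{\Id}^{PQ}\,\Id$ (inserting $\sum_{b,i=1}^n \rho^{(n)}(i,b)$ at level $n$, producing $PQQ$) and then $\Id\,T_{--}$, which left-multiplies the second factor by $s_n$. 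The identities $s_n b_n = b_{n+1} s_n$ and $s_n c_n = c_{n+1} s_n$ recast the RHS image into the same form as the LHS image after $\phi_{-+}$ is applied.

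The main obstacle will be careful bookkeeping of the Clifford decorations and summation indices: one needs to verify that the $c_{n+1}$-dotted terms arising from the cup formula on one side match, via $s_n c_{n+1} = c_n s_n$, the $c_n$-dotted terms appearing on the other side after the crossing and $\phi_{-+}$ operations, and to check that the annihilation of the $i = n+1$ summand by $\phi_{-+}$ on the RHS of (1) is compatible with the truncation of the summation range on the LHS. No techniques beyond those of Propositions~\ref{repisotopy1} and~\ref{repisotopy5} are required, so the proof is a straightforward if somewhat tedious extension of those arguments.
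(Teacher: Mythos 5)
Your proof is correct, but it takes a genuinely different route from the paper. The paper proves Proposition~\ref{repisotopy2} in two lines, as a purely formal consequence: it ``rotates'' the cap versions of these isotopy relations (Proposition~\ref{repisotopy5}) into the cup versions using the zig-zag identities of Proposition~\ref{repisotopy1}, thereby avoiding any direct computation with the unwieldy explicit formula for the unit $\adj_{\Id}^{PQ}$. The direct element-chasing is then confined to the cap case, where $\adj_{QP}^{\Id}$ has a very simple description. Your approach instead computes both sides of the identity as bimodule homomorphisms and tests them on the generator $1$ of the source bimodule (which suffices since the source is cyclic over the left-acting algebra $B_{n+1}^{\Gamma}$, respectively $B_n^{\Gamma}$). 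The term-by-term matching you describe does work: on one side $T$ (or $T_{--}$) places an $s_n$ adjacent to $\check{b}_n$ and $c_n$, which then slides past them via $\check{b}_n s_n = s_n\check{b}_{n+1}$ and $c_n s_n = s_n c_{n+1}$; on the other side the extra $i = n+1$ summand of the larger cup formula is annihilated by $\phi_{-+}$, and the $i \le n$ summands produce the same elements after $\phi_{-+}$ picks off the $s_n$. Both proofs are valid; the paper's is slicker, yours is more self-contained but reproduces work with the harder morphism. One small imprecision: you say the second tensor factor of the $i = n+1$ summand ``lies in $B_n^{\Gamma}$''---it does not, since it contains $b_{n+1}$ which sits in position $n+1$; the correct reason $\phi_{-+}$ annihilates it is that it contains no $s_n$, so it lies in the complement of $B_n^{\Gamma} s_n B_n^{\Gamma}$ in the Mackey decomposition of $B_{n+1}^{\Gamma}$ as a $(B_n^{\Gamma},B_n^{\Gamma})$-bimodule.
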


\begin{proof}
The first equation follows easily from Proposition ~\ref{repisotopy1} and the first part of Proposition ~\ref{repisotopy5}.
The second equation follows from Proposition ~\ref{repisotopy1} and the second part of Proposition ~\ref{repisotopy5}.
\end{proof}

\begin{equation}
\label{isotopy3}
\begin{tikzpicture}
  \draw[->,thick] (0.5,0) .. controls (0.5,1) and (1.5,1) .. (1.5,0);
  \draw[<-,thick] (1.5,1) to (1,0);
  \draw (2.25,0.5) node {=};
  \draw[->,thick] (3,0) .. controls (3,1) and (4,1) .. (4,0);
  \draw[<-,thick] (3,1) to (3.5,0);
  \draw[->,thick] (6.5,0) .. controls (6.5,1) and (7.5,1) .. (7.5,0);
  \draw[->,thick] (7.5,1) to (7,0);
  \draw (8.25,0.5) node {=};
  \draw[->,thick] (9,0) .. controls (9,1) and (10,1) .. (10,0);
  \draw[->,thick] (9,1) to (9.5,0);
\end{tikzpicture}
 \end{equation}

\begin{prop}
\label{repisotopy3}
There are equalities of natural transformations corresponding to the diagrams in ~\eqref{isotopy3}
\begin{enumerate}
\item $ (\adj_{PQ}^{\Id} \Id) \circ (\Id T_{+-}) = (\Id \adj_{PQ}^{\Id}) \circ (T \Id) $
\item $ (\adj_{PQ}^{\Id} \Id) \circ (\Id T_{--}) = (\Id \adj_{PQ}^{\Id}) \circ (T_{+-} \Id) $.
\end{enumerate}
\end{prop}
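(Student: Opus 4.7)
The plan is to derive both identities from the already-established adjunction isotopies of Proposition~\ref{repisotopy1} together with the ``cup-version'' isotopies of Proposition~\ref{repisotopy4}, exactly mirroring how Proposition~\ref{repisotopy2} was obtained from Proposition~\ref{repisotopy1} and Proposition~\ref{repisotopy5}. Intuitively, the two diagrams in \eqref{isotopy3} are the $180^\circ$ rotations of the diagrams in \eqref{isotopy4}, and Proposition~\ref{repisotopy1} says that rotation by a cap or a cup is an equivalence of morphisms of functors.

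For the first identity, I would compose both sides with $\adj_{\Id}^{QP}$ on the appropriate outer strand and apply the zig-zag relation from Proposition~\ref{repisotopy1} to turn each side into a diagram whose topmost cup is a counterclockwise cup and whose strands read (bottom to top) as the configuration appearing in part~(1) of Proposition~\ref{repisotopy4}. Concretely, attaching a counterclockwise cup from $\Id \to Q P$ on top of $(\adj_{PQ}^{\Id}\Id)\circ(\Id T_{+-})$ and using the zig-zag identity $(\adj_{PQ}^{\Id}\Id)\circ(\Id \adj_{\Id}^{QP}) = \Id$ rewrites this map as $(\Id\adj_{QP}^{\Id})\circ(T_{+-}\Id \Id)\circ(\adj_{\Id}^{QP}\Id)$, and the analogous manipulation applied to the right-hand side produces $(\Id\adj_{QP}^{\Id})\circ(\Id T\Id)\circ(\adj_{\Id}^{QP}\Id)$. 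By the first relation of Proposition~\ref{repisotopy4} (applied on the upper half of the diagram, where the roles of cap and cup are interchanged via another application of the zig-zag), these two composites agree.

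The second identity is handled in the same spirit, this time invoking part~(2) of Proposition~\ref{repisotopy4}. After composing with $\adj_{\Id}^{QP}$ on the outer strand and applying the zig-zag, both sides become diagrams matching the configuration of the $T_{--}$-isotopy already proved, and the equality follows.

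The main obstacle is purely bookkeeping: one must choose the correct strand on which to insert the auxiliary cup/cap and then invoke the zig-zag of Proposition~\ref{repisotopy1} on the correct pair of strands, being careful with the $\Z_{2}$-degree signs introduced by the super-convention for tensor products of homomorphisms. If this bookkeeping becomes too delicate, a fallback is to imitate the direct bimodule calculation given in the proof of Proposition~\ref{repisotopy5}: expand a general element $g \in B_{n+1}^{\Gamma}$ as $(g_1\otimes\cdots\otimes g_{n+1})cw$ and split into cases according to whether $w\in S_n$ and whether $c\in Cl_n$, then verify both sides agree in each case. In either approach, no new structural input beyond Propositions~\ref{repisotopy1} and~\ref{repisotopy4} is required.
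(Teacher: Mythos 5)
Your proposal is correct and follows exactly the paper's own route: the paper's proof is a one-line reduction stating that part (1) follows from Proposition~\ref{repisotopy1} and the first part of Proposition~\ref{repisotopy4}, and part (2) follows from Proposition~\ref{repisotopy1} and the second part of Proposition~\ref{repisotopy4}, mirroring the derivation of Proposition~\ref{repisotopy2}. Your bookkeeping description of the zig-zag manipulations simply spells out the detail the paper leaves implicit.
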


\begin{proof}
The first equality follows easily from Proposition ~\ref{repisotopy1} and the first part of Proposition ~\ref{repisotopy4}.
The second equality follows from Proposition ~\ref{repisotopy1} and the second part of Proposition ~\ref{repisotopy4}.
\end{proof}

\begin{equation}
\label{isotopy6}
\begin{tikzpicture}[>=stealth]
\draw (0,0) .. controls (1,1) .. (1,2)[->][thick];
\draw (1,0) .. controls (0,1) .. (0,2)[->][thick];
\draw (2,0) .. controls (2,1) .. (3,2)[->][thick];
\draw (3,0) .. controls (3,1) .. (2,2)[->][thick];
\draw (1.5,1) node{$ \cdots$};

\draw (4,1) node{$ = $};
\draw (5,0) .. controls (5,1) .. (6,2)[->][thick];
\draw (6,0) .. controls (6,1) .. (5,2) [->][thick];
\draw (8,0) .. controls (7,1) .. (7,2) [->][thick];
\draw (7,0) .. controls (8,1) .. (8,2) [->][thick];
\draw (6.5,1) node{$ \cdots$};
\end{tikzpicture}
\end{equation}

\begin{prop}
\label{repisotopy6}
There is an equality of natural transformations corresponding to the diagrams in ~\eqref{isotopy6}.  That is, if $ |k-l|>1 $ and $ 1 \leq k,l \leq r $, then as maps from
$ P(n+r) \circ \cdots \circ P(n) $ to itself, there is an equality:
\begin{equation*}
(\underbrace{\Id \cdots \Id}_{l-1} T \Id \cdots \Id) \circ (\underbrace{\Id \cdots \Id}_{k-1} T \Id \cdots \Id) = (\underbrace{\Id \cdots \Id}_{k-1} T \Id \cdots \Id) \circ (\underbrace{\Id \cdots \Id}_{l-1} T \Id \cdots \Id).
\end{equation*}
\end{prop}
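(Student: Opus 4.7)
The plan is to mimic the proofs of Propositions for equations \eqref{H7}, \eqref{H8}, and \eqref{H13}, which all reduce the verification of a categorical relation to a simple identity in the symmetric group $S_{n+r+1}$ sitting inside $B_{n+r+1}^{\Gamma}$. First I would unravel the functorial description: the composite $P(n+r) \circ \cdots \circ P(n) \colon C_n^{\Gamma} \to C_{n+r+1}^{\Gamma}$ is given by tensoring with the $(B_{n+r+1}^{\Gamma}, B_n^{\Gamma})$-bimodule $B_{n+r+1}^{\Gamma}$, with an overall degree shift $\langle r+1\rangle$ that plays no role in what follows. Under this identification, the elementary natural transformation $\underbrace{\Id \cdots \Id}_{k-1} T \Id \cdots \Id$ corresponds to the $(B_{n+r+1}^{\Gamma}, B_n^{\Gamma})$-bimodule endomorphism sending $x \mapsto x\,s_{n+k}$, exactly as in the proof of Proposition \ref{repH13} for a single $T$ (and by the same argument used there, right multiplication by $s_{n+k}$ is well-defined as a bimodule map because $s_{n+k}$ commutes with $B_n^{\Gamma}$ whenever $k\geq 1$).

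Next I would compute the two composites on a general element $x \in B_{n+r+1}^{\Gamma}$. Horizontal composition of natural transformations corresponds to composition of bimodule endomorphisms, so the left-hand side sends
\begin{equation*}
x \;\longmapsto\; x\,s_{n+k} \;\longmapsto\; x\,s_{n+k}\,s_{n+l},
\end{equation*}
while the right-hand side sends $x \mapsto x\,s_{n+l} \mapsto x\,s_{n+l}\,s_{n+k}$.

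Finally, the hypothesis $|k-l|>1$ forces $|(n+k)-(n+l)|>1$, and hence the transpositions $s_{n+k}$ and $s_{n+l}$ commute in the symmetric group $S_{n+r+1}$; this relation carries over verbatim to $B_{n+r+1}^{\Gamma}$ since the symmetric group sits inside as a subalgebra. Therefore $x\,s_{n+k}\,s_{n+l} = x\,s_{n+l}\,s_{n+k}$ for every $x$, so the two bimodule endomorphisms agree, and consequently so do the two natural transformations. There is no real obstacle here — the only mildly delicate point is bookkeeping the indexing $k \leftrightarrow n+k$ correctly so that the relevant transpositions really do land on non-adjacent positions of $S_{n+r+1}$; once that is settled the result is immediate from the far-commutativity braid relation.
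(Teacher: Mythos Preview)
Your proposal is correct and takes exactly the same approach as the paper, which simply observes in one sentence that the result follows from the far-commutativity relation $s_k s_l = s_l s_k$ for $|k-l|>1$ in the Hecke-Clifford algebra. One small bookkeeping slip: comparing with the proof of equation~\eqref{H7}, the map $\underbrace{\Id\cdots\Id}_{k-1}\,T\,\Id\cdots\Id$ actually corresponds to right multiplication by $s_{n+r-k+1}$ rather than $s_{n+k}$, but since the index difference $|k-l|$ is preserved under this relabelling your argument goes through unchanged.
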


\begin{proof}
This follows from the fact that elements $ s_k $ and $ s_l $ commute in the Hecke-Clifford algebra if $ |k-l| > 1 $.
\end{proof}

\subsubsection{A monoidal functor}
\begin{theorem}
There is a monoidal functor $ F \colon \mathcal{H}_{\Gamma}^- \rightarrow \bigoplus_{n \geq 0} C_n^{\Gamma} $.
\end{theorem}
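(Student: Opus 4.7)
The plan is to construct $F$ by specifying it on generating objects and generating morphisms, and then to verify that the relations defining $\mathcal{H}_{\Gamma}^-$ are mapped to equalities of natural transformations. On objects, I would send the generator $P$ to the collection of functors $\{P(n)\}_{n\geq 0}$ and $Q$ to $\{Q(n)\}_{n \geq 0}$, with a composition $X_{1}\cdots X_{r}$ (each $X_i\in\{P,Q\}$) sent to the corresponding composite of induction/restriction functors $\bigoplus_{n} X_{1}(\cdots)\cdots X_{r}(n)$ acting as an endofunctor of $\bigoplus_{n\geq 0} C_{n}^{\Gamma}$. The identity object $\mathbf{1}$ is sent to $\bigoplus_n \Id_{C_{n}^{\Gamma}}$. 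Monoidality is then essentially tautological because horizontal composition of natural transformations corresponds to tensor product of morphisms in $\mathcal{H}_{\Gamma}^-$.

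Next I would define $F$ on the generating morphisms. A solid dot labeled $b\in B^{\Gamma}$ on an upward (resp.\ downward) strand is sent to $X(b)$ (resp.\ $Y(b)$); a hollow dot on an upward (resp.\ downward) strand is sent to $X(c_{n+1})$ (resp.\ $Y(c_{n+1})$); an upward-upward crossing is sent to $T$ and a downward-downward crossing to $T_{--}$; the mixed crossings (upward-downward and downward-upward) are sent to $T_{+-}$ and $T_{-+}$; the four oriented cups and caps are sent to $\adj_{\Id}^{PQ}$, $\adj_{\Id}^{QP}$, $\adj_{PQ}^{\Id}$, $\adj_{QP}^{\Id}$ according to orientation. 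The degree shifts on the generators match those listed in \eqref{degrees} by the degree conventions embedded in the definitions of these natural transformations, so $F$ respects the $\Z\times\Z_{2}$-grading.

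To show $F$ is a well-defined functor it remains to verify that each defining relation \eqref{H1}--\eqref{H20} holds after applying $F$. This is precisely what Propositions \ref{repH1}--\ref{repH20} establish (with the exception of \eqref{H6} and \eqref{H19}, which are the commutation of dots on distinct strands and are handled by Proposition \ref{repH6} and its hollow-dot analogue). Planar isotopy invariance of diagrams that does not follow from the listed local relations is handled by Propositions \ref{repisotopy1}--\ref{repisotopy6}: namely the rigid-structure zigzag equations, the sliding of crossings past cups and caps, and the far-commutativity of non-adjacent crossings. Putting these together, every diagrammatic morphism in $\mathcal{H}_{\Gamma}'^-$ is sent to a well-defined natural transformation, and by the universal property of the Karoubi envelope this extends uniquely to a functor out of $\mathcal{H}_{\Gamma}^-$.

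The main obstacle in this plan is not conceptual but combinatorial bookkeeping: ensuring that the horizontal/vertical composition conventions for natural transformations, the Koszul sign conventions for tensor products of super bimodule maps, and the degree shifts in the various adjunction units/counits all match the corresponding conventions for diagrams in $\mathcal{H}_{\Gamma}^-$. Once these are aligned, each individual relation reduces to the previously proved lemma/proposition of the same number, and the compatibility with the tensor structure follows from the construction of $F$ on composite objects. Hence no new computations are required beyond those already carried out in this section.
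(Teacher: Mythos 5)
Your proposal matches the paper's approach exactly: the paper's own proof is a single sentence ("This follows from all the relations checked above"), which unpacks to precisely what you describe — $F$ is defined on generating diagrams by the natural transformations constructed in Section~\ref{2rep}, the relations \eqref{H1}--\eqref{H20} and the isotopy moves are verified proposition by proposition, and the functor extends to the Karoubi envelope because the target is idempotent complete. Your write-up is a faithful expansion of what the paper leaves implicit; the only minor caveat is that your reference pattern ``Propositions repH1--repH20'' does not literally match the paper's labeling (several of those propositions are unnumbered), but the substance is correct.
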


\begin{proof}
This follows from all the relations checked above.
\end{proof}

\section{Main result}
\label{mainresult}
We have already proved that relations in the twisted Heisenberg algebra lift to isomorphisms in the diagrammatic category $ \mathcal{H}_{\Gamma}^{-} $.  We still must show that the Grothendieck group of this category is isomorphic to the Heisenberg algebra.  We follow \cite[Section 7]{CL} as most of the proofs go through nearly identically.

Let $ \mathcal{F}^{\Gamma} $ be the Fock space associated to $ \Gamma $ which up to isomorphism is the unique irreducible module for $ \mathfrak{h}_{\Gamma}^{-} $.  Then there is a sequence of maps
\begin{equation}
\label{sequenceofmaps}
\mathfrak{h}_{q,\Gamma}^- \xrightarrow{\pi} K_0(\mathcal{K}_{\Gamma}^{-}) \xrightarrow{K_0(\eta)} K_0(\mathcal{H}_{\Gamma}^{-}) \rightarrow \text{End}(\bigoplus_{n \geq 0} K_0(\mathcal{C}_n^{\Gamma})) \cong \text{End}(\mathcal{F}^{\Gamma}).
\end{equation}

The next lemma is a consequence of the degree conventions and relations in the diagrammatic category.
\begin{lemma} ~\cite[Lemma 3]{CL}
\label{degreelemma}
There are no negative degree endomorphisms of $ \prod_i \widetilde{P}_i^{m_i} \prod_i \widetilde{Q}_i^{n_i} $ while the algebra of degree zero endomorphisms is isomorphic to
$ \otimes_i \mathbb{S}_{m_i} \otimes \otimes_i \mathbb{S}_{n_i} $.
\end{lemma}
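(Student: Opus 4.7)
The plan is to follow the template of \cite[Lemma 3]{CL}, adapted to the $\Z \times \Z_2$-graded Clifford setting. First, I would set up a normal form for diagrams representing endomorphisms of $\prod_i \widetilde{P}_i^{m_i} \prod_i \widetilde{Q}_i^{n_i}$ using the planar relations of $\mathcal{K}_{\Gamma}'^{-}$. Combining the Reidemeister-type moves \eqref{K3}--\eqref{K5} with the bigon-reduction relation \eqref{K6}, any such diagram is equivalent (modulo a sum of strictly simpler terms) to one in which all cups and caps are confined to a single horizontal strip. Each application of \eqref{K6} trades a cap-cup pair for two straight strands plus correction terms carrying additional solid/hollow dots; these corrections are smaller in the lexicographic order (number of cup-cap pairs, number of crossings).

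Second, I would do induction on the number of cup-cap pairs. For the base case of a straight diagram (no cups or caps), the boundary labeling forces each upward $\widetilde{P}_i$-strand and each downward $\widetilde{Q}_j$-strand to connect to matching endpoints. Only crossings, solid dots, and hollow dots survive. Reading off the degree table, every solid dot on an $i$-strand has strictly positive $\Z$-degree (by \eqref{K1} and the display below \eqref{K16}), so a $\Z$-degree-zero straight diagram contains no solid dots. What remains are hollow dots and crossings of like-labeled parallel strands, subject to \eqref{K1}, \eqref{K13}, \eqref{K14}; these are precisely the defining relations of $\bigotimes_i \mathbb{S}_{m_i} \otimes \bigotimes_i \mathbb{S}_{n_i}$. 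Strands of different labels $i \neq j$ cannot interact in degree zero (the only dotted linking morphism between them has strictly positive degree).

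Third, I would finish by showing that the non-straight contributions contribute only non-negative $\Z$-degree. A bubble that closes off evaluates by \eqref{K7} (a clean counterclockwise circle equals $1$), \eqref{K9} (a circle with a lone hollow dot vanishes), and the analogous solid-dot evaluations of \eqref{K1}; any surviving bubble-on-a-strand strictly raises the total $\Z$-degree of the straight diagram it modifies. Hence no negative degree endomorphisms exist, and the only degree-zero contributions come from the straight part identified above. Injectivity of the resulting map $\bigotimes_i \mathbb{S}_{m_i} \otimes \bigotimes_i \mathbb{S}_{n_i} \to \mathrm{End}(\prod_i \widetilde{P}_i^{m_i} \prod_i \widetilde{Q}_i^{n_i})$ is verified through the functor $\eta$ of Section \ref{auxcat} followed by the 2-representation of Section \ref{2rep}, which sends these algebras faithfully to their natural actions on $C_n^{\Gamma}$ by induction/restriction.

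The main obstacle will be the bookkeeping in the inductive reduction. Hollow dots do not move freely past cups and caps (relations \eqref{K15}, \eqref{K16} introduce signs), and each application of \eqref{K6} generates correction terms with extra dots that must be tracked. The Clifford $\Z_2$-signs appearing when hollow dots cross cups and caps must be threaded through the induction carefully, and one must verify that the chosen well-founded order genuinely decreases under every reduction move. This is the step that requires the most care relative to the Cautis-Licata argument, though the structural outline is unchanged.
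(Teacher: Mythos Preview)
Your proposal is correct and follows precisely the approach the paper intends: the paper does not give an independent proof but simply cites \cite[Lemma~3]{CL} and remarks that the result is a consequence of the degree conventions and diagrammatic relations, which is exactly the template you are filling in with the necessary Clifford modifications. One small refinement: when you invoke the bigon reduction you should also cite \eqref{K8} (the $i=j$ case) alongside \eqref{K6}, and your claim that ``strands of different labels cannot interact in degree zero'' should be phrased as saying that degree-zero crossings between distinct labels can be removed via \eqref{K4}--\eqref{K5} since the induced permutation on endpoints must preserve labels.
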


The categorified Heisenberg relations along with Lemma ~\ref{degreelemma} give the next proposition.
\begin{prop} ~\cite[Proposition 5]{CL}
\label{degreeprop}
Any object in $ \mathcal{K}_{\Gamma}^{-} $ decomposes uniquely into a finite direct sum of indecomposable objects which are of the form
$ \prod_i \widetilde{P}_i^{\lambda_i} \prod_i \widetilde{Q}_i^{\mu_i} $ where each $ \lambda_i $ and $ \mu_i $ is a strict partition.
\end{prop}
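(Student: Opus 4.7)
The plan is to adapt the strategy of \cite[Proposition 5]{CL} to the twisted setting, using the already-established categorical Heisenberg relations (Theorem~\ref{relationsinH} together with its counterpart via $\eta$) to reduce an arbitrary object to a normal form, and then invoke Lemma~\ref{degreelemma} to show that this normal form is a direct sum of indecomposables in an essentially unique way.

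First I would show existence of a decomposition. A general object of $\mathcal{K}_{\Gamma}^{-}$ is (a graded shift of a summand of) a tensor product of objects $\widetilde{P}_i$ and $\widetilde{Q}_j$. Using Propositions~\ref{relationsinK4} and~\ref{relationsinK5} (and their single-strand analogue) I can commute all $\widetilde{Q}$'s past all $\widetilde{P}$'s at the cost of lower order terms of the same shape, so by induction on the total number of strands (with a suitable lexicographic refinement for the off-diagonal cases) every object is isomorphic to a direct sum of objects of the form $\widetilde{P}_{i_1}^{a_{i_1}}\cdots\widetilde{P}_{i_r}^{a_{i_r}}\widetilde{Q}_{j_1}^{b_{j_1}}\cdots\widetilde{Q}_{j_s}^{b_{j_s}}$ (with graded shifts). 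Propositions~\ref{relationsinK1} and~\ref{relationsinK2} then allow me to reorder the $\widetilde{P}$'s and $\widetilde{Q}$'s by node index, giving a canonical ordering $\prod_i \widetilde{P}_i^{m_i}\prod_i \widetilde{Q}_i^{n_i}$.

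Next I would refine each block $\widetilde{P}_i^{m_i}$ and $\widetilde{Q}_i^{n_i}$ into its isotypic pieces for the Hecke-Clifford algebra. By Lemma~\ref{degreelemma} we have $\mathrm{End}(\widetilde{P}_i^{m_i})_{(0,0)}\cong \mathbb{S}_{m_i}$ and similarly for $\widetilde{Q}$; since we are in the Karoubi envelope, the decomposition of $1\in\mathbb{S}_{m_i}$ into the (quasi-)idempotents $e_{i,1,\lambda}$ for strict partitions $\lambda\vdash m_i$ gives
\[
\widetilde{P}_i^{m_i}\ \cong\ \bigoplus_{\lambda\vdash m_i \text{ strict}} \widetilde{P}_i^{(\lambda)\oplus d_\lambda},
\]
with analogous statements for the $\widetilde{Q}$'s, and each $\widetilde{P}_i^{(\lambda)}$ (resp.\ $\widetilde{Q}_i^{(\lambda)}$) is indecomposable because $e_{i,1,\lambda}$ is a primitive idempotent in $\mathbb{S}_{m_i}$ (equivalently, indexes an irreducible super representation of $\mathbb{S}_{m_i}$). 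This establishes existence.

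For uniqueness and indecomposability of the full products $\prod_i \widetilde{P}_i^{(\lambda_i)}\prod_i \widetilde{Q}_i^{(\mu_i)}$, I would again use Lemma~\ref{degreelemma}: the degree $(0,0)$ endomorphism ring of $\prod_i \widetilde{P}_i^{m_i}\prod_i \widetilde{Q}_i^{n_i}$ is $\bigotimes_i \mathbb{S}_{m_i}\otimes\bigotimes_i\mathbb{S}_{n_i}$, and no negative-degree endomorphisms exist, so by a standard Fitting-type argument for Karoubi envelopes of graded categories with connected degree zero part, the indecomposable summands of any object are detected (and enumerated with multiplicity) by the primitive idempotents of the endomorphism ring, which are precisely the $\prod e_{i,1,\lambda_i}\otimes\prod e_{i,1,\mu_i}$ for tuples of strict partitions. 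The main obstacle is the use of Lemma~\ref{degreelemma}, which in \cite{CL} rests on a careful analysis of diagrammatic endomorphism spaces via the faithful action on Fock space; the twisted version requires knowing both that no negative degree endomorphisms exist (which constrains the possible diagrams via the degree count in~\eqref{degrees}) and that the degree zero part is exactly the Hecke-Clifford tensor algebra, which we pull from the representation $F$ of Section~\ref{2rep} combined with the sequence of maps~\eqref{sequenceofmaps}.
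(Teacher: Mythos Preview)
Your proposal is correct and follows essentially the same route as the paper, which only records the one-line remark that the result follows from the categorified Heisenberg relations together with Lemma~\ref{degreelemma} (deferring the details to \cite[Proposition~5]{CL}); you have simply spelled those details out. One small omission: when commuting $\widetilde{Q}$'s past $\widetilde{P}$'s you also need Proposition~\ref{relationsinK3} for the case $i=j$, not just Propositions~\ref{relationsinK4} and~\ref{relationsinK5}.
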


\begin{lemma} ~\cite[Lemma 4]{CL}
\label{inductionrestrictionlemma}
In the category $ \mathcal{K}_{\Gamma}^{-} $, there is an isomorphism of objects
\begin{equation*}
\widetilde{P}_i^{\lambda} \widetilde{P}_i \cong \bigoplus_{\mu} \widetilde{P}_i^{\mu}
\end{equation*}
where the direct sum is over all strict partitions $ \mu $ such that $ |\mu| = |\lambda|+1 $.
\end{lemma}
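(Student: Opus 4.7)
The plan is to realize both sides of the claimed isomorphism as images of idempotents in the degree-zero endomorphism algebra of $\widetilde{P}_i^{|\lambda|+1}$, and then reduce to the branching rule for the Hecke--Clifford wreath algebra $B_{|\lambda|+1}^\Gamma$. In the Karoubi envelope $\mathcal{K}_\Gamma^-$, I would first identify the object $\widetilde{P}_i^\lambda \widetilde{P}_i$ with the image of the idempotent $e_{i,1,\lambda}\otimes e_{i,1,(1)}$ acting on $\widetilde{P}_i^{|\lambda|+1}$, where $e_{i,1,\lambda}$ lives on the first $|\lambda|$ strands and $e_{i,1,(1)}$ on the last. By Lemma~\ref{degreelemma}, the degree-zero endomorphism algebra of $\widetilde{P}_i^{|\lambda|+1}$ is the Hecke--Clifford algebra $\mathbb{S}_{|\lambda|+1}$, refined by the $i$-block of the $\Gamma$-data inherited from $B_{|\lambda|+1}^\Gamma$. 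Decomposing $\widetilde{P}_i^\lambda \widetilde{P}_i$ into indecomposable summands in $\mathcal{K}_\Gamma^-$ then becomes equivalent, via Proposition~\ref{degreeprop}, to writing $e_{i,1,\lambda}\otimes 1$ as an orthogonal sum of primitive idempotents of the form $e_{i,1,\mu}$, up to conjugation by invertible elements (which does not affect the isomorphism class of the resulting summand).

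Second, I would invoke the branching rule for primitive idempotents in the Hecke--Clifford wreath product, in the form supplied by Nazarov's construction \cite{N} together with the refinements by Sergeev \cite{Se} and the adaptation to the wreath setting by Jing--Wang \cite{JW}, all of which are cited in the preliminaries. This rule asserts that inside the appropriate block of $B_{|\lambda|+1}^\Gamma$ the idempotent $e_{i,1,\lambda}\otimes 1$ is conjugate to $\bigoplus_{\mu} e_{i,1,\mu}$, with $\mu$ ranging over the strict partitions of $|\lambda|+1$ described in the lemma. Transporting this primitive-idempotent decomposition back through the identification in the first step then yields the claimed direct-sum decomposition in $\mathcal{K}_\Gamma^-$, with each summand identified as $\widetilde{P}_i^\mu$ by Proposition~\ref{degreeprop}.

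The main obstacle is the branching rule itself, in particular the precise combinatorial description of which $\mu$ appear and with what multiplicities. In the purely symmetric-group setting this is classical Pieri branching, but in the Hecke--Clifford case the super structure of $\mathbb{S}_n$ introduces extra factors of two for type-$Q$ irreducibles, and the wreath with $\Gamma$ requires bookkeeping of the $\Gamma$-color on each additional strand so that the $i$-blocks on the two sides agree. Once the correct branching statement is in hand, the remainder of the argument is a formal transfer through Lemma~\ref{degreelemma} and Proposition~\ref{degreeprop}, directly paralleling \cite[Lemma 4]{CL}.
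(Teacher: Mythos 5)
Your proposal is correct and follows essentially the same route as the paper: both reduce the statement to the branching rule for irreducible representations of the Hecke--Clifford algebra (the paper simply cites \cite[Proposition 2]{N2}, while you spell out the reduction through Lemma~\ref{degreelemma} and Proposition~\ref{degreeprop}). One small simplification worth noting: since every strand here carries the single label $i$, Lemma~\ref{degreelemma} identifies the degree-zero endomorphism algebra of $\widetilde{P}_i^{|\lambda|+1}$ with the plain Hecke--Clifford algebra $\mathbb{S}_{|\lambda|+1}$, so the idempotent branching takes place there rather than in the full wreath product $B_{|\lambda|+1}^\Gamma$, and the $\Gamma$-coloring bookkeeping you flag as a concern does not actually arise in $\mathcal{K}_\Gamma^-$.
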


\begin{proof}
This follows from well know induction and restriction rules for irreducible representations of the Hecke-Clifford algebra.  See for example \cite[Proposition 2]{N2}.
\end{proof}

\begin{corollary} ~\cite[Corollary 1]{CL}
\label{catgenerators}
The elements $ \lbrace [\widetilde{P}_i^{(m)}], [\widetilde{Q}_j^{(n)}] | \forall m,n \geq 0, i,j \rbrace $ generate $ K_0(\mathcal{K}_{\Gamma}^{-}) $ as an algebra.
\end{corollary}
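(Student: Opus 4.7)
The plan is to combine Proposition \ref{degreeprop} with Lemma \ref{inductionrestrictionlemma} in a manner directly modeled on \cite[Corollary 1]{CL}. By Proposition \ref{degreeprop}, every indecomposable of $\mathcal{K}_\Gamma^-$ has the form $\prod_i \widetilde{P}_i^{\lambda^i} \prod_j \widetilde{Q}_j^{\mu^j}$ with each $\lambda^i, \mu^j$ a strict partition, and these classes $\mathbb{Z}[q,q^{-1}]$-span $K_0(\mathcal{K}_\Gamma^-)$. Because the Grothendieck-ring multiplication is induced by tensor product, each such class factors as $\prod_i [\widetilde{P}_i^{\lambda^i}] \prod_j [\widetilde{Q}_j^{\mu^j}]$ in $K_0$. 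Thus it suffices to show that each class $[\widetilde{P}_i^\lambda]$ (and symmetrically $[\widetilde{Q}_j^\mu]$) for a single strict partition belongs to the subalgebra $A \subseteq K_0(\mathcal{K}_\Gamma^-)$ generated by the divided-power classes $\{[\widetilde{P}_i^{(m)}], [\widetilde{Q}_j^{(n)}]\}$.

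Fixing $i$, I would proceed by induction on $n = |\lambda|$; the cases $n \leq 1$ are immediate since $\widetilde{P}_i^{(0)} = \mathbf{1}$ and $\widetilde{P}_i^{(1)} = \widetilde{P}_i$. Lemma \ref{inductionrestrictionlemma} supplies the branching identity
\[
[\widetilde{P}_i^\mu] \cdot [\widetilde{P}_i] \;=\; \sum_{\nu} [\widetilde{P}_i^\nu],
\]
where $\nu$ ranges over strict partitions of $|\mu|+1$ obtainable from $\mu$ by adding a single box. Assuming every $[\widetilde{P}_i^\mu]$ with $|\mu| \leq n$ lies in $A$, each left-hand side of this identity (as $\mu$ ranges over strict partitions of $n$) already lies in $A$, giving a linear system in the unknowns $\{[\widetilde{P}_i^\nu] : |\nu| = n+1\}$. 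Supplementing this with the fact that the single-row class $[\widetilde{P}_i^{(n+1)}]$ is itself a generator of $A$, the goal is to invert this system over the integers and conclude $[\widetilde{P}_i^\nu] \in A$ for every strict $\nu$ of size $n+1$.

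The main content, and the only real step of work, is verifying that the branching matrix (with the row for the single-part column $[\widetilde{P}_i^{(n+1)}]$ adjoined) admits an integral inversion on the span of its target. This is a standard combinatorial check using the explicit branching rule for projective representations of $S_n$ recorded in \cite{N2}: one organizes the strict partitions of $n+1$ by a suitable total order (for example, decrement the largest part that can be legally decremented while preserving strictness, and work downward in reverse lexicographic order) so that each $[\widetilde{P}_i^\nu]$ is extracted from the relation for a judiciously chosen $\mu$ with all other terms appearing either lex-larger (handled by the downward step) or already in $A$ (handled by the inductive hypothesis on $|\lambda|$). Since this is essentially the combinatorial content of the untwisted argument of \cite{CL} transcribed to the strict-partition setting, the verification is routine. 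The argument for $[\widetilde{Q}_j^\mu]$ is identical by symmetry, completing the proof.
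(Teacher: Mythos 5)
Your overall framework mirrors the paper's terse proof: reduce to single-color classes via Proposition~\ref{degreeprop}, then try to express $[\widetilde{P}_i^\lambda]$ in terms of the divided-power generators using Lemma~\ref{inductionrestrictionlemma}. However, the linear-algebra step as you set it up does not close. All terms in the branching identity $[\widetilde{P}_i^\mu][\widetilde{P}_i]=\sum_\nu[\widetilde{P}_i^\nu]$ have $|\nu|=n+1$, so your ``inductive hypothesis on $|\lambda|$'' gives no leverage inside a fixed degree; the only constraints available at stage $n+1$ are the $p_s(n)$ branching sums plus the one known class $[\widetilde{P}_i^{(n+1)}]$, and this system is not $\mathbb{Z}$-invertible in general.

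Concretely, take $n+1=8$. The strict partitions of $8$ are $(8),(7,1),(6,2),(5,3),(5,2,1),(4,3,1)$, and the branching relations from the five strict partitions of $7$, augmented by the row asserting $[\widetilde{P}_i^{(8)}]\in A$, give a $6\times 6$ integer matrix whose determinant is $2$, not a unit. After peeling off the chain $(8)\to(7,1)\to(6,2)$ one is left with the three constraints
\[
(5,3)+(5,2,1)\in A,\quad (5,3)+(4,3,1)\in A,\quad (5,2,1)+(4,3,1)\in A,
\]
from which one can extract the individual classes only after dividing by $2$. No lex or dominance reordering repairs this: the obstruction is the nonunimodularity of the augmented branching matrix, not a poor choice of elimination order. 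The missing input is a Pieri-type decomposition of $\widetilde{P}_i^{\lambda}\widetilde{P}_i^{(m)}$ for general $m$ (so that all products $[\widetilde{P}_i^{(m_1)}]\cdots[\widetilde{P}_i^{(m_\ell)}]$ over strict $(m_1,\ldots,m_\ell)$ yield a dominance-unitriangular change of basis), or equivalently an analogue of the Jacobi--Trudi formula in the spin setting (Nimmo's Pfaffian formula for Schur $Q$-functions), which expresses each $[\widetilde{P}_i^\lambda]$ as an integer polynomial in the $[\widetilde{P}_i^{(m)}]$. For instance, the product $[\widetilde{P}_i^{(5)}][\widetilde{P}_i^{(3)}]$ supplies exactly the extra relation needed to resolve the $n+1=8$ ambiguity. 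This additional combinatorial input, though derivable from the branching rule by a more substantial induction, is genuinely beyond what your outline provides; the paper's phrase ``repeated use of Lemma~\ref{inductionrestrictionlemma}'' is implicitly absorbing that work.
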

This corollary follows from Proposition ~\ref{degreeprop} and repeated use of Lemma ~\ref{inductionrestrictionlemma}.

We now come to the main theorem whose proof is the same as the proof of the main result in \cite{CL} but we repeat it for the sake of completeness.
\begin{theorem}
The map $ \phi \colon \mathfrak{h}_{q,\Gamma}^- \rightarrow K_0( \mathcal{H}_{\Gamma}^{-}) $ is an isomorphism.
\end{theorem}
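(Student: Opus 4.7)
The plan is to exploit the factorization $\phi = K_0(\eta)\circ \pi$ coming from \eqref{sequenceofmaps}. Since the preceding proposition asserts that $K_0(\eta)$ is an isomorphism, it suffices to show that $\pi\colon \mathfrak{h}_{q,\Gamma}^-\to K_0(\mathcal{K}_\Gamma^-)$ is an isomorphism, and I would establish surjectivity and injectivity separately.

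Surjectivity of $\pi$ is almost immediate from Corollary \ref{catgenerators}: the classes $[\widetilde{P}_i^{(m)}]$ and $[\widetilde{Q}_j^{(n)}]$ generate $K_0(\mathcal{K}_\Gamma^-)$ as a $\mathbb{Z}[q,q^{-1}]$-algebra, and by construction these are precisely the images $\pi(p_i^{(m)})$ and $\pi(q_j^{(n)})$. Together with the isomorphism $K_0(\eta)$, this gives surjectivity of $\phi$.

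For injectivity, I would compose with the action on Fock space induced by the monoidal functor $F$ built in Section~\ref{2rep}:
\[\Psi\colon \mathfrak{h}_{q,\Gamma}^-\xrightarrow{\phi} K_0(\mathcal{H}_\Gamma^-)\xrightarrow{K_0(F)} \operatorname{End}\Bigl(\bigoplus_{n\ge 0}K_0(C_n^\Gamma)\Bigr)\cong \operatorname{End}(\mathcal{F}^\Gamma).\]
The first step is to identify $\bigoplus_n K_0(C_n^\Gamma)$ with the twisted Fock space $\mathcal{F}^\Gamma$ as a $\mathbb{C}[q,q^{-1}]$-module, using the classification of irreducible $B_n^\Gamma$-modules by pairs of strict partitions indexed by the conjugacy classes of $\Gamma$ (as in \cite{FJW,JW}). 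The second step, which I expect to be the main obstacle, is to verify that $\Psi(p_i^{(m)})$ and $\Psi(q_j^{(n)})$ act as the standard Heisenberg generators on $\mathcal{F}^\Gamma$. This requires a careful analysis of how the bimodule functors $P(n)$ and $Q(n)$ compose with the idempotent projectors $e_{i,1,(m)}$ on irreducible $B_n^\Gamma$-modules, using Frobenius reciprocity and the branching rules for $B_n^\Gamma\subset B_{n+1}^\Gamma$, and then matching the output against the vertex-operator formulas of Section~\ref{heisalgebra}.

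Once this identification is secured, the conclusion is formal: the Fock space representation of $\mathfrak{h}_{q,\Gamma}^-$ is faithful (as follows from a PBW-type argument, using the triangular decomposition of $\mathfrak{h}_{q,\Gamma}^-$ into the polynomial subalgebras generated by the $p_i^{(m)}$ and $q_j^{(n)}$ together with the irreducibility of $\mathcal{F}^\Gamma$ over $\mathfrak{h}_\Gamma^-$ at the specialization $q=-1$). Hence $\Psi$ is injective, which forces $\phi$ to be injective and completes the proof.
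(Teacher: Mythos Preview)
Your approach is essentially the paper's own: factor $\phi=K_0(\eta)\circ\pi$ via \eqref{sequenceofmaps}, obtain surjectivity from Corollary~\ref{catgenerators}, and obtain injectivity from faithfulness of the Fock-space representation.

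There is one logical wrinkle. You invoke ``the preceding proposition'' that $K_0(\eta)$ is an isomorphism in order to deduce surjectivity of $\phi$, but in the paper that proposition is only \emph{stated} with the caveat ``which will be proved later''; its proof is carried out inside the proof of the present theorem. Citing it as established is therefore circular. The paper breaks the circularity by arguing the two pieces in the other order: first $\pi$ is shown to be an isomorphism (surjective by Corollary~\ref{catgenerators}, injective because the full composite to $\mathrm{End}(\mathcal{F}^\Gamma)$ is injective), and then $K_0(\eta)$ is handled independently---surjectivity by an argument parallel to Proposition~\ref{degreeprop} applied to $\mathcal{H}_\Gamma^-$, and injectivity again from injectivity of the composite. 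In your write-up you should either supply the surjectivity of $K_0(\eta)$ directly (i.e.\ show that indecomposables in $\mathcal{H}_\Gamma^-$ are also of the form $\prod P_i^{\lambda_i}\prod Q_j^{\mu_j}$), or reorganize so that $\pi$ is proved bijective first and $K_0(\eta)$ is deduced afterward.

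Your discussion of matching $\Psi$ with the standard Heisenberg action is more detailed than the paper's treatment, which simply records the identification $\bigoplus_n K_0(C_n^\Gamma)\cong\mathcal{F}^\Gamma$ by appeal to \cite{FJW,JW}. For the purposes of this theorem only faithfulness of the composite is used, so the full vertex-operator verification you outline is not strictly required.
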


\begin{proof}
The action of $ \mathfrak{h}_{\Gamma}^- $ on the Fock space $ \mathcal{F}^{\Gamma} $ is faithful which implies that $ \pi $ in \eqref{sequenceofmaps} is injective.
By Corollary ~\ref{catgenerators}, the classes $ [\widetilde{P}_i^{(m)}] $ and $ [\widetilde{Q}_j^{(n)}] $ generate $ K_0(\mathcal{K}_{\Gamma}^{-}) $.
Since $ \pi(p_i^{(m)}) = [\widetilde{P}_i^{(m)}] $ and $ \pi(q_j^{(n)}) = [\widetilde{Q}_j^{(n)}] $, the map $ \pi $ is surjective which shows that it is an isomorphism.

By an argument similar to that of the proof of Proposition ~\ref{degreeprop} (see ~\cite[Proposition 5]{CL}), $ K_0(\eta) $ is surjective.  Again, since the composite of maps in \eqref{sequenceofmaps} is injective,
$ K_0(\eta) $ must be injective as well which proves it's an isomorphism and implies the theorem.
\end{proof}

%\begin{corollary}
%The map $ K_0(\eta) \colon K_0(\mathcal{K}_{\Gamma}^{-}) \rightarrow K_0(\mathcal{H}_{\Gamma}^{-}) $ is an isomorphism.
%\end{corollary}

\section{References}

%%%%%%%%%%%%%%%%%%%%%%%%%%%%%%%%%%%%%

\def\refname{}

\end{document}